\newcommand{\diff}[2]{\mbox{{\rm Diff}{${\,}_{#1}({\mathbb C}^{#2},0)$}}}
\newcommand{\dif}[2]{\mbox{{\rm Diff}{${\,}_{#1}^{*}({\mathbb C}^{#2},0)$}}}
\newcommand{\cn}[1]{\mbox{(${\mathbb C}^{#1},0$)}}
\newcommand{\Xt}{{\mathcal X}_{tp1}\cn{2}}
\newcommand{\Xtg}{{\mathcal X}_{tp1}^{*} \cn{2}}
\newcommand{\Xnt}{{\mathcal X}_{p1} \cn{2}}
\newcommand{\Xntg}{{\mathcal X}_{p1}^{*} \cn{2}}
\newcommand{\pn}[1]{{\mathbb P}^{#1}({\mathbb C})}
\newcommand{\ox}{\'{o}}
\newtheorem{pro}{Proposition}[section]
\newtheorem{teo}{Theorem}[section]
\newtheorem{cor}{Corollary}[section]
\newtheorem{lem}{Lemma}[section]
\theoremstyle{remark}
\newtheorem{rem}{Remark}[section]
\theoremstyle{remark}
\newtheorem{exa}{Example}[section]
\theoremstyle{definition}
\newtheorem{defi}{Definition}[section]
\begin{document}

\title[Rigidity of unfoldings of resonant diffeomorphisms]
{Topological rigidity of unfoldings of resonant diffeomorphisms}
\author{Javier Rib\ox n}
\address{Instituto de Matem\'{a}tica, UFF, Rua M\'{a}rio Santos Braga S/N
Valonguinho, Niter\'{o}i, Rio de Janeiro, Brasil 24020-140}
\thanks{e-mail address: javier@mat.uff.br}
\thanks{MSC-class. Primary: 37F45, 37F75; Secondary: 37G10, 34E10}
\thanks{Keywords: resonant diffeomorphism, bifurcation theory,
topological classification, normal form}
\maketitle

\bibliographystyle{plain}
\section*{Abstract}
We prove that a topological homeomorphism conjugating two generic
$1$-parameter unfoldings of $1$-variable complex analytic
resonant diffeomorphisms
is holomorphic or anti-holomorphic by restriction to the unperturbed
parameter.
We provide examples that show that the genericity hypothesis is necessary.
Moreover we characterize the possible behavior of conjugacies for the
unperturbed parameter in the general case.
In particular they are always real analytic outside of the origin.

We describe the structure of the limits of orbits
when we approach the unperturbed parameter.
The proof of the rigidity results is based on the study of the action of a
topological conjugation on the limits of orbits.
\section{Introduction}
We are interested in the study of the topological properties of unfoldings
of tangent to the identity diffeomorphisms.
We define $\diff{p1}{n+1}$ as the set of $n$-parameter unfoldings of local complex
analytic $1$-variable tangent to the identity diffeomorphisms.
An element $\varphi$ of $\diff{p1}{n+1}$ is of the form
\[ \varphi(x_{1},\hdots,x_{n},y)=(x_{1},\hdots,x_{n},f(x_{1},\hdots,x_{n},y)) \]
where $f(0)=0$ and $(\partial f/\partial y)(0)=0$.
We define $\dif{p1}{2}$ as the set of diffeomorphisms whose fixed points set
does not contain $x=0$. We denote by $N(\varphi)$ or $N$ if there is no confusion
the number of points in $\{x=x_{0}\} \cap \mathrm{Fix} (\varphi)$ for any $x_{0} \neq 0$.
Clearly it is a topological invariant.
We prove the following rigidity theorem:
\begin{teo}[Main Theorem]
\label{teo:main}
Let $\varphi, \eta \in \dif{p1}{2}$ with $N>1$ such that
there exists a homeomorphism $\sigma$ satisfying
$\sigma \circ \varphi = \eta \circ \sigma$.
Suppose that either $\varphi_{|x=0}$ or $\eta_{|x=0}$
is non-analytically trivial.
Then $\sigma_{|x=0}$ is
holomorphic or anti-holomorphic.
\end{teo}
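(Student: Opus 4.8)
\medskip
\noindent\textit{Sketch of a proof strategy.}
The plan is to recover $\sigma_{|x=0}$ from the way $\sigma$ transports the limits of orbits, using the structural description of those limits established earlier in the paper. Fix a small $x_0\neq0$; then $\varphi_{|x=x_0}$ has $N$ fixed points near $0$ and nontrivial orbits joining them. As $x_0\to0$ along a ray $\arg x_0=\theta$ these orbits converge, uniformly on compact subsets of a punctured neighbourhood of $0$, to the real trajectories of $e^{i\alpha_\varphi(\theta)}X_\varphi$, where $X_\varphi$ is a convergent vector field with a zero of order exactly $N$ at $0$ --- the normal-form field of $\varphi_{|x=0}$, e.g.\ $y^N(1+\lambda y^{N-1})^{-1}\,\partial/\partial y$ in suitable coordinates, nontrivial precisely because $N>1$ --- and $\alpha_\varphi(\theta)$ runs through every value of $\mathbb R/2\pi\mathbb Z$ as $\theta$ varies. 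Hence the closure $W_\varphi$ of the union of all limit orbits is the union over $\alpha\in\mathbb R$ of the real trajectories of $e^{i\alpha}X_\varphi$, and similarly for $W_\eta$.

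Next I would show that $\sigma$ sends the central fibre to the central fibre and induces a parameter homeomorphism. Since $\sigma$ conjugates the dynamics, $\sigma(\mathrm{Fix}(\varphi))=\mathrm{Fix}(\eta)$, and for $N>1$ the value $0$ is distinguished as the parameter over which the $N$ fixed points collide, so $\sigma(0)=0$ and $\sigma(\{x=0\})=\{x=0\}$. The relation ``being joined by a $\varphi$-orbit'' is conjugacy invariant and, generically, two fixed points of $\varphi$ are so joined if and only if they lie in a common fibre; therefore $\sigma$ respects the fibration on $\mathrm{Fix}(\varphi)$ and on the orbits meeting it, the induced map $x_0\mapsto x_0'$ on the parameter is a homeomorphism fixing $0$, and it induces a circle homeomorphism on directions. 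Letting $x_0\to0$ in $\sigma\circ\varphi=\eta\circ\sigma$ --- legitimate since $\sigma$ is continuous, $\sigma(\{x=0\})=\{x=0\}$, and the orbit convergence is uniform on compacta --- we get that $\sigma_{|x=0}$ conjugates $\varphi_{|x=0}$ to $\eta_{|x=0}$, carries $W_\varphi$ onto $W_\eta$, and matches the family of trajectories of $e^{i\alpha}X_\varphi$ with that of $e^{i\beta}X_\eta$ through a homeomorphism $\alpha\mapsto\beta$.

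Then I would integrate. The formal invariants being topologically rigid in this parametric setting, one may take $X_\varphi=X_\eta=:X$. In a time coordinate $z=-1/((N-1)y^{N-1})+\lambda\log y$, defined on the sectors at $0$, one has $X=\partial/\partial z$, the trajectories of $e^{i\alpha}X$ become the straight lines of slope $\alpha$, $\varphi_{|x=0}$ becomes $z\mapsto z+1$ glued across the rays by the \'{E}calle--Voronin horn maps, and $\sigma_{|x=0}$ becomes a homeomorphism of a ramified neighbourhood of $\infty$ sending, for every $\alpha$, the family of lines of slope $\alpha$ onto the family of lines of slope $\beta(\alpha)$. A homeomorphism taking lines to lines is affine, and compatibility with the deck translations coming from the $(N-1)$-fold ramification, together with the conjugation of $z\mapsto z+1$ to $z\mapsto z+1$, forces it to have the form $z\mapsto z+b$ or $z\mapsto\bar z+b$; translating back, this already gives that $\sigma_{|x=0}$ is real analytic off $0$. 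Finally, if $\varphi_{|x=0}$ (or $\eta_{|x=0}$) is non-analytically trivial then $\sigma_{|x=0}$ must conjugate its nontrivial horn maps to those of the other germ, and a map that is affine in the time coordinate and does so is holomorphic or anti-holomorphic in $y$; when both germs are analytically trivial the web is too soft to impose this, which is exactly where the counterexamples live.

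The main obstacle is the middle step: showing that $\sigma$ respects the fibration closely enough near $0$ to produce a genuine parameter homeomorphism and to justify the passage to the limit in $\sigma\circ\varphi=\eta\circ\sigma$, while controlling --- especially without full genericity, when $\mathrm{Fix}(\varphi)$ may be reducible --- the combinatorics of the $2(N-1)$ separatrix directions and petals so that $\alpha\mapsto\beta$ is well defined and dynamically compatible. One must also make the elementary fact ``a homeomorphism carrying lines to lines is affine'' work in the non-compact, ramified, union-of-sectors picture rather than on all of $\mathbb R^2$.
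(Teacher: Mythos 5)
Your overall architecture (recover the sectorial flows from limits of orbits, deduce that $\sigma_{|x=0}$ is affine in a time/Fatou coordinate, then use the non-trivial changes of charts between consecutive petals to force holomorphy or anti-holomorphy) parallels the paper, and your last step is essentially Proposition \ref{pro:natc}. But the first step, on which everything rests, is wrong as stated. You claim that for $x_{0}\to 0$ along a ray $\arg x_{0}=\theta$ the orbits of $\varphi_{|x=x_{0}}$ converge on compacta to the real trajectories of $e^{i\alpha_{\varphi}(\theta)}X_{\varphi}$, with $\alpha_{\varphi}(\theta)$ sweeping all of ${\mathbb R}/2\pi{\mathbb Z}$. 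This is a shadowing-type statement that fails in general: the paper points out that a shadowing property for a non-multi-parabolic unfolding forces $\varphi$ to be embedded in an analytic flow, and along every ray in the (generic) stable directions ${\mathbb S}^{1}\setminus{\mathcal U}_{X}^{1}$ the limits of orbits simply collapse onto the orbit through $(0,y_{+})$ and the fixed point — no ``web'' $W_{\varphi}$ of rotated trajectories appears. The rich limit structure (Long Orbits) exists only along special curves $\beta$ tangent to the finitely many instability directions of the associated polynomial vector fields, those curves are determined by the meromorphic residue functions (they are not rays), and what one recovers in the limit is the complex flow of the petal-wise infinitesimal generator $X_{{\mathcal P}'}^{\varphi}$ (parametrized by varying $\beta$ in a family and by the fractional part of $T$), not the real flows of rotations of the convergent normal form. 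Getting this structure right is exactly the content of Sections 3--5 of the paper (dynamical splitting, polynomial vector fields, existence of Long Trajectories via homoclinic trajectories, residue formula).

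Even granting the correct limit structure for $\varphi$, your passage ``$\sigma$ matches the family of trajectories of $e^{i\alpha}X_{\varphi}$ with that of $e^{i\beta}X_{\eta}$'' hides a second essential difficulty: $\sigma$ does not conjugate the normal forms $X$ and $Y$, so it is not automatic that the image of a Long Orbit of $\varphi$ is again described by the normal form of $\eta$; the paper needs the tracking and Rolle arguments of Section 6 (and the a priori $(A,B)$-bounds) precisely to establish this, and then the residue formula to show that the induced map ${\mathfrak h}$ is additive, petal-independent and ${\mathbb R}$-linear — your ``a homeomorphism taking lines to lines is affine'' has no justification in this sparse, non-fibered setting. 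Finally, your reduction ``the formal invariants being topologically rigid in this parametric setting, one may take $X_{\varphi}=X_{\eta}$'' contradicts the paper's own results: the residue $\lambda$ is not a topological invariant of the unfolding (Lemma \ref{lem:res} only gives ${\mathfrak h}(2\pi i\,Res_{\varphi}(0,0))=\pm 2\pi i\,Res_{\eta}(0,0)$ for an ${\mathbb R}$-linear ${\mathfrak h}$), so this normalization is not available. Also note that preservation of the fibration $dx=0$ (hence of $\{x=0\}$) is a standing hypothesis in the paper, not something to be derived as you attempt in your middle step.
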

We denote by $\diff{1}{}$ the group of local complex analytic $1$-dimensional
diffeomorphisms whose linear part is the identity.
An element $\phi \in \diff{1}{}$ is analytically trivial if
it is embedded in an analytic flow, i.e.
$\phi$ is the exponential of an
analytic singular local vector field
$X=g(y) \partial / \partial y$.
A consequence of the Ecalle-Voronin
analytic classification of tangent to the identity
diffeomorphisms \cite{Ecalle} \cite{V} \cite{mal:ast}
is that elements of $\diff{1}{}$ are generically non-analytically trivial.

In particular if a generic element $\varphi$ of $\diff{p1}{2}$
is topologically conjugated to $\eta \in \diff{p1}{2}$
then $\varphi_{|x=0}$ and $\eta_{|x=0}$ are either holomorphically or
anti-holomorphically conjugated.
The result is far from trivial since a topological class of conjugacy
of a tangent to the identity diffeomorphism in one variable
contains a continuous infinitely dimensional moduli of analytic
classes of conjugacy.

Let us point out that all the topological conjugations in this paper
between elements of $\diff{p1}{2}$ preserve the fibration $dx =0$.
In other words they are of the form
$\sigma(x,y) =(\sigma_{0}(x), \sigma_{1}(x,y))$.
This is a natural hypothesis since we are interested in the topological
classification of unfoldings.

A natural problem is determining the classes of conjugacy of unfoldings
up to topological, formal or analytic equivalence.

The study of the analytic properties of unfoldings is an active field of research.
A natural idea to study an unfolding $\varphi$  in $\diff{p1}{2}$
is comparing the dynamics of $\varphi$ and ${\rm exp}(X)$
where $X = g(x,y) \partial / \partial y$ is a vector field
with $\mathrm{Fix} (\varphi) = \mathrm{Sing} (X)$ whose time $1$ flow ``approximates" $\varphi$.
This point of view has been developed by Glutsyuk  \cite{Gluglu}.
In this way extensions of the Ecalle-Voronin invariants \cite{Ecalle} \cite{V}
to some sectors in the parameter space are obtained. The extensions are
uniquely defined. The sectors of definition have to avoid a finite set of
directions of instability, typically associated (but not exclusively) to small
divisors phenomena.  The rich dynamics of $\varphi$ around the directions
of instability prevents the extension of the Ecalle-Voronin invariants to
be defined in the neighborhood of the instability directions.
Interestingly the study of the dynamics around instability
directions is one of the key elements of the proof
of the Main Theorem.

A different point of view was introduced by Shishikura for codimension $1$
unfoldings \cite{Shishi}. The idea is constructing appropriate fundamental domains bounded
by two curves with common ends at
singular points: one curve is the image of the other one.  Pasting the boundary curves by the
dynamics yields (by quasiconformal surgery) a Riemann surface that is conformally
equivalent to the Riemann sphere. The logarithm of an appropriate affine complex
coordinate on the sphere induces a Fatou coordinate for $\varphi$.
These ideas were generalized to higher codimension unfoldings by Oudkerk \cite{Oudkerk}.
In this approach the first curve is a phase
curve of an appropriate vector field transversal to the real flow of $X$. In both cases
the Fatou coordinates provide Lavaurs vector fields $X^{\varphi}$
such that $\varphi= {\rm exp}(X^{\varphi})$ \cite{Lavaurs}. The Shishikura's approach was
used by Mardesic, Roussarie and Rousseau to provide a complete system of invariants
for  unfoldings of codimension $1$ tangent to the identity diffeomorphisms \cite{MRR}.
Rousseau and Christopher classified the generic unfoldings of codimension $1$
resonant diffeomorphisms \cite{Rou-Chris:mod}.
The analytic classification for the unfoldings of finite codimension
resonant diffeomorphisms was
completed in \cite{JR:mod} by using the Oudkerk's point of view.

We described the formal invariants of elements $\varphi$ of $\diff{p1}{n+1}$
for any $n \in {\mathbb N}$ in \cite{UPD}.
The invariants are divided in two sets, namely those that are analogous to
the $1$-dimensional formal invariants and invariants that are associated
to the position of $\mathrm{Fix} (\varphi)$ with respect to the fibration
$dx_{1}= \hdots = dx_{n} = 0$.

\subsection{Topological classification}
In contrast with the analytic and formal cases there is no topological classification
of unfoldings of tangent to the
identity diffeomorphisms. One of the obstacles is the absence of a complete system
of analytic invariants for elements of $\diff{}{}$. More precisely the problem
is associated with small divisors; it is not known the topological classification of elements
$\phi(z) = \lambda z + O(z^{2}) \in \diff{}{}$ such that $\lambda \in {\mathbb S}^{1}$
is not a root of the unit and $\phi$ is not analytically linearizable.

Let $\varphi = (x,f(x,y)) \in \diff{p1}{2}$.
We denote by $m(\varphi)$ the vanishing order of $f-y$ at the line $x=0$.
We study unfoldings $\varphi$ such that $(N,m)(\varphi) \neq (1,0)$.
The remaining case is trivial and hence uninteresting since the
only topological invariant is the vanishing order of $f(0,y)-y$ at $0$.
From now on we assume $(N,m) \neq (1,0)$.

The situation in absence of small divisors (multi-parabolic case)
has been studied in \cite{rib-mams}.
An element $\varphi(x,y)=(x,f(x,y))$ of $\diff{p1}{2}$ is multi-parabolic
if $(\partial f/\partial y)_{|\mathrm{Fix} (\varphi)} \equiv 1$.
A complete system of topological invariants is presented in \cite{rib-mams} for
the classification of multi-parabolic diffeomorphisms under the assumption that
a conjugation $\sigma$ such that $\sigma \circ \varphi = \eta \circ \sigma$
is of the form $\sigma(x,y)=(x, f(x,y))$ and satisfies
$\sigma_{|\mathrm{Fix} (\varphi)} \equiv Id$.
One of the topological invariants is the analytic class of the unperturbed
diffeomorphism of the unfolding. Moreover $\sigma_{|x=0}$ is always a
local biholomorphism.

A key point of the classification is a shadowing property for
multi-parabolic diffeomorphisms. Roughly speaking, given a multi-parabolic
$\varphi \in \diff{p1}{2}$ there exists a vector field $X = g(x,y) \partial / \partial y$
with $\mathrm{Fix} (\varphi) = \mathrm{Sing} (X)$ such that every orbit of $\varphi$ can be
approximated by an orbit of ${\rm exp}(X)$ (Theorem 7.1 \cite{rib-mams}).
As a consequence the continuous dynamical system defined by the real flow $\Re (X)$ of
$X$ is  a good model of the topological behavior of $\varphi$.
In spite of this, generically there is no shadowing for unfoldings of tangent
to the identity diffeomorphisms.
Indeed the existence of a shadowing property for
a non-multi-parabolic element $\varphi$ of $\diff{p1}{2}$ implies
that $\varphi$ is embedded in an analytic flow \cite{rib-ast}.
Our strategy in this paper includes, as in the multi-parabolic case, approximating
$\varphi \in \diff{p1}{2}$ with ${\rm exp}(X)$  for some local vector field
$X = g(x,y) \partial / \partial y$
and then studying the real flow of $X$ to try to obtain interesting dynamical phenomena
associated to $\varphi$. Since there is no shadowing property for all orbits of $\varphi$
we have to show that the dynamics of $\Re (X)$ that we are trying to
replicate for $\varphi$ takes place in regions in which the orbits of
${\rm exp}(X)$ and $\varphi$ remain close.

The main tool in this paper is the study of Long Trajectories and Long Orbits.
These concepts were introduced in \cite{rib-mams}.
They are analogous to the concept of homoclinic trajectories for polynomial
vector fields introduced by Douady, Estrada and Sentenac \cite{DES}.
Let us focus on vector fields since the concepts are analogous and
the presentation is a little simpler.
Consider a local vector field $X=g(x,y) \partial / \partial y$ with
$g(0)=0$, $(\partial g/\partial y)(0)=0$ and $g(0,y) \not \equiv 0$,
i.e. an unfolding of a non-trivial vector field of vanishing
order higher than $1$.
Roughly speaking a Long Trajectory is given by the choice of a
point $y_{+} \neq 0$, a curve $\beta$ in the parameter space and
a continuous function $T:{\beta} \to {\mathbb R}^{+}$ such that
\[ (0,y_{-}) \stackrel{\mathrm{def}}{=} \lim_{x \in \beta, x \to 0} \mathrm{exp}(T(x)X)(x,y_{+}) \]
exists and
$\lim_{x \in \beta, x \to 0} T(x) = \infty$.
In general $(0,y_{-})$ does not belong to the trajectory through
$(0,y_{+})$.
We go from  $(0,y_{+})$ to $(0,y_{-})$ by following the real flow of $X$
an infinite time.
We say that $(X,y_{+},\beta,T)$ generates a {\it Long Trajectory} of $X$
containing $(0,y_{-})$. Denote $\varphi = {\rm exp}(X)$.
The point $(0,y_{-})$ is in the limit of the orbits of
$\varphi$ passing through points
$(x,y_{+})$ with $x \in T^{-1}({\mathbb N})$ when $x \to 0$.
We say that $(\varphi,y_{+},\beta,T)$ generates a {\it Long Orbit}
containing $(0,y_{-})$.
By replacing $T$ with $T+s$ for $s \in {\mathbb R}$
we obtain that $\mathrm{exp}(s X)(0,y_{-})$ is in the Long Orbit generated
by $(\varphi,y_{+},\beta,T+s)$.
The rest of the points in a neighborhood of $(0,y_{-})$ in $x=0$ are also
in Long Orbits of $\varphi$ through $(0,y_{+})$. They are
obtained by varying the curve $\beta$.
In particular the complex flow of the infinitesimal
generator of $\varphi_{|x=0}$ in the
repelling petal containing $(0,y_{-})$ can be
retrieved from Long Orbits through
$(0,y_{+})$.
In other words such complex flow is in the topological closure of the
pseudogroup generated by $\varphi$.

The Long Orbits phenomenon reminds Shcherbakov and Nakai's results
\cite{Shcherbakov-topan} \cite{Nakai-nonsolvable}
for non-solvable pseudogroups of holomorphic diffeomorphisms
of open neighborhoods of $0$ in ${\mathbb C}$.
A pseudogroup is non-solvable if its
associated group of local diffeomorphisms is non-solvable.
More precisely Nakai proves that there exists a real semianalytic subset $\Sigma$
such that any orbit of the pseudogroup is dense or empty
in every connected component of the complementary of $\Sigma$
(see \cite{Nakai-nonsolvable} for further details).
Moreover
the proof of Scherbakov's theorem
(a homeomorphism conjugating non-solvable pseudogroups is
holomorphic or anti-holomorphic) by Nakai is based
on finding real flows of holomorphic
vector fields that are in the topological closure
of a non-solvable pseudogroup.

Long Orbits are interesting in themselves.
Long Trajectories and Long Orbits are phenomena related to
instable behavior in the unfolding. Given $\varphi \in \dif{p1}{2}$
and a curve $\beta$ in the parameter space supporting a
Long Orbit then $\beta$ is tangent at $0$ to a unique
semi-line $\lambda {\mathbb R}^{+}$ for some $\lambda \in {\mathbb S}^{1}$.
Moreover $\lambda$ belongs to a finite set that only depends on
$\varphi$.
Generically in the parameter space there are no Long Orbits.
Notice that the absence of Long Orbits is a necessary condition
in the Glutsyuk point of view described above.
In spite of being scarce Long Orbits somehow vary continuously.
For instance the function $T$ in the definition can be
calculated by applying conveniently the residue theorem.
Indeed $T$ is (up to a bounded additive function)
a sum of meromorphic functions that are formal invariants of the
unfolding. The residue formula allows to describe the
evolution of the Long Orbits when we replace $\beta$
with nearby curves.
On the one hand Long Orbits appear in the regions of instability
of the unfolding and generically together with small divisors
phenomena. On the other hand they have a (rich) regular structure.
The main technical difficulty regarding Long Orbits is proving
their existence and properties. Once the setup is established
the Main Theorem is obtained by a relatively simple description of
the action of topological
conjugations on Long Orbits.

The analytic classification of
elements of $\diff{p1}{2}$ depends on studying transversal
structures to the dynamics of the unfolding. The point of view
behind the Main Theorem is closer to
Glutsyuk's point of view.
Anyway the focus on the parameter space is of complementary
type.
The extension of the Ecalle-Voronin invariants
\`{a} la Glutsyuk is obtained for regions of stability of
the unfolding. Nevertheless the topological dynamics in stability
regions is uninteresting. The
significant topological information is located in the neighborhood
of the instability directions.
\subsection{Rigidity of unfoldings}
The rigidity result of the Main Theorem extends to the
general case.
\begin{defi}
\label{def:simb}
Let $\phi, \rho \in \diff{1}{} \setminus \{Id\}$. We say that $\phi$ and $\rho$
have the same topological bifurcation type and we denote
$\phi \sim_{b} \rho$ if there exist
topologically conjugated unfoldings $\varphi$, $\varrho$ such that
$\varphi_{|x=0} \equiv \phi$, $\varrho_{|x=0} \equiv \rho$ and $N(\varphi) \neq 1$.
If the restriction $\sigma_{|x=0}$ of the topological conjugation $\sigma$
to the unperturbed line is
orientation-preserving (resp. orientation-reversing) we denote
$\phi {\sim}_{b}^{+} \rho$ (resp. $\phi {\sim}_{b}^{-} \rho$).
\end{defi}
We could naively think that this equivalence relation is the same induced by the
topological classification. The Main Theorem implies that a topological class
of conjugacy contains a continuous moduli of classes of
${\sim}_{b}$.
This is even true if we restrict ourselves to diffeomorphisms that are
embedded in analytic flows (Lemma \ref{lem:res})
since residues are not topological invariants.
Somehow surprisingly the analytic nature of
a generic $\phi \in \diff{1}{}$ is encoded in the topological
dynamics of any of its non-trivial unfoldings.

This kind of rigidity properties are typical in theory of
complex analytic foliations.
We already mentioned the results on non-solvable groups
by Scherbakov and Nakai. Other instances of the rigidity
of the moduli topological/analytic can be found in
Ilyashenko  \cite{Ilya-toppor}, Cerveau and Sad  \cite{Cerveau-Sad:modules},
Lins Neto, Sad and Scardua  \cite{NSS:rigidity},
Mar\'{i}n  \cite{Marin-rigidity},
Rebelo  \cite{Rebelo-rigidity}...
Moreover Cerveau and Moussu proved that in the context of non-solvable
non-exceptional groups, formal conjugacy implies analytic conjugacy
\cite{CM:bsmf}.

A natural question is what happens in the setup of the Main Theorem if
$\varphi_{|x=0}$ is analytically trivial.
It turns out that the situation is still rigid.
\begin{teo}
\label{teo:general} (General Theorem)
Let $\varphi, \eta \in \diff{p1}{2}$ with $(N,m)(\varphi) \neq (1,0)$ such that
there exists a homeomorphism $\sigma$ satisfying
$\sigma \circ \varphi = \eta \circ \sigma$.
Then $\sigma_{|x=0}$ is affine in Fatou coordinates.
Moreover $\sigma_{|x=0}$ is orientation-preserving if and only if
the action of $\sigma$ on the parameter space is
orientation-preserving.
\end{teo}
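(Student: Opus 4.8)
The plan is to reduce the General Theorem to the Main Theorem by treating the analytically trivial case separately and then recombining. Recall that by the Main Theorem, if $\varphi_{|x=0}$ or $\eta_{|x=0}$ is non-analytically trivial then $\sigma_{|x=0}$ is holomorphic or anti-holomorphic; since a holomorphic (resp. anti-holomorphic) self-map fixing $0$ that conjugates two parabolic germs is necessarily affine in Fatou coordinates (it commutes with the infinitesimal generators up to the Ecalle-Voronin identification, hence is a translation composed with a homothety by a root of unity in the appropriate chart), this already gives the first assertion in that case. So the substantive new content is the case where \emph{both} $\varphi_{|x=0}$ and $\eta_{|x=0}$ are analytically trivial, i.e. each is the time-one map of an analytic vector field $g_1(y)\partial/\partial y$, $g_2(y)\partial/\partial y$, with Fatou coordinates that are genuinely analytic (not just sectorial). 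In that situation the petals glue into honest annuli/punctured discs and the Fatou coordinate is a single-valued analytic chart $w=\Phi(y)$ conjugating $\varphi_{|x=0}$ to $w\mapsto w+1$; I would carry out the whole argument in these $w$-coordinates.

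First I would set up the Long Orbit machinery from the introduction for $\varphi$: since $(N,m)(\varphi)\neq(1,0)$, there are instability directions $\lambda\mathbb{S}^1$ supporting Long Orbits, and by the discussion preceding the Main Theorem, the real flow of the infinitesimal generator of $\varphi_{|x=0}$ in each repelling petal lies in the topological closure of the pseudogroup generated by $\varphi$ (restricted to a neighborhood of $x=0$). The key step is then purely topological: a homeomorphism $\sigma$ with $\sigma\circ\varphi=\eta\circ\sigma$, preserving the fibration $dx=0$ and hence of the form $\sigma(x,y)=(\sigma_0(x),\sigma_1(x,y))$, carries Long Orbits of $\varphi$ to Long Orbits of $\eta$ — this is the "relatively simple description of the action of topological conjugations on Long Orbits" the introduction advertises. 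Concretely, $\sigma_{|x=0}$ must send the closure of the $\varphi$-pseudogroup to the closure of the $\eta$-pseudogroup, so it conjugates the real flow of $g_1\partial/\partial y$ in each petal to the real flow of $g_2\partial/\partial y$ in the corresponding petal. A homeomorphism conjugating the real flow of $\partial/\partial w$ (the picture in Fatou coordinates) to the real flow of $\partial/\partial w$ while commuting with the unit translation is forced to be of the form $w\mapsto aw+b$ with $a\in\mathbb{R}$, $\Re(a)>0$ — in fact $a>0$, after combining the action on all petals and matching across the parabolic point — hence $\sigma_{|x=0}$ is affine in Fatou coordinates. The orientation bookkeeping falls out of the same computation: the sign governing whether $w\mapsto \bar a \bar w+b$ versus $w\mapsto a w+b$ occurs is dictated by whether $\sigma_0$ reverses orientation on the punctured parameter disc, because the Long Orbits attached to the instability direction $\lambda$ are matched with those attached to $\sigma_0'(0)\lambda$ or to $\overline{\sigma_0'(0)}\,\bar\lambda$ according to that sign, and consistency of the matching across the finite set of instability directions pins down the correspondence.

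The main obstacle I anticipate is establishing that Long Orbits genuinely \emph{exist} and have the claimed tangency and regularity properties in the analytically trivial case — the introduction itself flags that "the main technical difficulty regarding Long Orbits is proving their existence and properties", and analytic triviality of $\varphi_{|x=0}$ does \emph{not} propagate to the unfolding (by the cited result of \cite{rib-ast}, a non-multi-parabolic $\varphi$ with a shadowing property would be embedded in an analytic flow, which a generic unfolding is not), so one still has to locate the instability directions and build Long Orbits near them via the residue/time-function computation, controlling that the orbits of $\mathrm{exp}(X)$ and of $\varphi$ stay close precisely in the regions where the shadowed dynamics of $\Re(X)$ lives. A secondary technical point is the passage from "homeomorphism conjugating two real flows commuting with a translation" to the affine conclusion: one must rule out wild monotone reparametrizations, which is where single-valuedness of the Fatou coordinate (available exactly because of analytic triviality) is used — the flow line through a point, together with the $\mathbb{Z}$-action by the translation, rigidifies the coordinate up to the affine group. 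Once these two points are secured, assembling the local pieces over all petals and both attracting/repelling directions, and checking the orientation statement, is routine.
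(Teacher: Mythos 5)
Your proposal inverts the paper's logical architecture and, as a result, has a circularity and a missing case. In the paper the Main Theorem is \emph{deduced from} the General Theorem: Proposition \ref{pro:natc} shows that ``affine in Fatou coordinates'' plus non-analytic triviality forces holomorphic or anti-holomorphic. There is no independent proof of the Main Theorem available, so reducing the non-analytically-trivial case of the General Theorem to the Main Theorem is circular; the affine statement has to be proved directly for all $\varphi \in \dif{p1}{2}$ with $N>1$, trivial or not, which is what Propositions \ref{pro:defh}, \ref{pro:rlinear} and Lemmas \ref{lem:eqcon}, \ref{lem:orip} (Corollary \ref{cor:orp}) do without any triviality hypothesis. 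Moreover the hypothesis $(N,m)(\varphi)\neq(1,0)$ includes $m(\varphi)>0$, i.e. $\varphi_{|x=0}=Id$: then $\varphi_{|x=0}$ has no petals, ``affine in Fatou coordinates'' means Definition \ref{def:afffc2} (Fatou coordinates of $(X_0)_{|x=0}$ with $X=x^{m(\varphi)}X_0$), and the paper needs the separate argument of Section \ref{sec:infinite} (iterating $O(1/|x|^{m})$ times, Lemma \ref{lem:defhmp}, Corollary \ref{cor:affm}), including the nontrivial facts $m(\varphi)=m(\eta)$ and the continuity of $\sigma_{\sharp}$, $\breve{\sigma}$ after a real blow-up of the parameter. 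Your proposal never addresses this case.

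Even in the analytically trivial case with $m=0$, the two steps you lean on do not carry the weight. Knowing that $\sigma_{|x=0}$ conjugates the real flows of the infinitesimal generators and commutes with the unit translation does \emph{not} force affinity: in a Fatou chart any map $u+iv\mapsto u+\alpha(v)+i\beta(v)$ with $\alpha$ continuous and $\beta$ an increasing homeomorphism conjugates the real flow of $\partial/\partial w$ to itself and commutes with $w\mapsto w+1$ without being affine. What is actually needed is that $\sigma$ sends $\mathrm{exp}(zX_{{\mathcal P}'}^{\varphi})$ to $\mathrm{exp}({\mathfrak h}(z)X_{\sigma({\mathcal P}')}^{\eta})$ with ${\mathfrak h}(z)$ \emph{independent of the base point} $y$; this is Proposition \ref{pro:defh}, and it is extracted from the quantitative Long Orbit machinery (residue formula, Propositions \ref{pro:lores} and \ref{pro:evol}, meromorphy of the residue sums), not merely from the qualitative fact that Long Orbits are topological invariants or that certain flows lie in the closure of the pseudogroup. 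Finally, your orientation argument invokes $\sigma_0'(0)$, but $\sigma_0$ is only a homeomorphism and has no derivative; the paper's Lemma \ref{lem:orip} instead compares the sense of rotation of the level curves of the meromorphic residue functions of $\varphi$ and $\eta$ as the height parameter $u$ varies, and some argument of this kind is indispensable (it is also needed, and missing in your sketch, to decide which of holomorphic/anti-holomorphic occurs in the non-trivial case as a function of the parameter-space orientation).
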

Topological conjugations are of the form
$\sigma(x,y) =(\sigma_{0}(x), \sigma_{1}(x,y))$. We say that
the action of $\sigma$ in the parameter space is orientation-preserving
if $\sigma_{0}$ is. Analogously we define the concept of holomorphic
action on the parameter space.

The definition of affine in Fatou coordinates is provided in
Definitions \ref{def:afffc} and \ref{def:afffc2}.
Affine in Fatou coordinates implies real analytic outside the origin.
In order to compare the Main Theorem and Theorem \ref{teo:general}
let us point out that holomorphic conjugations between elements of
$\diff{1}{} \setminus \{Id\}$ are
translations in Fatou coordinates.
The Main Theorem is a consequence of Theorem \ref{teo:general}.
Indeed we show that affine in Fatou coordinates implies
holomorphic or anti-holomorphic in the non-analytically trivial case.

How to strengthen the General Theorem?
A first approach is provided by the Main Theorem by considering
generic classes of analytic conjugacy.
Another possibility is trying to impose conditions on the action
of conjugations on the parameter space.
Finally we notice that for analytically trivial elements of $\diff{1}{}$
the formal and analytic conjugacy classes coincide. So it is
interesting to study the action of $\sigma_{|x=0}$ on formal invariants.
The next propositions establish a relation between the topological, formal and analytic
classifications.
\begin{pro}
\label{pro:holpar}
Let $\varphi, \eta \in \diff{p1}{2}$ with $(N,m)(\varphi) \neq (1,0)$ such that
there exists a homeomorphism $\sigma$ satisfying
$\sigma \circ \varphi = \eta \circ \sigma$.
Suppose that the action of $\sigma$ on the parameter space is
holomorphic (resp. anti-holomorphic). Then
$\sigma_{|x=0}$ is holomorphic (resp.  anti-holomorphic).
\end{pro}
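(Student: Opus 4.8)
The plan is to derive this from the General Theorem (Theorem \ref{teo:general}) together with a bootstrapping argument that upgrades ``affine in Fatou coordinates'' to ``holomorphic'' once we know the action on the parameter space is holomorphic. By Theorem \ref{teo:general} we already know $\sigma_{|x=0}$ is affine in Fatou coordinates; in particular it is real analytic outside the origin. The issue is therefore purely local/structural: an affine-in-Fatou-coordinates map that is \emph{not} a translation (or a translation composed with complex conjugation) must be ruled out under the holomorphy hypothesis on $\sigma_0$.

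First I would set up the relation between $\sigma_{|x=0}$ and the nearby fibers. The key observation is that $\sigma$ conjugates $\varphi$ to $\eta$ on a full neighborhood, not just on $x=0$; restricting to a fiber $x=x_0$ with $x_0\neq 0$ gives a homeomorphism $\sigma_{1}(x_0,\cdot)$ conjugating $\varphi_{|x=x_0}$ to $\eta_{|x=\sigma_0(x_0)}$. When $x_0\neq 0$ these fiber diffeomorphisms are hyperbolic (their fixed points are the $N$ points of $\{x=x_0\}\cap\mathrm{Fix}(\varphi)$, each with multiplier $\neq 1$ generically, or at worst the linearizable/normalizable local model), so the conjugacy on each such fiber is essentially rigid: it is determined up to the finitely many discrete choices by its germ at the fixed points. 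The plan is to use the holomorphy of $\sigma_0$ to transport this fiberwise rigidity to the limit fiber $x=0$: because $\sigma_0$ is holomorphic, the family $x_0\mapsto \sigma_{1}(x_0,\cdot)$ depends on $x_0$ in a way compatible with the complex structure of the parameter disk, and one can take a holomorphic (rather than merely continuous) limit as $x_0\to 0$ along, say, the directions of stability. Concretely, using the Long Orbit machinery (and the fact that in stability directions the Ecalle--Voronin data extends holomorphically à la Glutsyuk), the conjugating map on $x=0$ is recovered as a limit of holomorphic objects, hence is itself holomorphic — or, if $\sigma_0$ is anti-holomorphic, anti-holomorphic.

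The cleanest route, and the one I would actually write, is this: by Theorem \ref{teo:general}, $\sigma_{|x=0}$ is affine in Fatou coordinates, so in a Fatou coordinate $\Phi$ on a petal of $\varphi_{|x=0}$ and $\Psi$ on the corresponding petal of $\eta_{|x=0}$ we have $\Psi\circ\sigma_{|x=0}\circ\Phi^{-1}(z) = a z + b$ with $a\in\mathbb{C}^{*}$ (the orientation-preserving case) or $\bar a z + b$ (orientation-reversing). One then shows $a$ must be real positive: the ``horn maps'' (Ecalle--Voronin transition maps) of $\varphi_{|x=0}$ and $\eta_{|x=0}$ are intertwined by $z\mapsto az+b$, and for this to be consistent with the \emph{holomorphic} deformation of these horn maps in the parameter — which is governed by $\sigma_0$ being holomorphic — the multiplier $a$ of the affine map is forced to be $1$ (equivalently $\sigma_{|x=0}$ is a translation in Fatou coordinates, i.e. holomorphic) when $\sigma_0$ is holomorphic, and $\bar a=1$ when $\sigma_0$ is anti-holomorphic. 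The orientation correspondence in Theorem \ref{teo:general} is what pins down which of the two cases occurs.

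The main obstacle I anticipate is the last step: excluding a genuine dilation $a\neq 1$ in Fatou coordinates under the holomorphy hypothesis. This requires controlling how the Fatou-coordinate normalization of $\sigma_{|x=0}$ is glued from the behavior on neighboring fibers, i.e. showing that a nontrivial scaling factor $a$ would force the parameter action $\sigma_0$ to be non-holomorphic (it would have to absorb a conformal distortion that only an anti-holomorphic or quasiconformal-but-not-conformal map could carry). Making this precise means tracking the Long Orbits / horn-map data as analytic functions of $x$ near a stability direction and checking that the induced identity on these data together with holomorphy of $\sigma_0$ leaves no room for $a\neq 1$. Everything else — invoking Theorem \ref{teo:general}, the fiberwise hyperbolic rigidity, the real-analyticity off the origin — is either already available in the excerpt or routine.
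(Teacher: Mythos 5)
Your reduction to the General Theorem is a reasonable starting point, but the proposal has genuine gaps. First, you mis-state what ``affine in Fatou coordinates'' actually gives: the conjugacy in Fatou coordinates is $z\mapsto{\mathfrak h}(z)+c$ where ${\mathfrak h}$ is an arbitrary ${\mathbb R}$-linear isomorphism with ${\mathfrak h}(1)=1$ (Proposition \ref{pro:rlinear}), i.e.\ ${\mathfrak h}(z)=\varsigma_{0}z+\varsigma_{1}\overline{z}$ with $\varsigma_{0}+\varsigma_{1}=1$; it is \emph{not} of the form $az+b$ or $a\overline{z}+b$. If ${\mathfrak h}$ were ${\mathbb C}$-linear, the conjugacy equation alone would force $a=1$ and there would be nothing left to prove, and the dichotomy you assert is exactly what fails in general: Section \ref{sec:build} constructs conjugacies whose ${\mathfrak h}$ has both $\varsigma_{0}\neq 0$ and $\varsigma_{1}\neq 0$. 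What the holomorphy of $\sigma_{0}$ must be used to exclude is the anti-linear part $\varsigma_{1}\overline{z}$, and your sketch never does this. Second, the mechanism you propose (intertwining of horn maps together with a ``holomorphic deformation'' of the Ecalle--Voronin data) cannot carry the argument: that intertwining argument is essentially Proposition \ref{pro:natc}, it uses no hypothesis on $\sigma_{0}$, and it breaks down precisely in the analytically trivial case, where the changes of charts between consecutive petals are translations and carry no information --- yet Proposition \ref{pro:holpar} must cover that case, as well as the case $m(\varphi)>0$, where $\varphi_{|x=0}$ is the identity and there are no petals or horn maps at all. The appeal to ``fiberwise hyperbolic rigidity'' is also unfounded: the fixed points in a fiber $x=x_{0}$ need not be hyperbolic (indifferent points and small divisors occur), and even at hyperbolic germs a topological conjugacy is far from unique or conformal, so $\sigma_{|x=0}$ is not a limit of holomorphic objects in any evident sense.

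The paper's actual mechanism is different and is where the holomorphy of $\sigma_{0}$ enters concretely. For $m(\varphi)=0$, $N>1$ (Corollary \ref{cor:holact}), the displacement ${\mathfrak h}(z)$ is computed through the residue formula for Long Orbits (proof of Proposition \ref{pro:defh}): ${\mathfrak h}(z)=z+\lim_{n}G_{n}^{z}-\lim_{n}G_{n}^{0}$, where $G_{n}^{z}$ is $2\pi i$ times the difference between the residue sum of $Y$ over $\tilde{E}_{-}(\sigma(x_{n}^{z}))$ and that of $X$ over $E_{-}(x_{n}^{z})$. The residue sums are meromorphic functions of the parameter; when $\sigma_{0}$ is holomorphic their difference is a single meromorphic function of $x$, so its limits along all sequences tending to $0$ coincide, $\lim_{n}G_{n}^{z}$ is independent of $z$, and ${\mathfrak h}(z)\equiv z$; then $\sigma_{|x=0}$ is holomorphic on each petal and Riemann's removable singularity theorem concludes. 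For $m(\varphi)>0$ one argues directly with orbits of length $O(1/|x|^{m})$: ${\mathfrak h}(\mu^{m})=\sigma_{\sharp}(\mu)^{m}\breve{\sigma}(\mu)^{m}$, and holomorphy of $\sigma_{0}$ forces $\sigma_{\sharp}$ constant and $\breve{\sigma}$ a rotation, so ${\mathfrak h}$ is ${\mathbb C}$-linear (Lemma \ref{lem:defhmp}, Corollary \ref{cor:holpar}); the anti-holomorphic cases reduce to these by composing with $(x,y)\mapsto(\overline{x},\overline{y})$. Without an argument of this type, which genuinely uses holomorphy of $\sigma_{0}$ to kill the $\overline{z}$-component of ${\mathfrak h}$ in all cases covered by the statement, your proof does not go through.
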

Let $\phi(y) = y + c y^{\nu +1} + h.o.t. \in \diff{1}{}$ with
$\nu \in {\mathbb N}$ and $c \in {\mathbb C}^{*}$.
The number $\nu$ determines the class of topological conjugacy of
$\phi$. The diffeomorphism $\phi$ is formally conjugated to
a unique diffeomorphism
$y +  y^{\nu +1} + ((\nu+1)/2 - \lambda) y^{2 \nu +1}$ for
some $\lambda \in {\mathbb C}$.
The pair $(\nu,\lambda)$ provides a complete system of formal
invariants. We define $Res_{\phi}(0)=\lambda$
and $Res_{\varphi}(0,0) = Res_{\varphi_{|x=0}}(0)$
for $\varphi \in \dif{p1}{2}$.
\begin{pro}
\label{pro:rigi}
Let $\varphi, \eta \in \dif{p1}{2}$ with $N>1$ such that
there exists a homeomorphism $\sigma$ satisfying
$\sigma \circ \varphi = \eta \circ \sigma$.
Suppose that either $\varphi_{|x=0}$ or $\eta_{|x=0}$ is analytically trivial.
Suppose that either $Res_{\varphi}(0,0) \not \in i {\mathbb R}$ or
$Res_{\eta}(0,0) \not \in i {\mathbb R}$.
Then
\begin{itemize}
\item If $\sigma_{|x=0}$ is orientation-preserving
then $\sigma_{|x=0}$ is holomorphic if and only if $Res_{\varphi}(0,0) =Res_{\eta}(0,0)$.
\item If $\sigma_{|x=0}$ is orientation-reversing
then $\sigma_{|x=0}$ is anti-holomorphic if and only if $Res_{\varphi}(0,0) =\overline{Res_{\eta}(0,0)}$.
\end{itemize}
\end{pro}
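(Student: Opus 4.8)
The plan is to deduce Proposition \ref{pro:rigi} from Theorem \ref{teo:general}, by making the notion ``affine in Fatou coordinates'' explicit in the analytically trivial setting and reading the residue off the monodromy of the Fatou coordinate.

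\textbf{Reduction to the analytically trivial case for both diffeomorphisms.} Write $\phi = \varphi_{|x=0}$ and $\rho = \eta_{|x=0}$. If only one of them were analytically trivial, the Main Theorem (applicable since $N>1$) would force $\sigma_{|x=0}$ to be holomorphic or anti-holomorphic, hence an analytic or anti-analytic conjugacy between $\phi$ and $\rho$; as both kinds of conjugacy preserve the property of being embedded in an analytic flow, this is impossible. So $\phi = {\rm exp}(X_{\varphi})$ and $\rho = {\rm exp}(X_{\eta})$ with $X_{\varphi}, X_{\eta}$ analytic. Integrating $1/X_{\varphi}$ and $1/X_{\eta}$ yields global multivalued Fatou coordinates $\Phi$ and $\Psi$ on punctured neighborhoods of the origin; a computation in the formal model $y + y^{\nu +1} + ((\nu+1)/2 - \lambda) y^{2\nu +1}$ gives $\Phi(y) = -1/(\nu y^{\nu}) - \lambda \log y + \mathrm{const} + \cdots$, so the monodromy of $\Phi$ around $0$ is $c_{\varphi} = -2\pi i\, Res_{\varphi}(0,0)$, and likewise $c_{\eta} = -2\pi i\, Res_{\eta}(0,0)$. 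Hence $Res_{\varphi}(0,0) \not\in i{\mathbb R}$ (resp.\ $Res_{\eta}(0,0) \not\in i{\mathbb R}$) is exactly the statement that $c_{\varphi}$ (resp.\ $c_{\eta}$) is not real; moreover under either conclusion of the Proposition the two residues are imaginary or not simultaneously, so the hypothesis is available for whichever one is convenient.

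\textbf{The ``only if'' implications.} If $\sigma_{|x=0}$ is holomorphic it is an analytic conjugacy between $\phi$ and $\rho$, hence preserves the formal invariants and $Res_{\varphi}(0,0) = Res_{\eta}(0,0)$. If $\sigma_{|x=0}$ is anti-holomorphic then $h := \overline{\sigma_{|x=0}}$ is holomorphic and $h \circ \phi = \overline{\rho} \circ h$, where $\overline{\rho}(w) := \overline{\rho(\bar w)}$ is obtained by conjugating the Taylor coefficients of $\rho$ and satisfies $Res_{\overline{\rho}}(0) = \overline{Res_{\eta}(0,0)}$; therefore $Res_{\varphi}(0,0) = \overline{Res_{\eta}(0,0)}$. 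The genericity hypothesis on the residues plays no role here.

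\textbf{The ``if'' implications.} By Theorem \ref{teo:general}, lifting $\sigma_{|x=0}$ to the universal covers of the punctured neighborhoods and transporting it through $\Phi$ and $\Psi$ produces a map $g(z) = Az + B\bar z + C$ that is real-affine. Intertwining $z \mapsto z+1$ (the lift of $\phi$ in the $\Phi$-chart) with $w \mapsto w+1$ (the lift of $\rho$ in the $\Psi$-chart), i.e.\ $g(z+1)=g(z)+1$, forces $A + B = 1$. Since $\sigma_{|x=0}$ is an honest homeomorphism of a punctured neighborhood, its lift intertwines the deck transformations, commuting with them in the orientation-preserving case and reversing them in the orientation-reversing case; in the charts $\Phi, \Psi$ this reads $g(z + c_{\varphi}) = g(z) + c_{\eta}$, resp.\ $g(z + c_{\varphi}) = g(z) - c_{\eta}$. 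Substituting $g(z) = Az + (1-A)\bar z + C$: in the orientation-preserving case, where $Res_{\varphi}(0,0) = Res_{\eta}(0,0)$ gives $c_{\varphi} = c_{\eta}$, one obtains $(A-1)(c_{\varphi} - \overline{c_{\varphi}}) = 0$, so $c_{\varphi} \not\in {\mathbb R}$ yields $A=1$, $g$ is a translation and $\sigma_{|x=0}$ is holomorphic; in the orientation-reversing case, where $Res_{\varphi}(0,0) = \overline{Res_{\eta}(0,0)}$ gives $c_{\eta} = -\overline{c_{\varphi}}$, one obtains $A(c_{\varphi} - \overline{c_{\varphi}}) = 0$, so $A = 0$, $g(z) = \bar z + C$ and $\sigma_{|x=0}$ is anti-holomorphic. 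In each case, once $A$ is pinned down and the residue relation holds, the a priori multivalued germ $\Psi^{-1} \circ g \circ \Phi$ is single valued, so it really is $\sigma_{|x=0}$.

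\textbf{Main difficulty.} The delicate point is upstream of this algebra: extracting from Theorem \ref{teo:general} that $\sigma_{|x=0}$ is real-affine with respect to the \emph{global} multivalued Fatou coordinate, that is, that the real-linear parts attached to the various attracting and repelling petals coincide and are compatible with the monodromy. Here analytic triviality is essential, so that the petalwise Fatou coordinates are restrictions of a single primitive of $1/X$ and the transition (horn) maps between consecutive petals are translations; matching the real-affine expressions on the overlapping corridors then carries the linear part from petal to petal around the puncture. I expect this gluing, and its compatibility with the deck transformation, to be the main obstacle; the residue computation and the case analysis above are then routine.
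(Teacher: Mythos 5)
Your proposal is correct and follows essentially the same route as the paper: the paper deduces this statement from Proposition \ref{pro:atu}, whose proof combines ${\mathfrak h}_{|{\mathbb R}} \equiv Id$ (Proposition \ref{pro:rlinear}) with the monodromy identity ${\mathfrak h}(2 \pi i Res_{\varphi}(0,0)) = \pm 2 \pi i Res_{\eta}(0,0)$ (Lemma \ref{lem:res}) and then the same elementary real-linear computation you perform with $g(z)=Az+B\bar{z}+C$. The gluing you flag as the main difficulty is precisely the content of Lemma \ref{lem:res} in the analytically trivial case (the petalwise Fatou coordinates are restrictions of the single primitive $\psi_{X}$, so the petalwise affine relations share their constant on overlaps and a full turn around the puncture yields the monodromy identity), and it resolves exactly as you sketch.
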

On the one hand it is possible to construct examples of diffeomorphisms $\varphi, \eta$
satisfying the hypotheses of the previous proposition such that
$\varphi_{|x=0}$ and $\eta_{|x=0}$ are neither holomorphically nor
anti-holomorphically conjugated (Section \ref{sec:build}).
On the other hand if they are holomorphically conjugated (in the orientation-preserving case)
then $\sigma_{|x=0}$ is also holomorphic.
In other words given $\varphi \in \dif{p1}{2}$ as in Proposition \ref{pro:rigi}
the analytic class of $\eta_{|x=0}$ is not determined for $\eta$ in the class of
topological conjugacy of $\varphi$ but the conjugation $\sigma_{|x=0}$ is determined
up to composition with a holomorphic diffeomorphism
(see Proposition \ref{pro:atu}).
The condition $Res_{\varphi}(0,0) \not \in i {\mathbb R}$ on formal invariants implies
flexibility in the analytic classes of
$\eta_{|x=0}$ but once they are fixed there is rigidity of the conjugating
mappings.

Next we consider the case of purely imaginary formal invariants.
\begin{pro}
\label{pro:atui}
Let $\varphi, \eta \in \dif{p1}{2}$ with $N>1$ such that
there exists a homeomorphism $\sigma$ satisfying
$\sigma \circ \varphi = \eta \circ \sigma$.
Suppose that either $\varphi_{|x=0}$ or $\eta_{|x=0}$ is analytically trivial.
Suppose that either $Res_{\varphi}(0,0) \in i {\mathbb R}$ or
$Res_{\eta}(0,0) \in i {\mathbb R}$.
Then $\varphi_{|x=0}$ and $\eta_{|x=0}$ are analytically conjugated
(resp. anti-analytically conjugated)
if $\sigma$ is orientation-preserving (resp. orientation-reversing) on
the parameter space.
\end{pro}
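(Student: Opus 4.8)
The plan is to derive this from the General Theorem together with the analytic classification of analytically trivial germs. First I would record the preliminary reductions. Since $\sigma(x,y)=(\sigma_{0}(x),\sigma_{1}(x,y))$ with $\sigma_{0}(0)=0$, the restriction $\sigma_{|x=0}:=\sigma_{1}(0,\cdot)$ is a germ of homeomorphism of $({\mathbb C},0)$ conjugating $\varphi_{|x=0}$ to $\eta_{|x=0}$; in particular these two germs share the same $\nu$, as $\nu$ is a complete topological invariant. Then I would reduce to the case in which $\varphi_{|x=0}$ is the analytically trivial germ and $Res_{\varphi}(0,0)\in i{\mathbb R}$: if instead $\eta_{|x=0}$ is the analytically trivial one, replace $\sigma$ by $\sigma^{-1}$; and the period computation below, which uses only analytic triviality of one side, forces $Res_{\varphi}(0,0)\in i{\mathbb R}$ as soon as $Res_{\eta}(0,0)\in i{\mathbb R}$, so all combinations of the two ``either/or'' hypotheses collapse to this single case.

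Next I would invoke the General Theorem: $\sigma_{|x=0}$ is affine in Fatou coordinates, and it is orientation-preserving if and only if the action of $\sigma$ on the parameter space is. Since $\varphi_{|x=0}=\exp(X)$ for an analytic vector field $X=g(y)\partial/\partial y$, the time coordinate $\Phi=\int dy/g$ is a holomorphic Fatou coordinate of $\varphi_{|x=0}$ which is single-valued on the universal cover of a punctured neighborhood of $0$, with deck monodromy $w\mapsto w+T_{\varphi}$, where $T_{\varphi}=2\pi i\,\nu\,Res_{\varphi}(0,0)$ is the period of $\Phi$. The content of the hypothesis $Res_{\varphi}(0,0)\in i{\mathbb R}$ is precisely that then $T_{\varphi}$ is real, i.e. lies on the line ${\mathbb R}\cdot 1\subset{\mathbb C}$.

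Then I would unwind ``affine in Fatou coordinates'' (Definitions \ref{def:afffc}, \ref{def:afffc2}): there are holomorphic Fatou coordinates $\Psi_{j}$ of $\eta_{|x=0}$ on its petals and a fixed real-affine map $L(w)=Aw+b$, with $A$ an ${\mathbb R}$-linear automorphism of ${\mathbb C}$, such that $\Psi_{j}=L\circ\Phi\circ\sigma_{|x=0}^{-1}$ on the $j$-th petal. Writing $\sigma_{|x=0}\circ\varphi_{|x=0}=\eta_{|x=0}\circ\sigma_{|x=0}$ in these coordinates, where $\varphi_{|x=0}$ and $\eta_{|x=0}$ both act as $w\mapsto w+1$, gives $L(w+1)=L(w)+1$, hence $A\cdot 1=1$: in the basis $(1,i)$ the matrix $A$ is upper triangular with first column $(1,0)^{t}$, and it fixes the real line pointwise. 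Because $\Phi$ is single-valued across the overlaps of consecutive petals of $\varphi_{|x=0}$, the $\Psi_{j}$ agree on the corresponding overlaps of $\eta_{|x=0}$ and glue to a global holomorphic Fatou coordinate $\Psi=L\circ\Phi\circ\sigma_{|x=0}^{-1}$ of $\eta_{|x=0}$, so $\eta_{|x=0}$ is analytically trivial as well. Comparing deck transformations, $\widetilde{\sigma}\circ\tau_{\varphi}=\tau_{\eta}^{\epsilon}\circ\widetilde{\sigma}$ with $\epsilon=+1$ if $\sigma_{|x=0}$ preserves orientation and $\epsilon=-1$ otherwise, I would obtain $T_{\eta}=\epsilon\,A\,T_{\varphi}$, where $T_{\eta}=2\pi i\,\nu\,Res_{\eta}(0,0)$ is the period of $\Psi$.

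Finally, since $A$ fixes ${\mathbb R}\cdot 1$ pointwise and $T_{\varphi}\in{\mathbb R}\cdot 1$, we have $A\,T_{\varphi}=T_{\varphi}$, so $T_{\eta}=\epsilon\,T_{\varphi}$ and $Res_{\eta}(0,0)=\epsilon\,Res_{\varphi}(0,0)$. If $\sigma$ is orientation-preserving on the parameter space then $\epsilon=+1$ and $Res_{\eta}(0,0)=Res_{\varphi}(0,0)$; thus $\varphi_{|x=0}$ and $\eta_{|x=0}$ are analytically trivial germs with the same complete formal invariant $(\nu,\lambda)$, and since for analytically trivial germs formal and analytic conjugacy coincide they are analytically conjugated. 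If $\sigma$ is orientation-reversing then $\epsilon=-1$ and $Res_{\eta}(0,0)=-Res_{\varphi}(0,0)=\overline{Res_{\varphi}(0,0)}$, using that the residue is purely imaginary; the complex conjugate germ $\overline{\varphi_{|x=0}}$ is analytically trivial with invariant $(\nu,\overline{\lambda})$, so $\eta_{|x=0}$ is analytically conjugated to $\overline{\varphi_{|x=0}}$, i.e. $\varphi_{|x=0}$ and $\eta_{|x=0}$ are anti-analytically conjugated. The hard part is the third paragraph: turning the qualitative statement ``affine in Fatou coordinates'' of the General Theorem into the precise facts $A\cdot 1=1$, the inheritance of analytic triviality by $\eta_{|x=0}$, and $T_{\eta}=\epsilon\,A\,T_{\varphi}$; the delicate bookkeeping is the cyclic relabelling of petals and the sign of the deck transformation under an orientation-reversing $\sigma_{|x=0}$. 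Once these are in place the residue computation is routine.
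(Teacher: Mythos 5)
Your argument is correct and follows essentially the same route as the paper: the paper packages your third paragraph into Lemma \ref{lem:res} (for analytically trivial restrictions the global Fatou coordinate of $\varphi_{|x=0}$ transports under $\sigma$ by the affine map ${\mathfrak h}$, so its monodromy $2\pi i\,Res$ goes to $\pm 2\pi i\,Res$ according to orientation) together with the preservation of analytic triviality (corollary of Proposition \ref{pro:natc}), and then concludes exactly as you do from ${\mathfrak h}(1)=1$, ${\mathfrak h}_{|{\mathbb R}}\equiv Id$ (Proposition \ref{pro:rlinear}) and the coincidence of formal and analytic classes for analytically trivial germs. The only slip is the spurious factor $\nu$ in the period ($T_{\varphi}=2\pi i\,Res_{\varphi}(0,0)$, not $2\pi i\,\nu\,Res_{\varphi}(0,0)$), which is harmless here since the same factor would appear on both sides and cancels in the comparison of residues.
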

The roles of analytic classes and conjugacies are reversed with respect to
Proposition \ref{pro:rigi}.
Indeed there are at most $2$ classes of analytic conjugacy of $\eta_{|x=0}$
in the set composed of the diffeomorphisms $\eta \in \dif{p1}{2}$ in the
topological class of $\varphi$.
In spite of the rigidity of analytic classes, conjugations are not rigid.
Even if $\varphi_{|x=0}$ and $\eta_{|x=0}$ are analytically conjugated the
mapping $\sigma_{|x=0}$ is not necessarily holomorphic.
Examples of this behavior are presented in Section \ref{sec:build}.

The following result is an
immediate consequence of Proposition \ref{pro:atui},
the Main and the General Theorems.
\begin{cor}
\label{cor:topiana}
Let $\phi, \rho \in \diff{1}{} \setminus \{Id\}$ with $Res_{\phi}(0) \in i {\mathbb R}$.
Then $\phi$ and $\rho$ have the same topological bifurcation type
if and only if $\phi$ and $\rho$ are holomorphically
or anti-holomorphically conjugated.
Moreover $\phi \sim_{b}^{+} \rho$ (resp. $\phi \sim_{b}^{-} \rho$) if and only if
$\phi$ and $\rho$ are holomorphically (resp. anti-holomorphically) conjugated.
\end{cor}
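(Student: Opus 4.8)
The plan is to read the corollary off the Main Theorem, the General Theorem and Proposition~\ref{pro:atui}, using these as black boxes, and to prove the two implications of the equivalence (together with the oriented refinements) separately.

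\textbf{Same bifurcation type $\Longrightarrow$ (anti)holomorphic conjugacy.} Assume $\phi\sim_b\rho$ and fix topologically conjugated unfoldings $\varphi,\varrho\in\diff{p1}{2}$ with $\varphi_{|x=0}\equiv\phi$, $\varrho_{|x=0}\equiv\rho$, $N(\varphi)\ne1$, and a homeomorphism $\sigma$ with $\sigma\circ\varphi=\varrho\circ\sigma$. I would first record that, since $\phi\ne Id$ (hence $\rho\ne Id$, being topologically conjugated to $\phi$ via $\sigma_{|x=0}$), the line $x=0$ lies in neither $\mathrm{Fix}(\varphi)$ nor $\mathrm{Fix}(\varrho)$, so $\varphi,\varrho\in\dif{p1}{2}$; a Weierstrass preparation count moreover gives $N(\varphi)\ge1$, hence $N(\varphi)=N(\varrho)>1$, so all three quoted results apply to $(\varphi,\varrho,\sigma)$. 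Then I would distinguish two cases. If $\phi$ or $\rho$ is non-analytically trivial, the Main Theorem gives that $\sigma_{|x=0}$ is holomorphic or anti-holomorphic, and since $\sigma_{|x=0}\circ\phi=\rho\circ\sigma_{|x=0}$ this exhibits $\phi$ and $\rho$ as holomorphically or anti-holomorphically conjugated. If both $\phi$ and $\rho$ are analytically trivial, then $Res_\varphi(0,0)=Res_\phi(0)\in i{\mathbb R}$ and Proposition~\ref{pro:atui} applies directly, giving that $\phi$ and $\rho$ are analytically or anti-analytically conjugated. For the oriented statement, if $\phi\sim_b^{+}\rho$ then $\sigma_{|x=0}$ is orientation-preserving: in the first case, an orientation-preserving map that is holomorphic or anti-holomorphic is holomorphic, so $\phi$ and $\rho$ are holomorphically conjugated; in the second case, the General Theorem forces $\sigma_0$ to be orientation-preserving, whereupon Proposition~\ref{pro:atui} returns an analytic, hence holomorphic, conjugacy. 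The case $\phi\sim_b^{-}\rho$ is symmetric.

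\textbf{(Anti)holomorphic conjugacy $\Longrightarrow$ same bifurcation type.} Here I would construct the unfoldings explicitly. If $h\circ\phi=\rho\circ h$ with $h$ a local biholomorphism fixing $0$, choose any $\varphi(x,y)=(x,f(x,y))\in\dif{p1}{2}$ with $\varphi_{|x=0}\equiv\phi$ and $N(\varphi)>1$; such $\varphi$ exist, e.g.\ with $f(x,y)-y=u(y)\prod_{j=1}^{\nu+1}(y-x\zeta_j)$ where $u(y)=(\phi(y)-y)/y^{\nu+1}$ is a unit, $\nu+1=\mathrm{ord}_0(\phi-Id)$, and $\zeta_1,\dots,\zeta_{\nu+1}$ are pairwise distinct. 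Putting $\sigma(x,y)=(x,h(y))$ and $\varrho=\sigma\circ\varphi\circ\sigma^{-1}$ gives an analytic unfolding $\varrho\in\dif{p1}{2}$ with $\varrho_{|x=0}\equiv\rho$, $N(\varrho)=N(\varphi)>1$, conjugated to $\varphi$ by a homeomorphism that is the identity on the parameter (hence orientation-preserving there) and has $\sigma_{|x=0}=h$ holomorphic; so $\phi\sim_b^{+}\rho$. If instead $\phi$ and $\rho$ are anti-holomorphically conjugated, write the conjugacy as $h(y)=\psi(\bar y)$ with $\psi$ holomorphic, keep the same $\varphi$, and set $\sigma(x,y)=(\bar x,h(y))$, $\varrho=\sigma\circ\varphi\circ\sigma^{-1}$. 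A direct computation gives $\varrho(x,y)=(x,h(f(\bar x,h^{-1}(y))))$; rewriting each factor through complex conjugation (so that $f$ is replaced by the series with conjugated coefficients and $h,h^{-1}$ by holomorphic maps) shows the second component is holomorphic in $(x,y)$, so $\varrho$ is again an admissible analytic unfolding of $\rho$ with $N(\varrho)>1$, conjugated to $\varphi$ by a homeomorphism reversing orientation on the parameter and with $\sigma_{|x=0}=h$ anti-holomorphic; so $\phi\sim_b^{-}\rho$. Forgetting orientations yields the unsigned equivalence.

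\textbf{Expected main obstacle.} The substance of the corollary is entirely carried by the three quoted results, so the remaining work is bookkeeping. The one place needing genuine care is the converse in the anti-holomorphic case: one must exhibit unfoldings of $\phi$ lying in $\dif{p1}{2}$ with $N>1$, and verify that $\varrho=\sigma\circ\varphi\circ\sigma^{-1}$ is actually complex-analytic --- not merely real-analytic --- even though $\sigma$ is anti-holomorphic in $y$ and conjugate-linear in $x$; the efficient route is to track how complex conjugation permutes the Taylor coefficients. A lesser point is passing, in the oriented statement, between the orientation of $\sigma_{|x=0}$ (which defines $\sim_b^{\pm}$) and the orientation of $\sigma_0$ on the parameter space (which appears in Proposition~\ref{pro:atui}); this is precisely the equivalence asserted in the General Theorem.
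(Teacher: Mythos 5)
Your proposal is correct and follows exactly the route the paper intends: the paper states the corollary as an immediate consequence of Proposition \ref{pro:atui} together with the Main and General Theorems, and your forward direction (splitting into the analytically trivial case, where $Res_{\phi}(0)\in i{\mathbb R}$ feeds Proposition \ref{pro:atui} via the orientation transfer of the General Theorem, and the non-trivial case, where the Main Theorem applies) is precisely that argument. Your explicit transport construction for the converse, including the conjugate-coefficient check that $\sigma\circ\varphi\circ\sigma^{-1}$ stays complex analytic when $\sigma(x,y)=(\overline{x},h(y))$ with $h$ anti-holomorphic, correctly supplies the details the paper leaves implicit.
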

\subsection{Generalizations and consequences}
The results have a straightforward generalization to unfoldings
of resonant diffeomorphisms. A diffeomorphism
$\phi \in \diff{}{}$ is resonant if $\phi'(0)$ is a root of the unit
of order $q \in {\mathbb N}$.
An unfolding $\varphi(x,y) =(x,f(x,y))$ of $\phi$ satisfies
that the iterate $\varphi^{q}$ belongs to $\diff{p1}{2}$.

Consider unfoldings $\varphi, \varrho$ of resonant diffeomorphisms
$\phi, \rho \in \diff{}{}$ and a local homeomorphism $\sigma$
such that $\sigma \circ \varphi = \varrho \circ \sigma$.
We have $\phi'(0) = \rho'(0)$ if $\sigma_{|x=0}$ is orientation-preserving
and $\phi'(0) = \overline{\rho'(0)}$ if
$\sigma_{|x=0}$ is orientation-reversing by Naishul's theorem \cite{Naishul}.
Since $\sigma$ conjugates iterates of $\varphi$ and $\varrho$
then all theorems in the introduction have obvious generalizations.
Moreover all results
(except Proposition \ref{pro:atui} and
Corollary \ref{cor:topiana}) describe properties of $\sigma$
so the generalizations are trivial consequences.

The generalizations of Proposition \ref{pro:atui} and
Corollary \ref{cor:topiana} are also simple.
We apply our results to the iterates.
Then it suffices to prove that
given resonant $\phi, \rho \in \diff{}{}$ such that
$\phi'(0) = \rho'(0)$, $\phi^{q} \in \diff{1}{}$ and
$\phi^{q}$ is analytically conjugated to $\rho^{q}$ then
$\phi$ and $\rho$ are analytically conjugated.
This is a trivial consequence of the description
of the formal centralizer of $\phi^{q}$
(see Corollary 6.17, p. 88 \cite{Ilya-Yako}).

A very simple consequence of our results is that a homeomorphism
conjugating two generic unfolding of saddle-nodes
is either transversaly conformal or transversaly
anti-conformal by restriction to the unperturbed parameter.
\subsection{Outline of the paper}
The properties of Long Trajectories and Long Orbits are studied
by dividing a neighborhood of the origin in two kind of sets:
exterior sets in which the unfolding behaves as a trivial one
($N=1$) and compact-like sets in which the dynamics of the unfolding
is described in terms of the dynamics of a polynomial vector field.
This decomposition is called
{\it dynamical splitting} and it is explained in Section \ref{sec:dynspl}.

The existence of Long Trajectories and Long Orbits in the multi-parabolic
case was proved in \cite{rib-mams}. We introduce a simpler proof
that is valid in a more general setting. The idea is taking profit
of the polynomial vector fields that are canonically associated to
the unfolding. The dynamics of the real flow of polynomial vector fields
is treated in Section \ref{sec:dpvf}.
At this point it is good to point out that we need to
compare the dynamics of elements of $\diff{p1}{2}$ with exponentials
of vector fields. In Section \ref{sec:dynext} we develop the tools required for such a
task in the exterior sets of the dynamical splitting.
We complete the proof of the existence of Long Trajectories in
Section \ref{sec:lt}.

It is easy to see that the existence of Long trajectories implies
the existence of Long Orbits for elements of $\dif{p1}{2}$ with $N>1$
(Proposition \ref{pro:ltlo}). Indeed the Long Orbits are constructed
in the neighborhood of Long Trajectories of the real flow of
a holomorphic vector field $X=g(x,y) \partial / \partial y$ such that
${\rm exp}(X)$ approximates $\varphi$.
A topological homeomorphism $\sigma$ conjugating $\varphi, \eta \in \dif{p1}{2}$
with $N>1$ does not conjugate the real flows of $X$ and $Y$ if
${\rm exp}(Y)$ approximates $\eta$.
Then it is not clear a priori that the image by $\sigma$ of a Long Orbit
is in the neighborhood of a Long Trajectory of the real flow of $Y$.
This shadowing property is important since it is the base for the residue formula
that provides the quantitative estimates of Long Orbits.
The tracking (or shadowing) property is proved in Section \ref{sec:tracking}
by showing that trajectories of the real flow of $Y$ in the
neighborhood of Long Orbits of $\eta$ satisfy a Rolle property.

The rigidity results in the introduction are proved in Section
\ref{sec:infinite} for unfoldings of the identity map and in Section
\ref{sec:finite} for the remaining cases.
Examples showing that the hypotheses in the results are optimal
are presented in Section \ref{sec:build}.
\section{Notations}
We denote by $\diff{}{n}$ the group of local complex analytic diffeomorphisms
defined in a neighborhood of $0$ in ${\mathbb C}^{n}$.
We denote by $\diff{1}{}$ the group of local complex analytic one-dimensional
diffeomorphisms whose linear part is the identity.
\begin{defi}
We define $\diff{p1}{n+1}$ as the set of $n$-parameter unfoldings of local complex
analytic tangent to the identity diffeomorphisms.
In other words $\varphi \in \diff{p1}{n+1}$ is of the form
$\varphi(x_{1},\hdots,x_{n},y)=(x_{1},\hdots,x_{n},f(x_{1},\hdots,x_{n},y))$
where $f \in {\mathbb C}\{x_{1},\hdots,x_{n},y\}$ and
the unperturbed diffeomorphism $f(0,\hdots,0,y)$ is tangent to the identity, i.e.
$f(0)=0$ and $(\partial f/\partial y)(0,\hdots,0)=1$.
We denote by $\dif{p1}{n+1}$ the subset of elements
$\varphi \in \diff{p1}{n+1}$ such that $\varphi_{|x_{1}=\hdots=x_{n}=0} \neq Id$.
Indeed $\diff{p1}{n+1} \setminus \dif{p1}{n+1}$ is the set of unfoldings of
the identity map.
\end{defi}
We relate the topological properties of unfoldings of tangent to the identity
diffeomorphisms and unfoldings of vector fields with a multiple singular point.
\begin{defi}
We denote by $\Xnt$ the set of local complex analytic vector fields of the form
$X=g(x,y) \partial / \partial y$ where $g \in {\mathbb C}\{x,y\}$ satisfies
$g(0,0)=0$ and $(\partial g/\partial y)(0,0)=0$. In other words $X$ is
an unfolding of the vector field $g(0,y) \partial /\partial y$ that has
a multiple zero at the origin.
We denote $\Xntg=\{ X \in \Xnt: X_{|x=0} \not \equiv 0\}$.
\end{defi}
\begin{defi}
We denote by $\Xt$ the subset of $\Xnt$ of local complex analytic vector fields $X$
such that any irreducible component of $\mathrm{Sing}(X)$ different than $x=0$ is of the
form $y=\gamma(x)$ for some $\gamma \in {\mathbb C}\{x\}$.
In other words the irreducible components of $\mathrm{Sing}(X)$ are transversal to the
fibration $dx=0$.
Let us remark that given $g(x,y) \partial /\partial y \in \Xnt$ there exists
$k \in {\mathbb N}$ such that $g(x^{k},y) \partial /\partial y$ belongs to $\Xt$.
We denote $\Xtg = \Xt \cap \Xntg$.
\end{defi}
Given a vector field $X$ defined in a domain $U \subset {\mathbb C}^{n}$
we denote by $\Re (X)$ the real flow of $X$, namely the flow defined in
${\mathbb R}^{2n} = {\mathbb C}^{n}$ by considering real times.
For instance if $X$ is of the form $a(x,y) \partial/\partial x + b(x,y) \partial / \partial y$
we have
\[ \Re (X) = Re (a) \frac{\partial}{\partial x_{1}} + Im (a)  \frac{\partial}{\partial x_{2}}  +
Re(b) \frac{\partial}{\partial y_{1}}  + Im(b) \frac{\partial}{\partial y_{2}} \]
where $x=x_{1} + i x_{2}$ and $y = y_{1} + i y_{2}$.
\begin{defi}
\label{def:traj}
Let $\gamma_{P}(s)$ be the trajectory of $\Re (Z)$ such that $\gamma_{P}(0)=P$.
We define ${\mathcal I}(Z, P,F)$ the maximal interval where
$\gamma_{P}(s)$ is well-defined and belongs to $F$ for any $s \in {\mathcal I}(Z, P,F)$ whereas
$\gamma_{P}(s)$ belongs to the interior $\accentset{\circ}{F}$ of $F$ for
any $s \neq 0$ in the interior of ${\mathcal I}(Z, P,F)$.
We denote $\Gamma(Z,P,F)=\gamma_{P}({\mathcal I}(Z, P,F))$. We define
\[ \partial {\mathcal I}(Z, P,F) = \{ \inf({\mathcal I}(Z, P,F)), \sup({\mathcal I}(Z, P,F)) \}
\subset {\mathbb R} \cup \{-\infty, \infty\}. \]
We denote $\Gamma(Z,P,F)(s) = \gamma_{P}(s)$.
\end{defi}
\begin{defi}
\label{def:normal}
Let $\varphi   \in \diff{1}{}$ (resp. $\diff{p1}{2}$).
Consider a vector field $X=g \partial / \partial y$ for $g \in {\mathbb C}\{y\}$
(resp. ${\mathbb C}\{x,y\}$)
such that $y \circ \varphi - y \circ \mathrm{exp}(X) \in (y \circ \varphi -y)^{3}$.
We say that $X$ and ${\mathfrak F}_{\varphi}=\mathrm{exp}(X)$ are {\it convergent normal forms}
of $\varphi$. There exist convergent normal forms
(Proposition 1.1 of \cite{UPD}).
\end{defi}
The idea is that the dynamics of $\mathrm{exp}(X)$ is much simpler than the dynamics of $\varphi$.
In particular the orbits of $\mathrm{exp}(X)$ are contained in the trajectories of $\Re (X)$.
Generically the orbits of $\mathrm{exp}(X)$ and $\varphi$ are very different. In spite of this
$\Re (X)$ provides valuable information of the dynamics of $\varphi$ (Section \ref{sec:tracking}).
\begin{defi}
\label{def:delta}
Let $\varphi \in \diff{}{} \cup \diff{p1}{2}$. Fix a convergent normal form $X$ of $\varphi$
and ${\mathfrak F}_{\varphi}=\mathrm{exp}(X)$. We define
\[ \Delta_{\varphi} = \psi_{X} \circ \varphi - \psi_{X} \circ {\mathfrak F}_{\varphi} =
\psi_{X} \circ \varphi - (\psi_{X} +1)  . \]
Indeed we have
\[ \Delta_{\varphi} = \psi_{X} \circ \varphi - \psi_{X} \circ {\mathfrak F}_{\varphi}  \sim
\frac{\psi_{X}}{\partial y} (y \circ \varphi - y \circ \mathrm{exp}(X)) =
O \left( \frac{(y \circ \varphi -y)^{3}}{X(y)} \right). \]
The function $\Delta_{\varphi}$ belongs to the ideal $(y \circ \varphi -y)^{2} = (X(y))^{2}$
of ${\mathbb C}\{x,y\}$ (see Lemma 7.2.1 of \cite{rib-mams}).
\end{defi}
The function $\Delta_{\varphi}$ measures how good is the approximation of $\varphi$ provided by
$\mathrm{exp}(X)$.
\begin{defi}
Let $X \in \Xnt$. We define $N(X)$ as the number of points in
$\mathrm{Sing} (X) \cap \{x=x_{0}\}$ for $x_{0} \neq 0$.
Analogously we define $N(\varphi)$ for $\varphi \in \diff{p1}{2}$
by replacing $\mathrm{Sing} (X)$ with $\mathrm{Fix} (\varphi)$.
We have $N(\varphi)=N(X)$ if $X$ is a convergent normal form of $\varphi$.
\end{defi}
\begin{defi}
Let $X \in \Xnt$. We define $m(X) \in {\mathbb N} \cup \{0\}$
as the multiplicity of
$x=0$ in $\mathrm{Sing} (X)$. More precisely  $X$ is of the form $x^{m(X)} X_{0}$ for some
holomorphic vector field such that $\{x=0\} \not \subset \mathrm{Sing} (X)$.
We define $m(\varphi)$ as the multiplicity of $x=0$ in $\mathrm{Fix} (\varphi)$.
\end{defi}
\begin{defi}
Let $X=g(y) \partial / \partial y$ with $g \in {\mathbb C}\{y\}$.
We define $\nu (X)=a-1$ where $a$ is the vanishing order of of $g(y)$ at $0$.
\end{defi}
\begin{defi}
Let $\varphi(y) \in \diff{}{}$.
We define $\nu (\varphi)=a-1$ where $a$ is the vanishing order of of $\varphi(y)-y$ at $0$.
\end{defi}
\begin{defi}
Let $X =x^{m(X)} g(x,y) \partial / \partial y \in \Xnt$.
We define $\nu (X)$ as $\nu (g(0,y) \partial / \partial y)$.
Let $\varphi=(x,y + x^{m(\varphi)} f(x,y)) \in \diff{p1}{2}$.
We define $\nu (\varphi)$ as $a-1$ where $a$ is the vanishing order of $f(0,y)$ at $0$.
\end{defi}
\begin{defi}
\label{def:atpetvf}
Let $X = g(y) \partial / \partial y$ with $X \neq 0$ and $\nu(X)>0$.
We say that ${\mathcal P}$
is an {\it attracting petal} of $\Re (X)_{|B(0,\epsilon)}$
if it is a connected component of
\[ \{ y \in B(0,\epsilon): [0,\infty) \subset {\mathcal I}(y,X,B(0,\epsilon)) \
\mathrm{and} \ \lim_{s \to \infty} \Gamma(y,X,B(0,\epsilon))(s) = 0\} \]
where $B(0,\epsilon)$ is the open ball of center at the origin and radius $\epsilon$.
Analogously a repelling petal of $\Re (X)_{|B(0,\epsilon)}$
is an attracting petal of $\Re (-X)_{|B(0,\epsilon)}$.
We consider the petals ${\{ {\mathcal P}_{j}\}}_{j \in {\mathbb Z}/(2 \nu(X){\mathbb Z})}$
ordered in counter clock wise sense (see \cite{Loray5}).
\end{defi}
A vector field $X=(a_{\nu +1} y^{\nu +1} + h.o.t.) \partial /\partial y$, $a_{\nu+1} \neq 0$, has
very similar petals as $a_{\nu +1} y^{\nu +1} \partial /\partial y$.
Consider a half line $e^{i \theta_{0}} {\mathbb R}^{+}$ with $a_{\nu+1} e^{i \theta_{0} \nu} \in {\mathbb R}^{*}$.
The set of half lines in $\{ a_{\nu+1} y^{\nu} \in {\mathbb R}\}$ is
${\{ e^{i \theta_{j}} {\mathbb R}^{+} \}}_{j \in {\mathbb Z}/(2 \nu {\mathbb Z})}$
where $\theta_{j} = \theta_{0} + \pi j /\nu$.
Given $j \in {\mathbb Z}/(2 \nu {\mathbb Z})$ there exists a petal
${\mathcal P}_{j}$ that is bisected by $e^{i \theta_{j}} {\mathbb R}^{+}$.
More precisely given $\eta>0$
the sector
$(0,\delta) e^{i (\theta_{j} - \pi /\nu + \eta, \theta_{j} +  \pi / \nu - \eta)}$ is contained in
${\mathcal P}_{j}$ for $\delta >0$ small enough.
Moreover ${\mathcal P}_{j}$ is attracting if and only if
$a_{\nu+1} e^{i \theta_{j} \nu} \in {\mathbb R}^{-}$.
Two petals have non-empty intersection if and only if they are consecutive.
These properties can be easily proved by using the change of coordinates
$z = -1/(\nu a_{\nu} y^{\nu})$. The vector field $X$ is of the form
$(1 + o(1)) \partial / \partial z$ where $z$ is defined in a neighborhood of $\infty$.
\begin{defi}
Let $X$ be a holomorphic vector field defined in an open set
$U$ of ${\mathbb C}^{n}$. We say that  a holomorphic
$\psi: U \to {\mathbb C}$ is a {\it Fatou coordinate} of $X$ if
$X(\psi) \equiv 1$ in $U$.
\end{defi}
\begin{defi}
\label{def:atpetd}
Let $\phi \in \diff{1}{}$ with $\phi \neq Id$.
We say that ${\mathcal P}'$
is an {\it attracting petal} of $\phi_{|B(0,\epsilon)}$
if it is a connected component of
\[ \{ y \in B(0,\epsilon): \phi^{j}(y) \in B(0,\epsilon) \ \forall j \in {\mathbb N}
\ \mathrm{and} \ \lim_{j \to \infty}  \phi^{j}(y)=0 \} . \]
Analogously a {\it repelling petal} of $\phi_{|B(0,\epsilon)}$
is an attracting petal of $\phi_{|B(0,\epsilon)}^{-1}$.
We consider the petals ${\{ {\mathcal P}_{j}'\}}_{j \in {\mathbb Z}/(2 \nu(\phi){\mathbb Z})}$
ordered in counter clock wise sense (see \cite{Loray5}).
\end{defi}
The petals of $y + a_{\nu +1} y^{\nu +1} + h.o.t.$, $a_{\nu +1} \neq 0$,
satisfy the properties
described below Definition \ref{def:atpetvf} for the petals of
$a_{\nu +1} y^{\nu +1} \partial / \partial y$.
\begin{defi}
Let $\phi   \in \diff{1}{}$ with
$\phi \neq Id$. Consider a petal ${\mathcal P}'$ of $\phi$.
Consider a convergent normal form $X$ of $\phi$ and a Fatou coordinate
$\psi$ of $X$ in ${\mathcal P}'$. We say that $\psi_{{\mathcal P}'}^{\phi}$ is
a {\it Fatou coordinate} of $\phi$ in ${\mathcal P}_{+}'$ if
$\psi_{{\mathcal P}_{+}'}^{\phi} \circ \varphi \equiv \psi_{{\mathcal P}_{+}'}^{\phi} +1$
and there exists
$c \in {\mathbb C}$ such that
\[ (\psi_{{\mathcal P}'}^{\phi} - (\psi + c))(y) = o(\max_{j \geq 0} |\phi^{j}(y)|) \ \ \mathrm{or}
\ \ (\psi_{{\mathcal P}'}^{\phi} - (\psi + c))(y) = o(\max_{j \geq 0} |\phi^{-j}(y)|) \]
depending on wether ${\mathcal P}'$ is attracting or repelling.
The definition depends only on $\phi$ and ${\mathcal P}'$. The Fatou coordinate is unique
up to an additive constant. Indeed
\[ \psi_{{\mathcal P}'}^{\phi}(y) = \psi(y) + \sum_{j=0}^{\infty} \Delta_{\phi}(\phi^{j}(y)) \ \ \mathrm{or} \ \
\psi_{{\mathcal P}'}^{\phi}(y) = \psi(y) - \sum_{j=1}^{\infty} \Delta_{\phi}(\phi^{-j}(y)) \]
is a Fatou coordinate of $\phi$ in ${\mathcal P}'$ (see Definition \ref{def:delta})
depending on wether ${\mathcal P}'$ is attracting or repelling (see \cite{Loray5}).
\end{defi}
\begin{defi}
\label{def:infgen}
Let $\phi \in \diff{1}{}$. Consider a petal ${\mathcal P}'$ of $\phi$.
There exists a unique vector field $X_{{\mathcal P}'}^{\phi}$
defined in ${\mathcal P}'$ such that
$X_{{\mathcal P}'}^{\phi}$ is the $1/\nu(\phi)$
Gevrey sum of the infinitesimal generator of $\phi$ in
${\mathcal P}'$ and
$\phi = \mathrm{exp}(X_{{\mathcal P}'}^{\phi})$ \cite{Ecalle}.
Equivalently $X_{{\mathcal P}'}^{\phi}$ is the unique holomorphic vector field
defined in ${\mathcal P}'$ such that
$X_{{\mathcal P}'}^{\phi}(\psi_{{\mathcal P}'}^{\phi}) \equiv 1$ for
some (and then every) Fatou coordinate $\psi_{{\mathcal P}'}^{\phi}$ of
$\phi$ in ${\mathcal P}'$.
If ${\mathcal P}'$ is a petal of $\varphi_{|x=0}$ for
$\varphi \in \diff{p1}{2}$ we denote
$X_{{\mathcal P}'}^{\varphi} = X_{{\mathcal P}'}^{\varphi_{|x=0}}$.
\end{defi}
\begin{defi}
\label{def:afffc}
Let $\varphi, \eta \in \dif{p1}{2}$ with $N>1$ and
a homeomorphism $\sigma$ conjugating $\varphi$ and $\sigma$.
We say that $\sigma_{|x=0}$ is {\it affine in Fatou coordinates} if there exists a
${\mathbb R}$-linear isomorphism ${\mathfrak h}:{\mathbb C} \to {\mathbb C}$
such that
\[ {\mathfrak h} (z) = (\psi_{\sigma({\mathcal P}')}^{\eta} \circ \sigma \circ
\mathrm{exp}(z X_{{\mathcal P}'}^{\varphi}) -
\psi_{\sigma({\mathcal P}')}^{\eta} \circ \sigma)(0,y) \]
for any petal ${\mathcal P}'$ of $\varphi_{|x=0}$.
The previous property implies that
$\psi_{\sigma({\mathcal P}')}^{\eta} \circ \sigma \circ (\psi_{{\mathcal P}'}^{\varphi})^{-1}$
is affine for any choice $\psi_{{\mathcal P}'}^{\varphi}$,
$\psi_{\sigma({\mathcal P}')}^{\eta}$
of Fatou coordinates.
\end{defi}
\begin{defi}
\label{def:afffc2}
Let $\varphi, \eta \in \diff{p1}{2}$ with $m(\varphi)>0$ such that
there exists a homeomorphism $\sigma$ satisfying
$\sigma \circ \varphi = \eta \circ \sigma$.
Consider normal forms
$X=x^{m(\varphi)} X_{0}$ and $Y=x^{m(\eta)} Y_{0}$
for $\varphi$ and $\eta$ respectively.
Let $\psi_{0}^{\varphi}$, $\psi_{0}^{\eta}$ Fatou coordinates
of $(X_{0})_{|x=0}$ and $(Y_{0})_{|y=0}$ respectively.
We say that $\sigma_{|x=0}$ is {\it affine in Fatou coordinates} if
$\psi_{0}^{\eta} \circ \sigma \circ (\psi_{0}^{\varphi})^{-1}$
is an affine isomorphism.
\end{defi}
\begin{rem}
The vector fields $(X_{0})_{|x=0}$ and $(Y_{0})_{|y=0}$ are well-defined
up to multiplication by a non-zero complex number. They do not depend on
the choices of convergent normal forms. Hence the previous property is
well-defined.
\end{rem}
\begin{defi}
\label{def:contsets}
We consider coordinates $(x,y) \in {\mathbb C} \times {\mathbb C}$ or
$(r,\lambda,y) \in
{\mathbb R}_{\geq 0} \times {\mathbb S}^{1} \times {\mathbb C}$
in ${\mathbb C}^{2}$. Given a set $F \subset {\mathbb C}^{2}$ we denote
by $F(x_{0})$ the set $F \cap \{ x=x_{0} \}$ and by $F(r_{0},\lambda_{0})$
the set $F \cap \{ (r,\lambda)=(r_{0},\lambda_{0}) \}$.
\end{defi}
\begin{defi}
\label{def:ue}
We define $U_{\epsilon} = {\mathbb C} \times B(0,\epsilon)$. In practice we always
work with domains of the form $B(0,\delta) \times B(0,\epsilon)$ for some
small $\delta \in {\mathbb R}^{+}$.
\end{defi}
\section{Dynamical splitting}
\label{sec:dynspl}
We define a dynamical splitting $\digamma_{X}$ associated to an element $X$
of $\Xt$ such that $N(X) \geq 1$. Most of the concepts were introduced
in \cite{JR:mod}. The idea is dividing a neighborhood of the origin
${\mathcal T}_{0}^{\epsilon}=\{(x,y) \in B(0,\delta) \times \overline{B(0,\epsilon)} \}$,
where $B(0,\epsilon)$ is the open ball of center at the origin and radius $\epsilon$,
in sets in which the dynamics of $\Re (X)$ is simpler to describe.
The sets of the division are obtained through a process of desingularization of
$\mathrm{Sing} (X)$.

We say that ${\mathcal T}_{0}^{\epsilon}=\{(x,y) \in B(0,\delta) \times \overline{B(0,\epsilon)} \}$
is a {\it seed}. Let us explain the terminology.
The set ${\mathcal T}_{0}={\mathcal T}_{0}^{\epsilon}$ is the starting point of the division.
Throughout the process we obtain sets of the form
${\mathcal T}_{\beta} = \{ (x,t) \in  B(0,\delta) \times \overline{B(0,\eta)} \}$
for some new coordinate $t$. Since these sets share analogous properties as
${\mathcal T}_{0}$ we can define a recursive process of division.
The sets of the form ${\mathcal T}_{\beta}$ are called {\it seeds} and the
coordinate $t$ is canonically associated to ${\mathcal T}_{\beta}$ along the process.

 We provide a recursive method to divide ${\mathcal T}_{0}$. At each step we have
a vector
$\beta=(0, \beta_{1}, \hdots, \beta_{k}) \in \{0\} \times {\mathbb C}^{k}$ with $k \geq 0$
and a seed
${\mathcal T}_{\beta} = \{ (x,t) \in  B(0,\delta) \times \overline{B(0,\eta)} \}$ in coordinates
$(x,t)$ canonically associated to ${\mathcal T}_{\beta}$.
We say that the coordinates $(x,t)$ are {\it adapted} to
${\mathcal T}_{\beta}$ and ${\mathcal E}_{\beta}$ (it is defined below).
In the first step we have
$k=0$, $\beta=0$ and $t=y$. Suppose also that
\begin{equation}
\label{for:forx}
 X = x^{e({\mathcal E}_{\beta})}
v(x,t) (t-\gamma_{1}(x))^{s_{1}} \hdots (t-\gamma_{p}(x))^{s_{p}}
\partial / \partial{t}
\end{equation}
in ${\mathcal T}_{\beta}$ where $\gamma_{1}(0)=\hdots=\gamma_{p}(0)=0$
and $\{ v=0 \} \cap {\mathcal T}_{\beta}=\emptyset$.

For $p=1$ we define the {\it terminal exterior basic set} ${\mathcal E}_{\beta}={\mathcal T}_{\beta}$,
we do not split the terminal seed ${\mathcal T}_{\beta}$.
The singular set of $X$ in ${\mathcal T}_{\beta}$ is already desingularized.
Suppose $p>1$.
We define $t=xw$ and
$S_{\beta} =  \{ (\partial \gamma_{1}/\partial x)(0), \hdots,
(\partial \gamma_{p}/\partial x)(0) \}$.
Consider the blow-up of the point $(x,t)=(0,0)$;
the set $S_{\beta}$ can be interpreted as the
intersection of the strict transform of
$\prod_{j=1}^{p} (t-\gamma_{j}(x))^{s_{j}}=0$ and the divisor.
We define the {\it exterior basic set}
${\mathcal E}_{\beta} = {\mathcal T}_{\beta} \cap \{ |t| \geq |x|\rho \}$
and $M_{\beta}= \{ (x,w) \in B(0,\delta) \times \overline{B(0,\rho)} \}$
for some $\rho>>0$.
The set $M_{\beta}$ contains $\prod_{j=1}^{p} (t-\gamma_{j}(x))^{s_{j}}=0$.
One of the ideas of the construction is that since ${\mathcal E}_{\beta}$ is
far away of the singular points then the dynamics of the vector fields
$\Re (X)_{|{\mathcal E}_{\beta}}$ and $\Re (x^{e({\mathcal E}_{\beta})}
v(x,t) (t-\gamma_{1}(x))^{s_{1}+\hdots+s_{p}} \partial / \partial{t})_{|{\mathcal E}_{\beta}}$
are very similar. We denote
\[ \partial_{e} {\mathcal E}_{\beta} = \{ (x,t) \in B(0,\delta) \times \partial B(0,\eta) \}
\ {\rm and} \
\nu({\mathcal E}_{\beta}) = s_{1}+\hdots+s_{p}-1.  \]
We define
\[ \partial_{I} {\mathcal E}_{\beta} = \{ (x,t) \in B(0,\delta) \times \overline{B(0,\eta)} :
|t| = |x| \rho \} \]
if ${\mathcal E}_{\beta}$ is not terminal.
We say that the sets $\partial_{e} {\mathcal E}_{\beta}$ and
$\partial_{I} {\mathcal E}_{\beta}$ are the {\it exterior} and {\it interior boundaries} of
${\mathcal E}_{\beta}$ respectively and $e({\mathcal E}_{\beta})$ is the
{\it exterior exponent} of ${\mathcal E}_{\beta}$.
\begin{defi}
We say that an exterior set ${\mathcal E}_{\beta}$ is {\it parabolic} if
$\nu({\mathcal E}_{\beta})>0$. Every non-parabolic exterior set is terminal
but a terminal exterior set can be parabolic.
\end{defi}
\begin{defi}
\label{def:rvfe}
Given an exterior set ${\mathcal E}_{\beta}$ we define
$X_{{\mathcal E}_{\beta}}$ as the vector field defined in ${\mathcal T}_{\beta}$ such that
$X= x^{e({\mathcal E}_{\beta})} X_{{\mathcal E}_{\beta}}$.
\end{defi}
We have
\[ X  =
x^{e({\mathcal E}_{\beta})+s_{1}+\hdots+s_{p}-1}v(x,xw)
{\left({ w -  \gamma_{1}(x) / x }\right)}^{s_{1}} \hdots
{\left({ w - \gamma_{p}(x) / x }\right)}^{s_{p}}
\partial / \partial{w}. \]
\begin{defi}
\label{def:rvfc}
We define $\nu({\mathcal C}_{\beta}) =  \nu({\mathcal E}_{\beta})$ and
$e({\mathcal C}_{\beta}) = e({\mathcal E}_{\beta})+ \nu({\mathcal E}_{\beta})$.
We define
$X_{{\mathcal C}_{\beta}}$ as the vector field defined in
$M_{\beta}$ such that
$X= x^{e({\mathcal C}_{\beta})} X_{{\mathcal C}_{\beta}}$.
\end{defi}
\begin{defi}
\label{def:pol}
We define the polynomial vector field
\[ X_{\beta}(\lambda) = \lambda^{e({\mathcal C}_{\beta})} v(0,0)
{\left({ w - (\partial \gamma_{1} / \partial x)(0) }\right)}^{s_{1}} \hdots
{\left({ w - (\partial \gamma_{p} / \partial x)(0) }\right)}^{s_{p}}
\partial / \partial{w}  \]
for $\lambda \in {\mathbb S}^{1}$ (see Eq. (\ref{for:forx}), note that $t=xw$)
associated to $X$, ${\mathcal T}_{\beta}$ and ${\mathcal C}_{\beta}$.
\end{defi}
The dynamics of $\Re(X)_{|M_{\beta}}$ and $\Re(X_{\beta}(\lambda))_{|M_{\beta}}$
are similar (up to reparametrization of the trajectories) outside of a neighborhood of
the singular set.
If $X_{\beta}(\lambda)$ has a multiple zero at $\zeta \in S_{\beta}$ we just
choose $w_{\zeta}=w-\zeta$.
Suppose now that
$X_{\beta}(\lambda)$ has a simple zero at $\zeta \in S_{\beta}$.
Assume that $(\partial \gamma_{1} / \partial x)(0)=\zeta$.
As a consequence there exist coordinates
$(r,\lambda,w_{\zeta})$ defined in the neighborhood of
$\{ (r,\lambda,w) \in \{0\} \times {\mathbb S}^{1} \times \{\zeta\}\}$
such that $\lambda^{e({\mathcal C}_{\beta})} X_{{\mathcal C}_{\beta}}$
is of the form $\lambda^{e({\mathcal C}_{\beta})} h(r \lambda) w_{\zeta} \partial / \partial w_{\zeta}$
for some function $h$ (see \cite{Loray5}).
Indeed $w_{\zeta}$ is a linearizing coordinate.
Hence $|w_{\zeta}| =\eta'$ for $\eta'>0$ small
is transversal to $\Re (X)$ if $(x,w)=(x_{0},\gamma_{1} (x_{0})/x_{0})$
is an attractor or a repeller.
Moreover $|w_{\zeta}|=\eta'$
is invariant by $\Re (X)$ if $(x,w)=(x_{0},\gamma_{1} (x_{0})/x_{0})$
is an indifferent singular point.

We define the {\it compact-like basic set}
\[ {\mathcal C}_{\beta}=\{
(x,w) \in B(0,\delta) \times \overline{B(0,\rho)} \} \setminus
(\cup_{\zeta \in S_{\beta}}
\{ (x,w_{\zeta}) \in  B(0,\delta) \times B(0,\eta_{\beta,\zeta}) \}) \]
where $\eta_{\beta,\zeta}>0$ is small enough for any $\zeta \in S_{\beta}$.  We denote
\[ \partial_{e} {\mathcal C}_{\beta} = \{ (x,w) \in B(0,\delta) \times \partial B(0,\rho) \}, \
\partial_{I} {\mathcal C}_{\beta} = \cup_{\zeta \in S_{\beta}} \{ (x,w_{\zeta}) \in B(0,\delta) \times
\partial B(0, \eta_{\beta,\zeta}) \} . \]
We say that   $e({\mathcal C}_{\beta})$ (see Definition \ref{def:rvfc})
is the {\it exponent} of ${\mathcal C}_{\beta}$.
Notice that the vector field $X_{\beta}(\lambda)$ determines the dynamics of
$\Re (X)$ in ${\mathcal C}_{\beta}$
since
$X /|x|^{e({\mathcal C}_{\beta})} \to X_{\beta}(\lambda_{0})$ in ${\mathcal C}_{\beta}$
if $x \to 0$ and $x/|x| \to \lambda_{0}$.

Fix $\zeta \in S_{\beta}$. We define the seed
${\mathcal T}_{\beta, \zeta}= \{ (x,t') \in B(0,\delta) \times \overline{B(0,\eta_{\beta,\zeta})} \}$ where
$t'$ is the coordinate $w_{\zeta}$. By definition $(x,t')$ is the set of adapted coordinates
associated to ${\mathcal T}_{\beta, \zeta}$.
We denote $e({\mathcal E}_{\beta, \zeta}) = e({\mathcal C}_{\beta})$.
We have
\[ X = x^{e({\mathcal E}_{\beta, \zeta})} h(x,t')
\prod_{(\partial \gamma_{j}/\partial x)(0) = \zeta}
{\left({ t' - \left({
\frac{\gamma_{j}(x)}{x} - \zeta }\right)
}\right)}^{s_{j}}  \frac{\partial}{\partial{t'}}, \
 X = x^{e({\mathcal E}_{\beta, \zeta})} h(x)
t'  \frac{\partial}{\partial{t'}}
 \]
depending on wether or not
$X_{\beta}(\lambda)$ has a multiple zero at $\zeta$.

Resuming, in each step of the process we either decide not to split ${\mathcal T}_{\beta}$
or we divide it in sets ${\mathcal E}_{\beta}$,
${\mathcal C}_{\beta}$, ${\{{\mathcal T}_{\beta, \zeta}\}}_{\zeta \in S_{\beta}}$
where $S_{\beta}$ is a finite subset of ${\mathbb C}$.
The seeds ${\mathcal T}_{\beta, \zeta}$ for  $(\beta,\zeta) \in \{0\} \times {\mathbb C}^{k+1}$
with $\zeta \in S_{\beta}$ are divided in ulterior steps of the process.
The sets ${\mathcal T}_{\beta}$, ${\mathcal E}_{\beta}$ and ${\mathcal C}_{\beta}$
are defined by induction on $k$.
The sets ${\mathcal E}_{\beta}$ are called
{\it exterior basic sets} whereas the sets ${\mathcal C}_{\beta}$
are called {\it compact-like basic sets} (see Example \ref{exa:dynsplit}).
\begin{defi}
\label{def:dynfx}
A dynamical splitting $\digamma_{X}$ associated to $X \in \Xt$
is a division of a neighborhood of the origin
${\mathcal T}_{0}^{\epsilon}=B(0,\delta) \times \overline{B(0,\epsilon)}$ in exterior and
compact-like basic sets. The choice of dynamical splitting is not unique.
\end{defi}
\begin{figure}[h]
\begin{center}
\includegraphics[height=6cm,width=6cm]{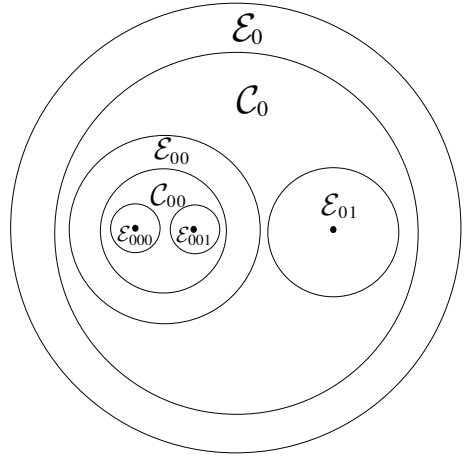}
\end{center}
\caption{Splitting for $X = y(y-{x}^{2})(y-x) \partial/\partial{y}$ in a line $x=x_{0}$} \label{fig5}
\end{figure}

\begin{exa}
\label{exa:dynsplit}
Consider $X=y (y-x^{2}) (y-x) \partial / \partial y$.
Denote $w=y/x$. The vector field $X$ has the form $x^{2} w (w-x) (w-1) \partial / \partial w$
in coordinates $(x,w)$. The polynomial vector field $X_{0}(\lambda)$ associated to the seed
${\mathcal T}_{0}=B(0,\delta) \times \overline{B(0,\epsilon)}$ is equal to
$\lambda^{2} w^{2}  (w-1) \partial / \partial w$.

The exterior and compact-like sets
associated to ${\mathcal T}_{0}$ are
${\mathcal E}_{0}={\mathcal T}_{0} \cap \{ |y| \geq \rho_{0} |x| \}$ and
${\mathcal C}_{0}=\{ |y| \leq \rho_{0} |x| \} \setminus (\{ |w|<\eta_{00} \} \cup \{ |w_{1}|<\eta_{01} \} )$
respectively. The set ${\mathcal C}_{0}$ encloses the
seeds ${\mathcal T}_{00}=\{ |w| \leq \eta_{00} \}$ and
${\mathcal E}_{01} = {\mathcal T}_{01}= \{ |w_{1}| \leq \eta_{01} \}$.
The seed ${\mathcal T}_{01}$ is terminal since it only contains one irreducible component of
$\mathrm{Sing} X$.

Denote $w'=w/x$. We have
$X=x^{3} w' (w'-1) (xw'-1) \partial / \partial w'$
in coordinates $(x,w')$. Thus
$- \lambda^{3} w' (w'-1) \partial / \partial w'$
is the polynomial vector field $X_{00}(\lambda)$
associated to ${\mathcal T}_{00}$. The seed ${\mathcal T}_{00}$ contains an exterior set
${\mathcal E}_{00} = {\mathcal T}_{00} \cap \{ |w| \geq \rho_{00} |x| \}$ for
$\rho_{00}>>1$, a compact-like set
${\mathcal C}_{00} =
\{ |w'| \leq \rho_{00} \} \setminus ( \{ |w_{0}'|<\eta_{000} \} \cup \{ |w_{1}'|<\eta_{001} \} )$
and two terminal seeds ${\mathcal E}_{000}={\mathcal T}_{000}=\{ |w_{0}'| \leq \eta_{000} \}$
and ${\mathcal E}_{001}={\mathcal T}_{001}= \{ |w_{1}'| \leq \eta_{001} \}$ for some
$0 < \eta_{000},\eta_{001} <<1$. We have $e({\mathcal E}_{0})=0$,
$e({\mathcal C}_{0})=e({\mathcal E}_{00})=e({\mathcal E}_{01})=2$
and
$e({\mathcal C}_{00})=e({\mathcal E}_{000})=e({\mathcal E}_{001})=3$.
\end{exa}
\begin{rem}
This construction reminds the Fulton-MacPherson compactification
of the configuration space of $n$ distinct labeled points in a nonsingular algebraic variety $X$
\cite{Fulton-MacPherson}.
Indeed the analogue of the {\it seeds} are the Fulton-MacPherson's {\it screens}.
\end{rem}
\section{Dynamics of polynomial vector fields}
\label{sec:dpvf}
Given a vector field $X \in \Xt$ we divide a neighborhood of the origin in
exterior and compact-like basic sets.
The dynamics of $\Re (X)_{|x=x_{0}}$ in an exterior set is simple, namely an attractor,
a repeller or a  Fatou flower (see Section \ref{sec:dynext}).
The dynamics of $\Re (X)$ in compact-like sets determines the dynamics of
$\Re (X)$ in a neighborhood of the origin.  It turns out that the behavior of
$\Re (X)_{|{\mathcal C}}$ for a compact-like basic set ${\mathcal C}$
can be described in terms of
the polynomial vector field $X_{\mathcal C}(\lambda)$ associated to ${\mathcal C}$
(see Definition \ref{def:pol}). In this section we study polynomial vector fields and
their stability properties.

Polynomial vector fields have been studied by Douady, Estrada and Sentenac \cite{DES}.
We include here the main properties and their proofs for the sake of completeness.
\begin{defi}
Let $Y = P(w) \partial / \partial w \in {\mathbb C}[w] \partial / \partial w$ be
a polynomial vector field. We define $\nu (Y) = \deg (P) -1$.
We consider polynomial vector fields of degree greater than $1$.
\end{defi}
\begin{defi}
We define the set $Tr_{\to \infty}(Y)$
of trajectories $\gamma:(c,d) \to {\mathbb C}$ of $\Re(Y)$
such that $c \in {\mathbb R} \cup \{-\infty\}$, $d \in {\mathbb R}$
and $\lim_{\zeta \to d} \gamma(\zeta) = \infty$. In an analogous way
we define $Tr_{\leftarrow \infty}(Y)=Tr_{\to \infty}(-Y)$.
We define $Tr_{\infty}(Y) = Tr_{\leftarrow \infty}(Y) \cup Tr_{\to \infty}(Y)$.
\end{defi}
\begin{rem}
\label{rem:frinfty}
Let $Y = P(w) \partial / \partial w \in {\mathbb C}[w] \partial / \partial w$
with $\nu (Y) \geq 1$.
The vector field $Y$ is analytically conjugated to
$\nu(Y)^{-1} z^{1-\nu(Y)} \partial / \partial z$ in the neighborhood of
$\infty$ by a change of coordinates of the form
$w = c/z +O(1)$ for some $c \in {\mathbb C}$
(see \cite{JR:mod}, p. 348).
We have $Tr_{\to \infty}(\partial/\partial{z})= {\mathbb R}^{+}$
and $Tr_{\leftarrow \infty}(\partial/\partial{z})= {\mathbb R}^{-}$.
Since $\nu(Y)^{-1} z^{1-\nu(Y)} \partial / \partial z$ is equal to
$(z^{\nu(Y)})^{*} (\partial / \partial z)$
each of the sets $Tr_{\to \infty}(Y)$, $Tr_{\leftarrow \infty}(Y)$
contains $\nu (Y)$ trajectories of $\Re (Y)$ in a neighborhood of $\infty$.
\end{rem}
\begin{defi}
\label{def:angle}
The complementary of the set $Tr_{\infty}(Y)  \cup \{ \infty \}$
has $2 \nu(Y)$ connected components in the neighborhood of $w=\infty$.
Each of these components is called an {\it angle}, the boundary of an angle
contains exactly one $\to \infty$-trajectory and one
$\leftarrow \infty$-trajectory.
\end{defi}
\begin{defi}
\label{def:homtr}
We say that $\Re(Y)$ has $\infty$-connections or homoclinic trajectories if
$Tr_{\to \infty}(Y) \cap Tr_{\leftarrow \infty}(Y) \neq \emptyset$, i.e.
there exists a trajectory
$\gamma:(c_{-1},c_{1}) \to {\mathbb C}$
of $\Re(Y)$ such that $c_{-1},c_{1} \in {\mathbb R}$ and
$\lim_{\zeta \to c_{s}} \gamma(\zeta)=\infty$ for any $s \in \{-1,1\}$.
The notion of homoclinic trajectory has been introduced in \cite{DES}
for the study of deformations of elements of $\diff{1}{}$.
\end{defi}
\begin{defi}
We define the $\alpha$ and $\omega$-limits $\alpha^{Y}(P)$ and
$\omega^{Y}(P)$ respectively of a point $P \in {\mathbb C}$
by the vector field $\Re(Y)$. If $P \in Tr_{\to \infty}(Y)$
we denote $\omega^{Y}(P) = \{\infty\}$ whereas if
$P \in Tr_{\leftarrow \infty}(Y)$ we denote $\alpha^{Y}(P)=\{\infty\}$.
\end{defi}
\begin{lem}
\label{lem:infom}
Let $Y = P(w) \partial / \partial w$ be a polynomial vector field
such that $\nu(Y) \geq 1$. Then $\omega^{Y}(w_{0})=\{ \infty \}$
is equivalent to $w_{0} \in Tr_{\to \infty}(Y)$. Analogously
$\alpha^{Y}(w_{0})=\{ \infty \}$
is equivalent to $w_{0} \in Tr_{\leftarrow \infty}(Y)$
\end{lem}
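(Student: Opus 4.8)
The plan is to establish the two equivalences by reducing everything to the behavior near $\infty$, where $Y$ has the normal form recalled in Remark \ref{rem:frinfty}. One implication is immediate: if $w_{0} \in Tr_{\to \infty}(Y)$ then by definition the trajectory $\gamma$ through $w_{0}$ reaches $\infty$ in finite forward time, so we have declared $\omega^{Y}(w_{0}) = \{\infty\}$; likewise for the $\alpha$-limit statement. So the content is the converse: assuming $\omega^{Y}(w_{0}) = \{\infty\}$ I must show $w_{0} \in Tr_{\to \infty}(Y)$, i.e. the forward trajectory actually hits $\infty$ in finite time rather than merely accumulating there.

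First I would fix a coordinate $z$ near $\infty$ in which $Y = \nu(Y)^{-1} z^{1-\nu(Y)} \partial/\partial z$, valid on a punctured neighborhood $\{|z| < \varepsilon\}$ of $0$ (i.e. a neighborhood of $w=\infty$). In this coordinate $\Re(Y)$ pulls back, via $u = z^{\nu(Y)}$, to the constant real flow $\partial/\partial u$, whose trajectories are horizontal lines $u(s) = u_{0} + s$; each of the $\nu(Y)$ branches of $z = u^{1/\nu(Y)}$ transports these to the $\nu(Y)$ trajectories in $Tr_{\to\infty}$ and $Tr_{\leftarrow\infty}$ asserted in the remark. The key qualitative fact to extract is: \emph{inside $\{0 < |z| < \varepsilon\}$ the only way a forward trajectory can have $\omega$-limit equal to $\{0\}$ (that is, $\{\infty\}$ in the $w$-coordinate) is to be one of the finitely many trajectories in $Tr_{\to\infty}(Y)$, and then it reaches $z=0$ in finite time.} This is because in the $u$-coordinate the trajectory is a translate of the real axis: a horizontal line in $\{ |u| < \varepsilon^{\nu(Y)} \}$ either exits the disk through the boundary circle (in both time directions, if it does not pass through $0$) or, if it is the segment of the real axis through $0$, runs into $u = 0$ in finite time. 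No horizontal line accumulates at $0$ without hitting it. Transporting back by the $\nu(Y)$-th root branches gives the claim.

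Next I would run the dynamical argument globally. Suppose $\omega^{Y}(w_{0}) = \{\infty\}$. Then the forward trajectory $\gamma_{w_{0}}$ eventually enters and stays in the chosen neighborhood of $\infty$: indeed, for every neighborhood $V$ of $\infty$ there is $S$ with $\gamma_{w_{0}}(s) \in V$ for all $s \ge S$ in the domain, since the $\omega$-limit set is exactly $\{\infty\}$ and $\overline{\mathbb C}$ is compact. Choose $V = \{|z| < \varepsilon\}$. From time $S$ onward $\gamma_{w_{0}}$ is a forward trajectory of $\Re(Y)$ inside the $z$-disk with $\omega$-limit $\{0\}$; by the local analysis it must coincide with one of the $Tr_{\to\infty}$ trajectories and reach $z = 0$ (i.e. $w = \infty$) in finite additional time. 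Hence $\gamma_{w_{0}}$ reaches $\infty$ in finite forward time, so by definition $w_{0} \in Tr_{\to\infty}(Y)$. The statement for $\alpha^{Y}$ follows by applying the case just proved to $-Y$, using $Tr_{\leftarrow\infty}(Y) = Tr_{\to\infty}(-Y)$ and $\alpha^{Y} = \omega^{-Y}$.

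The main obstacle, and the step I would write most carefully, is the local dichotomy near $\infty$: ruling out the possibility that a forward trajectory spirals or otherwise accumulates at $\infty$ without hitting it. The linearizing/normalizing coordinate $z$ and the further substitution $u = z^{\nu(Y)}$ reduce this to the trivial statement about horizontal lines in a disk, but one must be slightly careful that the change of coordinates $w = c/z + O(1)$ is only valid on a punctured neighborhood, that $u = z^{\nu(Y)}$ is a genuine (branched) covering so trajectories lift correctly, and that "reaching $z=0$ in finite time" in the $u$-model genuinely corresponds to the trajectory in the $w$-coordinate reaching $\infty$ — all of which are routine once the normal form of Remark \ref{rem:frinfty} is in hand. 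Everything else is a compactness argument on $\overline{\mathbb C}$.
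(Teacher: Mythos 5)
Your proof is correct and takes essentially the same route as the paper: both rest on the ramified normal form of $Y$ near $\infty$ (Remark \ref{rem:frinfty}), which shows that a point of a small neighborhood $V$ of $\infty$ not lying on a trajectory of $Tr_{\to \infty}(Y)$ must leave $V$ within a bounded time, so that only points of $Tr_{\to \infty}(Y)$ can have $\omega$-limit $\{\infty\}$. The paper phrases this as a uniform escape-time statement and argues the contrapositive, while you argue directly that the trajectory eventually stays in $V$ and then must hit $\infty$ in finite time; the two formulations are the same argument.
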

\begin{proof}
The vector field $Y$ is a ramification of a regular vector field
in a neighborhood of $\infty$. Thus there exists an open neighborhood $V$
of $\infty$ and $c \in {\mathbb R}^{+}$ such that
\[ \mathrm{exp}(c Y)(V \setminus Tr_{\to \infty}(Y)) \cap V = \emptyset
\ \ \mathrm{and} \ \
\mathrm{exp}(-c Y)(V \setminus Tr_{\leftarrow \infty}(Y)) \cap V = \emptyset. \]
We are done since $w_{0} \not \in Tr_{\to \infty}(Y)$ implies
$\omega^{Y}(w_{0}) \cap (\pn{1} \setminus V) \neq \emptyset$.
\end{proof}
\begin{lem}
\label{lem:doml}
Let $Y = P(w) \partial / \partial w$ be a polynomial vector field
such that $\nu(Y) \geq 1$. Let $w_{1} \in {\mathbb C}$ be a point
such that $w_{1} \in \omega^{Y} (w_{0})$ for some $w_{0} \in {\mathbb C}$.
Then either $\omega^{Y}(w_{1}) = \{ \infty \}$ or $w_{1} \in \mathrm{Sing} (Y)$
and $\omega^{Y}(w_{0}) = \{w_{1}\}$ or $w_{1}$
belongs to a closed trajectory of $\Re (Y)$. Moreover, in the latter
case $w_{0}$ and $w_{1}$ belong to the same trajectory of
$\Re (Y)$.
\end{lem}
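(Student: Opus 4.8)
The statement is a trichotomy for the $\omega$-limit of a point $w_1$ that itself lies in an $\omega$-limit set. The plan is to exploit the Poincaré–Bendixson-type structure of real flows of holomorphic vector fields on $\pn{1}$, together with Lemma~\ref{lem:infom} which already handles the case $\omega^Y(w_1) = \{\infty\}$. So I would assume $\omega^Y(w_1) \neq \{\infty\}$, i.e. $w_1 \notin Tr_{\to\infty}(Y)$, and aim to show that either $w_1 \in \mathrm{Sing}(Y)$ with $\omega^Y(w_0)=\{w_1\}$, or $w_1$ lies on a closed trajectory of $\Re(Y)$ sharing the trajectory of $w_0$.

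\textbf{Key steps.} First I would recall that $\Re(Y)$ is the real part of a holomorphic vector field, hence the flow of $\Re(Y)$ preserves a foliation by Riemann surfaces (the level sets of the imaginary part of a local Fatou coordinate away from $\mathrm{Sing}(Y)$), and on each leaf $\Re(Y)$ is, in the Fatou coordinate $\psi$, simply translation by real time along the line $\mathrm{Im}(\psi)=\text{const}$. Since $w_1 \in \omega^Y(w_0)$ and $w_1$ is finite and not a singular point, I would pick a small flow-box (transversal) $\tau$ through $w_1$ on which $\psi$ is defined; the trajectory of $w_0$ returns to $\tau$ along a monotone sequence of times (monotonicity of successive crossings of a transversal, exactly as in the classical Poincaré–Bendixson argument, which is valid because the leaf through $w_0$ is embedded). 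Two cases arise: either the trajectory of $w_0$ actually \emph{meets} $\tau$ at $w_1$ (so $w_0$ and $w_1$ are on the same trajectory), or the successive crossing points converge monotonically to $w_1$ without equalling it. In the first case $w_1$ lies on the leaf through $w_0$, the return map on $\tau$ fixes $w_1$, and a fixed point of the holonomy/return map of a holomorphic foliation forces the trajectory through $w_1$ to be closed (periodic), giving the third alternative and the ``same trajectory'' conclusion. In the second case, the forward trajectory of $w_0$ spirals towards $w_1$; but then, again by the holomorphic structure, the leaf through $w_0$ accumulates on $w_1$, and since $Y$ is a polynomial vector field (finitely many singularities, and near $\infty$ it is a ramified regular vector field by Remark~\ref{rem:frinfty}), the only way a non-singular point can be a one-sided limit with monotone return is if $w_1$ is on a periodic orbit that the trajectory of $w_0$ spirals onto — but a holomorphic vector field has no limit cycles in the Poincaré–Bendixson sense that are isolated and attracting from a different leaf, so in fact $w_1$ and $w_0$ must lie on the \emph{same} closed trajectory. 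The remaining possibility, $\omega^Y(w_0)$ not containing $\infty$ and $w_1$ neither singular nor periodic, is excluded: a bounded forward trajectory whose $\omega$-limit contains a regular non-periodic point would, by the flow-box/monotonicity argument, have to be that point's trajectory and accumulate on it, contradicting that $\psi$ is a genuine (injective) Fatou coordinate near $w_1$ along which motion is a nontrivial translation.

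\textbf{Main obstacle.} The delicate point is ruling out a ``false'' spiralling situation: showing that if the trajectory of $w_0$ accumulates on a regular finite point $w_1$ from one side, then $w_1$ must in fact be periodic \emph{and} on the same trajectory, rather than on some other nearby closed leaf that the trajectory of $w_0$ approaches asymptotically. This is where one must use that $\Re(Y)$ comes from a \emph{holomorphic} field — the holonomy of the foliation is holomorphic, so periodic trajectories of $\Re(Y)$ come in one-parameter holomorphic families (annuli) and a trajectory on a neighbouring leaf cannot asymptote to a fixed closed leaf; either it is that leaf or it leaves a neighbourhood. I would make this precise by working in the Fatou coordinate on a flow-box and analyzing the return map, invoking that a holomorphic return map with an interior fixed point is either the identity on a neighbourhood (periodic family) or the fixed point is isolated; combined with monotonicity of crossings this pins down the trichotomy. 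The singular case $w_1 \in \mathrm{Sing}(Y)$ with $\omega^Y(w_0)=\{w_1\}$ is then immediate: if $w_1$ is singular and in the $\omega$-limit, connectedness of $\omega^Y(w_0)$ (a standard fact for flows on surfaces with isolated singularities) together with the fact that a singular point is flow-invariant forces $\omega^Y(w_0)$ to be exactly $\{w_1\}$ unless it also contains regular points, which the previous analysis has already classified.
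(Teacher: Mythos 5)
Your overall strategy---Poincar\'{e}--Bendixson-style monotonicity of transversal crossings combined with the holomorphic rigidity of closed trajectories (holomorphic holonomy/period map, so closed trajectories come in open families and there are no limit cycles)---is the same toolkit the paper uses, and your Case 1 together with your final ``no limit cycles, hence $w_{0}$ and $w_{1}$ lie on the same closed trajectory'' step matches the paper's last step (isolated zeros principle). But there is a genuine gap in your Case 2. When the crossings of the orbit of $w_{0}$ with the transversal through $w_{1}$ converge to $w_{1}$ without reaching it, you assert that ``the only way a non-singular point can be a one-sided limit with monotone return is if $w_{1}$ is on a periodic orbit''. That is exactly the point that needs proof, and it is false for general surface flows: the orbit of $w_{1}$ could a priori be non-periodic with $\omega^{Y}(w_{1})$ a finite singular point (an attracting or parabolic zero), i.e. $\omega^{Y}(w_{0})$ could a priori be a graphic/homoclinic polycycle. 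Your standing assumption $\omega^{Y}(w_{1}) \neq \{\infty\}$ does not exclude $\omega^{Y}(w_{1}) = \{\tilde{w}\}$ with $\tilde{w} \in \mathrm{Sing}(Y)$, $\tilde{w} \neq w_{1}$, and nothing in your sketch rules this configuration out; the closing sentence about ``contradicting that $\psi$ is a genuine injective Fatou coordinate'' does not engage with it. The paper kills precisely this case first: if $\omega^{Y}(w_{1})$ meets $\mathrm{Sing}(Y)$, then, because basins of attraction of attracting and parabolic singular points are \emph{open} and $w_{1} \in \omega^{Y}(w_{0})$, the orbit of $w_{0}$ enters that basin, so $\omega^{Y}(w_{0}) = \{\tilde{w}\}$ and hence $w_{1} = \tilde{w} \in \mathrm{Sing}(Y)$. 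You need this (or an equivalent) argument; the connectedness of the $\omega$-limit set, which you invoke for the singular case, does not suffice, since connected $\omega$-limit sets of surface flows may well be graphics.

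Second, even after singular accumulation is excluded, your argument does not actually prove that the orbit of $w_{1}$ is closed in Case 2, because your transversal sits at $w_{1}$ and you only track the orbit of $w_{0}$. The paper's route is: once $\omega^{Y}(w_{1}) \neq \{\infty\}$ and $\omega^{Y}(w_{1}) \cap \mathrm{Sing}(Y) = \emptyset$, Lemma \ref{lem:infom} provides a \emph{regular} point $w_{2} \in \omega^{Y}(w_{1})$; one then places the transversal through $w_{2}$ and applies the fact that a trajectory through a point of an $\omega$-limit set (here the trajectory of $w_{1} \in \omega^{Y}(w_{0})$) meets a connected transversal at most once (Hirsch--Smale), which forces that trajectory to pass through $w_{2}$ and to be closed. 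With that in hand, your final holomorphic-rigidity step does yield that $w_{0}$ and $w_{1}$ lie on the same closed trajectory. As written, however, the periodicity of the orbit of $w_{1}$ is asserted rather than derived, and the graphic case is not excluded, so the trichotomy is not established.
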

\begin{proof}
If $\omega^{Y} (w_{1})$ contains a singular point $\tilde{w}$
then $\omega^{Y} (w_{0}) = \{\tilde{w}\}$ and $w_{1}=\tilde{w}$ since
basins of attractions of attractors and parabolic points
are open sets.

Suppose that
$\omega^{Y} (w_{1}) \neq \{\infty\}$ and
$\omega^{Y} (w_{1}) \cap \mathrm{Sing} (Y) = \emptyset$. Then $\omega^{Y} (w_{1})$
contains a regular point $w_{2}$ of
$Y$ by Lemma \ref{lem:infom}.
Consider a transversal $T$ to $\Re (Y)$ passing through $w_{2}$.
Trajectories through points in $\alpha$ or $\omega$-limits intersect
connected transversals at most once (Proposition 2, p. 246 \cite{Hirsch-Smale}).
Thus $w_{2}$ is in the same trajectory $\gamma$ of $\Re (Y)$ as $w_{1}$
and $\gamma$ is a closed trajectory.
The neighborhood of a closed trajectory of $\Re (Y)$ is composed
by closed trajectories of the same period by the isolated zeros principle.
We deduce that $w_{0}$ and $w_{1}$ both belong to $\gamma$.
\end{proof}
\begin{cor}
\label{cor:trninf}
Let $Y = P(w) \partial / \partial w$ be a polynomial vector field
with $\nu(Y) \geq 1$. Assume that $\infty \not \in \omega^{Y} (w_{0})$.
Then either $\omega^{Y} (w_{0})$ is a singleton contained in $\mathrm{Sing} (Y)$
or $w_{0}$ belongs to a closed trajectory.
\end{cor}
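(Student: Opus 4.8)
The plan is to derive Corollary~\ref{cor:trninf} directly from Lemma~\ref{lem:doml} by a short case analysis, using Lemma~\ref{lem:infom} to handle the point at infinity. The corollary asserts that if $\infty \notin \omega^{Y}(w_{0})$, then $\omega^{Y}(w_{0})$ is either a singleton contained in $\mathrm{Sing}(Y)$ or $w_{0}$ lies on a closed trajectory. Since $\omega^{Y}(w_{0})$ is nonempty — the trajectory through $w_{0}$ is confined to a compact region of $\pn{1}$ once we know $\infty$ is not in its $\omega$-limit, and the $\omega$-limit of a bounded trajectory is always nonempty — we may pick a point $w_{1} \in \omega^{Y}(w_{0})$ and apply Lemma~\ref{lem:doml} to it.

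First I would invoke Lemma~\ref{lem:doml} with this choice of $w_{1} \in \omega^{Y}(w_{0})$. The lemma offers exactly three alternatives: either $\omega^{Y}(w_{1}) = \{\infty\}$, or $w_{1} \in \mathrm{Sing}(Y)$ with $\omega^{Y}(w_{0}) = \{w_{1}\}$, or $w_{1}$ lies on a closed trajectory of $\Re(Y)$ and $w_{0}$, $w_{1}$ belong to the same trajectory. The first alternative is where the hypothesis $\infty \notin \omega^{Y}(w_{0})$ intervenes: by Lemma~\ref{lem:infom}, $\omega^{Y}(w_{1}) = \{\infty\}$ forces $w_{1} \in Tr_{\to \infty}(Y)$, which is an open arc whose closure contains $\infty$; since $w_{1}$ is itself a limit point of the trajectory of $w_{0}$, and the trajectory through $w_{1}$ runs off to $\infty$ in forward time, a standard argument (the $\omega$-limit set is invariant and closed) shows $\infty \in \omega^{Y}(w_{0})$, contradicting the hypothesis. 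So the first alternative is excluded. The second alternative gives directly that $\omega^{Y}(w_{0})$ is a singleton in $\mathrm{Sing}(Y)$, which is one of the two desired conclusions. The third alternative puts $w_{0}$ on the same trajectory as $w_{1}$, which is closed; hence $w_{0}$ lies on a closed trajectory, the other desired conclusion.

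The only point requiring a little care — and the step I expect to be the main (if modest) obstacle — is ruling out $\omega^{Y}(w_{1}) = \{\infty\}$ cleanly: one must argue that a point $w_{1}$ in the $\omega$-limit of $w_{0}$ whose own forward trajectory escapes to $\infty$ propagates the value $\infty$ into $\omega^{Y}(w_{0})$. This is where the invariance and closedness of $\omega$-limit sets under the flow, together with Lemma~\ref{lem:infom}, are used: approximating $w_{1}$ by points $\gamma_{w_{0}}(s_{n})$ along the trajectory of $w_{0}$ with $s_{n} \to \infty$ and flowing forward by a time that carries $w_{1}$ near $\infty$, continuity of the flow away from singularities gives points of $\gamma_{w_{0}}$ arbitrarily close to $\infty$ at arbitrarily large times, so $\infty \in \omega^{Y}(w_{0})$. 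With that contradiction in hand the remaining two cases of Lemma~\ref{lem:doml} are precisely the two alternatives in the statement, and the proof is complete.
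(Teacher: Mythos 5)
Your proof is correct and follows essentially the same route as the paper: pick a point $w_{1} \in \omega^{Y}(w_{0})$, apply the trichotomy of Lemma \ref{lem:doml}, and rule out the alternative $\omega^{Y}(w_{1}) = \{\infty\}$ via the hypothesis $\infty \notin \omega^{Y}(w_{0})$ together with the invariance and closedness of $\omega$-limit sets (the paper asserts this exclusion in one line, choosing $w_{1}$ regular first, while you take $w_{1}$ arbitrary and let the lemma's second alternative absorb the singular case — an immaterial rearrangement).
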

\begin{proof}
Either $\omega^{Y} (w_{0})$ is a singleton or
there exists $w_{1} \in \omega^{Y} (w_{0}) \cap ({\mathbb C} \setminus \mathrm{Sing} (Y))$.
We have $\infty \not \in \omega^{Y} (w_{1})$, otherwise we obtain
$\infty \in \omega^{Y} (w_{0})$.
Lemma \ref{lem:doml} implies that either $w_{1}$ is singular and
$\omega^{Y} (w_{0}) = \{w_{1}\}$ or $w_{0}$ belongs to a closed trajectory.
\end{proof}
The previous corollary has an analogous version for elements of $\Xnt$.
\begin{defi}
\label{def:alom}
Let $X \in \Xnt$.
We define $\alpha^{X}(P)$ as the $\alpha$-limit of $\Gamma(X, P, {\mathcal T}_{0})$
for any $P \in B(0,\delta) \times \overline{B(0,\epsilon)}$
such that ${\mathcal I}(X,P,{\mathcal T}_{0})$
contains $(- \infty,0)$. Otherwise we define $\alpha^{X}(P)=\{ \infty \}$.
We define $\omega^{X}(P)$ in an analogous way.
\end{defi}
\begin{pro}
\label{pro:lxp1}
Let $X \in \Xnt$. Consider a point $P \in B(0,\delta) \times B(0,\epsilon)$
such that $\omega^{X}(P) \neq \{\infty\}$. Then either
$\omega^{X}(P)$ is a singleton contained in $\mathrm{Sing} (X)$ or
the trajectory of $\Re (X)$ through $P$ is closed.
\end{pro}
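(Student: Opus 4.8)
The plan is to reduce the statement to the real flow of a one variable holomorphic vector field on a disc, and then to transcribe the proofs of Lemma~\ref{lem:doml} and Corollary~\ref{cor:trninf}.

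First I would use that $X=g(x,y)\partial/\partial y\in\Xnt$ has $x$ as a first integral of $\Re(X)$, so the trajectory $\gamma_{P}$ of $\Re(X)$ through $P=(x_{0},y_{0})$ stays in the fibre $\{x=x_{0}\}$, where $\Re(X)$ restricts to the real flow of the one dimensional holomorphic vector field $X_{x_{0}}:=g(x_{0},\cdot)\,\partial/\partial y$. If $X_{x_{0}}\equiv 0$ then $\Re(X)$ vanishes on the fibre and $\omega^{X}(P)=\{P\}\subset\mathrm{Sing}(X)$; so assume $X_{x_{0}}\not\equiv 0$, whence $\mathrm{Sing}(X)\cap\{x=x_{0}\}$ is discrete. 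By Definition~\ref{def:alom} the hypothesis $\omega^{X}(P)\neq\{\infty\}$ means $[0,\infty)\subset\mathcal{I}(X,P,\mathcal{T}_{0})$, so $\gamma_{P}([0,\infty))$ is contained in the compact set $\{x_{0}\}\times\overline{B(0,\epsilon)}$, on a neighbourhood of which $\Re(X_{x_{0}})$ is a planar analytic flow. Hence $\omega^{X}(P)$ is a non-empty compact connected $\Re(X_{x_{0}})$-invariant subset of that fibre, and for each $w\in\omega^{X}(P)$ the whole trajectory through $w$ lies in $\omega^{X}(P)$, so $\omega^{X}(w)\subset\omega^{X}(P)$.

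The core step is the analogue of Lemma~\ref{lem:doml}: if $w_{1}\in\omega^{X}(P)$, then either $w_{1}\in\mathrm{Sing}(X)$ and $\omega^{X}(P)=\{w_{1}\}$, or $w_{1}$ lies on a closed trajectory of $\Re(X)$ and $P$, $w_{1}$ lie on the same trajectory. I would prove it exactly as in the proof of Lemma~\ref{lem:doml}, with the condition "$\omega^{X}(\cdot)=\{\infty\}$" playing the role of "$Tr_{\to\infty}$" and Definition~\ref{def:alom} replacing Lemma~\ref{lem:infom}: if $\omega^{X}(w_{1})$ contains a singular point $\tilde w$ then $\tilde w\in\omega^{X}(P)$, and since a singular point of the real flow of a one dimensional holomorphic vector field lying in an $\omega$-limit set must be an attractor or have an attracting petal (repellers and centres cannot lie in an $\omega$-limit set) while basins of attraction of attractors and of parabolic points are open, the trajectory of $w_{1}$, and then that of $P$, is captured by $\tilde w$, so $\omega^{X}(P)=\{\tilde w\}$ and $w_{1}=\tilde w$; otherwise $\omega^{X}(w_{1})$ contains a regular point $w_{2}$, and since $w_{1}\in\omega^{X}(P)$ the trajectory through $w_{1}$ meets a connected transversal to $\Re(X_{x_{0}})$ through $w_{2}$ at most once (\cite{Hirsch-Smale}), which together with $w_{2}\in\omega^{X}(w_{1})$ forces that trajectory to be closed and to pass through $w_{2}$; finally a neighbourhood of this closed trajectory is filled with closed trajectories of the same period (the isolated zeros argument of Lemma~\ref{lem:doml}), so that $\gamma_{P}$, accumulating on it, lies itself on a closed trajectory, necessarily the one through $w_{1}$. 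Granting the analogue, the Proposition follows as Corollary~\ref{cor:trninf} follows from Lemma~\ref{lem:doml}: if $\omega^{X}(P)=\{w_{1}\}$ is a singleton the analogue forces $w_{1}\in\mathrm{Sing}(X)$, since its second alternative is impossible for a single point; and otherwise $\omega^{X}(P)$, being connected with $\mathrm{Sing}(X)\cap\{x=x_{0}\}$ discrete, contains a regular point $w_{1}$, to which the analogue applies and yields that the trajectory through $P$ is closed.

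The reduction to a fibre and the transversal/Poincar\'e--Bendixson argument are routine copies of the polynomial case. The step that needs care, and the main obstacle, is checking that the local facts used about the real flow near a singular point of $X_{x_{0}}$ --- openness of the basins of attractors and of parabolic (Fatou flower) singular points, the behaviour of centres, and that a closed trajectory has a neighbourhood foliated by closed trajectories of equal period --- hold for an arbitrary one dimensional holomorphic germ and not only for the model zeros of a polynomial. These are, however, local or semi-local statements available from the description of $\Re(X)$ near its singularities (Section~\ref{sec:dynext}, \cite{Loray5}) and from the residue theorem, so the classification of $\omega$-limits goes through. A minor point is to choose $\epsilon$ so that no singular point of $X$ and no closed trajectory of $\Re(X_{x_{0}})$ is forced onto $\partial B(0,\epsilon)$, which only affects the bookkeeping with the seed $\mathcal{T}_{0}$.
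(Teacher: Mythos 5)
Your proposal is exactly the route the paper takes: the paper's proof of Proposition \ref{pro:lxp1} consists precisely of the remark that the argument of Corollary \ref{cor:trninf} (hence Lemma \ref{lem:doml}) transfers, with the hypothesis $\omega^{X}(P)\neq\{\infty\}$ playing the role of $\infty\notin\omega^{Y}(w_{0})$, and your fibrewise reduction to $X_{x_{0}}=g(x_{0},\cdot)\,\partial/\partial y$ together with the transversal and basin-openness arguments is just a spelled-out version of that. The local facts you flag (openness of basins of attractors and parabolic points, centres, closed trajectories with locally constant period, and the boundary bookkeeping coming from Definition \ref{def:traj}) are indeed the only ingredients needed, so the proof is correct and essentially identical in approach.
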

The proof is analogous to the proof of Corollary \ref{cor:trninf}.
The analogue for $P$ of the condition
$\infty \not \in \omega^{Y} (w_{0})$
is $\omega^{X}(P) \neq \{\infty\}$.

Our next goal is showing that the dynamics of the real flow of a
polynomial vector field of degree greater than $1$ is simple if
there are no homoclinic trajectories.
\begin{lem}
\label{lem:aolnht}
Let $Y = P(w) \partial / \partial w$ be a polynomial vector field
such that $\nu(Y) \geq 1$. Suppose that $\Re (Y)$ has no homoclinic trajectories.
Then either $\omega^{Y}(w_{0})=\{\infty\}$ or $\omega^{Y}(w_{0})$ is a singleton
contained in $\mathrm{Sing} (Y)$. In particular $\Re (Y)$ does not have periodic
trajectories.
\end{lem}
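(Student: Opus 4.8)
The plan is to argue by contradiction using the structural results already established for polynomial vector fields. Suppose the conclusion fails for some point $w_0$, so that $\omega^{Y}(w_0) \neq \{\infty\}$ and $\omega^{Y}(w_0)$ is not a singleton contained in $\mathrm{Sing}(Y)$. By Corollary \ref{cor:trninf}, applied under the hypothesis $\infty \not\in \omega^{Y}(w_0)$ (which holds since $\omega^{Y}(w_0) \neq \{\infty\}$ would, by Lemma \ref{lem:infom}, require $w_0 \in Tr_{\to\infty}(Y)$ and hence $\omega^{Y}(w_0) = \{\infty\}$ exactly), we conclude that $w_0$ belongs to a closed trajectory $\gamma$ of $\Re(Y)$. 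So the whole proof reduces to showing: \emph{if $\Re(Y)$ has no homoclinic trajectories, then $\Re(Y)$ has no closed (periodic) trajectory.}

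To rule out a closed trajectory $\gamma$, first I would note that $\gamma$ bounds a disk $D$ in the plane (Jordan curve theorem). The real flow $\Re(Y)$ is area-non-expanding/non-contracting in a precise sense because $Y$ is holomorphic: $\Re(Y)$ has a first integral given (locally, away from singularities) by the imaginary part of a Fatou-type coordinate, or equivalently $\Re(Y)$ preserves the foliation by level curves of $\mathrm{Im}\int dw/P(w)$. The cleanest route: the disk $D$ must contain at least one singularity of $Y$ in its interior (an orbit of a holomorphic flow that is periodic forces, by the argument principle / index computation, that the sum of indices of enclosed zeros of $P$ equals $1$, so there is a zero inside), and moreover the flow $\Re(Y)$ restricted to the interior is conjugate near $\gamma$ to a rotation flow with a band of concentric periodic orbits by the isolated-zeros principle (this is exactly the argument used at the end of the proof of Lemma \ref{lem:doml}).

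The key step is then to \emph{continue the annulus of periodic orbits outward} and derive a homoclinic connection. Starting from $\gamma$, consider the maximal annular region $A$ foliated by closed trajectories of $\Re(Y)$ containing $\gamma$, with outer boundary $\partial^{+}A$. Since the set of periodic points is open in the complement of $\mathrm{Sing}(Y)$, the outer boundary $\partial^{+}A$ cannot consist of a regular closed trajectory (else we could enlarge $A$). Hence $\partial^{+}A$ is a union of singular points and trajectories whose $\alpha$- and $\omega$-limits lie in $\mathrm{Sing}(Y)\cup\{\infty\}$; I would use Lemma \ref{lem:infom} and the fact that only finitely many trajectories escape to $\infty$ to see that $\partial^{+}A$, being the limit of closed curves of \emph{bounded} period (the period is locally constant, hence constant on $A$), must stay in a compact subset of $\mathbb{C}$ — it cannot reach $\infty$. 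But a polygonal cycle of separatrices between \emph{finite} singular points, together with the local normal form of $Y$ at each singularity (hyperbolic or parabolic sectors), forces either a graphic through saddles, which for a planar holomorphic vector field would produce a non-trivial holonomy contradicting the first integral, or a loop at a single singularity. Tracking this carefully, the real obstruction is showing that no such finite graphic exists: I expect to handle it by exhibiting that any separatrix on $\partial^{+}A$ must be $\to\infty$ or $\leftarrow\infty$ (because the only recurrence allowed by Lemma \ref{lem:doml} is at $\infty$ or at closed orbits, and closed orbits are excluded on the boundary), and then a separatrix that is simultaneously on the boundary of the \emph{inside} region (forced to limit in $\mathbb C$) and escapes to $\infty$ yields a trajectory in $Tr_{\to\infty}(Y)\cap Tr_{\leftarrow\infty}(Y)$, i.e.\ a homoclinic trajectory, contradicting the hypothesis. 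The last sentence of the statement, absence of periodic trajectories, is then immediate since a periodic trajectory would be a closed trajectory, already excluded.
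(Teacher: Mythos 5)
Your overall strategy (reduce the lemma to the non-existence of closed trajectories, then look at the boundary of the region filled by closed trajectories and extract a homoclinic connection) is the same as the paper's, but two steps are genuinely gapped. First, the reduction itself: you dismiss the possibility $\infty \in \omega^{Y}(w_{0})$ with $\omega^{Y}(w_{0}) \neq \{\infty\}$ by appealing to Lemma \ref{lem:infom}, but that lemma only states that $\omega^{Y}(w_{0}) = \{\infty\}$ is equivalent to $w_{0} \in Tr_{\to \infty}(Y)$; it says nothing about a trajectory that returns arbitrarily close to $\infty$ infinitely often without escaping, in which case $\omega^{Y}(w_{0})$ contains $\infty$ together with finite points. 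This mixed case really occurs for polynomial vector fields (trajectories spiralling towards a homoclinic loop accumulate on the loop and on $\infty$), so it can only be excluded by using the no-homoclinic hypothesis. That is precisely the first half of the paper's proof: if the forward trajectory accumulates at $\infty$, it does so inside some angle, whose $\leftarrow \infty$ boundary separatrix $\gamma$ is then contained in $\omega^{Y}(w_{0})$; since $\gamma \cap Tr_{\to \infty}(Y) = \emptyset$ by hypothesis, Lemma \ref{lem:doml} forces $\gamma$ to be a closed trajectory, which is absurd. Without this argument neither your dichotomy nor Corollary \ref{cor:trninf} can be invoked, and the trichotomy you later need for boundary trajectories is also unavailable.

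Second, in the exclusion of closed trajectories the decisive step is missing: you must rule out that a trajectory in the boundary of the periodic region has its $\omega$- (or $\alpha$-) limit at a finite singular point. Your justification, that ``the only recurrence allowed by Lemma \ref{lem:doml} is at $\infty$ or at closed orbits'', is a non sequitur: a boundary trajectory need not be recurrent at all, it could a priori be a connection between two singular points, and Lemma \ref{lem:doml} does not forbid that. The excursus on saddle graphics and non-trivial holonomy is off target (zeros of a one-dimensional holomorphic vector field are nodes, foci, centers or parabolic points, not saddles) and you explicitly leave it unfinished (``I expect to handle it''). The correct and short argument is openness of basins, as in the paper: attracting and parabolic singular points have open basins (and a simple indifferent zero is a genuine center, so no other trajectory converges to it), hence if a point $w_{0}$ in the boundary of the component $D_{0}$ of the union of closed trajectories satisfied $\omega^{Y}(w_{0}) \subset \mathrm{Sing}(Y)$, all nearby points would converge to the same singular point and could not be periodic, contradicting $w_{0} \in \partial D_{0}$; applying the same to $-Y$ handles the $\alpha$-limit. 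Combined with the (corrected) trichotomy this gives $w_{0} \in Tr_{\to \infty}(Y) \cap Tr_{\leftarrow \infty}(Y)$, i.e. a homoclinic trajectory. The digressions on constancy of the period, compactness of the outer boundary and first integrals are not needed once this is done.
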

\begin{proof}
We claim that $\omega^{Y}(w_{0})$ can not contain a point
$w_{1} \in {\mathbb C} \setminus \mathrm{Sing} (Y)$ such that $\omega^{Y}(w_{1})=\{\infty\}$.
Otherwise there exists an angle $A$ containing points of
$\Gamma(Y,w_{0},{\mathbb C})[0,\infty)$ in every neighborhood of $\infty$.
The angle $A$ is limited by
a trajectory in $Tr_{\to \infty}(Y)$ and a trajectory $\gamma$ in
$Tr_{\leftarrow \infty}(Y)$.
Moreover $\gamma$ is contained in $\omega^{Y}(w_{0})$.
It satisfies $\gamma \cap Tr_{\to \infty}(Y) = \emptyset$ since
there are no homoclinic trajectories. We obtain a contradiction since
Lemma \ref{lem:doml} implies that
$\gamma$ is a closed orbit.

We obtain that $w_{0} \in Tr_{\to \infty}(Y)$ or $\omega^{Y}(w_{0}) \subset \mathrm{Sing} (Y)$ or
$w_{0}$ belongs to a closed orbit of $\Re (Y)$ for any $w_{0} \in {\mathbb C}$
by Lemma \ref{lem:doml}.

Let us prove that if $\Re (Y)$ has a closed trajectory
$\gamma: [0,a] \to {\mathbb C}$ ($\gamma(0)=\gamma(a)$)
we obtain a contradiction.
Let $D$ be the union of all the closed trajectories
of $\Re (Y)$. We denote by $D_{0}$ the connected component of
$D$ containing $\gamma[0,a]$.
Since $Tr_{\infty}(Y)  \cap D = \emptyset$
we can choose $w_{0} \in \partial D_{0} \setminus \mathrm{Sing} (Y)$.
If $\omega^{Y}(w_{0}) \subset \mathrm{Sing} (Y)$ or $w_{0}$ belongs to a closed trajectory
then the analogous property also holds true for the points in the neighborhood of
$w_{0}$ and $w_{0} \not \in \partial D_{0}$. We deduce that $w_{0}$ belongs to
$Tr_{\to \infty}(Y)$. Analogously $w_{0}$ is contained in $Tr_{\leftarrow \infty}(Y)$.
Thus $w_{0}$ belongs to a homoclinic trajectory.
\end{proof}
The next result is of technical type, it will be used in
the proof of Lemma \ref{lem:inst}.
\begin{lem}
\label{lem:sptcl}
Let $Y = P(w) \partial / \partial w$ be a polynomial vector field
such that $\nu(Y) \geq 1$. Suppose that $\Re (Y)$ has no homoclinic trajectories.
Let $w_{0} \in {\mathbb C} \setminus \mathrm{Sing} (Y)$ be a point such that
$\omega^{Y}(w_{0}) =\{w_{1}\}$ for some $w_{1} \in \mathrm{Sing} (Y)$.
Then there exists a trajectory $\gamma$ in $Tr_{\leftarrow \infty}(Y)$
such that $\omega^{Y}(\gamma) = \{w_{1}\}$.
\end{lem}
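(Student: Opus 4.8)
The plan is to analyze the region swept out by the trajectories that flow forward into the attractor $w_1$. Let $\mathcal{B}$ denote the basin of $w_1$, i.e. the set of points $w \in \mathbb{C}$ with $\omega^Y(w) = \{w_1\}$; since $w_1$ is an attractor or a parabolic point for $\Re(Y)$, the set $\mathcal{B}$ is open and nonempty (it contains $w_0$). First I would look at the backward orbit of $w_0$: either the trajectory $\gamma_{w_0}$ through $w_0$ belongs to $Tr_{\leftarrow\infty}(Y)$, in which case we are essentially done (that trajectory has $\omega$-limit $\{w_1\}$ and lies in $Tr_{\leftarrow\infty}(Y)$), or $\alpha^Y(w_0)$ is a nonempty subset of $\mathbb{C} \cup \{\infty\}$. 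Applying Lemma \ref{lem:doml} to $-Y$ (equivalently, the $\alpha$-limit version of it) together with Lemma \ref{lem:aolnht} for $-Y$: since $\Re(Y)$ has no homoclinic trajectories, neither does $\Re(-Y)$, so $\alpha^Y(w_0)$ is either $\{\infty\}$ or a singleton in $\mathrm{Sing}(Y)$. If $\alpha^Y(w_0) = \{\infty\}$ then $w_0 \in Tr_{\leftarrow\infty}(Y)$ and $\gamma := \gamma_{w_0}$ works. So the only remaining case is $\alpha^Y(w_0) = \{w_2\}$ for some $w_2 \in \mathrm{Sing}(Y)$.

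In that remaining case I would pass to the boundary of the basin. Consider the connected component $\mathcal{B}_0$ of $\mathcal{B}$ containing $w_0$; it is open, and its boundary $\partial \mathcal{B}_0$ in $\mathbb{P}^1(\mathbb{C})$ is nonempty. Pick a point $p \in \partial \mathcal{B}_0$. If $p = \infty$, then there is a $\leftarrow\infty$-trajectory arbitrarily close to the interior of $\mathcal{B}_0$; more carefully, one takes an angle $A$ at $\infty$ meeting $\mathcal{B}_0$, whose bounding $\leftarrow\infty$-trajectory $\gamma$ must then lie in $\overline{\mathcal{B}_0}$, and by openness of the basin together with continuity of the flow one shows $\gamma \subset \mathcal{B}_0$, hence $\omega^Y(\gamma) = \{w_1\}$ and $\gamma \in Tr_{\leftarrow\infty}(Y)$, as desired. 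If instead $p \in \mathbb{C}$, then $p \notin \mathrm{Sing}(Y)$ would force (by openness of $\mathcal{B}$) a whole neighborhood of $p$ into $\mathcal{B}$ — but we can arrange, by an argument parallel to the one in the proof of Lemma \ref{lem:aolnht}, that $p \in \partial \mathcal{B}_0 \setminus \mathrm{Sing}(Y)$ leads to $p$ lying on a $\leftarrow\infty$-trajectory as well as a $\to\infty$-trajectory (because forward-asymptotic-to-$\mathrm{Sing}$ and closed-trajectory behavior are open conditions, excluded on the boundary), i.e. $p$ lies on a homoclinic trajectory, contradicting the hypothesis. Thus $\partial\mathcal{B}_0$ can contain only singular points and the point $\infty$; and if it contained only singular points, $\mathcal{B}_0$ would be all of $\mathbb{P}^1(\mathbb{C})$ minus a finite set, which is impossible (e.g. another attractor or the point $\infty$ with its $\to\infty$-trajectories is not in $\mathcal{B}$). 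Hence $\infty \in \partial\mathcal{B}_0$ and the previous paragraph's construction applies.

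The main obstacle I anticipate is the boundary analysis: showing rigorously that a $\leftarrow\infty$-trajectory bounding an angle $A$ that meets $\mathcal{B}_0$ actually lies \emph{inside} $\mathcal{B}_0$ rather than merely in its closure. This requires using that the basin is open and that the flow near $\infty$ is a ramified regular flow (Remark \ref{rem:frinfty}), so that a trajectory entering the angle $A$ near $\infty$ and not escaping to $\infty$ in forward time must, for time slightly larger, enter the open set $\mathcal{B}_0$ and stay there; combined with the no-homoclinic hypothesis (which prevents the bounding trajectory from itself returning to $\infty$), this pins the bounding trajectory down to having $\omega$-limit exactly $\{w_1\}$. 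The rest is bookkeeping with Lemmas \ref{lem:infom}, \ref{lem:doml} and \ref{lem:aolnht} applied to both $Y$ and $-Y$.
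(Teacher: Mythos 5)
Your overall skeleton is the paper's: pass to the connected component $\mathcal{B}_{0}$ of the basin of $w_{1}$, show it accumulates at $\infty$, pick an angle $A$ there, and identify the $\omega$-limit of its bounding $\leftarrow\infty$-trajectory $\gamma$ as $\{w_{1}\}$ via Lemma \ref{lem:aolnht} and openness of basins. But the step where you dispose of finite non-singular boundary points has a genuine gap. For $p \in \partial\mathcal{B}_{0} \setminus \mathrm{Sing}(Y)$ you can indeed force $\omega^{Y}(p)=\{\infty\}$: Lemma \ref{lem:aolnht} leaves only ``$\omega$-limit a singular point'' (which, by openness of that basin, would put a neighborhood of $p$ either inside $\mathcal{B}$, if the point is $w_{1}$, or inside a set disjoint from $\mathcal{B}_{0}$ otherwise, contradicting $p \in \partial\mathcal{B}_{0}$); note this is the correct argument, not ``openness of $\mathcal{B}$ forces a neighborhood of $p$ into $\mathcal{B}$'', which is false as stated. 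The backward half of your claim, however, does not follow: ``$\alpha^{Y}(p)$ is a singular point'' is also an open condition, but it is perfectly compatible with $p \in \partial\mathcal{B}_{0}$, because $\mathcal{B}_{0}$ is defined by forward behaviour only. So $p$ need not lie on a $\leftarrow\infty$-trajectory, there is no homoclinic trajectory produced, and no contradiction. Such boundary points are in fact the typical ones: for $Y=(w^{2}-1)\,\partial/\partial w$, which has no homoclinic trajectories, the boundary of the basin of the attractor $-1$ contains the separatrix $(1,\infty)$, whose points satisfy $\alpha^{Y}=\{1\}$ and $\omega^{Y}=\{\infty\}$. Hence your dichotomy ``either $\infty \in \partial\mathcal{B}_{0}$ or contradiction'' fails exactly in the case that actually occurs, and you never establish $\infty \in \overline{\mathcal{B}_{0}}$.

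The repair is to use the forced conclusion $p \in Tr_{\to\infty}(Y)$ positively rather than as a contradiction, which is what the paper does: $\mathcal{B}_{0}$ is a union of trajectories, and points of $\mathcal{B}_{0}$ arbitrarily close to $p$ shadow the forward trajectory of $p$ for arbitrarily long finite times, hence come arbitrarily close to $\infty$; therefore $\infty \in \overline{\mathcal{B}_{0}}$. From there your angle argument goes through essentially as you sketched: $\mathcal{B}_{0}$ meets some angle $A$ arbitrarily close to $\infty$, so the bounding trajectory $\gamma \in Tr_{\leftarrow\infty}(Y)$ of $A$ lies in $\overline{\mathcal{B}_{0}}$; by the no-homoclinic hypothesis $\gamma \notin Tr_{\to\infty}(Y)$, so Lemma \ref{lem:aolnht} makes $\omega^{Y}(\gamma)$ a singleton in $\mathrm{Sing}(Y)$, and openness of that basin together with $\gamma \subset \overline{\mathcal{B}_{0}}$ forces it to equal $\{w_{1}\}$. (Your preliminary case analysis of $\alpha^{Y}(w_{0})$ in the first paragraph is harmless but unnecessary.)
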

\begin{proof}
Let $D = \{ w \in {\mathbb C} \setminus \mathrm{Sing} (Y) : \omega^{Y}(w) =\{w_{1}\}\}$.
We denote by $D_{0}$ the connected component of
$D$ containing $w_{0}$.
Since $Tr_{\to \infty}(Y) \cap D = \emptyset$
we can choose $\tilde{w} \in \partial D_{0} \setminus \mathrm{Sing} (Y)$.
Lemma \ref{lem:aolnht} implies that $\tilde{w} \in Tr_{\to \infty}(Y)$
by the openness of basins of attraction of singular points.
Thus we have $\infty \in \overline{D_{0}}$.
The set $D_{0}$ contains an angle $A$. The angle $A$ is limited by
a trajectory in $Tr_{\to \infty}(Y)$ and a trajectory $\gamma$ in
$Tr_{\leftarrow \infty}(Y)$.
Lemma \ref{lem:aolnht} implies $\omega^{Y}(\gamma)=\{w_{1}\}$.
\end{proof}
Next we study the notion of stability of polynomial vector fields
as introduced in \cite{DES}.
It is crucial in the paper since the rigidity results for conjugacies
between elemens of $\dif{p1}{2}$
are obtained by analyzing the directions of
instability in the parameter space for unfoldings in $\Xntg$ and $\dif{p1}{2}$.
\begin{defi}
\label{def:noinfcon}
We denote by ${\mathcal X}_{stable}\cn{}$ the set of polynomial vector
fields such that $\nu(Y) \geq 1$ and
$\Re (Y)$ is orbitally equivalent to $\Re (\mu Y)$ for any
$\mu \in {\mathbb S}^{1}$ in a neighborhood of $1$.
\end{defi}
\begin{defi}
\label{def:res1}
Let $X$ be a holomorphic vector field defined in a connected domain
$U \subset {\mathbb C}$ such that $X \neq 0$. Consider $P \in \mathrm{Sing} (X)$.
There exists a unique meromorphic differential form
$\omega$ in $U$ such that $\omega(X)=1$. We denote by
$Res(X,P)$ the residue of $\omega$ at the point $P$.
\end{defi}
\begin{defi}
\label{def:res12}
Given $\phi \in \diff{1}{} \setminus \{Id\}$ we consider a convergent normal form
$X=g(y) \partial /\partial y$. We define
$Res_{\phi}(0) =Res (X,0)$. The definition does not depend on the choice of $X$.
Let $\varphi \in \dif{p1}{2}$; we define $Res_{\varphi}(0,0) = Res_{\varphi_{|x=0}}(0)$.
\end{defi}
\begin{defi}
\label{def:res2}
Let $X=f(x,y) \partial/\partial y \in \Xnt$.
Given $(x_{0},y_{0}) \in \mathrm{Sing} (X)$ such that
$\{x=x_{0}\} \not \subset \mathrm{Sing} (X)$ we define
$Res(X,(x_{0},y_{0}))=Res(f(x_{0},y) \partial/\partial y,y_{0})$.
\end{defi}

We introduce the main result in this section.
\begin{pro}
\label{pro:DES}
(See \cite{DES})
Let $Y = P(w) \partial / \partial w$ be a polynomial vector field
such that $\nu(Y) \geq 1$. Then $Y$ belongs to ${\mathcal X}_{stable}\cn{}$
if and only if $\Re (Y)$  has no homoclinic trajectories.
\end{pro}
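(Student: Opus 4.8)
The plan is to prove both implications of the equivalence. One direction is essentially contained in the work already done: if $\Re(Y)$ has a homoclinic trajectory $\gamma$, then $\gamma$ is a separatrix configuration that is structurally unstable. Concretely, a homoclinic trajectory together with the angle it bounds is an open condition that fails under generic rotation $Y \mapsto \mu Y$, because rotating the vector field rotates the $2\nu(Y)$ directions at infinity and destroys the exact incidence $Tr_{\to\infty}(Y) \cap Tr_{\leftarrow\infty}(Y) \neq \emptyset$; one can make this precise by noting that the presence of a homoclinic connection forces a non-generic relation among the residues $Res(Y,P)$ at the singular points enclosed by $\gamma$ (the sum of these residues is real, by integrating $\omega$ with $\omega(Y)=1$ around the loop), and this relation is broken when $Y$ is replaced by $\mu Y$ since the residues get multiplied by $1/\mu$. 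Hence $Y \notin {\mathcal X}_{stable}\cn{}$. I would write this first as the easier half.

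For the converse — no homoclinic trajectories implies stability — the idea is to use Lemma \ref{lem:aolnht} to show that the phase portrait of $\Re(Y)$ is completely rigid and then that this rigid structure persists under small rotations. By Lemma \ref{lem:aolnht}, when there are no homoclinic trajectories every trajectory either escapes to infinity in finite time (belongs to $Tr_{\to\infty}(Y)$ or $Tr_{\leftarrow\infty}(Y)$) or is attracted to (resp. repelled from) a singular point; there are no periodic trajectories. So $\mathbb{C}$ decomposes into finitely many open cells, each a basin of attraction or repulsion of a singular point, separated by the finitely many trajectories in $Tr_\infty(Y)$ together with the stable/unstable separatrices of the singular points. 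The combinatorics of this cell decomposition — which basins touch which, and how the $2\nu(Y)$ angles at infinity are glued in — is a finite discrete invariant. The key point is that for $\mu$ near $1$ this combinatorial picture cannot change: the singular points of $\mu Y$ are the same as those of $Y$ with the same multiplicities, their local phase portraits vary continuously (an attractor stays an attractor, etc., as long as no eigenvalue crosses the imaginary axis, which is an open condition on $\mu$), the directions at infinity move continuously, and a new homoclinic connection cannot appear because that would require a separatrix of one singular point to hit a $Tr_\infty$-trajectory, which by Lemma \ref{lem:sptcl} and the openness of basins is a codimension-one phenomenon that happens only on a discrete set of $\mu$'s. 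Putting these together, one builds the orbital equivalence cell by cell: on each basin one uses the local normal form near the singular point (linearizable or parabolic petal) glued with the ramified-regular normal form near infinity from Remark \ref{rem:frinfty}, and matches the pieces along the separating trajectories, which vary continuously with $\mu$.

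The main obstacle will be the converse direction, and specifically making rigorous the claim that the cell decomposition is robust and that one can assemble a genuine homeomorphism (not just a combinatorial equivalence) realizing the orbital equivalence between $\Re(Y)$ and $\Re(\mu Y)$. The delicate point is controlling what happens near infinity: one must check that as $\mu \to 1$ no trajectory of $Tr_{\to\infty}$ can merge with one of $Tr_{\leftarrow\infty}$, i.e. that the absence of homoclinic connections is itself an open condition — this is exactly where one invokes that a homoclinic connection would force the real-residue relation described above, and that this relation fails on an open dense set of $\mu$, hence in particular the non-homoclinic hypothesis is stable under perturbation. Once this openness is in hand, the construction of the conjugating homeomorphism is a patching argument of the kind that is by now standard for polynomial vector fields (this is precisely the content of \cite{DES}, which is why the statement is attributed there), and I would present it by reducing to the local models and citing continuity of the separatrices in $\mu$.
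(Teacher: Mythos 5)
Your proposal follows essentially the same route as the paper's proof: necessity of the no-homoclinic condition comes from integrating the dual form along the homoclinic loop, which forces $2 \pi i \sum_{w \in E} Res(Y,w) \in {\mathbb R}^{+}\mu$ and hence confines homoclinic directions to a finite set, while sufficiency comes from Lemma \ref{lem:aolnht} together with the continuous dependence on $\mu$ of the trajectories in $Tr_{\infty}(\mu Y)$, the openness of basins, and a patching of the local models near $\infty$ and near the singular points, exactly as in the paper's sketch. One small correction: the loop integral shows that the enclosed residue sum is purely imaginary (its product with $2\pi i$ is a positive real), not real as you state, though this slip does not affect the finiteness argument or the rest of your reasoning.
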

\begin{proof}
Let $\Omega$ be the meromorphic $1$-form such that $\Omega (Y)=1$.
Let $\gamma:(c_{-1},c_{1}) \to {\mathbb C}$ be a homoclinic trajectory
of $\Re (\mu Y)$ (see Definition \ref{def:homtr}). We obtain
\[ {\mathbb R}^{+} \ni c_{1}-c_{-1} = \int_{\gamma(c_{-1},c_{1})} \frac{\Omega}{\mu} =
\frac{1}{\mu} 2 \pi i \sum_{\omega \in E} Res(Y, w) \]
where $E$ is the set of singular points of $Y$ enclosed by $\gamma$.
The set of directions $\mu \in {\mathbb S}^{1}$ such that
$2 \pi i \sum_{\omega \in E} Res(Y, w) \in {\mathbb R}^{*} \mu$ for some
subset $E$ of $\mathrm{Sing} (Y)$ is finite. Hence $Y$ is not stable if
it has a homoclinic trajectory.

The trajectories $\eta_{1,\mu}$, $\hdots$, $\eta_{2 \nu(Y),\mu}$
of $Tr_{\infty}(\mu Y)$
depend continuously on $\mu$.
Suppose that $\Re (Y)$ has no homoclinic trajectories.
Given a trajectory $\eta_{j,\mu}$ in $Tr_{\to \infty}(Y)$ (resp. $Tr_{\leftarrow \infty}(Y)$)
we have that $\alpha^{Y}(\eta_{j,\mu})$ (resp. $\omega^{Y}(\eta_{j,\mu})$) is a singleton contained
in $\mathrm{Sing} (Y)$ by Lemma \ref{lem:aolnht}.
Since basins of attractions are open sets then the previous limits do not depend on $\mu$
for $\mu \in {\mathbb S}^{1}$ in a neighborhood of $1$.
By extending an analytical conjugacy defined in a neighborhood of $\infty$
we obtain that there exists a topological orbital equivalence between
$\Re (Y)$ and $\Re (\mu Y)$ defined in $F$ where
$F(\mu) = Tr_{\to \infty}(\mu Y) \cup Tr_{\leftarrow \infty}(\mu Y) \cup \{\infty\} \cup \mathrm{Sing} (Y)$.
The set $F(\mu)$ depends continuously on $\mu$.
The functions $\alpha^{\mu Y}$ and $\omega^{\mu Y}$ are constant
in each component of $\pn{1} \setminus F(\mu)$.
Moreover
\[ \begin{array}{cccc}
(\alpha, \omega) : &
(\pn{1} \times V) \setminus \cup_{\mu \in V} (F(\mu) \times \{\mu\}) & \to & \mathrm{Sing} (Y) \times \mathrm{Sing} (Y) \\
& (w,\mu) & \mapsto & (\alpha^{\mu Y}(w), \omega^{\mu Y}(w))
\end{array}
\]
is constant in the connected components of
$(\pn{1} \times V) \setminus \cup_{\mu \in V} (F(\mu) \times \{\mu\})$
for some connected open neighborhood $V$ of $1$ in ${\mathbb S}^{1}$.
As a consequence the topological orbital equivalence can be extended to
$\pn{1}$.
\end{proof}
\begin{defi}
\label{def:plevels}
 Let $Y = P(w) \partial / \partial w$ be a polynomial vector field
with $\nu(Y) \geq 1$. We define
${\mathcal U}_{Y}^{1} = \left\{{ \lambda \in {\mathbb S}^{1}   : \lambda Y \not \in
{\mathcal X}_{stable} \cn{} }\right\}$.
\end{defi}
\begin{defi}
\label{def:levels}
  Let $X \in \Xnt$. Consider the compact-like sets
${\mathcal C}_{1}$, $\hdots$, ${\mathcal C}_{q}$ associated to $X$.
Let $X_{j}(\lambda) = \lambda^{e({\mathcal C}_{j})} P_{j}(w) \frac{\partial}{\partial w}$ be the
polynomial vector field associated to ${\mathcal C}_{j}$ and $X$ for $1 \leq j \leq q$.
We define
\[ {\mathcal U}_{X}^{j,1} = \left\{{ \lambda \in {\mathbb S}^{1}   : X_{j}(\lambda) \not \in
{\mathcal X}_{stable} \cn{} }\right\}, \ \
{\mathcal U}_{X}^{1} = \cup_{1 \leq j \leq q} {\mathcal U}_{X}^{j,1} . \]
\end{defi}
On the one hand the dynamics of $\Re (X)$ in an exterior set is trivial.
On the other hand the behavior of $\Re (X)$ in ${\mathcal C}_{j}$ is controlled
by the vector field $X_{j}(\lambda)$. Hence the dynamics of $\Re (X)$ is stable in
the neighborhood of the directions in
${\mathbb S}^{1} \setminus {\mathcal U}_{X}^{1}$.
\begin{defi}
\label{def:dir}
Let $A$ be a subset of ${\mathbb C} \setminus \{0\}$. Consider the blow-up mapping
$\pi : ({\mathbb R}^{+} \cup \{0\}) \times {\mathbb S}^{1} \to {\mathbb C}$
defined by $\pi (r,\lambda)=r \lambda$. We denote $A_{\pi}$ the subset
$\overline{\pi^{-1}(A)}$ of $({\mathbb R}^{+} \cup \{0\}) \times {\mathbb S}^{1}$.
We say that $A$ adheres the directions in $A_{\pi} \cap (\{0\} \times {\mathbb S}^{1})$.
\end{defi}
\begin{lem}
\label{lem:inst}
 Let $Y = P(w) \partial / \partial w$ be a polynomial vector field
with $\nu(Y) \geq 1$.
Then ${\mathcal U}_{Y}^{1} = \emptyset$ if and only if
$\sharp \mathrm{Sing} (Y) = 1$.
\end{lem}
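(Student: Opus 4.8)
The plan is to prove the two implications separately, using Proposition \ref{pro:DES} to translate stability into the absence of homoclinic trajectories. For the direction ``$\sharp \mathrm{Sing}(Y)=1$ implies ${\mathcal U}_{Y}^{1}=\emptyset$'': if $Y$ has a unique singular point $w_{0}$, then any homoclinic trajectory $\gamma$ of $\Re(\mu Y)$ would enclose a subset $E$ of $\mathrm{Sing}(Y)$ which is either empty or $\{w_{0}\}$. The case $E=\emptyset$ is excluded because the relevant period integral $\int_{\gamma}\Omega/\mu$ computes to $0$, which cannot lie in ${\mathbb R}^{+}$. The case $E=\{w_{0}\}$ forces $\mu$ into a finite set of directions determined by $2\pi i\, Res(Y,w_{0})\in{\mathbb R}^{*}\mu$; but in fact $Res(Y,w_{0})$ is the residue of the unique $1$-form $\Omega$ with $\Omega(Y)=1$ at its only pole, and by the residue theorem on $\pn{1}$ this residue is forced to vanish (equivalently, $\nu(Y)\ge 1$ means $\Omega$ also has a pole structure at $\infty$, and the sum of residues is zero, leaving $Res(Y,w_{0})=0$). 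Hence no $\mu$ near $1$ gives a homoclinic trajectory, so $\mu Y\in{\mathcal X}_{stable}\cn{}$ for all such $\mu$ and ${\mathcal U}_{Y}^{1}=\emptyset$.

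For the converse, ``$\sharp\mathrm{Sing}(Y)\ge 2$ implies ${\mathcal U}_{Y}^{1}\ne\emptyset$'': I want to produce, for some $\mu\in{\mathbb S}^{1}$ arbitrarily close to $1$, a homoclinic trajectory of $\Re(\mu Y)$. Pick two distinct singular points $w_{1},w_{2}$. If $\Re(Y)$ itself already has a homoclinic trajectory we are done with $\mu=1$, so assume it does not. Then by Lemma \ref{lem:aolnht} every trajectory of $\Re(Y)$ has $\omega$-limit either $\{\infty\}$ or a singleton in $\mathrm{Sing}(Y)$, and symmetrically for $\alpha$-limits; in particular some singular point, say $w_{1}$, is the $\omega$-limit of a nonempty open set of points (a basin), and by Lemma \ref{lem:sptcl} there is a trajectory $\gamma_{+}\in Tr_{\leftarrow\infty}(Y)$ with $\omega^{Y}(\gamma_{+})=\{w_{1}\}$. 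Likewise, applying the same reasoning to $-Y$, there is a trajectory $\gamma_{-}\in Tr_{\to\infty}(Y)$ with $\alpha$-limit a singular point. The key quantitative tool is the period/residue formula from the proof of Proposition \ref{pro:DES}: a homoclinic loop of $\Re(\mu Y)$ enclosing $E\subset\mathrm{Sing}(Y)$ has ``length'' $\frac{2\pi i}{\mu}\sum_{w\in E}Res(Y,w)$, and as $E$ ranges over subsets with $1\le\sharp E<\sharp\mathrm{Sing}(Y)$ at least one partial residue sum $\rho_{E}=2\pi i\sum_{w\in E}Res(Y,w)$ is nonzero (again because the \emph{total} sum over all of $\mathrm{Sing}(Y)$ together with the contribution at $\infty$ is constrained; if every proper partial sum vanished, all individual residues would vanish, but then so would the $\infty$-residue, contradicting $\nu(Y)\ge1$ which forces $\Omega$ to have a genuine pole at $\infty$). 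For that $E$, choosing $\mu=\rho_{E}/|\rho_{E}|$ makes $\rho_{E}/\mu\in{\mathbb R}^{+}$, which is exactly the compatibility condition allowing the separatrix configuration of $\Re(\mu Y)$ at $\infty$ to close up into a homoclinic loop around $E$; the continuous dependence of the trajectories $\eta_{j,\mu}\in Tr_{\infty}(\mu Y)$ on $\mu$, together with the fact that for $\mu=1$ the $\alpha$- and $\omega$-limits already ``separate'' these separatrices, lets one run a connectedness/intermediate-value argument to realize such a $\mu$ arbitrarily close to $1$. Hence $\mu Y\notin{\mathcal X}_{stable}\cn{}$ for $\mu$ near $1$, i.e. ${\mathcal U}_{Y}^{1}\ne\emptyset$.

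The step I expect to be the main obstacle is the converse direction: showing that a nonzero partial residue sum $\rho_{E}$ genuinely \emph{produces} a homoclinic trajectory for the nearby direction $\mu=\rho_{E}/|\rho_{E}|$, rather than merely satisfying the necessary numerical constraint. The necessary condition (period real and positive) is immediate from the residue computation, but the existence of an actual closed separatrix loop requires tracking how the $2\nu(Y)$ trajectories in $Tr_{\infty}(\mu Y)$ recombine as $\mu$ varies — i.e. understanding the bifurcation diagram of the separatrix graph near $\mu=1$. I would handle this by fixing a large circle $|w|=R$, labeling the $2\nu(Y)$ intersection arcs, and using Lemmas \ref{lem:infom}, \ref{lem:doml}, \ref{lem:aolnht}, \ref{lem:sptcl} to identify which separatrix connects to which singular point for $\mu=1$; then an open–closed argument on the set of $\mu$ for which a prescribed pair of separatrices bounding an angle both limit onto the \emph{same} singular configuration enclosing $E$ yields the desired $\mu$, the ``closed'' part coming from continuity of limits and the ``open'' part from openness of basins of attraction, with the numerical residue condition pinpointing exactly where the transition occurs.
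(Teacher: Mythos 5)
Your forward implication ($\sharp \mathrm{Sing}(Y)=1 \Rightarrow {\mathcal U}_{Y}^{1}=\emptyset$) is essentially correct and takes a different route from the paper: you go through Proposition \ref{pro:DES} and the vanishing of the residue at the unique (necessarily multiple) zero, whereas the paper simply observes that $w^{\nu(Y)+1}\partial/\partial w$ is linearly conjugate to each of its rotations $\mu Y$. One correction, though: for $\deg P\geq 2$ the form $\Omega=dw/P$ is \emph{holomorphic} at $\infty$ (it has a zero of order $\deg P-2$ there), so it has no ``pole structure at $\infty$''; the residue at the unique zero vanishes simply because $P=a(w-w_{0})^{\nu+1}$ with $\nu+1\geq 2$ (equivalently, by the global residue theorem together with the residue at $\infty$ being $0$). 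This misstatement is harmless here but becomes a real problem in the converse.

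The converse is where the genuine gap lies, in two places. First, your claim that some proper partial residue sum $\rho_{E}$ is nonzero is justified by the same false assertion about a pole of $\Omega$ at $\infty$; since the residue of $\Omega$ at $\infty$ is automatically $0$, no contradiction follows from all finite residues vanishing. The claim itself is true, but it needs an actual argument: if all residues vanish then $dw/P=dQ$ for a rational $Q$ whose only poles are the zeros of $P$ with orders $m_{i}-1$, so $\deg Q=\deg P-k$ where $k=\sharp\mathrm{Sing}(Y)$, while $Q-Q(\infty)$ vanishes to order $\deg P-1$ at $\infty$; since the multiplicity of a point in a fiber is at most $\deg Q$, this forces $k\leq 1$. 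Second, and more seriously, the step you yourself flag as the main obstacle is not established and is false as a general principle: the condition $\rho_{E}/\mu\in{\mathbb R}^{+}$ at $\mu=\rho_{E}/|\rho_{E}|$ is only necessary, and for a given nonzero $\rho_{E}$ there need not exist any homoclinic trajectory of $\Re(\mu Y)$ enclosing that particular set $E$ at that particular direction (also note you do not need $\mu$ close to $1$: ${\mathcal U}_{Y}^{1}\neq\emptyset$ only asks for some direction on the circle). The sketched open--closed argument is not carried out, and once made precise it essentially collapses onto the paper's proof, which avoids constructing a homoclinic connection altogether: assuming ${\mathcal U}_{Y}^{1}=\emptyset$, the $\alpha/\omega$-limits of the $2\nu(Y)$ separatrices at infinity are locally constant in $\mu$ (openness of basins plus Lemma \ref{lem:aolnht}), and comparing $\mu=1$ with $\mu=-1$, where the separatrices of $Y$ and $-Y$ coincide as sets after the asymptotic directions rotate by $\pi/\nu$, shows that all separatrices limit on a single point $w_{1}$; then any other singular point must be a multiple zero (a simple zero would be a center for some $\mu_{0}$, producing periodic and hence homoclinic trajectories by Lemma \ref{lem:aolnht}) and by Lemma \ref{lem:sptcl} it receives a trajectory of $Tr_{\leftarrow\infty}(Y)$, hence equals $w_{1}$. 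As written, your converse is a plan rather than a proof.
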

\begin{proof}
If $\sharp \mathrm{Sing} (Y) = 1$ then we have $Y = w^{\nu(Y) +1} \partial / \partial w$
for some $\nu \geq 1$ up to an affine change of coordinates.
The vector fields $Y$ and $\mu Y$ are analytically conjugated by
a linear change of coordinates for any $\mu \in {\mathbb S}^{1}$.

Suppose ${\mathcal U}_{Y}^{1} = \emptyset$.
The dynamics of $\Re (\mu Y)$ depends continuously on $\mu$.
Consider the trajectories $\eta_{1,\mu}$, $\hdots$, $\eta_{2 \nu(Y),\mu}$
of $Tr_{\infty}(\mu Y)$ ordered in
counter clock wise sense. We can suppose that $\eta_{j,\mu} \in Tr_{\leftarrow \infty}(\mu Y)$
if $j$ is odd whereas $\eta_{j,\mu} \in Tr_{\to \infty}(\mu Y)$ if $j$ is even.
The trajectory $\eta_{j,e^{i \theta}}$ adheres to a direction
$\lambda_{0} e^{-i \theta / \nu} e^{j \pi i/\nu}$ for some $\lambda_{0} \in {\mathbb S}^{1}$
and all $\theta \in {\mathbb R}$ and $1 \leq j \leq 2 \nu$.
When we follow the path $e^{i \theta}$ for $\theta \in [0,\pi]$ the
direction $\lambda_{0}   e^{j \pi i/\nu}$ becomes $\lambda_{0} e^{-i \pi / \nu} e^{j \pi i/\nu}$
and $Y$ becomes $-Y$. Hence the trajectories $\eta_{j,1}$ and $\eta_{j+1,-1}$ are the same as sets.
We obtain
\[ \omega^{Y}(\eta_{j,1})  = \alpha^{Y}(\eta_{j+1,1}) \
\mathrm{for} \ j \ \mathrm{odd} \ \mathrm{and} \
\alpha^{Y}(\eta_{j,1}) = \omega^{Y}(\eta_{j+1,1}) \
\mathrm{for} \ j \ \mathrm{even} . \]
This implies that there exists $w_{1} \in \mathrm{Sing} (Y)$
such that $\omega^{Y}(\eta_{j,1})=\{w_{1}\}$ if $j$ is odd and
$\alpha^{Y}(\eta_{j,1})=\{w_{1}\}$ if $j$ is even.

We claim that $w_{1}$ is the unique point of $\mathrm{Sing} (Y)$.
Consider $w_{2} \in \mathrm{Sing} (Y)$. The point $w_{2}$ is a parabolic point,
i.e $P' (w_{2})=0$. Otherwise there exists $\mu_{0}$ such that
$w_{2}$ is a center of $\Re (\mu_{0} Y)$ and $\mu_{0}$ belongs to
${\mathcal U}_{Y}^{1}$ by Lemma \ref{lem:aolnht}.
The set
$G = \{ w \in {\mathbb C} \setminus \mathrm{Sing} (Y) : \omega^{Y}(w) = \{w_{2}\} \} \cap Tr_{\leftarrow \infty}(Y)$
is non-empty by Lemma \ref{lem:sptcl}.
The discussion in the previous paragraph implies $w_{2}=w_{1}$.
\end{proof}

\begin{cor}
\label{cor:exir}
Let $X \in \Xnt$. Then ${\mathcal U}_{X}^{j,1} = \emptyset$ iff
$\sharp \mathrm{Sing} (X_{j}(1)) = 1$.
Moreover we have ${\mathcal U}_{X}^{1} = \emptyset$ iff $N(X)=1$.
\end{cor}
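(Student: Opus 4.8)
**The plan is to deduce Corollary \ref{cor:exir} from Lemma \ref{lem:inst} together with the definitions of the dynamical splitting.** The first statement, ${\mathcal U}_{X}^{j,1} = \emptyset$ iff $\sharp \mathrm{Sing} (X_{j}(1)) = 1$, is essentially Lemma \ref{lem:inst} applied to $X_{j}(1)$: indeed ${\mathcal U}_{X}^{j,1}$ was defined as $\{\lambda \in {\mathbb S}^{1} : X_{j}(\lambda) \not\in {\mathcal X}_{stable}\cn{}\}$, and since $X_{j}(\lambda)=\lambda^{e({\mathcal C}_{j})} P_{j}(w)\,\partial/\partial w$ differs from $X_{j}(1)$ only by the scalar $\lambda^{e({\mathcal C}_{j})} \in {\mathbb S}^{1}$, we have $X_{j}(\lambda) \in {\mathcal X}_{stable}\cn{}$ for all $\lambda$ near $1$ precisely when $X_{j}(1) \in {\mathcal X}_{stable}\cn{}$ (note ${\mathcal X}_{stable}\cn{}$ is by Definition \ref{def:noinfcon} a condition about $\mu Y$ for $\mu$ near $1$, which is invariant under $Y \mapsto \nu_0 Y$ for any fixed $\nu_0 \in {\mathbb S}^1$). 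One subtlety: ${\mathcal U}_{Y}^{1}$ in Lemma \ref{lem:inst} ranges over all of ${\mathbb S}^1$ while ${\mathcal U}_{X}^{j,1}$ also ranges over all of ${\mathbb S}^1$, so the identification is $X_{j}(\lambda) \in {\mathcal X}_{stable}$ iff $\lambda^{e({\mathcal C}_j)}$ avoids ${\mathcal U}_{X_{j}(1)}^{1}$; since $\lambda \mapsto \lambda^{e}$ is surjective on ${\mathbb S}^1$ (using $e({\mathcal C}_j) \geq 1$, which holds since $e({\mathcal C}_\beta) = e({\mathcal E}_\beta) + \nu({\mathcal E}_\beta)$ with $\nu({\mathcal E}_\beta) \geq 1$ for any non-terminal seed carrying a compact-like set), ${\mathcal U}_{X}^{j,1} = \emptyset$ iff ${\mathcal U}_{X_j(1)}^{1} = \emptyset$ iff $\sharp \mathrm{Sing}(X_j(1)) = 1$ by Lemma \ref{lem:inst}.

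\textbf{For the second statement, I would relate $N(X)$ to the geometry of the splitting.} Recall ${\mathcal U}_{X}^{1} = \cup_{j=1}^{q} {\mathcal U}_{X}^{j,1}$, so ${\mathcal U}_{X}^{1} = \emptyset$ iff $\sharp \mathrm{Sing}(X_j(1)) = 1$ for every compact-like set ${\mathcal C}_j$. The key observation is that the number $N(X)$ of points of $\mathrm{Sing}(X)$ on a generic fibre $\{x = x_0\}$ counts exactly how the irreducible components $y = \gamma_i(x)$ of $\mathrm{Sing}(X)$ separate as $x \to 0$: after one blow-up they split according to the distinct values $(\partial \gamma_i/\partial x)(0)$, i.e. according to the points of $S_\beta$, which are the singular points of $X_\beta(1)$ lying on the divisor; at a simple zero $\zeta$ the corresponding component is already separated, while at a multiple zero one recurses into the seed ${\mathcal T}_{\beta,\zeta}$. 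If every $X_j(1)$ has a single singular point, then at each stage the components never separate, so all $\gamma_i$ agree to infinite order, forcing a single component (counted with multiplicity, $N(X) = 1$). Conversely, if $N(X) > 1$, then at some stage of the splitting the components genuinely separate, meaning the relevant $X_\beta(1) = X_j(1)$ has at least two singular points, so ${\mathcal U}_{X}^{j,1} \neq \emptyset$ and hence ${\mathcal U}_{X}^{1} \neq \emptyset$.

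\textbf{The main obstacle I anticipate is making precise the inductive bookkeeping} that converts "every $X_j(1)$ has exactly one singular point" into "$N(X) = 1$" — that is, carefully tracking, through the recursive construction of seeds ${\mathcal T}_\beta \rightsquigarrow ({\mathcal E}_\beta, {\mathcal C}_\beta, \{{\mathcal T}_{\beta,\zeta}\})$, the fact that $N(X)$ is additive over the contributions of the terminal seeds and that a terminal seed contributes more than $1$ precisely when some ancestor compact-like set had $\geq 2$ singular points. A clean way to phrase this: run an induction on the depth $k$ of the splitting tree, using the factorization in Eq. (\ref{for:forx}) and the formula $X = x^{\ldots} v(x,xw) \prod (w - \gamma_j(x)/x)^{s_j}\,\partial/\partial w$; the number $N$ of fibre-singularities of the seed ${\mathcal T}_\beta$ equals $\nu({\mathcal E}_\beta)+1$ if all $\gamma_j$ have the same derivative at $0$ and the blown-up vector field $X_\beta(1)$ is $w^{\nu+1}\partial/\partial w$-like (one singular point), and otherwise is $> 1$ because $S_\beta$ has $\geq 2$ points or a $\zeta$-seed already has $N > 1$ by induction. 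Once this combinatorial correspondence is in place, the corollary follows by combining it with the first statement and Lemma \ref{lem:inst}.
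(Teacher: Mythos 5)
Your proposal is correct and is essentially the argument the paper leaves implicit: Corollary \ref{cor:exir} is stated there without proof as a direct consequence of Lemma \ref{lem:inst}, and your two ingredients --- the identification $\lambda \in {\mathcal U}_{X}^{j,1} \Leftrightarrow \lambda^{e({\mathcal C}_{j})} \in {\mathcal U}_{X_{j}(1)}^{1}$ together with the surjectivity of $\lambda \mapsto \lambda^{e({\mathcal C}_{j})}$ (valid since $e({\mathcal C}_{j}) \geq 1$ and $\nu(X_{j}(1)) \geq 1$), and the bookkeeping that distinct irreducible components of $\mathrm{Sing}(X)$ must separate at some compact-like set of the splitting, so that $N(X)>1$ forces some $X_{j}(1)$ with at least two zeros while $N(X)=1$ produces no compact-like sets at all --- are exactly the omitted details. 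The only slip is the parenthetical claim that ${\mathcal X}_{stable}$ is invariant under $Y \mapsto \nu_{0} Y$ for every fixed $\nu_{0} \in {\mathbb S}^{1}$, which is false (it would force ${\mathcal U}_{Y}^{1}$ to be either empty or all of ${\mathbb S}^{1}$, contradicting Lemma \ref{lem:inst} and Proposition \ref{pro:DES} when $Y$ has at least two singular points), but nothing depends on it since your ``one subtlety'' sentence already gives the correct reasoning.
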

Corollary \ref{cor:exir} implies that there are instability phenomena for
any $X =g(x,y) \partial /\partial y$ in $\Xntg$ with $N(X) \neq 1$.
The remaining case $N(X) = 1$
is a topological product.
More precisely $\Re (X)$ is topologically conjugated to
$\Re (g(0,y) \partial /\partial y)$ by a mapping of the form
$\sigma(x,y) = (x,\tilde{\sigma}(x,y))$.
\begin{defi}
\label{def:unstext}
Let $X \in \Xnt$. Consider a non-parabolic exterior set
${\mathcal E}$. We define
${\mathcal U}_{X}^{{\mathcal E},1}=
\{ \lambda \in {\mathbb S}^{1} : \lambda^{e({\mathcal E})} h(0) \in i {\mathbb R}\}$
where
$X = x^{e({\mathcal E})}
h(x) t \partial / \partial{t}$
in adapted coordinates in ${\mathcal E}$.
We define ${\mathcal U}_{X}^{{\mathcal E},1} = \emptyset$ if ${\mathcal E}$
is a parabolic exterior set.
\end{defi}
\begin{rem}
 Let us stress that if ${\mathcal C}_{j}$ is the compact-like set enclosing
the non-parabolic exterior set ${\mathcal E}$ then
${\mathcal U}_{X}^{{\mathcal E},1} \subset {\mathcal U}_{X}^{j,1}$.
The point $t=0$ is an attractor in
$\{ x \in B(0,\delta) \setminus \{0\} : Re(\lambda^{e({\mathcal E})} h(x)) <0 \}$
where $x=|x| \lambda$.
\end{rem}
Let us analyze the dynamics of $\Re (X)$ in the stable directions.
The next result is Lemma 6.13 of \cite{JR:mod}.
\begin{lem}
\label{lem:goins}
Let $X \in \Xntg$. Let $K$ be a compact subset of
${\mathbb S}^{1} \setminus {\mathcal U}_{X}^{1}$.
Then
$[0,\infty)$ is contained in
${\mathcal I}(X, P_{0},{\mathcal T}_{0}^{\epsilon})$
and $\lim_{\zeta \to \infty} \mathrm{exp}(\zeta X)(P_{0}) \in \mathrm{Sing} (X)$
for any
$P_{0} \in [0,\delta(\epsilon,K)) K \times \partial{B(0,\epsilon)}$
such that $\Re (X)$ does not point towards
${\mathbb C}^{2} \setminus {\mathcal T}_{0}^{\epsilon}$ at $P_{0}$.
Moreover,
there exists a dynamical splitting $\digamma_{K}$ such that
the intersection of $\mathrm{exp}((0,\infty) X)(P_{0})$ with
every compact-like or exterior set is connected for any
$P_{0} \in [0,\delta(\epsilon,K)) K \times \partial{B(0,\epsilon)}$.
\end{lem}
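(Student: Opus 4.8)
The plan is to follow a forward trajectory of $\Re(X)$ through the successive basic sets of a conveniently chosen dynamical splitting, controlling the dynamics inside the compact-like sets by the stability of the associated polynomial vector fields (Proposition \ref{pro:DES}) and inside the exterior sets by a convergent normal form, and to argue by induction on the depth of the splitting (the number of blow-ups needed to desingularize $\mathrm{Sing}(X)$; after a ramification $x\mapsto x^{k}$ if necessary a dynamical splitting of ${\mathcal T}_{0}^{\epsilon}=B(0,\delta)\times\overline{B(0,\epsilon)}$ is available).

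The main input concerns a compact-like set ${\mathcal C}_{\beta}={\mathcal C}_{j}$ with associated polynomial vector field $X_{\beta}(\lambda)=X_{j}(\lambda)$. Since $K\subset{\mathbb S}^{1}\setminus{\mathcal U}_{X}^{1}$ and ${\mathcal U}_{X}^{j,1}\subset{\mathcal U}_{X}^{1}$, for every $\lambda_{0}\in K$ the vector field $X_{\beta}(\lambda_{0})$ lies in ${\mathcal X}_{stable}\cn{}$, so by Proposition \ref{pro:DES} it has no homoclinic trajectory. Then Lemma \ref{lem:aolnht} gives that every $\omega$-limit of $\Re(X_{\beta}(\lambda_{0}))$ is $\{\infty\}$ or a singleton of $\mathrm{Sing}(X_{\beta}(\lambda_{0}))\subseteq S_{\beta}$, that there are no periodic trajectories, and that every simple zero of $X_{\beta}(\lambda_{0})$ is attracting or repelling, since a simple zero with purely imaginary eigenvalue would be a center, producing periodic and hence homoclinic trajectories. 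By compactness of $K$ and the orbital equivalence between $\Re(X_{\beta}(\lambda_{0}))$ and $\Re(\mu X_{\beta}(\lambda_{0}))$ for $\mu$ near $1$ produced in the proof of Proposition \ref{pro:DES}, all this holds uniformly for $\lambda_{0}\in K$. One then fixes, for the finitely many basic sets at once, the radii $\rho_{\beta}$, the linearizing disc radii $\eta_{\beta,\zeta}$, and finally $\delta(\epsilon,K)>0$, so that for $0<|x_{0}|<\delta$ with $x_{0}/|x_{0}|\in K$: the flow $\Re(X)$ on ${\mathcal C}_{\beta}(x_{0})$ is, up to the time rescaling by the positive constant $|x_{0}|^{e({\mathcal C}_{\beta})}$, a small perturbation of $\Re(X_{\beta}(x_{0}/|x_{0}|))$ away from $\partial_{I}{\mathcal C}_{\beta}$; each circle $|w_{\zeta}|=\eta_{\beta,\zeta}$ around a simple zero $\zeta$ is transversal to $\Re(X)$, the flow crossing inward at an attracting $\zeta$ and outward at a repelling $\zeta$ (using the linearization of Section \ref{sec:dynspl}); and the partition of $\partial_{e}{\mathcal C}_{\beta}(x_{0})$ into pieces in basins of attracting singular points of $X_{\beta}$ and pieces met by $Tr_{\to\infty}(X_{\beta})$ persists.

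In an exterior set ${\mathcal E}_{\beta}$ one uses adapted coordinates, in which $X=x^{e({\mathcal E}_{\beta})}v(x,t)(t-\gamma_{1}(x))^{s_{1}+\dots+s_{p}}\partial/\partial t$ with $\{v=0\}\cap{\mathcal E}_{\beta}=\emptyset$. If ${\mathcal E}_{\beta}$ is non-parabolic it is terminal and enclosed by some ${\mathcal C}_{j}$ with ${\mathcal U}_{X}^{{\mathcal E}_{\beta},1}\subseteq{\mathcal U}_{X}^{j,1}$, hence $\Re(X)_{|x=x_{0}}$ is, after a change of coordinate, linear and either attracting or repelling because $x_{0}/|x_{0}|\notin{\mathcal U}_{X}^{{\mathcal E}_{\beta},1}$; a forward trajectory then either converges to $t=\gamma_{1}(x_{0})\in\mathrm{Sing}(X)$ or leaves ${\mathcal E}_{\beta}$ through $\partial_{I}{\mathcal E}_{\beta}$ or $\partial_{e}{\mathcal E}_{\beta}$ in finite time. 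If ${\mathcal E}_{\beta}$ is parabolic, $\Re(X)_{|x=x_{0}}$ has the Fatou-flower structure described below Definition \ref{def:atpetvf}, and a trajectory through a point at which $\Re(X)$ does not point out of ${\mathcal T}_{\beta}$ either converges to $t=\gamma_{1}(x_{0})\in\mathrm{Sing}(X)$ along an attracting petal or crosses $\partial_{I}{\mathcal E}_{\beta}$ into the enclosed $M_{\beta}$. In each case the trajectory meets ${\mathcal E}_{\beta}$ in a single arc. The outermost exterior set ${\mathcal E}_{0}$ is parabolic (since $\sum s_{i}\geq 2$), so the trajectory from the hypothesised $P_{0}\in B(0,\delta)\times\partial B(0,\epsilon)$, at which $\Re(X)$ does not point out of ${\mathcal T}_{0}^{\epsilon}$, enters ${\mathcal E}_{0}$ along an attracting petal; read at $\partial_{e}{\mathcal C}_{0}=\partial_{I}{\mathcal E}_{0}$ in the coordinate $w=t/x$, the attracting-petal directions correspond to $|w|$ decreasing, hence to sectors from which $\Re(X_{0}(x_{0}/|x_{0}|))$ flows into basins of attracting singular points rather than into $Tr_{\to\infty}(X_{0})$.

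It remains to run the recursion. From ${\mathcal E}_{0}$ the trajectory either converges to a point of $\mathrm{Sing}(X)$ and we are done, or it enters ${\mathcal C}_{0}$ inside the basin of an attracting $\zeta\in S_{0}$, converges toward $\zeta$, and crosses $\partial_{I}{\mathcal C}_{0}$ at the $\zeta$-component into the seed ${\mathcal T}_{0,\zeta}$; transversality with consistent orientation of $\partial_{e}{\mathcal C}_{0}$ and of the circles $|w_{\zeta}|=\eta_{0,\zeta}$ forbids the trajectory from re-entering ${\mathcal C}_{0}$ or ${\mathcal E}_{0}$, which both traps it in ${\mathcal T}_{0}^{\epsilon}$ and makes its intersection with each of these sets connected. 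Since ${\mathcal T}_{0,\zeta}$ has strictly smaller splitting depth and its outermost exterior set is a parabolic flower or, when $\zeta$ is a simple zero, the attracting linear node $X=x^{e}h(x)t'\partial/\partial t'$, the inductive hypothesis applied to $X$ in ${\mathcal T}_{0,\zeta}$ yields that the trajectory converges to a point of $\mathrm{Sing}(X)$, meets every basic set in a connected set, and $[0,\infty)\subset{\mathcal I}(X,P_{0},{\mathcal T}_{0}^{\epsilon})$. The main obstacle is the uniformity of the second paragraph: exhibiting one dynamical splitting $\digamma_{K}$ and one $\delta(\epsilon,K)$ for which the perturbative comparison with the polynomial models, the transversality of all interior boundaries, and the basin versus $Tr_{\to\infty}$ dichotomy on the exterior boundaries hold simultaneously for every direction of $K$ and every basic set; the connectedness and the non-escape from ${\mathcal T}_{0}^{\epsilon}$ then follow from the orientation consistency (a Rolle-type property) of the resulting boundary crossings.
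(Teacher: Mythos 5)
Your outline cannot be compared line-by-line with a proof in the paper, because the paper does not prove this lemma at all: it quotes it as Lemma 6.13 of \cite{JR:mod}. Measured against the machinery the paper builds around it (Sections 3--5), your strategy is the intended one: stability of the associated polynomial vector fields (Proposition \ref{pro:DES}, Lemma \ref{lem:aolnht}), Fatou-flower behaviour in parabolic exterior sets, linear attracting/repelling behaviour in non-parabolic ones away from ${\mathcal U}_{X}^{{\mathcal E},1}$, and a recursion through the tree of seeds.

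As a proof, however, it has genuine gaps. First, the step that drives the recursion --- that the trajectory, on reaching $\partial_{e} {\mathcal C}_{\beta}$, lies in the basin of a singular point of $X_{\beta}(\lambda)$ and never returns to ${\mathcal E}_{\beta}$ or ${\mathcal C}_{\beta}$ --- is not justified by your appeal to ``transversality with consistent orientation'': $\partial_{e} {\mathcal C}_{\beta}$ is \emph{not} transversal to the flow (it carries $2\nu({\mathcal C}_{\beta})$ convex tangencies, Lemma \ref{lem:tgpt30}), and for a stable polynomial field an inward crossing point of $|w|=\rho$ could a priori sit on a $Tr_{\to \infty}(X_{\beta}(\lambda))$ separatrix making an excursion, in which case the trajectory would leave again; ``$|w|$ decreasing'' at the entry point does not by itself place it in a basin. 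Closing this needs a real argument, for instance: for $\rho$ large each of the $2\nu$ separatrices of the stable field meets $|w|=\rho$ exactly once (its other end is a singular point), so no inward crossing lies on $Tr_{\to \infty}$; combined with the hitting-map analysis of Proposition \ref{pro:hit}, which shows that as $x \to 0$ along directions in $K$ the entry point converges to a point of $Tr_{\leftarrow \infty}(X_{\beta}(\lambda))$, whose forward trajectory lands at a singular point by Lemma \ref{lem:aolnht}; only then do convergence to $\mathrm{Sing}(X)$ and the connectedness of the intersections follow. Second, the uniformity that is the actual content of the lemma --- one splitting $\digamma_{K}$ and one $\delta(\epsilon,K)$ for which the comparison with the polynomial models, the transversality of the circles around simple zeros, and the basin-versus-$Tr_{\to \infty}$ dichotomy hold simultaneously for every $\lambda \in K$, every basic set, and over the bounded transit times --- is stated as a requirement and explicitly deferred (``the main obstacle''), so the sketch stops short of proving precisely what the statement asserts. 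A minor slip besides: a terminal exterior set has no interior boundary, so a non-parabolic (hence terminal) ${\mathcal E}_{\beta}$ cannot be left through $\partial_{I}{\mathcal E}_{\beta}$.
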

Let us remark that the dynamical splitting $\digamma_{K}$
depends on $K$ but
it does not depend on $\epsilon$.
Indeed  stable behavior degrades as we approach the directions in
${\mathcal U}_{X}^{1}$ in the parameter space.
Therefore a unique dynamical splitting does not satisfy
the result in the theorem for any compact set
$K \subset {\mathbb S}^{1} \setminus {\mathcal U}_{X}^{1}$.
On the other hand instability of the dynamics of $\Re (X)$ is related
to a finite set of data, namely the polynomial vector fields
restricted to the finitely many directions in ${\mathcal U}_{X}^{1}$.
Hence we can choose a unique dynamical splitting to describe instability
phenomena.
\begin{cor}
\label{cor:stdir}
Let $X \in \Xntg$. Let $K$ be a compact subset of ${\mathbb S}^{1} \setminus {\mathcal U}_{X}^{1}$.
Given any $\epsilon>0$ small there exists $\delta_{0}=\delta_{0}(\epsilon,K) \in {\mathbb R}^{+}$
such that
\begin{itemize}
\item There is no $P \in [0,\delta_{0}) K \times B(0,\epsilon)$
such that $\alpha^{X}(P)=\omega^{X}(P) = \{\infty\}$.
\item $\alpha^{X}(P) \subset \mathrm{Sing} (X)$ or
$\omega^{X}(P) \subset \mathrm{Sing} (X)$ for any $P \in [0,\delta_{0}) K \times B(0,\epsilon)$.
\item There are no closed trajectories or centers of $\Re (X)$ in
$[0,\delta_{0}) K \times B(0,\epsilon)$.
\end{itemize}
\end{cor}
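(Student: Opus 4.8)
The plan is to derive all three items from Lemma \ref{lem:goins} together with its time-reversed version, from Proposition \ref{pro:lxp1}, and from the stability theory of polynomial vector fields developed above. First I would note that reversing time interchanges $Tr_{\to\infty}$ and $Tr_{\leftarrow\infty}$ and hence preserves the existence of homoclinic trajectories, so $(-X)_{j}(\lambda)\in{\mathcal X}_{stable}\cn{}$ if and only if $X_{j}(\lambda)\in{\mathcal X}_{stable}\cn{}$; consequently ${\mathcal U}_{-X}^{1}={\mathcal U}_{X}^{1}$ and $K$ is also a compact subset of ${\mathbb S}^{1}\setminus{\mathcal U}_{-X}^{1}$. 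I would then set $\delta_{0}$ to be smaller than $\delta$ and than the two constants $\delta(\epsilon,K)$ furnished by Lemma \ref{lem:goins} applied to $X$ and to $-X$ (shrinking it once more for the third item). Throughout, the key elementary fact is that $x$ is a first integral of $\Re(X)$, so a trajectory of $\Re(X)$ through a point of $[0,\delta_{0})K\times B(0,\epsilon)$ stays in its fibre $\{x=x_{0}\}$ and can leave ${\mathcal T}_{0}^{\epsilon}$ only across $\{x_{0}\}\times\partial B(0,\epsilon)$.

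The core step is the following observation: if the forward (resp. backward) trajectory of $\Re(X)$ through a point $P$ in $[0,\delta_{0})K\times B(0,\epsilon)$ leaves ${\mathcal T}_{0}^{\epsilon}$, meeting $\{x_{0}\}\times\partial B(0,\epsilon)$ first at a point $Q$, then the opposite half-trajectory is contained in ${\mathcal T}_{0}^{\epsilon}$ and converges to a point of $\mathrm{Sing}(X)$. Indeed, at $Q$ the field $\Re(-X)$ (resp. $\Re(X)$) does not point towards ${\mathbb C}^{2}\setminus{\mathcal T}_{0}^{\epsilon}$, so Lemma \ref{lem:goins} applied to $-X$ (resp. to $X$) at $Q$ yields exactly this statement. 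The first item is then immediate: $\alpha^{X}(P)=\omega^{X}(P)=\{\infty\}$ would force both half-trajectories to leave ${\mathcal T}_{0}^{\epsilon}$, but then the above forces one of them into a singular point. The same dichotomy proves the second item, except when neither half-trajectory leaves ${\mathcal T}_{0}^{\epsilon}$; in that case the whole trajectory lies in the compact fibre $\{x_{0}\}\times\overline{B(0,\epsilon)}$ and Proposition \ref{pro:lxp1} gives that $\omega^{X}(P)$ is a singleton of $\mathrm{Sing}(X)$ or the trajectory is closed, so the second item will follow once the third is established.

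For the third item I would argue by contradiction. A closed trajectory or a center of $\Re(X)$ in the region produces, in some fibre $\{x_{0}\}$ with $x_{0}\in[0,\delta_{0})K$, a closed trajectory $\gamma\subset B(0,\epsilon)$; it bounds a disc $D\subset B(0,\epsilon)$, and since $\Re(X)$ has finitely many singular points in $\{x_{0}\}\cap B(0,\epsilon)$ (for $\epsilon$ small) a Poincar\'e--Bendixson analysis of the nest of closed orbits inside $\gamma$ produces a center $(x_{0},y^{\ast})$ of $\Re(X)$ with $y^{\ast}\in B(0,\epsilon)$. If $x_{0}=0$ this is impossible: $y=0$ is the only zero of $X_{|x=0}$ in $B(0,\epsilon)$, of order $\nu(X)+1\geq 2$, so every trajectory of $\Re(X_{|x=0})$ contained in $B(0,\epsilon)$ converges to $y=0$ by the petal picture (Definition \ref{def:atpetvf}) and none is closed. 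If $x_{0}=|x_{0}|\lambda\neq 0$, I would locate $(x_{0},y^{\ast})$ in the dynamical splitting $\digamma_{K}$ provided by Lemma \ref{lem:goins}; since the interior boundaries of the basic sets are transversal to $\Re(X)$ at attracting and repelling singular points, $\gamma$ is in fact confined to a single basic set. In an exterior set ${\mathcal E}$, where $X=x^{e({\mathcal E})}h(x)t\,\partial/\partial t$, a center occurs only if $\lambda^{e({\mathcal E})}h(0)\in i{\mathbb R}$, i.e. $\lambda\in{\mathcal U}_{X}^{{\mathcal E},1}\subset{\mathcal U}_{X}^{1}$; in a compact-like set ${\mathcal C}_{j}$, using $X/|x|^{e({\mathcal C}_{j})}\to X_{j}(\lambda)$ on ${\mathcal C}_{j}$ one gets that a closed orbit of $\Re(X)$ in ${\mathcal C}_{j}\cap\{x_{0}\}$ for $|x_{0}|$ small forces a closed, hence (Lemma \ref{lem:aolnht}) homoclinic, trajectory of $\Re(X_{j}(\lambda))$, so $X_{j}(\lambda)\notin{\mathcal X}_{stable}\cn{}$ by Proposition \ref{pro:DES} and again $\lambda\in{\mathcal U}_{X}^{j,1}\subset{\mathcal U}_{X}^{1}$. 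Either way $\lambda\in{\mathcal U}_{X}^{1}$, contradicting $\lambda\in K$.

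I expect the main obstacle to be the compact-like case of the third item: it requires the convergence $X/|x|^{e({\mathcal C}_{j})}\to X_{j}(\lambda)$ to be strong and uniform enough (for $|x_{0}|<\delta_{0}$ and $x_{0}/|x_{0}|$ near $\lambda$, once small discs around the singular points are removed) to rule out the emergence of closed orbits for small $|x_{0}|$, together with the companion subcase in which the nest of closed orbits is trapped inside one of those small discs around a simple zero of $X_{j}(\lambda)$, where the contradiction must instead be read off from the linear model and Definition \ref{def:unstext}. A secondary technical point is the behaviour of trajectories tangent to $\partial B(0,\epsilon)$, which I would dispose of by choosing $\epsilon$ so that the relevant finitely many trajectories meet $\partial B(0,\epsilon)$ transversally, as is implicit in the statement and use of Lemma \ref{lem:goins}.
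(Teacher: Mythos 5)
Your treatment of the first two items is sound and is essentially the paper's own argument: at the first entry (resp.\ exit) point of a trajectory on $\{x_{0}\}\times\partial B(0,\epsilon)$ you apply Lemma \ref{lem:goins} to $X$ (resp.\ to $-X$, after the correct observation that ${\mathcal U}_{-X}^{1}={\mathcal U}_{X}^{1}$, which the paper leaves implicit in its ``analogously''), and Proposition \ref{pro:lxp1} handles trajectories that never reach the boundary. The genuine gap is in the third item, where you leave the paper's route. Two steps there are unjustified. First, the claim that the closed trajectory $\gamma$ (or the nest of closed orbits it bounds) is ``confined to a single basic set'': the reason you offer (transversality of interior boundaries at attracting/repelling singular points) only prevents crossings of the circles around \emph{simple} zeros in stable directions; the exterior boundaries of compact-like sets and the boundaries of the seeds around \emph{multiple} zeros of the polynomial fields carry tangency points and are crossed by trajectories in both directions, so a priori a closed orbit may straddle several basic sets. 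Without confinement, the rescaled limit of your closed orbits as $x_{0}\to 0$ need not be a closed orbit of $X_{j}(\lambda)$: it can degenerate to a graphic through the finite zeros of $X_{j}(\lambda)$, and heteroclinic or homoclinic connections between \emph{finite} singular points are not excluded by stability (Proposition \ref{pro:DES} forbids only connections through $\infty$), so Lemma \ref{lem:aolnht} cannot be invoked at that point. Second, the ``Poincar\'e--Bendixson analysis'' producing a center is itself a nontrivial step: Poincar\'e--Bendixson only says that the inner boundary of the maximal annulus of periodic orbits is a singular point or a graphic, and excluding the graphic needs an argument specific to holomorphic vector fields (for instance, the period on the annulus is the constant $2\pi i\sum Res$ over the enclosed singular points, while passage near a vertex of a graphic forces the period to blow up). You yourself flag the compact-like transfer as ``the main obstacle'' without resolving it, so as written the third item --- and with it the leftover case of your second item --- is not proved.

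For comparison, the paper disposes of the third item by a short reduction to the first one, with no use of the dynamical splitting: let $D$ be the union of the closed trajectories of $\Re(X)$ in $\{x_{0}\}\times B(0,\epsilon)$ and $D_{0}$ the component containing $\gamma$; at a non-singular boundary point of $D_{0}$ inside the ball, lying on a closed trajectory and having $\alpha$- or $\omega$-limit in $\mathrm{Sing}(X)$ are open conditions, so Proposition \ref{pro:lxp1} forces $\alpha^{X}=\omega^{X}=\{\infty\}$ there, contradicting the first item; centers are then excluded because they are accumulated by closed trajectories. If you want to keep your strategy, the viable repair is to make the nest-to-center step rigorous and then argue only about the center: a center of $\Re(X)$ in a fibre $x_{0}\neq 0$ is a simple zero of $X(x_{0},\cdot)$ with purely imaginary eigenvalue; after the ramification reducing to ${\mathcal X}_{tp1}^{*}\cn{2}$ it lies in a non-parabolic terminal exterior set ${\mathcal E}$, where the eigenvalue is $x_{0}^{e({\mathcal E})}h(x_{0})$ up to a unit, and for $x_{0}/|x_{0}|$ in the compact $K$ avoiding ${\mathcal U}_{X}^{{\mathcal E},1}$ (Definition \ref{def:unstext}) one has $Re(x_{0}^{e({\mathcal E})}h(x_{0}))\neq 0$ for $|x_{0}|$ small. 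That bypasses both the confinement of $\gamma$ and the passage to the limit polynomial vector field; without one of these repairs the proposal has a genuine gap.
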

\begin{proof}
Let $\delta_{0} \in {\mathbb R}^{+}$ be the constant provided by Lemma \ref{lem:goins}.
Suppose there exists $P \in [0,\delta_{0}) K \times B(0,\epsilon)$ 
such that $\alpha^{X}(P) = \{\infty\}$.
Consider $(a,b) = {\mathcal I}(X, P, U_{\epsilon})$ (see Definition \ref{def:ue}).
Let $P_{0} = \Gamma (X, P, {\mathcal T}_{0})(a)$.
Lemma \ref{lem:goins} implies $b = \infty$
and $\omega^{X}(P_{0}) \subset \mathrm{Sing} (X)$.
Analogously $\omega^{X}(P) = \{\infty\}$ implies
$\alpha^{X}(P) \subset \mathrm{Sing} (X)$.

It remains to consider the case $\alpha^{X}(P) \neq \{\infty\}$,
$\omega^{X}(P) \neq \{\infty\}$.
We have that either both $\alpha^{X}(P)$, $\omega^{X}(P)$ are contained
in $\mathrm{Sing} (X)$ or $P$ belongs to a closed trajectory $\gamma$ by Proposition \ref{pro:lxp1}.
Suppose that $\gamma$ is contained in $x=x_{0}$.
Consider the union $D$ of closed trajectories of $\Re (X)$
in $\{x_{0}\} \times B(0,\epsilon)$. Let
$(x_{0},y_{0})$ be a point in
$(\partial D_{0} \setminus \mathrm{Sing} (X)) \cap (\{x_{0}\} \times   B(0,\epsilon))$ where
$D_{0}$ is the connected component of $D$ containing $\gamma$.
The point $(x_{0},y_{0})$ satisfies
$\alpha^{X}(x_{0},y_{0})=\omega^{X}(x_{0},y_{0}) = \{\infty\}$.
This contradicts the first part of the corollary.
\end{proof}
\section{Dynamics in exterior basic sets}
\label{sec:dynext}
We study the dynamics of diffeomorphisms $\varphi \in \diff{p1}{2}$.
The idea is comparing the dynamics of $\varphi$ with the dynamics of
a convergent normal form ${\rm exp}(X)$ (see Definition \ref{def:normal}).
This section is intended to describe the
behavior of $\Re (X)$ in exterior sets.

The main technique to prove the results in the paper is the study of
special orbits for $\varphi \in \dif{p1}{2}$. Part of the proof is
showing that such orbits are close to
trajectories of $\Re (X)$ where $X$ is a normal form of $\varphi$.
In order to compare the orbits of $\varphi$
and ${\mathfrak F}_{\varphi}=\mathrm{exp}(X)$ we need some estimates that
are introduced below.
\begin{defi}
Let $X \in \Xnt$ and $P \in U_{\epsilon}$. Given $M \in {\mathbb R}^{+}$ we
define $B_{X}(P,M) = \cup_{z \in B(0,M)} \{ \mathrm{exp}(z X)(P) \}$.
\end{defi}
\subsection{Exterior sets}

\begin{defi}
\label{def:psiext}
Let $X \in \Xnt$.
Let ${\mathcal E} = \{   \eta \geq |t| \geq \rho|x| \}$
be an   exterior set associated to a seed ${\mathcal T}$.
The vector field $X_{\mathcal E}$ is of the form
\[ X_{\mathcal E} =v(x,t) (t-\gamma_{1}(x))^{s_{1}} \hdots (t-\gamma_{p}(x))^{s_{p}}
\partial / \partial{t} \]
where $v$ is a function never vanishing in ${\mathcal T}$. Denote $\gamma_{\mathcal E}=\gamma_{1}$.
Denote $\psi_{\mathcal E}^{0}$  a Fatou coordinate   of
$v(0,t-\gamma_{\mathcal E}(x)) (t-\gamma_{\mathcal E}(x))^{\nu({\mathcal E})+1}
\partial / \partial{t}$
defined in the neighborhood of ${\mathcal E} \setminus \mathrm{Sing} X$.
\end{defi}
The idea behind the definitions in this section is that the
dynamics of  the vector fields $X_{\mathcal E}$ and
$v(0,t-\gamma_{\mathcal E}(x)) (t-\gamma_{\mathcal E}(x))^{\nu({\mathcal E})+1}
\partial / \partial{t}$ in ${\mathcal E}$ are analogous. But the latter vector field is much
simpler since it is conjugated to
$v(0,t) t^{\nu({\mathcal E})+1} \partial / \partial t$, that does not depend on $x$,
by a diffeomorphism  $(x,t+\gamma_{\mathcal E}(x))$.
\begin{rem}
\label{rem:psiext0}
Suppose $\nu({\mathcal E})=0$.
We have $X_{\mathcal E} = h(x) t \partial / \partial t$.
The function $\psi_{\mathcal E}^{0}$ is of the form
\[ \psi_{\mathcal E}^{0} =
\frac{1}{h(0)} \ln t + b(x) \]
where $b(x)$ is a holomorphic function in the neighborhood of $0$.
The Fatou coordinates are useful to study trajectories of $\Re (X)$
intersecting the boundaries of the basic sets.
We will use determinations of $\psi_{\mathcal E}^{0}$ that are bounded by above in
the exterior boundary of ${\mathcal E}$.
\end{rem}
\begin{rem}
\label{rem:psiext}
Suppose $\nu({\mathcal E})>0$.
The function $\psi_{\mathcal E}^{0}$ is of the form
\[ \psi_{\mathcal E}^{0} =
\frac{-1}{\nu({\mathcal E}) v(0,0)} \frac{1}{(t-\gamma_{\mathcal E}(x))^{\nu({\mathcal E})}}
+ Res(X_{\mathcal E},(0,0)) \ln (t-\gamma_{\mathcal E}(x)) + h(t-\gamma_{\mathcal E}(x)) + b(x) \]
where $h(z)$ is a $O(1/z^{\nu({\mathcal E})-1})$ meromorphic function
and $b(x)$ is a holomorphic function in the neighborhood of $0$.
We make analogous choices of determinations of $\psi_{\mathcal E}^{0}$
as in the case $\nu({\mathcal E})=0$. We obtain that
given $\zeta>0$
there exists $C_{\zeta} \in {\mathbb R}^{+}$ such that
\[ \frac{1}{C_{\zeta}}  \frac{1}{|t-\gamma_{\mathcal E}(x)|^{\nu({\mathcal E})}} \leq
|\psi_{\mathcal E}^{0}|(x,t) \leq  C_{\zeta} \frac{1}{|t-\gamma_{\mathcal E}(x)|^{\nu({\mathcal E})}} \]
in ${\mathcal E} \cap \{t - \gamma_{\mathcal E}(x) \in {\mathbb R}^{+} e^{i[-\zeta, \zeta]}\}
\cap \{x \in B(0,\delta(\zeta)) \}$.
\end{rem}
\begin{defi}
\label{def:psiE}
Let ${\mathcal E} = \{   \eta \geq |t| \geq \rho|x| \}$
be a parabolic exterior set associated to $X \in \Xnt$.
Denote by $\psi_{\mathcal E}$  a Fatou coordinate of
$X_{\mathcal E}$ defined in the neighborhood of ${\mathcal E} \setminus \mathrm{Sing} X$ such that
$\psi_{\mathcal E}(0,\lambda,t) \equiv \psi_{\mathcal E}^{0}(0,\lambda,t)$.
The function $\psi_{\mathcal E}$ is multi-valued.
\end{defi}
\subsubsection{Non-parabolic exterior sets}
The next lemma is used in Proposition \ref{pro:boufespre} to show
that far away from ${\mathcal U}_{X}^{{\mathcal E},1}$ (see Definition \ref{def:unstext})
the orbits of $\varphi$ track, i.e. are very close to, orbits of the
normal form ${\mathfrak F}_{\varphi}$.
The lemma will be applied to the function $\Delta_{\varphi}$ (see Definition \ref{def:delta})
that measures how good is the approximation of $\varphi$ provided by ${\mathfrak F}_{\varphi}$.
\begin{lem}
\label{lem:cansumnp}
Let $X \in \Xnt$ with $N \geq 1$.
Fix a non-parabolic exterior set ${\mathcal E}$.
Consider a function $\Delta=O({x}^{a} t)$ where
$X = x^{e({\mathcal E})}
h(x) t \partial / \partial{t}$
in adapted coordinates. Fix
a closed subset $S$ of
$\{ \lambda \in {\mathbb S}^{1} : Re(\lambda^{e({\mathcal E})} h(0)) <0 \}$
and $b \in {\mathbb N}$.
Then there exists $C \in {\mathbb R}^{+}$ such that
$|\Delta| \leq C {|x|}^{a}/ |\psi_{{\mathcal E}}|^{b}$ in
a neighborhood of any trajectory $\gamma$ of $\Re (X)$ in
${\mathcal E} \cap \{ x \in (0,\delta) S\}$.
\end{lem}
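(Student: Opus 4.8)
The plan is to reduce everything to explicit estimates in the Fatou coordinate $\psi_{\mathcal E}$ of the linear vector field $X_{\mathcal E} = h(x)\, t\, \partial/\partial t$. Recall from Remark \ref{rem:psiext0} that $\psi_{\mathcal E}^{0} = h(0)^{-1}\ln t + b(x)$, and since ${\mathcal E}$ is non-parabolic we have $\psi_{\mathcal E}(0,\lambda,t) \equiv \psi_{\mathcal E}^{0}(0,\lambda,t)$; more generally $\psi_{\mathcal E} = h(x)^{-1}\ln t + \tilde b(x)$ for a holomorphic $\tilde b$. The key point is that $\Re(X)$ acts on $\psi_{\mathcal E}$ as translation by a real parameter along a line of slope determined by $h(x)$: if $\gamma$ is a trajectory of $\Re(X)$, then $\psi_{\mathcal E}(\gamma(s)) = \psi_{\mathcal E}(\gamma(0)) + s\, x^{e({\mathcal E})}\overline{?}$ — more precisely $d(\psi_{\mathcal E}\circ\gamma)/ds$ equals the constant $x^{e({\mathcal E})}$ times the value of $X_{\mathcal E}(\psi_{\mathcal E}) = 1$, i.e. along a real-time trajectory $\psi_{\mathcal E}$ increases linearly with speed $|x|^{e({\mathcal E})}$ in the direction $x^{e({\mathcal E})}/|x|^{e({\mathcal E})} = \lambda^{e({\mathcal E})}$. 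Since $\lambda \in S$ forces $\mathrm{Re}(\lambda^{e({\mathcal E})}h(0)) < 0$, the point $t=0$ is an attractor and $|t|$ decreases monotonically along $\gamma$ in forward time (and $|\psi_{\mathcal E}|$ grows like $-\frac{1}{|h(0)|}\ln|t| \to +\infty$).

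First I would make the change of variable $u = \psi_{\mathcal E}$, under which $\Re(X)$ becomes (a real reparametrization of) the constant vector field, and the trajectory $\gamma$ becomes a ray. Then $t = \exp(h(x)(u - \tilde b(x)))$, so $|t| \asymp e^{\mathrm{Re}(h(x)u)}$ up to bounded multiplicative constants uniform for $x \in (0,\delta)S$ (using compactness of $S$ and holomorphy of $h,\tilde b$; shrink $\delta$ so that $\mathrm{Re}(\lambda^{e({\mathcal E})}h(x))$ stays bounded away from $0$ by a uniform constant $-c < 0$ for all $x\in(0,\delta)S$). Substituting into the hypothesis $\Delta = O(x^{a} t)$ gives, on a neighborhood of $\gamma$,
\[
|\Delta| \le C_{1} |x|^{a}\, |t| \le C_{2}\, |x|^{a}\, e^{-c\,|x|^{e({\mathcal E})} |u|}
\]
for points at Fatou-parameter $u$ along (or near) $\gamma$. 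Finally, comparing the exponential decay $e^{-c|x|^{e}|u|}$ with the polynomial growth $|u|^{-b} = |\psi_{\mathcal E}|^{-b}$: for each fixed $b$ the function $r\mapsto r^{b}e^{-c\,\rho\, r}$ is bounded by $(b/(ce\rho))^{b} = O(\rho^{-b})$ for $r\ge 0$, but this would cost a factor $|x|^{-b e({\mathcal E})}$, which is not allowed. So instead I would split into two regimes: where $|u| \le |x|^{-e({\mathcal E})}$ one uses $|t| \le \eta$ (boundedness of $t$ in ${\mathcal E}$) together with $|u|^{b}\le |x|^{-be({\mathcal E})}$ — no wait, that also fails. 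The correct dichotomy is: where $|\psi_{\mathcal E}| \le$ a fixed constant $R_{0}$, one simply bounds $|\Delta| \le C|x|^{a}|t| \le C\eta|x|^{a} \le C\eta R_{0}^{b} |x|^{a}/|\psi_{\mathcal E}|^{b}$ after noting $|\psi_{\mathcal E}|$ is bounded below on ${\mathcal E}$ away from $\mathrm{Sing}\,X$ — and where $|\psi_{\mathcal E}| \ge R_{0}$, one uses that along $\gamma$ the relation $|t| \asymp e^{\mathrm{Re}(h(x)\psi_{\mathcal E})}$ and $\mathrm{Re}(h(x)\psi_{\mathcal E})$ is a monotone function of arclength, so that $|t|$ decays at least like $|\psi_{\mathcal E}|^{-b}$ once $|\psi_{\mathcal E}|$ is large enough (any power decay is beaten by the exponential, and here the relevant comparison is between $e^{-c|\psi_{\mathcal E}|}$-type decay of $|t|$ in terms of $|\psi_{\mathcal E}|$ directly, with no $|x|$ in the exponent because $u=\psi_{\mathcal E}$ already absorbed the speed).

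Let me restate the clean version: along the ray that $\gamma$ becomes in the $\psi_{\mathcal E}$-coordinate, parametrize by $|\psi_{\mathcal E}| = r$; then $|t| \le C_{3} e^{-c' r}$ with $c' = c'(S) > 0$ \emph{independent of $x$}, because $\mathrm{Re}(h(x)\psi_{\mathcal E}) = \mathrm{Re}(h(x))\,r\cos\theta - \mathrm{Im}(h(x))\, r\sin\theta$ and on the ray the argument $\theta$ of $\psi_{\mathcal E}$ is asymptotically constant (determined by $\arg(x^{e({\mathcal E})})$, since the ray is $\psi_{\mathcal E}(\gamma(0)) + \mathbb{R}^{+}\lambda^{e({\mathcal E})}$), making $\mathrm{Re}(h(x)\psi_{\mathcal E})$ asymptotically linear in $r$ with a strictly negative slope uniform over $x\in(0,\delta)S$. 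Hence $|\Delta| \le C|x|^{a}|t| \le C C_{3}|x|^{a}e^{-c'|\psi_{\mathcal E}|} \le C_{b}|x|^{a}/|\psi_{\mathcal E}|^{b}$, the last step since $e^{-c'r} \le (b/(c'e))^{b} r^{-b}$ for $r>0$. Taking $C$ to be the larger of the constants from the two regimes finishes the proof.

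The main obstacle is the uniformity over $S$ and over the one-parameter family of trajectories $\gamma$: one must check that the decay rate $c'$ of $|t|$ in terms of $|\psi_{\mathcal E}|$ does not degenerate as $\lambda$ ranges over the closed set $S$ (it does not, precisely because $S$ is a \emph{closed subset} of the \emph{open} set where $\mathrm{Re}(\lambda^{e({\mathcal E})}h(0)) < 0$, so $\mathrm{Re}(\lambda^{e({\mathcal E})}h(0)) \le -c_{0} < 0$ uniformly) and does not degenerate as $x \to 0$ with $|x|>0$ (holomorphy of $h$ and shrinking $\delta$). The bookkeeping of which determination of the multivalued $\psi_{\mathcal E}$ is used — the one bounded above on the exterior boundary, per Remark \ref{rem:psiext0} — must be respected so that the lower bound $|\psi_{\mathcal E}| \ge$ const on ${\mathcal E}\setminus \mathrm{Sing}\,X$ holds; everything else is routine.
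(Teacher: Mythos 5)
Your argument is correct and follows essentially the same route as the paper's proof: you write $t=e^{h(x)\psi_{\mathcal E}}$, use that along real trajectories $\psi_{\mathcal E}$ moves linearly in the direction $\lambda^{e({\mathcal E})}$ so that $Re(h(x)\psi_{\mathcal E})\leq \log C_{1}-C_{2}|\psi_{\mathcal E}|$ with constants uniform over $x\in(0,\delta)S$ (using that $S$ is a closed subset of the set where $Re(\lambda^{e({\mathcal E})}h(0))<0$ and the normalization of the determination of $\psi_{\mathcal E}$), and then beat the polynomial $|\psi_{\mathcal E}|^{-b}$ by the exponential decay. The only difference is cosmetic: after the false start with the bound $e^{-c|x|^{e({\mathcal E})}|\psi_{\mathcal E}|}$ your ``clean version'' is exactly the paper's estimate, so no further comment is needed.
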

In Lemma \ref{lem:cansumnp}  we consider neighborhoods of the form
$B_{X}(\gamma,M) = \cup_{P \in \gamma} B_{X}(P,M)$ for some
a priori fixed $M \in {\mathbb R}^{+}$, for instance $M=1$.
In such neighborhoods the function $\psi_{\mathcal E}$ is uni-valuated.
\begin{proof}
We have
\[  X_{\mathcal E} =
h(x) t \frac{\partial}{\partial{t}}
\implies \psi_{\mathcal E} = \frac{\ln t}{h(x)}  .  \]
We obtain
$\Delta = O({x}^{a} e^{h(x) \psi_{\mathcal E}})$. We have
($x = r \lambda$ with $r \in {\mathbb R}^{+} \cup \{0\}$ and $\lambda \in {\mathbb S}^{1}$)
\[ (h(x)  \psi_{\mathcal E}) \circ \mathrm{exp}(z X)(x,y)  =
(h(x)  \psi_{\mathcal E})(x,y) +
r^{e({\mathcal E})} z h(x) {\lambda}^{e({\mathcal E})}  \]
for any $z \in {\mathbb C}$. The trajectories of $\Re (X)$ are obtained by
considering $z \in {\mathbb R}$. We obtain
\[ | e^{h(x) \psi_{\mathcal E}} | =
e^{Re(h(x) \psi_{\mathcal E})} \leq
C_{1} e^{- C_{2} |\psi_{\mathcal E}|} \]
along any trajectory of $\Re (X)$. The constants
$C_{1},C_{2} \in {\mathbb R}^{+}$ do not depend on the trajectory
because of the choice of $\psi_{\mathcal E}$ (see Remark
\ref{rem:psiext0} and Definition \ref{def:psiE}). It is clear
that $e^{-C_{2} |\psi_{\mathcal E}|} = O(1/\psi_{\mathcal E}^{b})$.
\end{proof}
\subsection{Parabolic exterior sets}
Let $X \in \Xnt$. Consider a parabolic exterior set ${\mathcal E}$.
The  approximation of
$X_{\mathcal E}$ with
$v(0,t-\gamma_{\mathcal E}(x)) (t-\gamma_{\mathcal E}(x))^{\nu({\mathcal E})+1}
\partial / \partial{t}$ is accurate.
\begin{lem}
(Lemma 6.5 \cite{JR:mod})
\label{lem:itf}
Let ${\mathcal E} = \{(x,t) \in B(0,\delta) \times {\mathbb C} :  \eta \geq |t| \geq \rho|x| \}$
be a parabolic exterior set associated to $X \in \Xnt$.
Let $\upsilon>0$, $\zeta>0$. Suppose ${\mathcal E}$ is terminal.
Then $|\psi_{\mathcal E}/\psi_{\mathcal E}^{0} -1| \leq \upsilon$ in
${\mathcal E} \cap \{t - \gamma_{\mathcal E}(x) \in {\mathbb R}^{+} e^{i[-\zeta, \zeta]}\}
\cap \{x \in B(0,\delta(\upsilon,\zeta)) \}$ for some $\delta(\upsilon,\zeta) \in {\mathbb R}^{+}$.
The same inequality is true for a non-terminal ${\mathcal E}$ if $\rho >0$ is big enough.
\end{lem}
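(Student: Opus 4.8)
The plan is to compare the two Fatou coordinates through the transport equation satisfied by their difference. Write $X_{\mathcal E} = P(x,t)\,\partial/\partial t$ with $P = v(x,t)\prod_{j=1}^{p}(t-\gamma_{j}(x))^{s_{j}}$, and let $Y_{0} = P_{0}(x,t)\,\partial/\partial t$ be the vector field of which $\psi_{\mathcal E}^{0}$ is a Fatou coordinate, so that $P_{0} = v(0,t-\gamma_{\mathcal E}(x))(t-\gamma_{\mathcal E}(x))^{\nu({\mathcal E})+1}$ with $\nu({\mathcal E})+1 = s_{1}+\dots+s_{p}$ and $\gamma_{\mathcal E}=\gamma_{1}$. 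Setting $u = t-\gamma_{\mathcal E}(x)$, the first step I would carry out is the multiplicative comparison
\[ \frac{P}{P_{0}} = \frac{v(x,t)}{v(0,u)}\prod_{j=1}^{p}\Big(1+\frac{\gamma_{\mathcal E}(x)-\gamma_{j}(x)}{u}\Big)^{s_{j}} =: 1+g(x,t). \]
Since $\gamma_{j}(0)=0$ for all $j$ and $v(0,0)\neq 0$, the first factor is $1+O(|x|)$ uniformly on ${\mathcal E}$; since $|u|\geq|t|-|\gamma_{\mathcal E}(x)|\geq(\rho-C)|x|$ on ${\mathcal E}$ while $|\gamma_{\mathcal E}(x)-\gamma_{j}(x)|=O(|x|)$, each remaining factor is $1+O(1/\rho)$. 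Hence $|g|\leq C(|x|+1/\rho)$ on ${\mathcal E}$, and the term $1/\rho$ is absent precisely when $p=1$, i.e. when ${\mathcal E}$ is terminal, in which case $|g|\leq C|x|$ with no condition on $\rho$.

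From $X_{\mathcal E}(\psi_{\mathcal E})\equiv 1$ and $Y_{0}(\psi_{\mathcal E}^{0})\equiv 1$ one has $\partial_{t}\psi_{\mathcal E}=1/P$ and $\partial_{t}\psi_{\mathcal E}^{0}=1/P_{0}$, so the difference $\Phi:=\psi_{\mathcal E}-\psi_{\mathcal E}^{0}$ satisfies the transport equation $Y_{0}(\Phi)=P_{0}/P-1=-g/(1+g)=:G$ with $|G|\leq 2|g|$. Next I would restrict to the simply connected set $\Sigma = {\mathcal E}\cap\{u\in{\mathbb R}^{+}e^{i[-\zeta,\zeta]}\}$; since all the $\gamma_{j}(x)$ cluster within $O(|x|)$ of $0$ while $|u|\gg|x|$ on ${\mathcal E}$, this set encircles no singular point of $X_{\mathcal E}$ nor of $Y_{0}$, so after fixing branches compatible with the normalization $\psi_{\mathcal E}|_{x=0}\equiv\psi_{\mathcal E}^{0}|_{x=0}$ both Fatou coordinates, hence $\Phi$, are single valued and holomorphic on $\Sigma$, and $\Phi|_{x=0}\equiv 0$. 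Using $\tau=\psi_{\mathcal E}^{0}$ as a coordinate turns the equation into $\partial_{\tau}\Phi=G$. Fixing a base point $t_{0}(x)$ with $u_{0}=\eta/2$, one has $\Phi(x,t_{0}(x))=O(|x|)$ (it is holomorphic in $x$ and vanishes at $x=0$), and integrating $\partial_{\tau}\Phi=G$ along the $\psi_{\mathcal E}^{0}$-image of the path in $u$ that follows the arc $|u|=\eta/2$ and then runs radially to $u$ gives $|\Phi(x,t)|\leq O(|x|)+(\sup_{\Sigma}|G|)\,L$, where $L$ is the length of that image path. By Remark \ref{rem:psiext}, $\psi_{\mathcal E}^{0}$ equals $-1/(\nu({\mathcal E})v(0,0)u^{\nu({\mathcal E})})$ up to a logarithm and lower order terms, so the arc contributes $O(1)$ and the radial part contributes $O(|\psi_{\mathcal E}^{0}(x,t)|)$ to $L$; thus $L\leq C(|\psi_{\mathcal E}^{0}(x,t)|+1)$.

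Combining this with the lower bound $|\psi_{\mathcal E}^{0}(x,t)|\geq (C_{\zeta}|u|^{\nu({\mathcal E})})^{-1}\geq c_{0}>0$ on $\Sigma\cap\{x\in B(0,\delta(\zeta))\}$ — again from Remark \ref{rem:psiext} — one gets $|\psi_{\mathcal E}/\psi_{\mathcal E}^{0}-1| = |\Phi|/|\psi_{\mathcal E}^{0}| \leq C'|x|+C''/\rho$ there, with $C',C''$ depending only on $\zeta$ and the fixed data. For a terminal ${\mathcal E}$ the last term is absent, so one takes $\delta(\upsilon,\zeta)\leq\upsilon/C'$; for a non-terminal ${\mathcal E}$ one first picks $\rho$ with $C''/\rho\leq\upsilon/2$ and then $\delta(\upsilon,\zeta)\leq\upsilon/(2C')$. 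I expect the main obstacle to be the choice of the integration path and the verification that its $\psi_{\mathcal E}^{0}$-image has length comparable to $|\psi_{\mathcal E}^{0}|$ — this is the only place where the explicit description of $\psi_{\mathcal E}^{0}$ in Remark \ref{rem:psiext} is really needed, and it is used both to bound $L$ and to keep $1/|\psi_{\mathcal E}^{0}|$ bounded; everything else is routine, and the terminal versus non-terminal dichotomy in the statement mirrors exactly the dichotomy $|g|=O(|x|)$ versus $|g|=O(|x|+1/\rho)$.
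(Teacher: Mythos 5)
The paper does not actually prove this statement: it is imported verbatim as Lemma 6.5 of \cite{JR:mod}, so there is no in-paper argument to compare yours against; I can only judge your proof on its own terms and against Definitions \ref{def:psiext}, \ref{def:psiE} and Remark \ref{rem:psiext}. On those terms your argument is essentially correct and is a reasonable self-contained route: the decomposition $P/P_{0}=1+g$ with $|g|\leq C(|x|+1/\rho)$, with the $1/\rho$ term present exactly when $p>1$, is right (on ${\mathcal E}$ one has $|u|\geq(\rho-C)|x|$ and $|\gamma_{\mathcal E}-\gamma_{j}|=O(|x|)$, and the unit ratio is $1+O(|x|)$); the identity $\partial_{t}\Phi\,dt=G\,d\psi_{\mathcal E}^{0}$ with $G=-g/(1+g)$ is correct; and the arc-plus-radial path, measured with $|d\psi_{\mathcal E}^{0}|=|du|/(|v(0,u)||u|^{\nu({\mathcal E})+1})$, does give $L\leq C(|\psi_{\mathcal E}^{0}(x,t)|+1)$, which combined with the two-sided bound of Remark \ref{rem:psiext} (both to keep $1/|\psi_{\mathcal E}^{0}|$ bounded and to compare $|u|^{-\nu({\mathcal E})}$ with $|\psi_{\mathcal E}^{0}|$) yields $|\psi_{\mathcal E}/\psi_{\mathcal E}^{0}-1|\leq C'|x|+C''/\rho$ on the sector, exactly reproducing the terminal/non-terminal dichotomy of the statement.

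The one point where you are too glib is the base-point estimate $\Phi(x,t_{0}(x))=O(|x|)$ ``since it is holomorphic in $x$ and vanishes at $x=0$''. Holomorphy in $x$ of the normalizing constant is not obvious, because the normalization of Definition \ref{def:psiE} is imposed on the real blow-up locus $\{r=0\}$, fibre by fibre in $\lambda$, not on a complex divisor, and both Fatou coordinates are multivalued, so the statement ``vanishes at $x=0$'' only makes sense after the branch bookkeeping you do on the simply connected sector. Fortunately you do not need $O(|x|)$: since you divide at the end by $|\psi_{\mathcal E}^{0}|\geq c_{0}>0$, it suffices that $\sup_{\lambda}|\Phi(x,t_{0}(x))|=o(1)$ as $x\to 0$, and this is exactly what the normalization $\psi_{\mathcal E}(0,\lambda,t)\equiv\psi_{\mathcal E}^{0}(0,\lambda,t)$ gives once one observes that $\Phi$ is single valued on the union of the sectors, continuous up to $r=0$ (the loop in $\lambda$ at fixed $t_{0}$ is contractible through $(0,t_{0})$, so there is no monodromy in the parameter), and $\equiv 0$ at $r=0$, using compactness of ${\mathbb S}^{1}\times\{|u|=\eta/2\}$ for uniformity. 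With that rephrasing the argument closes. A second, purely cosmetic, point: your radial leg can dip slightly inside $\{|t|=\rho|x|\}$ because the sector is centred at $\gamma_{\mathcal E}(x)$ rather than at $0$; since $|u|\geq(\rho-C)|x|$ still holds along it and the Fatou coordinates are defined on a neighborhood of ${\mathcal E}\setminus\mathrm{Sing}\,X$, all the estimates persist, but it is worth saying.
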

The behavior of a multi-transversal flow in a parabolic exterior set is also
analogous to a Fatou flower from a quantitative point of view. In particular we prove
that the spiraling behavior is bounded in exterior basic sets.
\begin{pro}
\label{pro:estext}
Let $X \in \Xnt$ and let
${\mathcal E} = \{ \eta \geq |t| \geq \rho |x| \}$ be a parabolic exterior set
associated to $X$.  Consider a trajectory
$\Gamma =
\Gamma( \lambda^{e({\mathcal E})}   X_{\mathcal E},(r, \lambda,t),{\mathcal E})$
for $r \lambda$ in a neighborhood of $0$. Then $\Gamma$ is contained
in a sector centered at $t=\gamma_{\mathcal E}(r \lambda)$ (see Definition \ref{def:psiext}) of angle less
than $\zeta$ for some $\zeta>0$ independent of $r, \lambda$ and $\Gamma$.
\end{pro}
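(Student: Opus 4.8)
The plan is to bound, by a constant independent of $r$, $\lambda$ and $\Gamma$, the total variation along $\Gamma$ of a continuous determination of $\arg(t-\gamma_{\mathcal E}(r\lambda))$; this is equivalent to the statement, since such a variation being less than $\zeta$ places $\Gamma$ in a sector of angle less than $\zeta$ with vertex at $t=\gamma_{\mathcal E}(r\lambda)$. Throughout I work in the coordinates $(x,t)$ adapted to the seed of ${\mathcal E}$, write $u=t-\gamma_{\mathcal E}(x)$ and $\nu=\nu({\mathcal E})\geq 1$ (parabolicity), and I may assume $x=r\lambda$ lies in an arbitrarily small ball around $0$, since the statement only concerns $r\lambda$ near the origin. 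The case in which $\Gamma$ reduces to a point is trivial, so assume $u$ does not vanish on $\Gamma$.

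The key observation is that along $\Gamma$ the Fatou coordinate linearizes the dynamics globally. Since $\lambda^{-e({\mathcal E})}\psi_{\mathcal E}$ is a Fatou coordinate of $\lambda^{e({\mathcal E})}X_{\mathcal E}$, integrating $(\lambda^{e({\mathcal E})}X_{\mathcal E})(\lambda^{-e({\mathcal E})}\psi_{\mathcal E})\equiv 1$ along $\Gamma$ gives $\psi_{\mathcal E}(\Gamma(s))=\psi_{\mathcal E}(\Gamma(0))+s\,\lambda^{e({\mathcal E})}$; hence the continuous determination of $\psi_{\mathcal E}$ along $\Gamma$ describes a line segment $S$ of direction $\lambda^{e({\mathcal E})}$, traversed affinely (in particular monotonically) by the parameter $s$. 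By Remark \ref{rem:psiext}, $|\psi_{\mathcal E}|$ is bounded below by a positive constant on ${\mathcal E}$, so $0\notin S$; consequently $\arg\psi_{\mathcal E}$ admits a continuous determination along $\Gamma$ of total variation at most $\pi$ (a bounded line segment subtends an angle $\leq\pi$ at the origin), and this variation is additive over subarcs by monotonicity.

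I then split the parameter interval of $\Gamma$ into the middle part where $|\psi_{\mathcal E}|$ is bounded and the (at most two) end parts where $|\psi_{\mathcal E}|$ is large. Fix $\eta_{0}>0$ small. Covering $\{|u|\leq\eta_{0}\}$ by finitely many sectors around $u=0$, Remark \ref{rem:psiext} and Lemma \ref{lem:itf} show that on $\{|u|\leq\eta_{0},\ |x|<\delta_{0}\}$, for $\eta_{0}$ and $\delta_{0}$ small, $\psi_{\mathcal E}$ equals its leading term $-1/(\nu\,v(0,0)\,u^{\nu})$ up to a factor $1+E$ with $|E|$ as small as desired; in particular $\{|u|\leq\eta_{0}\}$ corresponds along $\Gamma$ to a subset of $\{|\psi_{\mathcal E}|\geq c/\eta_{0}^{\nu}\}$, that is, to the two end parts of the parameter interval, where moreover $\arg(u^{\nu})=\mathrm{const}-\arg\psi_{\mathcal E}+\arg(1+E)$ holds as a continuous identity with $\arg(1+E)$ continuous and small. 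Hence the total variation of $\arg(u^{\nu})$ over the two end parts is at most (total variation of $\arg\psi_{\mathcal E}$ over $\Gamma$) plus a small quantity, so it is at most $\pi$ plus a small quantity, and the end parts contribute at most $\pi/\nu+(\text{small})\leq\pi+1$ to the variation of $\arg u$. On the middle part $|\psi_{\mathcal E}|$ is bounded, say by $C'$ (again by Remark \ref{rem:psiext} and Lemma \ref{lem:itf}, using that $|u|$ stays away from $0$ there), so $\psi_{\mathcal E}(\Gamma(s))=\psi_{\mathcal E}(\Gamma(0))+s\lambda^{e({\mathcal E})}$ forces the middle part to have length at most $2C'$; since $|\mathrm{Im}(\dot u/u)|$ is bounded on ${\mathcal E}$ along any trajectory of $\Re(\lambda^{e({\mathcal E})}X_{\mathcal E})$ by a constant $C''$ depending only on $X$ and ${\mathcal E}$, the middle part contributes at most $2C'C''$ to the variation of $\arg u$. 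Adding the two contributions yields a bound $\zeta=\pi+1+2C'C''$ independent of $r,\lambda,\Gamma$.

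The main obstacle is uniformity: one must verify that the error $E$ in the normal form of $\psi_{\mathcal E}$ near $t=\gamma_{\mathcal E}(x)$ and the bound $C'$ on the middle part are uniform in $x$ near $0$ and in $\lambda\in{\mathbb S}^{1}$. This is exactly where the explicit form of $\psi_{\mathcal E}^{0}$ in Remark \ref{rem:psiext} and the comparison of Lemma \ref{lem:itf} (for non-terminal ${\mathcal E}$, available because $\rho$ is large in the splitting) are needed, and it is important that these inputs involve only the local model at the singular fibre and magnitude comparisons, so the instability directions of $X$ do not interfere. The other delicate point is the multivaluedness of $\psi_{\mathcal E}$: the argument circumvents it by never leaving the single trajectory $\Gamma$, for which the affine identity $\psi_{\mathcal E}(\Gamma(s))=\psi_{\mathcal E}(\Gamma(0))+s\lambda^{e({\mathcal E})}$ holds regardless of how $\Gamma$ winds around $t=\gamma_{\mathcal E}(x)$ — and it is precisely this identity that prevents $\Gamma$ from accumulating on a closed orbit or spiralling unboundedly.
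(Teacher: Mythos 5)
Your argument is correct and is essentially the paper's proof: in both, the key step is that the Fatou coordinate $\psi_{\mathcal E}$ maps the trajectory $\Gamma$ to a straight segment (so a continuous determination of $\arg \psi_{\mathcal E}$ varies by a uniformly bounded amount along $\Gamma$), and the comparison $\psi_{\mathcal E}\sim\psi_{\mathcal E}^{00}$ coming from Remark \ref{rem:psiext} and Lemma \ref{lem:itf} transfers this to a uniform bound on the winding of $t-\gamma_{\mathcal E}(x)$, i.e.\ containment in a sector of uniformly bounded angle. Your splitting into two end parts and a middle part of bounded length is a harmless technical refinement of the paper's direct use of the comparison throughout ${\mathcal E}$ (where $|t-\gamma_{\mathcal E}(x)|$ is already uniformly small), and it produces the same kind of uniform constant $\zeta$.
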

Let us explain the statement. Consider the universal covering
\[ (r,\lambda,\gamma_{\mathcal E}(r \lambda) + e^{z}):
 {\mathcal E}^{\flat} \to {\mathcal E} \setminus \mathrm{Sing} X . \]
Let ${\Gamma}^{\flat}$ the lifting of $\Gamma$ by $(r,\lambda,e^{z})$.
We claim that the set $(Im(z))({\Gamma}^{\flat})$ is contained in an interval of length $\zeta$.
\begin{proof}
We have
\[ X = x^{e({\mathcal E})} v(x,t) (t-\gamma_{1}(x))^{s_{1}} \hdots (t-\gamma_{p}(x))^{s_{p}}
\partial / \partial{t}  \]
where we consider $\gamma_{\mathcal E} \equiv \gamma_{1}$.
We denote
\[ \psi_{\mathcal E}^{00} =
\frac{-1}{\nu({\mathcal E}) v(0,0)} \frac{1}{(t-\gamma_{\mathcal E}(x))^{\nu({\mathcal E})}} =
\frac{-1}{\nu({\mathcal E}) v(0,0)} \frac{1}{(t-\gamma_{1}(x))^{\nu({\mathcal E})}}. \]
Given $\upsilon>0$ and $\zeta_{0}>0$ we can consider $\eta>0$ small to obtain that
$|\psi_{\mathcal E}^{0}/\psi_{\mathcal E}^{00} -1| < \upsilon$ in the set
$\{ (x,t) \in {\mathcal E} :  |\arg(t-\gamma_{\mathcal E}(x))| \leq \zeta_{0} \}$
(see Remark \ref{rem:psiext}).
Therefore we obtain
$|\psi_{\mathcal E}/\psi_{\mathcal E}^{00} -1| < \upsilon$ in
$\{ (x,t) \in {\mathcal E} :  |\arg(t-\gamma_{1}(x))| \leq \zeta_{0} \}$
by considering $\eta >0$ small enough and $\rho >0$ big enough if ${\mathcal E}$ is not terminal
(Lemma \ref{lem:itf}).

We have that either
\[ (\psi_{\mathcal E}/\lambda^{e({\mathcal E})})(\Gamma) \cap i ({\mathbb R}^{+} \cup \{0\}) = \emptyset \
\mathrm{or} \ (\psi_{\mathcal E}/\lambda^{e({\mathcal E})})(\Gamma) \cap i ({\mathbb R}^{-} \cup \{0\}) = \emptyset . \]
Thus $(\psi_{\mathcal E}/\lambda^{e({\mathcal E})})(\Gamma)$ lies in a sector of angle of angle $2 \pi$.
Since $\psi_{\mathcal E} / \psi_{\mathcal E}^{00} \sim 1$ then $\Gamma$  lies in a sector of
center $t=\gamma_{\mathcal E}(r, \lambda)$ and angle close to $2 \pi/\nu({\mathcal E})$.
\end{proof}
The next result plays an analogous role as Lemma \ref{lem:cansumnp}
for parabolic exterior sets.
\begin{lem}
\label{lem:cansum}
Let $X \in \Xnt$ with $N \geq 1$.
Fix a parabolic exterior set ${\mathcal E}$.
Consider a function $\Delta=O({x}^{a} {f'}^{b})$ where
$X = x^{e({\mathcal E})} f' \partial / \partial{t}$
in adapted coordinates.
Then $\Delta$ is of the form $O({x}^{a}/ \psi_{{\mathcal E}}^{b})$ in ${\mathcal E}$.
\end{lem}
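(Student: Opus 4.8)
The plan is to reduce the parabolic case to the estimate already obtained for non-parabolic exterior sets by working in the coordinate $z = \psi_{\mathcal E}^{00}$ (equivalently, passing through the change of variable that straightens the dominant part of $X_{\mathcal E}$ near the unique relevant singular point $t = \gamma_{\mathcal E}(x)$). First I would recall from Definition \ref{def:psiext} and Remark \ref{rem:psiext} that in adapted coordinates $X = x^{e({\mathcal E})} f' \partial/\partial t$ with $f' = v(x,t)\prod_{j=1}^{p}(t-\gamma_j(x))^{s_j}$, and that $\psi_{\mathcal E}^{0}$ (hence $\psi_{\mathcal E}$, which agrees with it on $x=0$ by Definition \ref{def:psiE}) has leading term $\psi_{\mathcal E}^{00} = -1/(\nu({\mathcal E}) v(0,0) (t-\gamma_{\mathcal E}(x))^{\nu({\mathcal E})})$. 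Thus $f'$ and $\psi_{\mathcal E}$ are related by $f' \sim c\, t' \cdot (\psi_{\mathcal E}^{00})^{(\nu({\mathcal E})+1)/\nu({\mathcal E})}$ where $t' = t - \gamma_{\mathcal E}(x)$ and $c$ is a nonvanishing factor; more simply, since $|t'|^{-\nu({\mathcal E})} \asymp |\psi_{\mathcal E}|$ in the relevant sector (Remark \ref{rem:psiext}), and $|f'| \asymp |t'|^{\,s_1+\cdots+s_p} = |t'|^{\,\nu({\mathcal E})+1}$ there (the factors $(t-\gamma_j(x))$ with $\gamma_j(0)\neq\gamma_1(0)$ for $j$ outside the cluster at $\gamma_{\mathcal E}$, and the bounded factor $v$, contribute only bounded nonzero multiplicative constants on ${\mathcal E}$), we get $|f'| \asymp |\psi_{\mathcal E}|^{-(\nu({\mathcal E})+1)/\nu({\mathcal E})} = |\psi_{\mathcal E}|^{-1} |\psi_{\mathcal E}|^{-1/\nu({\mathcal E})} = O(|\psi_{\mathcal E}|^{-1})$.

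Next, combining this with the hypothesis $\Delta = O(x^{a} (f')^{b})$ immediately yields $\Delta = O\bigl(x^{a} |\psi_{\mathcal E}|^{-b}\bigr)$ on the part of ${\mathcal E}$ lying in a sector $\{|\arg t'| \le \zeta_0\}$. The remaining point — and this is where Proposition \ref{pro:estext} enters — is that although a single trajectory $\Gamma$ of $\Re(X_{\mathcal E})$ may wind, it stays (after lifting to the logarithmic coordinate as in the statement of Proposition \ref{pro:estext}) inside a sector of bounded angle centered at $t = \gamma_{\mathcal E}(x)$. Hence ${\mathcal E}$ is covered by finitely many rotated copies of a sector $\{|\arg(\omega t')| \le \zeta_0\}$, on each of which the multi-valued function $\psi_{\mathcal E}$ has a well-defined determination and the estimate $|f'| \asymp |\psi_{\mathcal E}|^{-1}$ holds by the same computation (the implied constants being uniform by the choice of determinations made in Remark \ref{rem:psiext} and Definition \ref{def:psiE}). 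Patching these finitely many estimates gives $\Delta = O(x^{a}/\psi_{\mathcal E}^{b})$ throughout ${\mathcal E}$, which is the assertion.

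The main obstacle I anticipate is bookkeeping the branches of $\psi_{\mathcal E}$ correctly: since $\psi_{\mathcal E}$ is multi-valued (its definition involves a $\log(t-\gamma_{\mathcal E}(x))$ term from the residue), one must be careful that the comparison $|f'| \asymp |\psi_{\mathcal E}|^{-(\nu({\mathcal E})+1)/\nu({\mathcal E})}$ is made with a determination whose modulus is controlled — precisely the determinations singled out in Remark \ref{rem:psiext0}, Remark \ref{rem:psiext} and Lemma \ref{lem:itf}, which are bounded above on the exterior boundary and for which the two-sided bound in Remark \ref{rem:psiext} is valid. Once it is clear that Proposition \ref{pro:estext} confines each trajectory to a bounded sector, so that only finitely many such determinations are needed and their implied constants can be taken uniform, the rest is the elementary modulus comparison sketched above. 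I would also remark that, as in the non-parabolic Lemma \ref{lem:cansumnp}, the statement is really pointwise on ${\mathcal E}$ (or on $B_X(\Gamma,M)$-type neighborhoods where $\psi_{\mathcal E}$ is single-valued), so no additional dynamical input beyond the sectorial confinement of Proposition \ref{pro:estext} is required.
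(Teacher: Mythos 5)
Your proposal is correct and follows essentially the same route as the paper: it combines the comparison $|f'|\asymp|t-\gamma_{\mathcal E}(x)|^{\nu({\mathcal E})+1}$ on ${\mathcal E}$ with $\psi_{\mathcal E}\sim\psi_{\mathcal E}^{0}\sim 1/(t-\gamma_{\mathcal E}(x))^{\nu({\mathcal E})}$ (Lemma \ref{lem:itf}, Remark \ref{rem:psiext}) and invokes Proposition \ref{pro:estext} to confine the relevant determinations of the multi-valued $\psi_{\mathcal E}$ to sectors of uniformly bounded angle, yielding $\Delta=O(x^{a}/\psi_{\mathcal E}^{b(\nu+1)/\nu})$ and hence the claim. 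This is exactly the paper's argument, stated with a bit more care about branches.
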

\begin{proof}
Let
\[
 X = x^{e({\mathcal E})}
v(x,t) (t-\gamma_{1}(x))^{s_{1}} \hdots (t-\gamma_{p}(x))^{s_{p}}
\frac{\partial}{\partial{t}}
\]
be the expression of $X$ in adapted coordinates in the exterior set ${\mathcal E}$.
Let   $\nu = \nu_{{\mathcal E}} (X)$.
We obtain $\Delta = O({x}^{a} {(t-\gamma_{1}(x))}^{b (\nu+1)})$ in ${\mathcal E}$.
We have
\[ \psi_{{\mathcal E}} \sim \psi_{{\mathcal E}}^{0} \sim 1 /(t-\gamma_{1}(x))^{\nu} \]
by Lemma  \ref{lem:itf}. Let us remark that the previous property holds true
at a sector centered at $t=\gamma_{1}(r \lambda)$ and angle uniformly
bounded.
We obtain $\Delta = O({x}^{a}/\psi_{{\mathcal E}}^{be})$ for $e = (\nu+1)/\nu$
by Proposition \ref{pro:estext}.
\end{proof}
\section{Long Trajectories}
\label{sec:lt}
\begin{defi}
Consider a subset $\beta$ of ${\mathbb C}$. We say that
$\upsilon: \beta \to {\mathbb C}^{2}$ is a section if
there exists a continuous function
$\tilde{\upsilon}: \beta \to {\mathbb C}$
such that $\upsilon(x) \equiv (x, \tilde{\upsilon}(x))$.
\end{defi}
The proof of Theorem \ref{teo:main} depends on
the instability properties of elements $\varphi$ of
$\dif{p1}{2}$. Roughly speaking, we consider sections
$\upsilon: \beta \cup \{0\} \to U_{\epsilon}$, where
$\beta$ is a connected set with $0 \in \overline{\beta}$,
such that the limit of the orbits of $\varphi$ through
$\upsilon(x)$ splits in two orbits in the limit.
One of the orbits is obviously the orbit through $\upsilon(0)$
whereas the other orbit is composed of points $(0,y_{-})$
such that to go from $\upsilon(x)$ to a neighborhood of $(0,y_{-})$
we have to iterate $\varphi$ a number of times $T(x)$ that tends to
$\infty$ when $x \to 0$. These so called Long Orbits appear when
$N(\varphi) >1$ even if for simplicity we only consider the case
$N(\varphi) >1$, $m(\varphi)=0$. Anyway, this
is a non-generic phenomenon since the parameters containing Long Orbits adhere
directions of ${\mathcal U}_{X}^{1}$ (Proposition \ref{pro:udlo}).
Next we introduce the rigorous definition of Long Orbits and its
analogue (Long Trajectories) for vector fields.

Let $X \in \Xntg$ be a vector field defined in $B(0,\delta) \times B(0,\epsilon)$.
Consider a set  $\beta \subset {\mathbb C}^{*}$ adhering $0 \in {\mathbb C}$
and a point $y_{+} \in B(0,\epsilon) \setminus \{0\}$ such that
$\omega^{X} (0,y_{+}) = \{(0,0)\}$.  Assume that
$T: \beta \to {\mathbb R}^{+}$ is a continuous function
such that $\lim_{x \in \beta, \ x \to 0} T(x) = \infty$.
We are interested in describing the limit of
the trajectory
$\Gamma (X, (x,y_{+}), U_{\epsilon})$ when $x \in \beta$ and $x \to 0$.
\begin{defi}
\label{def:wlt}
We say that ${\mathcal O}=(X, y_{+},\beta,T)$ generates a {\it weak Long Trajectory}
if there exist a submersion $\vartheta_{\mathcal O}: \beta \to {\mathcal S}_{\mathcal O}$ where
${\mathcal S}_{\mathcal O}$ is a connected subset of ${\mathbb R}$ containing $0$ and a
section
$\upsilon_{\mathcal O}: \beta \cup \{0\} \to U_{\epsilon}$
with $\upsilon_{\mathcal O}(0)=(0,y_{+})$ such that
\begin{itemize}
\item $\vartheta_{\mathcal O}^{-1}(s)$ is a germ of connected curve for any $s \in {\mathcal S}_{\mathcal O}$.
\item $[0,T(x)] \subset {\mathcal I}(\Gamma (X, \upsilon_{\mathcal O}(x), U_{\epsilon}))$
for any $x \in \beta$.
\item Given a compact subset $K$ of ${\mathcal S}_{\mathcal O}$
there exists $\epsilon_{K}>0$ such that
\[ \Gamma (X, \upsilon_{\mathcal O}(x), U_{\epsilon})(T(x)) \not \in
U_{\epsilon_{K}} \]
for any $x \in \vartheta_{\mathcal O}^{-1}(K)$ close to $0$.
\item Given
any $\epsilon''>0$ there exists $M \in {\mathbb N}$ such that
\[ \Gamma (X, \upsilon_{\mathcal O}(x), U_{\epsilon})[M,T(x)-M] \subset
U_{\epsilon''} \]
for any $x \in \beta$ in a neighborhood of $0$.
\end{itemize}
\end{defi}
\begin{defi}
\label{def:lt}
We say that ${\mathcal O}=(X, y_{+},\beta,T)$ generates a {\it Long Trajectory}
if ${\mathcal O}$ generates a weak Long Trajectory and
there exists a continuous $\chi_{\mathcal O}:i {\mathcal S}_{\mathcal O} \to U_{\epsilon}(0)$
(see Definition \ref{def:ue})
such that
\[ \chi_{\mathcal O}(i s) =
\lim_{x \in \beta, \ \vartheta_{\mathcal O}(x) \to s, \ \ x \to 0}
\Gamma (X, (x,y_{+}), U_{\epsilon})(T(x)) \]
for any $s \in {\mathcal S}_{\mathcal O}$ and
$\chi_{\mathcal O}(i s)  = \mathrm{exp}(i s X)(\chi_{\mathcal O}(0))$
for any $s \in {\mathcal S}_{\mathcal O}$.
\end{defi}
Fix $s \in {\mathcal S}_{\mathcal O}$ for
a Long Trajectory ${\mathcal O}$.
The trajectory
$\Gamma (X, (x,y_{+}), U_{\epsilon})[0,T(x)]$ converges to
$\Gamma (X, (0,y_{+}), U_{\epsilon})[0,\infty) \cup \{(0,0) \} \cup
\Gamma (X, \chi_{\mathcal O}(i s), U_{\epsilon})(-\infty,0]$
when $x \in \vartheta_{\mathcal O}^{-1}(s)$ tends to $0$.
We consider the Hausdorff topology for compact sets.
Moreover $\Gamma (X, \chi_{\mathcal O}(i s), U_{\epsilon})$
describes all trajectories in a petal of $\Re (X)_{|U_{\epsilon}(0)}$ if
${\mathcal S}_{\mathcal O}={\mathbb R}$.

Consider a weak Long Trajectory. An accumulation
point of  $\Gamma (X, (x,y_{+}), U_{\epsilon})[0,T(x)]$ when
$x \in \vartheta_{\mathcal O}^{-1}(s)$ tends to $0$ is a union
of two trajectories of $\Re (X)_{|U_{\epsilon}(0)}$ and the origin by the last two properties of
Definition \ref{def:wlt}. Anyway the accumulation set is
not necessarily unique. The definition of Long Trajectory was introduced in
\cite{rib-mams}. There the section $\upsilon_{\mathcal O}$ is defined in a curve
denoted by $\beta \cup \{0\}$ in \cite{rib-mams} and whose analogue in this paper
is $\vartheta_{\mathcal O}^{-1}(0) \cup \{0\}$. Then the evolution of the Long
Trajectories is studied when that curve varies in a family as
${\{\vartheta_{\mathcal O}^{-1}(s)\}}_{s \in {\mathcal S}_{\mathcal O}}$.
In this paper we can deal with all the curves simultaneously since the
proofs have been improved by using the properties of polynomial vector fields.
\begin{rem}
\label{rem:udll}
Corollary \ref{cor:stdir} implies the non-existence of weak Long Trajectories ${\mathcal O}$
such that ${\mathcal S}_{\mathcal O}$ is compact and $\beta$ adheres directions in
${\mathbb S}^{1} \setminus {\mathcal U}_{X}^{1}$.
More precisely we have
$\lim_{n \to \infty} \Gamma (X, \upsilon_{\mathcal O}(x_{n}), U_{\epsilon})(T(x_{n}))=(0,0)$
for any sequence $x_{n}$ in $\beta$ such that $x_{n} \to 0$ and $x_{n}/|x_{n}|$ converges to
a point in ${\mathbb S}^{1} \setminus {\mathcal U}_{X}^{1}$.
Thus any set $\beta$ supporting a weak Long Trajectory with ${\mathcal S}_{\mathcal O}$ compact
adheres to a  unique direction in the finite set
${\mathcal U}_{X}^{1}$.
\end{rem}
Next we introduce the analogue of Long Trajectories for diffeomorphisms.
Let $\varphi \in \diff{p1}{2}$. Let $y_{+} \neq 0$ be a point such that
$\varphi^{j}(0,y_{+})$ is well-defined and belongs to $U_{\epsilon}$ for any $j \in {\mathbb N}$
and $\lim_{j \to \infty} \varphi^{j}(0,y_{+}) = (0,0)$. Consider a germ of set
$\beta \subset {\mathbb C}^{*}$ at $0$ and a continuous function
$T: \beta \to {\mathbb R}^{+}$ with $\lim_{x \in \beta, \ x \to 0} T(x) = \infty$.
We denote by $[s]$ and $\lceil s \rceil$ the integer part and the ceiling of $s \in {\mathbb R}$
respectively. Let us remind that
$\lceil s \rceil$ is the smallest integer not less than $s$.
\begin{defi}
\label{def:lo}
We say that ${\mathcal O}=(\varphi, y_{+},\beta,T)$ generates a Long Orbit if
there exist a submersion $\vartheta_{\mathcal O}: \beta \to {\mathcal S}_{\mathcal O}$ where
${\mathcal S}_{\mathcal O}$ is a connected subset of ${\mathbb R}$ containing $0$ and
continuous $\upsilon_{\mathcal O}: \beta \cup \{0\} \to U_{\epsilon}$
and $\chi_{\mathcal O}: [0,1]+ i {\mathcal S}_{\mathcal O} \to U_{\epsilon}(0) \setminus \{(0,0)\}$
such that
\begin{itemize}
\item $\vartheta_{\mathcal O}^{-1}(s)$ is a germ of connected curve for any $s \in {\mathcal S}_{\mathcal O}$.
\item $\upsilon_{\mathcal O}$ is a section, $\upsilon_{\mathcal O}(0)=(0,y_{+})$
and $\chi_{\mathcal O} (1+is) = \varphi(\chi_{\mathcal O}(is))$ for any $s \in {\mathcal S}_{\mathcal O}$.
\item $\varphi^{j}(\upsilon_{\mathcal O}(x))$ is well-defined and belongs to $U_{\epsilon}$ for any $0 \leq j \leq [T(x)]+1$
and any $x \in \beta$.
\item Given any $z=s +i u \in [0,1] + i {\mathcal S}_{\mathcal O}$ and
a sequence $\{x_{n}\}$ in $\beta$
with $x_{n} \to 0$ and
\[ s = \lim_{n \to \infty} ( \lceil T(x_{n}) \rceil - T(x_{n})), \ \
\lim_{n \to \infty} \vartheta_{\mathcal O}(x_{n}) = u \]
we obtain
$\lim_{n \to \infty} \varphi^{\lceil T(x_{n}) \rceil} (\upsilon_{\mathcal O}(x_{n})) = \chi_{\mathcal O}(z)$.
\item Given any $\epsilon'>0$
there exists $M \in {\mathbb N}$ such that
\[ \{\varphi^{M}(\upsilon(x)), \hdots, \varphi^{\lceil T(x) \rceil -M}(\upsilon(x))\} \]
is contained in
$U_{\epsilon'}$ for any $x \in \beta$ in a neighborhood of $0$.
\end{itemize}
\end{defi}
Notice that if
$(X, y_{+},\beta,T)$ generates a Long Trajectory then
$({\rm exp}(X), y_{+},\beta,T)$ generates a Long Orbit.
The definitions of Long Trajectories and Orbits are analogous.
Obviously the definition for flows is a bit simpler since
for diffeomorphisms we can only iterate an integer number of times.
\begin{rem}
Long Orbits are topological invariants.
\end{rem}
\begin{rem}
\label{rem:idgnc}
In the previous definitions $\beta$ is a germ of set at $0$ if
${\mathcal S}_{\mathcal O}$ is compact. Otherwise we identify
$\beta$ and $\tilde{\beta}$ if the germs of
$\vartheta_{\mathcal O}^{-1}[-n,n] \cap \beta$ and
$\vartheta_{\mathcal O}^{-1}[-n,n] \cap \tilde{\beta}$ coincide for
any $n \in {\mathbb N}$.
\end{rem}
\begin{defi}
\label{def:trim}
Suppose that $(X, y_{+},\beta,T)$ generates a weak Long Trajectory.
Let $(0,y_{+}') = \mathrm{exp}(MX) (0,y_{+})$ for some $M \in {\mathbb N}$.
Then $(X,y_{+}',\beta,T-2M)$ generates a weak Long Trajectory.
We say that the latter weak Long Trajectory is obtained by {\it trimming} the former one.
Given $\epsilon'>0$ any weak Long Trajectory is contained in $U_{\epsilon'}$ up to trimming
by the last condition in Definition \ref{def:wlt}.

Analogously suppose that $(\varphi, y_{+},\beta,T)$ generates a Long Orbit.
Let $(0,y_{+}') = \varphi^{M} (0,y_{+})$ for some $M \in {\mathbb N}$.
Then $(\varphi,y_{+}',\beta,T-2M)$ generates a Long Orbit.
We say that the latter Long Orbit is obtained by trimming the former one.
\end{defi}
Trimming does not change the fundamental properties of a Long Trajectory.
Moreover it is easy to define germs of Long Trajectory.
Trimming maps a Long Trajectory to another one in the same equivalence class.
\subsection{The residue formula}
\label{subsec:res}
The quantitative properties of the Long Trajectories are obtained by
applying the residue formula. It allows to calculate the ``length" of the
Long Trajectories or more precisely the function $T$ (see Definition \ref{def:lt}).

Consider a vector field $Z=a(y) \partial /\partial y$ defined in a neighborhood of
$\overline{B(0,\epsilon')}$ such that $\mathrm{Sing} (Z) \cap \partial B(0,\epsilon') = \emptyset$.
Let $\gamma:[0,c] \to {\mathbb C}$ a trajectory of $\Re (Z)$ such that
$\gamma(0) \in \partial B(0,\epsilon') \ni \gamma(c)$ and $\gamma (0,c) \subset B(0,\epsilon')$.
Let $\kappa$ be a path in $\partial B(0,\epsilon')$ going from $\gamma(0)$ to $\gamma (c)$
in counter clock wise sense.
Consider the bounded connected component $C_{-}$ of ${\mathbb C} \setminus \gamma \kappa^{-1}$.
We denote $E_{-} = C_{-} \cap \mathrm{Sing} (Z)$.

Consider a Fatou coordinate $\psi_{+}$ of $Z$ defined in a neighborhood of $\gamma(0)$.
We define $\psi_{-}$ and $\psi_{-}'$ Fatou coordinates of $Z$ defined in the neighborhood
of $\gamma(c)$. More precisely, $\psi_{-}$ and $\psi_{-}'$ are obtained by analytic continuation
of $\psi_{+}$ along the paths $\kappa$ and $\gamma$ respectively.
We have
\[ {\mathbb R}^{+} \ni c = \psi_{-}'(\gamma(c)) - \psi_{+}(\gamma(0)) =
\psi_{-}(\gamma(c)) - 2 \pi i \sum_{y \in E_{-}} Res (Z,y) - \psi_{+}(\gamma(0)) . \]
This is the residue formula. Of course it can be extended to other setups.
For instance if $\gamma:[0,c] \to {\mathbb C}$ is a trajectory of $\Re (Z)$
such that there exists $c_{1},c_{2} \in [0,c]$ with
$\gamma(0,c_{1}) \subset {\mathbb C} \setminus \overline{B(0,\epsilon')}$,
$\gamma(c_{1},c_{2}) \subset B(0,\epsilon')$ and
$\gamma(c_{2},c) \subset {\mathbb C} \setminus \overline{B(0,\epsilon')}$
the we obtain
\begin{equation}
\label{equ:res}
\psi_{-}(\gamma(c)) - 2 \pi i \sum_{y \in E_{-}} Res (Z,y) - \psi_{+}(\gamma(0))  = c \in {\mathbb R}^{+}
\end{equation}
where $\psi_{+}$ is a Fatou coordinate of $Z$ defined in a neighborhood of $\gamma(0)$ and
$\psi_{-}$ is the analytic continuation of $\psi_{+}$ along $\gamma[0,c_{1}] \kappa \gamma[c_{2},c]$.

Eq. (\ref{equ:res}) is interesting to study weak Long Trajectories.
Given a Long Trajectory ${\mathcal O}$ (see Definition \ref{def:lt}) we can define a holomorphic
Fatou coordinate $\psi_{+}$ of $X$ in the neighborhood of $(0,y_{+})$.
Then we can consider the Fatou coordinates $\psi_{-}$ and $\psi_{-}'$ defined in
the neighborhood of $\chi_{\mathcal O}(0)$. The Fatou coordinate $\psi_{-}$ is holomorphic
in the neighborhood of $\chi_{\mathcal O}(0)$ whereas $\psi_{-}'$ is equal to $\infty$ in $x=0$.
Thus the length and properties of Long Trajectories are intimately related to the
properties of the meromorphic residue functions.
\begin{rem}
Suppose that ${\mathcal O}=(X, y_{+},\beta,T)$ generates a Long Trajectory.
Then the trajectory $\Gamma(X,\upsilon_{\mathcal O}(x),U_{\epsilon})[0,T(x)]$
establishes a division of $\mathrm{Sing} (X)$ in sets
$E_{-}(x)$ and $E_{+}(x) = (\mathrm{Sing} (X))(x) \setminus E_{-}(x)$
as described above. Moreover the sets $E_{-}(x)$ and $E_{+}(x)$
depend continuously on $x$.
We say that $(E_{-},E_{+})$ is the division of $\mathrm{Sing} (X)$ induced by ${\mathcal O}$.
\end{rem}
\subsection{Behavior of trajectories in adapted coordinates}
The Long Trajectories of an element in $\Xntg$ with $N>1$ are obtained
by analyzing the dynamics in the most exterior compact-like set
${\mathcal C}_{j_{0}}$ such that ${\mathcal U}_{X,j_{0}}^{1} \neq \emptyset$.
This section is devoted to describe the dynamics of $\Re (X)$ in the
basic sets enclosing ${\mathcal C}_{j_{0}}$.

We study the properties of the sets of tangencies between $\Re (X)$ and the
boundaries of the basic sets in the next results. This is useful to
understand the topological behavior of $\Re (X)$. Moreover
the set of tangencies determines the dynamics of $\Re (X)$
for some simple basic sets and in particular for the basic sets that are
the subject of this section.
\begin{defi}
Let $X \in \Xnt$.
Consider an exterior set
\[ {\mathcal E} = \{(x,t) \in B(0,\delta) \times {\mathbb C} :  \eta \geq |t| \geq \rho|x| \} \]
associated to $X$ with $\eta >0$  and $\rho \geq 0$.
We define $T{\mathcal E}_{X}^{\eta}(r, \lambda)$ the set of tangent points between
$|t|=\eta$ and $\Re(\lambda^{e({\mathcal E})} X_{\mathcal E})_{|x=r \lambda}$
for $(r,\lambda) \in {\mathbb R}_{\geq 0} \times {\mathbb S}^{1}$.
We denote $T_{X}^{\epsilon}(r \lambda)= T {\mathcal E}_{X}^{\epsilon}(r, \lambda)$
for the particular case ${\mathcal E}= {\mathcal E}_{0}$.
\end{defi}
\begin{rem}
We have $X = r^{e({\mathcal E})} \lambda^{e({\mathcal E})} X_{\mathcal E}$.
Thus $T{\mathcal E}_{X}^{\eta}(r, \lambda)$ is the set of tangent points between
$\Re (X)_{|x=r \lambda}$ and $|t|=\eta$ for $r \neq 0$. The definition allows to
extend the concept to $r=0$ in adapted coordinates.
\end{rem}
\begin{defi}
\label{def:taintpt0}
Let $X \in \Xnt$.
Consider a compact-like set
\[ {\mathcal C} =\{
(x,w) \in B(0,\delta) \times \overline{B(0,\rho)} \} \setminus
(\cup_{\zeta \in S_{\mathcal C}}
\{ (x,w_{\zeta}) \in  B(0,\delta) \times B(0,\eta_{{\mathcal C}, \zeta}) \}) \]
associated to $X$. We denote $T{\mathcal C}_{X}^{\rho}(r, \lambda)$ the set of
tangent points between the exterior boundary
$|w|=\rho$ of ${\mathcal C}$
and $\Re(\lambda^{e({\mathcal C})}   X_{\mathcal C})_{|x=r \lambda}$.
\end{defi}
\begin{defi}
Let ${\mathcal B}$ a basic set.
We say that a point $P \in T{\mathcal B}_{X}(r, \lambda)$
is {\it convex} if the germ of trajectory of
$\Re(\lambda^{e({\mathcal B})} X_{\mathcal B})_{|x=r \lambda}$
through $P$ is contained in ${\mathcal B}$.
\end{defi}
We describe the tangent sets $T{\mathcal B}_{X}(r, \lambda)$ for parabolic
exterior sets and compact-like sets.
\begin{lem}
(See \cite{JR:mod})
\label{lem:tgpt20}
Let $X \in \Xnt$. Let
${\mathcal E}=\{ \eta \geq |t| \geq \rho|x| \}$ be
a parabolic exterior set associated to $X$
with $0 < \eta <<1$ and $\rho \geq 0$. Then the set
$T{\mathcal E}_{\mu X}^{\eta}(r, \lambda)$ is composed of
$2 \nu({\mathcal E})$ convex points for all
$(\lambda,\mu) \in {\mathbb S}^{1} \times {\mathbb S}^{1}$
and $r$ close to $0$. Each connected component of
$(\partial_{e} {\mathcal E})(r, \lambda) \setminus T{\mathcal E}_{\mu X}^{\eta}(r, \lambda)$
contains a unique point of $T{\mathcal E}_{\mu' X}^{\eta}(r, \lambda)$ for
all $r \in [0,\delta)$, $\lambda$, $\mu$, $\mu' \in {\mathbb S}^{1}$
such that
$\mu' \in {\mathbb S}^{1} \setminus \{ -\mu, \mu \}$.
\end{lem}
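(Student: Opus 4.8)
The plan is to reduce everything to the behaviour of the argument of $X_{\mathcal E}$ along the boundary circle $|t|=\eta$. Write $X_{\mathcal E}=v(x,t)\prod_{j=1}^{p}(t-\gamma_{j}(x))^{s_{j}}\,\partial/\partial t$ as in Definition \ref{def:psiext}, with $v$ non-vanishing on the relevant region, $\gamma_{j}(0)=0$ and $\sum_{j}s_{j}=\nu({\mathcal E})+1$. A point $\eta e^{i\theta}$ belongs to $T{\mathcal E}_{\mu X}^{\eta}(r,\lambda)$ precisely when $\Re(\mu\lambda^{e({\mathcal E})}X_{\mathcal E})$ is tangent to $|t|=\eta$ at $(r\lambda,\eta e^{i\theta})$, i.e. when $\mu\lambda^{e({\mathcal E})}X_{\mathcal E}(r\lambda,\eta e^{i\theta})/(\eta e^{i\theta})\in i{\mathbb R}$. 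So I would introduce the lift
\[ \Psi(r,\lambda,\theta)=\arg\!\left(\frac{\lambda^{e({\mathcal E})}X_{\mathcal E}(r\lambda,\eta e^{i\theta})}{\eta e^{i\theta}}\right) \]
as a continuous real function of $\theta\in{\mathbb R}$, so that $\eta e^{i\theta}\in T{\mathcal E}_{\mu X}^{\eta}(r,\lambda)$ if and only if $\arg\mu+\Psi(r,\lambda,\theta)\equiv\pi/2 \pmod \pi$.

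The core of the argument is the estimate that, for $\eta$ small and $\delta=\delta({\mathcal E},\eta)$ small, $\theta\mapsto\Psi(r,\lambda,\theta)$ is a $C^{1}$ strictly increasing function with $\Psi(r,\lambda,\theta+2\pi)=\Psi(r,\lambda,\theta)+2\pi\nu({\mathcal E})$, uniformly in $r\in[0,\delta)$ and $\lambda\in{\mathbb S}^{1}$. The periodicity is the argument principle for the meromorphic function $z\mapsto X_{\mathcal E}(r\lambda,z)/z$ on $|z|=\eta$: inside that circle it has the $\nu({\mathcal E})+1$ zeros $z=\gamma_{j}(r\lambda)$ (clustered near $0$ for $r$ small) and the simple pole $z=0$, hence winding number $\nu({\mathcal E})$. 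For monotonicity I would differentiate and use $\partial_{t}\log(X_{\mathcal E}/t)=\partial_{t}\log v+\sum_{j}s_{j}/(t-\gamma_{j})-1/t$, getting $d\Psi/d\theta=\Im\!\big(it\,\partial_{t}\log(X_{\mathcal E}/t)\big)=\nu({\mathcal E})+O(\eta)+O(r/\eta)>0$, the error terms coming from $|\gamma_{j}(r\lambda)|=O(r)$ and from the boundedness of $\partial_{t}v/v$. (The same picture appears in the Fatou coordinate $\psi_{\mathcal E}$: by Lemma \ref{lem:itf}, $\{|t|=\eta\}$ is a $\nu({\mathcal E})$-fold cover of a near-circle in the $\psi_{\mathcal E}$-plane, while $\Re(\mu\lambda^{e({\mathcal E})}X_{\mathcal E})$ becomes a constant field, so the tangencies are those of a family of parallel lines with that near-circle.) Granting this estimate, $\Psi(r,\lambda,\cdot)$ is an increasing bijection of ${\mathbb R}$, so the set $\{\theta:\arg\mu+\Psi(r,\lambda,\theta)\equiv\pi/2\pmod\pi\}$ meets each period in exactly $2\nu({\mathcal E})$ points, all nondegenerate zeros of $\theta\mapsto\cos(\arg\mu+\Psi(r,\lambda,\theta))$.

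For convexity I would compute, along the trajectory of $\Re(\mu\lambda^{e({\mathcal E})}X_{\mathcal E})$ through a tangent point $t_{0}$, the second variation of $\log|t|$. Writing $b$ for the $\partial/\partial t$-component of the field and $b(t_{0})/t_{0}=i\alpha$ with $\alpha\in{\mathbb R}^{*}$, one has $\tfrac{d}{ds}\log|t|=\Re(b/t)$, which vanishes at $t_{0}$, and $\tfrac{d^{2}}{ds^{2}}\log|t|\big|_{t_{0}}=\alpha^{2}\big(1-\Re(t_{0}\,\partial_{t}\log b|_{t_{0}})\big)$; since $\partial_{t}\log b=\partial_{t}\log v+\sum_{j}s_{j}/(t-\gamma_{j})$, the estimate above gives $\Re(t_{0}\,\partial_{t}\log b|_{t_{0}})=\nu({\mathcal E})+1+O(\eta)+O(r/\eta)$, hence the second variation equals $-\nu({\mathcal E})\alpha^{2}+O(\eta)+O(r/\eta)<0$. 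Thus $\log|t|$ has a strict local maximum at $t_{0}$ along the trajectory, so the germ of trajectory stays in $\{|t|\le\eta\}$, and since $|t|\approx\eta>\rho|x|$ near $t_{0}$ it stays in ${\mathcal E}$; so every point of $T{\mathcal E}_{\mu X}^{\eta}(r,\lambda)$ is convex. Finally, $\mu'\in{\mathbb S}^{1}\setminus\{-\mu,\mu\}$ is equivalent to $\arg\mu\not\equiv\arg\mu'\pmod\pi$, which is exactly the condition that the cosets $\pi/2-\arg\mu+\pi{\mathbb Z}$ and $\pi/2-\arg\mu'+\pi{\mathbb Z}$ strictly interlace on ${\mathbb R}$, each gap of one containing a single point of the other; pulling back by the increasing bijection $\Psi(r,\lambda,\cdot)$ shows that each connected component of $(\partial_{e}{\mathcal E})(r,\lambda)\setminus T{\mathcal E}_{\mu X}^{\eta}(r,\lambda)$ contains exactly one point of $T{\mathcal E}_{\mu'X}^{\eta}(r,\lambda)$.

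The main obstacle is the uniform monotonicity estimate $d\Psi/d\theta>0$ — that is, showing the non-constancy of $v$ and the spreading of the roots $\gamma_{j}$ are genuinely lower order on the circle $|t|=\eta$ — together with the companion second-variation estimate; once these are secured the counting, the convexity and the interlacing are formal.
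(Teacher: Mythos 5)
The paper itself does not prove this lemma; it is quoted from \cite{JR:mod}, so there is no internal proof to compare against. Your argument is correct and self-contained, and it is the natural one: the characterization of tangency as $\mu\lambda^{e({\mathcal E})}X_{\mathcal E}(t)/t\in i{\mathbb R}$ on $|t|=\eta$ is right; the argument principle applied to $t\mapsto X_{\mathcal E}(r\lambda,t)/t$ (zeros $\gamma_{j}(r\lambda)$ of total multiplicity $\nu({\mathcal E})+1$ inside, simple pole at $0$, unit $v$ contributing nothing) gives the period increment $2\pi\nu({\mathcal E})$; the logarithmic derivative computation gives $d\Psi/d\theta=\nu({\mathcal E})+O(\eta)+O(r/\eta)>0$, and this is exactly where parabolicity ($\nu({\mathcal E})\geq 1$) is used; monotonicity plus the period increment yields the count $2\nu({\mathcal E})$ and, for $\mu'\neq\pm\mu$, the strict interlacing of the two arithmetic progressions mod $\pi$ pulled back by the increasing bijection $\Psi(r,\lambda,\cdot)$, which is the second assertion. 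The second-variation formula $\frac{d^{2}}{ds^{2}}\log|t|\big|_{t_{0}}=\alpha^{2}\bigl(1-\Re(t_{0}\,\partial_{t}\log b|_{t_{0}})\bigr)$ checks out and gives a strict local maximum of $|t|$ along the trajectory, hence convexity (and $|t|\approx\eta>\rho r$ keeps the germ inside ${\mathcal E}$). Two small points of hygiene: the error in the second variation should be written multiplicatively, i.e. $\alpha^{2}\bigl(-\nu({\mathcal E})+O(\eta)+O(r/\eta)\bigr)$, since $\alpha$ is itself small on $|t|=\eta$ and an additive $O(\eta)$ would not obviously be dominated; and the estimates force $\delta$ to be chosen after $\eta$ (so that $r/\eta$ is small), which is consistent with the lemma's ``$r$ close to $0$'' and with the order of choices in the paper's construction of the dynamical splitting, but is worth stating explicitly since the constants must be uniform in $\lambda,\mu\in{\mathbb S}^{1}$.
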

\begin{lem}
(See \cite{JR:mod})
\label{lem:tgpt30}
Let $X \in \Xt$ and a compact-like set
\[ {\mathcal C} =\{
(x,w) \in B(0,\delta) \times \overline{B(0,\rho)} \} \setminus
(\cup_{\zeta \in S_{\mathcal C}}
\{ (x,w_{\zeta}) \in  B(0,\delta) \times B(0,\eta_{{\mathcal C}, \zeta}) \}) \]
associated to $X$ with $\rho >>0$. Then $T{\mathcal C}_{\mu X}^{\rho}(r, \lambda)$
is composed of $2 \nu({\mathcal C})$ convex points for all
$(\lambda,\mu) \in {\mathbb S}^{1} \times {\mathbb S}^{1}$
and $r$ close to $0$. Moreover each connected component of
$(\partial_{e} {\mathcal C})(r, \lambda) \setminus T{\mathcal C}_{\mu X}^{\rho}(r, \lambda)$
contains a unique point of $T {\mathcal C}_{\mu' X}^{\rho}(r, \lambda)$ for
all $r \in [0,\delta)$, $\lambda$, $\mu$, $\mu' \in {\mathbb S}^{1}$
such that
$\mu' \in {\mathbb S}^{1} \setminus \{ -\mu, \mu \}$.
\end{lem}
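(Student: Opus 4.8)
The plan is to reduce everything to an explicit computation for the model polynomial vector field $X_{\beta}(\lambda) = \lambda^{e(\mathcal{C})} c\prod_{j}(w-\zeta_{j})^{s_{j}}\partial/\partial w$ attached to $\mathcal{C}$ (Definition \ref{def:pol}), where $c = v(0,0)\neq 0$, $\zeta_{j} = (\partial\gamma_{j}/\partial x)(0)$ and $\sum_{j}s_{j} = \nu(\mathcal{C})+1$, and then to transfer the conclusion to $\lambda^{e(\mathcal{C})}X_{\mathcal{C}}$ by a perturbation argument, exactly as in the proof of Lemma \ref{lem:tgpt20}. I would parametrize the exterior boundary $(\partial_{e}\mathcal{C})(r,\lambda)$ by $w = \rho e^{i\theta}$, $\theta\in\mathbb{R}/2\pi\mathbb{Z}$. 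A point $w$ of this circle lies in $T\mathcal{C}_{\mu X}^{\rho}(r,\lambda)$ precisely when the radial component of the field vanishes there, i.e.\ when $g_{r,\lambda,\mu}(\theta) := \mathrm{Re}\!\left(\mu\lambda^{e(\mathcal{C})}X_{\mathcal{C}}(r\lambda,\rho e^{i\theta})\,e^{-i\theta}\right) = 0$; such a point is convex exactly when $\rho^{2}$ is a strict local maximum of $|w|^{2}$ along the trajectory, a sign condition on $\frac{d^{2}}{ds^{2}}|w|^{2}$.

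For the model field the computation is direct. On $|w| = \rho$ with $\rho$ large one has $\prod_{j}(w-\zeta_{j})^{s_{j}} = w^{\nu(\mathcal{C})+1}(1+O(1/\rho))$, so, writing $c' = \mu\lambda^{e(\mathcal{C})}c$ and normalizing by $\rho^{\nu(\mathcal{C})+1}$, the tangency function of $\mu X_{\beta}(\lambda)$ on the circle equals $\mathrm{Re}(c'e^{i\nu(\mathcal{C})\theta}) + O(1/\rho) = |c|\cos\!\left(\nu(\mathcal{C})\theta + \arg c'\right) + O(1/\rho)$. For $\rho$ large this has exactly $2\nu(\mathcal{C})$ zeros, all simple, with $|g'|$ at each zero bounded below uniformly in $(\lambda,\mu)\in\mathbb{S}^{1}\times\mathbb{S}^{1}$. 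At such a zero, using $\dot{w} = c'w^{\nu(\mathcal{C})+1}(1+o(1))$ and the tangency relation $c'w^{\nu(\mathcal{C})}\in i\mathbb{R}^{*}(1+o(1))$, one gets $\frac{d^{2}}{ds^{2}}|w|^{2} = -2\nu(\mathcal{C})|c|^{2}\rho^{2\nu(\mathcal{C})+2}(1+o(1)) < 0$, so every tangent point is convex, again with a uniform bound. Finally, for two parameters $\mu$, $\mu'$ the model tangency functions differ only by the phase shift $\arg(\mu'/\mu)$ inside the cosine, and $\arg(\mu'/\mu)\notin\pi\mathbb{Z}$ precisely when $\mu'\notin\{\mu,-\mu\}$; hence in that case the zero sets of $\mu X_{\beta}(\lambda)$ and $\mu'X_{\beta}(\lambda)$ strictly interleave, i.e.\ each component of $(\partial_{e}\mathcal{C})(r,\lambda)\setminus T\mathcal{C}_{\mu X}^{\rho}(r,\lambda)$ carries exactly one point of $T\mathcal{C}_{\mu'X}^{\rho}(r,\lambda)$.

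It remains to pass from the model to $\lambda^{e(\mathcal{C})}X_{\mathcal{C}}$. Since $X = x^{e(\mathcal{C})}X_{\mathcal{C}}$ and $X/|x|^{e(\mathcal{C})}\to X_{\beta}(\lambda_{0})$ in $\mathcal{C}$ as $x\to 0$, $x/|x|\to\lambda_{0}$ (the discussion after Definition \ref{def:pol}), on the circle $|w| = \rho$ one has $\lambda^{e(\mathcal{C})}X_{\mathcal{C}}(r\lambda,w) = X_{\beta}(\lambda)(w)\left(1 + O(1/\rho) + O(r)\right)$ with the error bounds uniform in $(r,\lambda)\in[0,\delta)\times\mathbb{S}^{1}$ once $\delta$ is small relative to $1/\rho$. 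Thus $g_{r,\lambda,\mu}$, suitably normalized, is a uniformly $C^{1}$-small perturbation in $\theta$ of the model cosine, for all $(r,\lambda,\mu)$, provided $\rho$ is chosen large and then $\delta$ small. Because the model zeros are uniformly simple and the model convexity inequality is uniformly strict, the exact count $2\nu(\mathcal{C})$, the convexity of each tangent point, and the interleaving property all persist. The only real difficulty — and the only place where $\rho\gg 0$ is genuinely needed — is organizing these domination and uniformity estimates on $|w| = \rho$ simultaneously over the compact parameter set $\mathbb{S}^{1}\times\mathbb{S}^{1}$ and over $r$ near $0$; once these are in place, what remains is the soft degree / implicit-function argument sketched above, structurally identical to the proof of Lemma \ref{lem:tgpt20}.
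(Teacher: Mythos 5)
The paper itself gives no proof of this lemma (it is quoted from \cite{JR:mod}), so I can only compare you with the expected argument; your route is the natural one. Your reduction to the model $X_{\beta}(\lambda)$, the count of $2\nu({\mathcal C})$ simple zeros of the normalized tangency function $\theta\mapsto \mathrm{Re}\bigl(\mu\lambda^{e({\mathcal C})}X_{{\mathcal C}}(r\lambda,\rho e^{i\theta})e^{-i\theta}\bigr)$, and the convexity computation are correct: the exact model value of the second derivative is $-2\nu({\mathcal C})|c|^{2}\rho^{2\nu({\mathcal C})+2}$ at a model tangency point, and Cauchy estimates on a slightly larger circle (still inside the seed, since $\delta\rho$ is small) give the $C^{1}$ control in $\theta$ needed to transfer both the simplicity of the zeros and the sign of the second derivative, uniformly in $(r,\lambda,\mu)$.

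The genuine gap is in your last sentence, the persistence of the interleaving. For the model, the tangency sets of $\mu$ and $\mu'$ are translates of a $\pi/\nu({\mathcal C})$-periodic set whose mutual distance is comparable to $\mathrm{dist}(\arg(\mu'/\mu),\pi{\mathbb Z})$, which tends to $0$ as $\mu'\to\pm\mu$; but the perturbation size is fixed once $\rho$ and $\delta$ are chosen, and these must be chosen before $\mu,\mu'$ since the assertion is for \emph{all} $\mu'\in{\mathbb S}^{1}\setminus\{\mu,-\mu\}$. So ``uniformly simple model zeros plus a small perturbation'' does not yield the interleaving when $\mu'$ is close to $\pm\mu$: an error of fixed size can reorder zeros whose model positions are closer together than the error. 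The fix is already implicit in your estimates: write $\lambda^{e({\mathcal C})}X_{{\mathcal C}}(r\lambda,\rho e^{i\theta})e^{-i\theta}=|K(\theta)|e^{i\Phi(\theta)}$; your $C^{1}$ bounds give $K\neq 0$ on $|w|=\rho$ and $\Phi'(\theta)=\nu({\mathcal C})+o(1)>0$, with total increase $2\pi\nu({\mathcal C})$. The tangency set for $\mu=e^{i\alpha}$ is exactly $\{\theta:\ \Phi(\theta)+\alpha\in \pi/2+\pi{\mathbb Z}\}$, so between two consecutive tangency points for $\mu$ the lift of $\Phi$ increases by exactly $\pi$; hence for every $\alpha'$ with $\alpha'-\alpha\notin\pi{\mathbb Z}$ there is exactly one solution of $\Phi(\theta)+\alpha'\in\pi/2+\pi{\mathbb Z}$ in each component (and no common tangency can occur, since it would force $K(\theta)=0$). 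This monotone-phase argument gives the second assertion uniformly for all admissible $\mu'$, and also re-derives the count $2\nu({\mathcal C})$; with it in place of your final persistence claim, the proof is complete.
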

\begin{defi}
\label{def:bdtr}
Let $X \in \Xnt$ with $N>1$.  Let ${\mathcal B}$ be a basic set.
The set
$(\partial_{e}  {\mathcal B} \setminus T_{X} {\mathcal B})(x)$ has $2 \nu ({\mathcal B})>0$
connected components if $\nu ({\mathcal B})>0$ (Lemmas \ref{lem:tgpt20} and \ref{lem:tgpt30}).
Otherwise ${\mathcal B}$ is a non-parabolic exterior set and
$(\partial_{e}  {\mathcal B} \setminus T_{X} {\mathcal B})(x)$ is either empty
or coincides with $(\partial_{e}  {\mathcal B})(x)$.
We say that a set is a {\it boundary transversal} if it is the closure of a connected
component of $(\partial_{e}  {\mathcal B} \setminus T_{X} {\mathcal B})$.
for some basic set ${\mathcal B}$.

Suppose $\nu ({\mathcal B})>0$. We define
the set $\partial_{\downarrow} {\mathcal B}$ of points in $\partial_{e} {\mathcal B}$
where $\Re (X)$ does not point towards the exterior of ${\mathcal B}$. It is the union of
$\nu ({\mathcal B})$ boundary transversals.
\end{defi}
\begin{defi}
Let $X \in \Xnt$ with $N(X)>1$.
Corollary \ref{cor:exir} implies that there exists a sequence of
basic sets
${\mathcal E}_{0}$, ${\mathcal C}_{1}$, ${\mathcal E}_{1}$, ${\mathcal C}_{2}$,
$\hdots$, ${\mathcal E}_{j_{0}-1}$,  ${\mathcal C}_{j_{0}}$
such that
\[ \partial_{I} {\mathcal E}_{0} = \partial_{e} {\mathcal C}_{1}, \
\partial_{I} {\mathcal C}_{1} = \partial_{e} {\mathcal E}_{1},   \hdots, \
\partial_{I} {\mathcal C}_{j_{0}-1} = \partial_{e} {\mathcal E}_{j_{0}-1}, \
\partial_{I} {\mathcal E}_{j_{0}-1} = \partial_{e} {\mathcal C}_{j_{0}}, \]
${\mathcal U}_{X,j}^{1} = \emptyset$ for any $1 \leq j < j_{0}$ and
${\mathcal U}_{X,j_{0}}^{1} \neq \emptyset$. We deduce
$\nu({\mathcal E}_{0}) =    \hdots   = \nu({\mathcal C}_{j_{0}})$.
We say that
${\mathcal E}_{0}$, ${\mathcal C}_{1}$, ${\mathcal E}_{1}$, ${\mathcal C}_{2}$,
$\hdots$, ${\mathcal E}_{j_{0}-1}$,  ${\mathcal C}_{j_{0}}$ is a
{\it simple sequence} associated to $X$.
\end{defi}
Let ${\mathcal B}$ be a basic set in the sequence with ${\mathcal B} \neq {\mathcal C}_{j_{0}}$.
The set ${\mathcal B}(r_{0},\lambda_{0})$ is an annulus that does not contain singular points
of $X$. Moreover
the number of tangent points between $\Re (X)$ and $\partial_{e} {\mathcal B}$
coincides with the
number of tangent points between $\Re (X)$ and $\partial_{I} {\mathcal B}$
and it is equal to $2 \nu (X)$ in
any line $(r,\lambda)=(r_{0},\lambda_{0})$ with $r_{0} \in [0,\delta)$ and
$\lambda_{0} \in {\mathbb S}^{1}$. Both sets of tangent points are composed
of convex points. It is easy to show that in this setting
the dynamics of $\Re (\lambda^{e({\mathcal B})} X_{\mathcal B})_{|x= r \lambda}$
is as described in Figure (\ref{EVfig6}) (see Proposition 6.1 and Corollary 6.1 of \cite{JR:mod}).
The dynamics of $\Re (X)$ in ${\mathcal B}$ is a truncated Fatou flower.
\begin{figure}[h]
\begin{center}
\includegraphics[height=6cm,width=7cm]{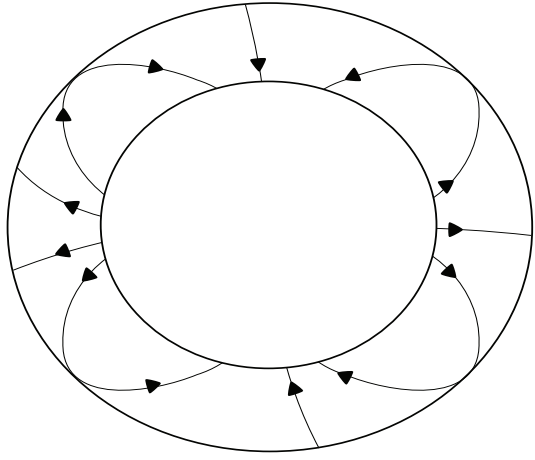}
\end{center}
\caption{Dynamics in basic set ${\mathcal B} \neq {\mathcal C}_{j_{0}}$ of a simple sequence} \label{EVfig6}
\end{figure}
\begin{defi}
\label{def:hit}
Let $X \in \Xnt$ with $N>1$.
Let ${\mathcal E}_{0}$, ${\mathcal C}_{1}$,  $\hdots$,    ${\mathcal C}_{j_{0}}$
be a simple sequence
associated to $X$. Consider a continuous section
$\tau: [0,\delta) \times {\mathbb S}^{1}   \to \partial_{\downarrow} {\mathcal E}_{0}$.
Denote $\Gamma_{r,\lambda} =
\Gamma (X, \tau(r,\lambda), {\mathcal E}_{0} \cup {\mathcal C}_{1} \cup \hdots \cup {\mathcal E}_{j_{0}-1})$.
The dynamics of $\Re (X)$
implies $\sup {\mathcal I}(\Gamma_{r,\lambda}) < \infty$ and
$\Gamma_{r,\lambda}(\sup {\mathcal I}(\Gamma_{r,\lambda})) \in \partial_{\downarrow} {\mathcal C}_{j_{0}}$
for any $(r,\lambda) \in  (0,\delta) \times {\mathbb S}^{1}$.
The formula
\[ \partial \tau (r,\lambda) = \Gamma_{r,\lambda}(\sup {\mathcal I}(\Gamma_{r,\lambda})) \]
defines a continuous section
$\partial \tau: (0,\delta) \times {\mathbb S}^{1} \to \partial_{\downarrow} {\mathcal C}_{j_{0}}$.
\end{defi}
In other words $\partial \tau (r,\lambda)$ is the first point of the positive trajectory
of $\Re (X)$ through $\tau (r,\lambda)$ that belongs to ${\mathcal C}_{j_{0}}$.
\begin{rem}
A section $\tau$ as introduced in Definition \ref{def:hit} satisfies that
$\tau ([0,\delta) \times {\mathbb S}^{1})$ is contained in a component $\kappa$
of $\partial_{\downarrow} {\mathcal E}_{0}$.
The set $(\partial \tau) (B(0,\delta) \setminus \{0\})$ is contained in
a connected component $\tilde{\kappa}$ of
$\partial_{\downarrow} {\mathcal C}_{j_{0}}$.
Moreover $\tilde{\kappa}$ depends only on $\kappa$ and the mapping
$\kappa \to \tilde{\kappa}$ is a bijection from the boundary transversals of
$\partial_{\downarrow} {\mathcal E}_{0}$ onto the boundary transversals of
$\partial_{\downarrow} {\mathcal C}_{j_{0}}$.
\end{rem}
The next proposition shows that $\partial \tau$ is continuous in adapted coordinates.
\begin{pro}
\label{pro:hit}
Let $X \in \Xnt$ with $N>1$.
Let ${\mathcal E}_{0}$,  $\hdots$,    ${\mathcal C}_{j_{0}}$
be a simple sequence.
Consider a continuous section
$\tau: [0,\delta) \times {\mathbb S}^{1}   \to \partial_{\downarrow} {\mathcal E}_{0}$.
Then $\partial \tau$ admits a continuous extension to
$[0,\delta) \times {\mathbb S}^{1}$ in the adapted
coordinates of ${\mathcal C}_{j_{0}}$.
Moreover we have $(\partial \tau)(0,\lambda) \in Tr_{\leftarrow \infty}(X_{j_{0}}(\lambda))$
for any $\lambda \in {\mathbb S}^{1}$.
The mapping $(\partial \tau)_{|r=0}$ depends only on the connected component
$\kappa$ of $\partial_{\downarrow} {\mathcal E}_{0}$ containing $\tau([0,\delta) \times {\mathbb S}^{1})$.
The mapping $\kappa \to (\partial \tau)_{|r=0}$ is a bijection from
$\partial_{\downarrow} {\mathcal E}_{0}$ onto the continuous sections of
$Tr_{\leftarrow \infty}(X_{j_{0}}(\lambda))$.
\end{pro}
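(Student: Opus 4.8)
The plan is to follow the positive $\Re(X)$-trajectory $\Gamma_{r,\lambda}$ across the basic sets of the simple sequence ${\mathcal E}_{0}, {\mathcal C}_{1}, {\mathcal E}_{1}, \hdots, {\mathcal E}_{j_{0}-1}, {\mathcal C}_{j_{0}}$ one set at a time, controlling each transition map uniformly as $r\to 0$ in the adapted coordinates. Continuity of $\partial \tau$ on $(0,\delta)\times {\mathbb S}^{1}$ is already contained in Definition \ref{def:hit}; what remains is the continuous extension to $r=0$, the location of the limit in $Tr_{\leftarrow \infty}(X_{j_{0}}(\lambda))$, its dependence on $\tau$ only through the component $\kappa$ of $\partial_{\downarrow}{\mathcal E}_{0}$ containing its image, and the bijectivity statement.

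First I would set up the recursion. Denote the sets of the sequence in order by ${\mathcal B}_{0}, {\mathcal B}_{1}, \hdots$ (so ${\mathcal B}_{0}={\mathcal E}_{0}$ and the last one is ${\mathcal C}_{j_{0}}$), put $\tau_{0}=\tau$, and let $\tau_{k+1}$ be the first-hitting section of $\partial_{\downarrow}{\mathcal B}_{k+1}$ obtained by flowing $\Re(X)$ forward from $\tau_{k}$ across ${\mathcal B}_{k}$, i.e. the single-set version of $\partial\tau$. By Corollary \ref{cor:exir} every ${\mathcal B}_{k}$ other than ${\mathcal C}_{j_{0}}$ is a Fatou-flower basic set: its slices ${\mathcal B}_{k}(r,\lambda)$ are annuli free of singular points and the dynamics of $\Re(\lambda^{e({\mathcal B}_{k})}X_{{\mathcal B}_{k}})_{|x=r\lambda}$ is the truncated Fatou flower of Figure \ref{EVfig6}, with $\nu({\mathcal B}_{k})=\nu$ constant. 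It follows that each transition $\tau_{k}\mapsto \tau_{k+1}$ is continuous on $(0,\delta)\times {\mathbb S}^{1}$ and carries the boundary transversal containing the image of $\tau_{k}$ bijectively onto a boundary transversal of $\partial_{\downarrow}{\mathcal B}_{k+1}$. The proposition then reduces to showing that this single-set map extends continuously to $r=0$ in the adapted coordinates of ${\mathcal B}_{k+1}$ and that its value at $r=0$ depends on $\tau_{k}$ only through the transversal containing its image.

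The core of the argument is this single-set statement. For a Fatou-flower basic set ${\mathcal B}=\{\eta\geq |t|\geq \rho|x|\}$ one works in a Fatou coordinate $\psi=\psi_{\mathcal B}$ of $X_{\mathcal B}$ normalized to be bounded above on the entering part $\partial_{\downarrow}{\mathcal B}$ of $\partial_{e}{\mathcal B}$ (Remark \ref{rem:psiext0}, Definition \ref{def:psiE}), so that $\Re(\lambda^{e({\mathcal B})}X_{\mathcal B})$ becomes the constant translation field of direction $\lambda^{e({\mathcal B})}$ in the $\psi$-chart. A trajectory entering $\partial_{\downarrow}{\mathcal B}$ then runs along a straight ray there; by Lemma \ref{lem:itf} and Remark \ref{rem:psiext} one has $\psi\sim\psi_{\mathcal B}^{0}\sim 1/(t-\gamma_{\mathcal B}(x))^{\nu}$, and by Proposition \ref{pro:estext} the ray stays in a sector of uniformly bounded angle, so it leaves ${\mathcal B}$ through $\partial_{I}{\mathcal B}=\{|t|=\rho|x|\}$, where $|\psi|$ is of order $(\rho r)^{-\nu}\to\infty$, at a point depending continuously on the entry point and on $(r,\lambda)$. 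Passing to the adapted coordinate $w=t/x$ of the enclosed set rescales $\partial_{I}{\mathcal B}$ to the fixed circle $\{|w|=\rho\}$, while $X/|x|^{e}$ converges to the polynomial vector field associated with the enclosed set (Definition \ref{def:pol} and the discussion following it). Since on the entering part $\psi$ is bounded whereas on the exit $|\psi|\to\infty$, in the limit $r\to 0$ the exit point in the $w$-chart keeps no memory of the entry point beyond its boundary transversal, and the $\Re(X)$-orbit through it, traced backward, escapes to $w=\infty$ (it spent the preceding time in the exterior set enclosing it, whose $w$-range $\{\eta/|x|\geq |w|\geq \rho\}$ opens up to $\infty$ as $x\to 0$). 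Hence the limiting exit point lies on a $\leftarrow\infty$-trajectory of the polynomial field; applied with ${\mathcal B}={\mathcal E}_{j_{0}-1}$ this is precisely the assertion $(\partial\tau)(0,\lambda)\in Tr_{\leftarrow \infty}(X_{j_{0}}(\lambda))$.

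Composing the finitely many single-set transitions gives the continuous extension of $\partial\tau$ to $[0,\delta)\times {\mathbb S}^{1}$ and shows that $(\partial\tau)_{|r=0}$ depends only on the component $\kappa\subset\partial_{\downarrow}{\mathcal E}_{0}$ containing the image of $\tau$. For the bijectivity: $\partial_{\downarrow}{\mathcal E}_{0}$ has $\nu$ boundary transversals (Definition \ref{def:bdtr}) and, by Remark \ref{rem:frinfty}, $Tr_{\leftarrow \infty}(X_{j_{0}}(\lambda))$ consists of $\nu$ trajectories; since every single-set transition is a bijection between boundary transversals, $\kappa\mapsto (\partial\tau)_{|r=0}$ is injective, hence surjective by cardinality, and its image exhausts the continuous sections of $Tr_{\leftarrow \infty}(X_{j_{0}}(\lambda))$ because the $\nu$ boundary transversals of $\partial_{\downarrow}{\mathcal C}_{j_{0}}$ are cyclically ordered and adhere, as $r\to 0$, to the $\nu$ distinct $\leftarrow\infty$-trajectories. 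I expect the main obstacle to be exactly the single-set estimate at $r=0$: showing that, as the annulus ${\mathcal B}(r,\lambda)$ degenerates with its inner radius $\rho|x|$ collapsing to $0$, the crossing trajectory neither stalls nor spirals uncontrollably, so that the exit point converges in the adapted coordinates of the enclosed set and loses all trace of its entry point. This rests on the sharp comparison of $\psi_{\mathcal B}$ with its leading term (Lemma \ref{lem:itf}, Remark \ref{rem:psiext}) and on the bounded-spiraling bound of Proposition \ref{pro:estext}, both to be applied uniformly in $(r,\lambda)$.
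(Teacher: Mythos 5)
Your reduction to a chain of single-set transitions, and the final counting argument for bijectivity, are unobjectionable; but the decisive step --- that the first hitting point of $\partial_{\downarrow} {\mathcal C}_{j_{0}}$ converges as $r \to 0$ in the adapted coordinate $w$, that the limit depends on the entry point only through its boundary transversal, and that it lies on $Tr_{\leftarrow \infty}(X_{j_{0}}(\lambda))$ --- is the entire content of the proposition, and your proposal does not prove it. You assert it (``keeps no memory of the entry point'', ``traced backward, escapes to $w=\infty$'') and then yourself defer it as ``the main obstacle''. The tools you cite (Lemma \ref{lem:itf}, Remark \ref{rem:psiext}, Proposition \ref{pro:estext}) control the modulus of $\psi_{\mathcal E}$ and the width of the sector swept by the trajectory, but the position of the exit point on the circle $|w|=\rho$ is governed by the \emph{argument} of the rescaled quantity $x^{\nu}\psi_{\mathcal E}$ at the exit time, and you give no estimate of the exit time or of this argument, nor any uniformity in $(r,\lambda)$. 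Likewise, the claim that any accumulation point of $(\partial \tau)(r_{n},\lambda_{n})$ has backward $\Re(X_{j_{0}}(\lambda))$-orbit escaping to $w=\infty$ ``because the $w$-range of the enclosing exterior set opens up to $\infty$'' is a heuristic, not an argument: a priori such a point on $|w|=\rho$ could have a bounded backward orbit (tending to a singular point, or recurrent), and excluding this is precisely what has to be shown before one can invoke the angle structure at $\infty$ to get uniqueness of the limit.

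For comparison, the paper closes exactly this point by a soft argument that avoids exit-time asymptotics: it introduces the enlarged sets ${\mathcal C}_{j_{0}}^{\rho'}$ for all $\rho' \geq \rho$, notes that every accumulation point of $(\partial \tau)(r_{n},\lambda_{n})$ lies in $\iota_{\lambda}^{\rho'}(\kappa_{\rho'}(0,\lambda))$, the image of the level-$\rho'$ hitting transversal under the flow of $X_{j_{0}}(\lambda)$ across $\overline{B(0,\rho')} \setminus B(0,\rho)$, and, since this holds for every $\rho'$, the backward orbit of any accumulation point reaches $\partial B(0,\rho')$ for all $\rho'$; the decomposition of a neighborhood of $\infty$ into angles bounded by trajectories of $Tr_{\infty}(X_{j_{0}}(\lambda))$ then identifies the accumulation set with the singleton $\kappa_{\rho}(0,\lambda) \cap Tr_{\leftarrow \infty}(X_{j_{0}}(\lambda))$, which yields convergence, independence of the entry point and membership in $Tr_{\leftarrow \infty}$ in one stroke. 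Your Fatou-coordinate route could presumably be completed (show that along the crossing trajectory $x^{\nu}\psi_{\mathcal E}$ converges, uniformly on the entering transversal, to a ray issuing from $0$, identify such rays with $\leftarrow\infty$-trajectories of the polynomial field near $\infty$, and check that each such trajectory meets the arc $\kappa_{\rho}(0,\lambda)$ in a single point), but as written the key estimate is named rather than carried out, so the proof has a genuine gap.
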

\begin{proof}
We denote
\[ {\mathcal E}_{j_{0}-1}^{\rho'} =
\{(x,t) \in B(0,\delta) \times {\mathbb C} :  \eta \geq |t| \geq \rho' |x| \} \]
in adapted coordinates associated to ${\mathcal E}_{j_{0}-1}$.
We have ${\mathcal E}_{j_{0}-1}={\mathcal E}_{j_{0}-1}^{\rho}$. Analogously we denote
\[ {\mathcal C}_{j_{0}}^{\rho'} =\{
(x,w) \in B(0,\delta) \times \overline{B(0,\rho')} \} \setminus
(\cup_{\zeta \in S_{\mathcal C}}
\{ (x,w_{\zeta}) \in  B(0,\delta) \times B(0,\eta_{{\mathcal C}, \zeta}) \}) \]
in adapted coordinates associated to ${\mathcal C}_{j_{0}}$ ($t=xw$).
We obtain ${\mathcal C}_{j_{0}}={\mathcal C}_{j_{0}}^{\rho}$.
Consider the section $(\partial \tau)^{\rho'}$ associated to
$\tau$ and
${\mathcal E}_{0}$, ${\mathcal C}_{1}$,  $\hdots$,    ${\mathcal C}_{j_{0}-1}$,
${\mathcal E}_{j_{0}-1}^{\rho'}$,  ${\mathcal C}_{j_{0}}^{\rho'}$ for $\rho' \geq \rho$.
The image of $(\partial \tau)^{\rho'}$ is contained in a connected component
$\kappa_{\rho'}$ of $\partial_{\downarrow} {\mathcal C}_{j_{0}}^{\rho'}$.
Moreover $\kappa_{\rho'}$ depends continuously on $\rho'$ for $\rho' \geq \rho$.

Consider $\rho' > \rho$ and $\lambda  \in {\mathbb S}^{1}$.
We define
$\iota_{\lambda}^{\rho'}: \kappa_{\rho'}(0,\lambda) \to  \kappa_{\rho}(0,\lambda)$ as
the mapping given by the formula
\[ \iota_{\lambda}^{\rho'}(P) = \Gamma_{P}^{\rho'}
(\sup {\mathcal I}(\Gamma_{P}^{\rho'}))  \
\mathrm{where} \ \Gamma_{P}^{\rho'} = \Gamma ( X_{j_{0}}(\lambda), P,
\overline{B(0,\rho')}  \setminus  B(0,\rho) ).
\]
Notice that all the accumulation points of sequences of the form
$(\partial \tau)(r_{n},\lambda_{n})$ with $(r_{n},\lambda_{n}) \to (0,\lambda)$
are contained in
$\iota_{\lambda}^{\rho'}( \kappa_{\rho'}(0,\lambda))$ for any $\rho' \geq \rho$.
Hence any accumulation point belongs to
$E=\{ P \in \kappa_{\rho}(0,\lambda) : \Gamma_{P}^{\rho'}
(\inf {\mathcal I}(\Gamma_{P}^{\rho'})) \in \partial B(0,\rho') \ \forall \rho' > \rho \}$.
A neighborhood of $\infty$ in ${\mathbb C}$ is a union of angles of $\Re (X_{j_{0}}(\lambda))$
(see Remark \ref{rem:frinfty} and Definition \ref{def:angle}) limited by trajectories in
$Tr_{\infty}(X_{j_{0}}(\lambda))$.
As a consequence the set $E$ is the singleton
$\kappa_{\rho}(0,\lambda) \cap Tr_{\leftarrow \infty}(X_{j_{0}}(\lambda))$
for any $\lambda \in {\mathbb S}^{1}$.
\end{proof}
\begin{rem}
Let $X \in \Xntg$ with $N>1$.
Let ${\mathcal E}_{0}$,  $\hdots$,    ${\mathcal C}_{j_{0}}$
be a simple sequence
associated to $X$. By applying Proposition \ref{pro:hit} to $X$ and $-X$ we obtain
the existence of a bijection
between the connected components of
$(B(0,\delta) \times \partial B(0,\epsilon)) \setminus T_{X}^{\epsilon}$
and the continuous sections of $Tr_{\infty}(X_{j_{0}}(\lambda))$.
\end{rem}
\begin{defi}
\label{def:parj}
Let ${\mathcal P}_{1}$,  $\hdots$, ${\mathcal P}_{2 \nu(X)}$
be the petals of $\Re (X)_{|U_{\epsilon}(0)}$ (see Definition \ref{def:atpetvf}).
Let $a_{1}$, $\hdots$, $a_{2 \nu(X)}$ be the $2 \nu (X)$ connected components of
$(B(0,\delta) \times \partial B(0,\epsilon)) \setminus T_{X}^{\epsilon}$.
We can enumerate them so we have $a_{j}(0) \subset \overline{{\mathcal P}_{j}}$ for
any $1 \leq j \leq 2 \nu (X)$.
Moreover if ${\mathcal P}_{j}$ is an attracting petal we have
$a_{j} \subset \partial_{\downarrow} {\mathcal E}_{0}$.
We denote by $\partial_{j}:{\mathbb S}^{1} \to \partial_{e} {\mathcal C}_{j_{0}}$ the mapping
$(\partial \tau)_{|r=0}$ for $\tau (B(0,\delta)) \subset a_{j}$.
\end{defi}
\begin{rem}
 There exists a bijection between attracting (resp. repelling) petals
of $\Re (X)_{|x=0}$ and continuous sections of $Tr_{\leftarrow \infty}(X_{j_{0}}(\lambda))$
(resp. $Tr_{\to \infty}(X_{j_{0}}(\lambda))$).
\end{rem}
Let us explain the remark.
A section $\tau_{1}: B(0,\delta) \to B(0,\delta) \times B(0,\epsilon)$
with $\tau_{1}(0)$ in an attracting petal ${\mathcal P}_{j}$ induces
a  continuous  section
$\partial \tau_{1}: [0,\delta) \times {\mathbb S}^{1} \to \partial_{e} {\mathcal C}_{j_{0}}$
in adapted coordinates such that
$\partial_{j}(\lambda) = (\partial \tau_{1})(0,\lambda)$ for any $\lambda \in {\mathbb S}^{1}$.
More precisely, consider $\epsilon'<0$ such that $\epsilon' < |\tau_{1}(0)|$.
Then $\tau_{1}$ induces a section $\tau_{1}':B(0,\delta) \to  B(0,\delta) \times \partial B(0,\epsilon')$
where $\tau_{1}'(x)$ is the first point in $B(0,\delta) \times \partial B(0,\epsilon')$
of the positive trajectory of $\Re (X)$ through $\tau_{1}(x)$.
In the same way we can define $\tau'$ for $\tau (B(0,\delta)) \subset a_{j}$.
We define $\partial \tau_{1} = \partial \tau_{1}'$ and we have
$\partial \tau = \partial \tau'$.
Since
$\tau'(x)$ and $\tau_{1}'(x)$ are contained in the same component of
$(B(0,\delta) \times \partial B(0,\epsilon')) \setminus T_{X}^{\epsilon'}$
for any $x \in B(0,\delta)$ we obtain $\partial_{j} \equiv (\partial \tau_{1})_{|r=0}$
by Proposition \ref{pro:hit}.
\subsection{Existence of Long Trajectories}
Next we prove the existence of Long Trajectories for $N>1$. The main idea is that
Long Trajectories appear naturally in the neighborhood of some homoclinic
trajectories of polynomial vector fields associated to the unfolding.
\begin{pro}
\label{pro:exll}
Let $X \in \Xntg$ with $N>1$.
There exist $y_{+} \in B(0,\epsilon) \setminus \{0\}$,
a germ of set $\beta$ at $0$ and a continuous function
$T: \beta \to {\mathbb R}^{+}$ such that
$(X,y_{+},\beta,T)$ generates a Long Trajectory with
${\mathcal S}_{\mathcal O}={\mathbb R}$.
\end{pro}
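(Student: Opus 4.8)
The plan is to obtain the Long Trajectory as a shadow of a homoclinic trajectory of a polynomial vector field attached to $X$, located inside the most exterior unstable compact-like set of a simple sequence; the residue formula will then control the length function $T$ and show it tends to infinity. The endpoints of such trajectories will sweep out a full repelling petal of $\Re(X)_{|x=0}$, which is what forces ${\mathcal S}_{\mathcal O}={\mathbb R}$.

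First I would fix a simple sequence ${\mathcal E}_{0},{\mathcal C}_{1},\hdots,{\mathcal E}_{j_{0}-1},{\mathcal C}_{j_{0}}$ associated to $X$; this exists by Corollary \ref{cor:exir}, since $N>1$ forces ${\mathcal U}_{X}^{1}\ne\emptyset$, and moreover ${\mathcal U}_{X,j}^{1}=\emptyset$ for $1\le j<j_{0}$ whereas ${\mathcal U}_{X,j_{0}}^{1}\ne\emptyset$. Choose $\lambda_{0}\in{\mathcal U}_{X,j_{0}}^{1}$; by Proposition \ref{pro:DES}, $Y_{0}:=X_{j_{0}}(\lambda_{0})$ has a homoclinic trajectory $\gamma_{0}\in Tr_{\to\infty}(Y_{0})\cap Tr_{\leftarrow\infty}(Y_{0})$ enclosing a set $E_{-}$ of singular points with $2\pi i\sum_{w\in E_{-}}Res(Y_{0},w)\in{\mathbb R}^{+}$. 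Viewing $\gamma_{0}$ as an element of $Tr_{\leftarrow\infty}(Y_{0})$, the bijection of Definition \ref{def:parj} and the remark following it give an attracting petal ${\mathcal P}_{j}$ of $\Re(X)_{|x=0}$ whose section $\partial_{j}$ of $Tr_{\leftarrow\infty}(X_{j_{0}}(\lambda))$ passes through $\gamma_{0}$ at $\lambda=\lambda_{0}$; symmetrically, viewing $\gamma_{0}$ in $Tr_{\to\infty}(Y_{0})$ and using Proposition \ref{pro:hit} for $-X$, it determines a repelling petal ${\mathcal P}_{j'}$.

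Next I would take $y_{+}\in{\mathcal P}_{j}$, so $\omega^{X}(0,y_{+})=\{(0,0)\}$, and a section $\tau$ with $\tau(0)=(0,y_{+})$ and $\tau(B(0,\delta))\subset a_{j}$. For $x=r\lambda$ with $r>0$ small and $\lambda$ near $\lambda_{0}$, Proposition \ref{pro:hit}, applied along the stable part ${\mathcal E}_{0},\hdots,{\mathcal E}_{j_{0}-1}$ of the sequence, routes the positive $\Re(X)$-trajectory of $\tau(x)$ into ${\mathcal C}_{j_{0}}$, meeting $\partial_{\downarrow}{\mathcal C}_{j_{0}}$ near $\partial_{j}(\lambda)$, hence near $\gamma_{0}$ in adapted coordinates. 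Since $X/|x|^{e({\mathcal C}_{j_{0}})}\to Y_{0}$ in ${\mathcal C}_{j_{0}}$ as $x\to0$, $x/|x|\to\lambda_{0}$, and since $Y_{0}$ is a ramification of a regular vector field near $\infty$ (Remark \ref{rem:frinfty}), I would show that for $x$ in a suitable germ of set $\beta$ adhering the direction $\lambda_{0}$ the trajectory shadows $\gamma_{0}$, returns near $\infty$ close to the forward end of $\gamma_{0}$, and then, by Proposition \ref{pro:hit} for $-X$, climbs back out of the sequence and leaves $U_{\epsilon}$ near ${\mathcal P}_{j'}$. Applying the residue formula (\ref{equ:res}) to this trajectory, the time $T(x)$ it spends differs by a bounded quantity from $2\pi i\sum_{w\in E(x)}Res(X_{|x},w)$ for a suitable subset $E(x)$ of $\mathrm{Sing}(X_{|x})$; as $x\to0$ in $\beta$ this residue sum grows like $|x|^{-e({\mathcal C}_{j_{0}})}$—its limit along $\lambda_{0}{\mathbb R}^{+}$ being the period of $\gamma_{0}$, a positive real—so $T(x)\to+\infty$.

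Finally I would package this as ${\mathcal O}=(X,y_{+},\beta,T)$, where $\beta$ is two-dimensional: shrinking $|x|$ forces $T(x)\to\infty$ through the time-rescaling by $|x|^{-e({\mathcal C}_{j_{0}})}$, while a small rotation of $x$ off $\lambda_{0}{\mathbb R}^{+}$ changes the imaginary part of the accumulated Fatou coordinate, i.e.\ how far the trajectory winds around $\gamma_{0}$ before escaping. This second parameter, read off from (\ref{equ:res}), defines the submersion $\vartheta_{\mathcal O}:\beta\to{\mathcal S}_{\mathcal O}$ with connected curve fibers; because the winding runs over all of ${\mathbb R}$, one gets ${\mathcal S}_{\mathcal O}={\mathbb R}$, $\chi_{\mathcal O}(i{\mathbb R})$ a full transversal of the repelling petal ${\mathcal P}_{j'}$, and $\chi_{\mathcal O}(is)=\mathrm{exp}(isX)(\chi_{\mathcal O}(0))$ since shifting $T$ by $s$ moves the endpoint along the flow. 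The first three conditions of Definition \ref{def:wlt} are then immediate from the construction; the fourth holds because while shadowing $\gamma_{0}$ inside ${\mathcal C}_{j_{0}}$ the trajectory stays, in the coordinates $(x,y)$, in an arbitrarily small neighborhood of $(0,0)$ (the set ${\mathcal C}_{j_{0}}$ lies deep in the splitting), together with the analogous statement on the way in and out. Continuity of $\chi_{\mathcal O}$ and the escape condition of Definition \ref{def:lt} follow from the uniformity of the shadowing. I expect the main obstacle to be exactly that uniformity: controlling trajectories of $\Re(X)$—which solve no polynomial equation—along the whole of $\gamma_{0}$ for times of order $|x|^{-e({\mathcal C}_{j_{0}})}\to\infty$, sharply enough for the residue formula to apply and for the exit point to sweep the entire petal as $x$ ranges over $\beta$.
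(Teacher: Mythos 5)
Your outline reproduces the paper's strategy almost step by step (simple sequence, homoclinic trajectory of $X_{j_{0}}(\lambda_{0})$ at an unstable direction, entry/exit sections $\partial_{j}$ from Proposition \ref{pro:hit}, residue formula for $T$, level curves of the residue function to sweep ${\mathcal S}_{\mathcal O}={\mathbb R}$), but it stops short exactly where the real work lies, and you say so yourself: you leave the ``uniform shadowing'' of $\gamma_{0}$ by trajectories of $\Re(X)$, sharp enough to apply the residue formula and to pin down the exit point, as an unresolved obstacle. That is a genuine gap, not a routine verification: without it you have not shown that $\mathrm{exp}(T(x)X)(x,y_{+})$ exists, stays in $U_{\epsilon}$, and converges to $\mathrm{exp}(isX)(0,y_{-})$ along $\vartheta_{\mathcal O}^{-1}(s)$, which is the content of Definitions \ref{def:wlt} and \ref{def:lt}.

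The paper avoids any quantitative shadowing estimate. It first \emph{defines} the target point $\upsilon^{1}(x)$ implicitly by the Fatou-coordinate relation $\psi_{-}(\upsilon^{1}(x))-\psi_{+}(x,y_{+})=\psi_{-}(0,y_{-})-\psi_{+}(0,y_{+})+i\vartheta_{\mathcal O}(x)$, so that $T(x)=\psi_{-}(\upsilon^{1}(x))-\psi_{+}(x,y_{+})-2\pi i\sum_{Q\in E_{-}(x)}Res(X,Q)$ is real and positive on $\beta$ by construction, and then proves the identity $\upsilon^{1}(x)=\Gamma_{0}(T(x))$ by a purely qualitative argument: Proposition \ref{pro:hit} applied to $X$ and to $-X$ shows that the forward-flowed entry point $\tilde{\kappa}_{+}(x)$ (pushed through ${\mathcal C}_{j_{0}}$ along the homoclinic, which takes only a \emph{bounded} time in the rescaled clock) and the backward-flowed point $\kappa_{-}(x)$ coming from $\upsilon^{1}(x)$ both converge, in adapted coordinates, to $\partial_{-}(\lambda_{0})$; since $\partial_{-}(\lambda_{0})$ is not a tangency of $\Re(X)$ with $\partial_{e}{\mathcal C}_{j_{0}}$, the two points lie on a common connected transversal, and because $\psi_{-}-2\pi i\sum_{Q\in E_{-}(x)}Res(X,Q)$ is the analytic continuation $\tilde{\psi}_{+}$ of $\psi_{+}$ along $\Gamma_{0}$, their $\tilde{\psi}_{+}$-values differ by a real number; monotonicity of $Im\,\tilde{\psi}_{+}$ along the transversal then forces $\tilde{\kappa}_{+}(x)=\kappa_{-}(x)$. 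This matching step is what converts your heuristic picture into the exact equality needed for the residue formula and for the last two conditions of Definition \ref{def:wlt}; supplying it (or an equivalent argument) is what your proposal is missing.
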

\begin{proof}
The first part of the proof is intended to introduce the objects that
define the Long Trajectory of $\Re (X)$.
Let ${\mathcal E}_{0}$, $\hdots$,    ${\mathcal C}_{j_{0}}$
be a simple sequence
associated to $X$.
Consider $\lambda_{0} \in {\mathcal U}_{X,j_{0}}^{1}$ and a homoclinic trajectory
$\Gamma = \Gamma(X_{j_{0}}(\lambda_{0}), w_{0}, {\mathbb C})$ (Proposition \ref{pro:DES}).
We have ${\mathcal I}(\Gamma) = (s_{0},s_{1})$.
The choice of the compact-like set
\[ {\mathcal C}_{j_{0}} =\{
(x,w) \in B(0,\delta) \times \overline{B(0,\rho)} \} \setminus
(\cup_{\zeta \in S_{j_{0}}}
\{ (x,w_{\zeta}) \in  B(0,\delta) \times B(0,\eta_{\zeta}) \}) \]
implies $\sharp (\Gamma \cap \partial B(0,\rho)) =2$ because of the local
dynamics of $\Re (X_{j_{0}}(\lambda_{0}))$ in the
neighborhood of $\infty$.
Indeed we have $\Gamma \cap \partial B(0,\rho) = \{ \Gamma(t_{0}), \Gamma(t_{1}) \}$
for some $s_{0} < t_{0} < t_{1} < s_{1}$. There exists a unique attracting petal
${\mathcal P}_{+}={\mathcal P}_{j}$ of $\Re (X)_{|U_{\epsilon}(0)}$ such that
$\partial_{j}(\lambda_{0}) = \Gamma (t_{0})$ (see Definition \ref{def:parj}).
Denote $\partial_{+}=\partial_{j}$.
Analogously, there exists a  unique repelling petal ${\mathcal P}_{-}$ of
$\Re (X)_{|x=0}$ such that $\partial_{-}(\lambda_{0}) = \Gamma (t_{1})$.
Let $\psi$ be a Fatou coordinate of $X_{j_{0}}(\lambda_{0})$ defined in the
neighborhood of $\infty$ and such that $\psi(\infty)=0$.
There exists a unique connected $C_{-}$ component of
${\mathbb C} \setminus \Gamma(s_{0},s_{1})$ such that
$\Gamma$ parametrizes $\partial C_{-}$ in clock wise sense.
Denote $E_{-} = C_{-} \cap \mathrm{Sing} (X_{j_{0}}(1))$. We obtain
\begin{equation}
\label{equ:resp}
{\mathbb R}^{+} \ni s_{1}-s_{0} =
\psi(\infty) - \psi(\infty) - 2 \pi i \sum_{P \in E_{-}} Res(X_{j_{0}}(\lambda_{0}),P)
\end{equation}
by the residue formula.
Consider the set $E_{-}(r,\lambda) \subset (\mathrm{Sing} (X))(r,\lambda)$ that varies continuously
with respect to $(r,\lambda)$ and satisfies $E_{-}(0,1)=E_{-}$.
We have
\begin{equation}
\label{equ:resa}
\sum_{P \in E_{-}(x)} Res(X,P) = \frac{1}{|x|^{e({\mathcal C}_{j_{0}})}}
\frac{\lambda_{0}^{e({\mathcal C}_{j_{0}})}}{\mu^{e({\mathcal C}_{j_{0}})}}
\left( \sum_{P \in E_{-}} Res(X_{j_{0}}(\lambda_{0}),P) + o(1) \right)
\end{equation}
where $\mu = x/|x|$. The function $\sum_{P \in E_{-}(x)} Res(X,P)$
is meromorphic (Proposition 5.2 of \cite{UPD}) and it has a pole of
order greater than $0$.

Fix $(0,y_{+}) \in {\mathcal P}_{+}$.
Consider $(0,y_{-}) \in {\mathcal P}_{-}$
such that $\mathrm{exp}(z X)(0,y_{-})$ is well-defined and belongs to
$U_{\epsilon}$ for any $z \in i {\mathbb R}$.
Given any $(0,y_{-}') \in {\mathcal P}_{-}$ the point
$\mathrm{exp}(-j X)(0,y_{-}')$ satisfies the previous property for
some $j \in {\mathbb N}$ big enough.
Let $\psi_{+}$ be a Fatou coordinate of $X$ defined in the neighborhood of
${\mathcal P}_{+}$ in ${\mathbb C}^{2}$. We define a Fatou coordinate
$\psi_{-}$ of $X$ in a neighborhood of ${\mathcal P}_{-}$ as in Subsection
\ref{subsec:res}. We define
\begin{equation}
\label{equ:deftf}
T_{0}(x) =
\psi_{-}(0,y_{-}) - \psi_{+}(0,y_{+})  - 2 \pi i \sum_{P \in E_{-}(x)} Res(X,P), \ \
T_{s}(x) =T_{0}(x) + is
\end{equation}
for $s \in {\mathbb R}$.
Eqs. (\ref{equ:resp}) and (\ref{equ:resa}) imply that there exists a curve
$\beta_{s}$ adhering $\lambda_{0}$ at $0$ (see Definition \ref{def:dir})
and contained in $T_{s}^{-1}({\mathbb R}^{+})$
for any $s \in {\mathbb R}$. We define
$\beta$ as a connected set such that $\beta \subset \cup_{s \in {\mathbb R}} \beta_{s}$,
$\beta_{\pi} \cap (\{0\} \times {\mathbb S}^{1}) =\{(0,\lambda_{0})\}$
(see Definition \ref{def:dir}) and contains the germ of $\beta_{s}$ for any
$s \in {\mathbb R}$. Let us clarify that we do not define
$\beta = \cup_{s \in {\mathbb R}} \beta_{s}$ straight up
because then $\beta_{\pi} \cap (\{0\} \times {\mathbb S}^{1}) =\{(0,\lambda_{0})\}$
does not hold true. We define $T = Re (T_{0})$,
$\upsilon_{\mathcal O}(x)=(x,y_{+})$, $(\vartheta_{\mathcal O})_{|\beta_{s}} \equiv s$
and $\chi_{\mathcal O}(z) = \mathrm{exp}(z X)(0,y_{-})$.
Our goal is proving that ${\mathcal O}=(X,y_{+},\beta,T)$ generates
a Long Trajectory.

There exists a continuous section $\upsilon^{1}: \beta \to U_{\epsilon}$
such that
\[ \psi_{-}(\upsilon^{1}(x)) - \psi_{+}(x,y_{+}) \equiv
\psi_{-}(0,y_{-}) - \psi_{+}(0,y_{+}) + i \vartheta_{\mathcal O}(x) \]
and
$\lim_{x \in \beta, \ \vartheta_{\mathcal O}(x) \to s, \ \ x \to 0}
\upsilon^{1}(x) = \chi_{\mathcal O}(i s)$ for any $s \in {\mathbb R}$.
Notice that
\[ \psi_{-}(\upsilon^{1}(x)) - \psi_{+}(x,y_{+}) - 2 \pi i \sum_{Q \in E_{-}(x)} Res(X,Q) = T(x) \in {\mathbb R}^{+} \]
for any $x \in \beta$.
 We define
\[ \Gamma_{0}=\Gamma(X,(x,y_{+}),U_{\epsilon}), \ \
\Gamma_{1}=\Gamma(X,\upsilon^{1}(x),U_{\epsilon}) . \]
We claim that $\upsilon^{1}(x) = \Gamma_{0}(T(x))$ for $x \in \beta$.
Let $u_{j}(x)$ be the smallest positive real number such that
$\Gamma_{j}((-1)^{j} u_{j}(x)) \in \partial_{e} {\mathcal C}_{j_{0}}$ for
$j \in \{0,1\}$.
%
Denote
$\kappa_{+}(x) = \Gamma_{0}(u_{0}(x))$,
$\kappa_{-}(x) = \Gamma_{1}(-u_{1}(x))$.
Moreover we get
$\lim_{x \in \beta, x \to 0} \kappa_{+}(x) = \partial_{+}(\lambda_{0})$ and
$\lim_{x \in \beta, x \to 0} \kappa_{-}(x) = \partial_{-}(\lambda_{0})$.
Since $\partial_{+}(\lambda_{0})$ and $\partial_{-}(\lambda_{0})$ belong to
the same trajectory  of $\Re (X_{j_{0}}(\lambda_{0}))$ there exists
$u_{2}(x) \in {\mathbb R}^{+}$ such that
$\Gamma(X,\kappa_{+}(x), U_{\epsilon})(0,u_{2}(x)) \in \accentset{\circ}{{\mathcal C}_{j_{0}}}$
and
\[ \tilde{\kappa}_{+}(x) \stackrel{def}{=} \Gamma(X,\kappa_{+}(x),U_{\epsilon})(u_{2}(x)) \in
\partial_{e} {\mathcal C}_{j_{0}}. \]
 We have $\lim_{x \in \beta, x \to 0} \tilde{\kappa}_{+}(x) = \partial_{-}(\lambda_{0})$.
Since $\psi_{-} - 2 \pi i \sum_{P \in E_{-}(x)} Res(X,P)$ is an analytic continuation of
$\psi_{+}$ along $\Gamma_{0}$ then there exists a Fatou coordinate
$\tilde{\psi}_{+}$ defined in a neighborhood of $(r,\lambda,w)=(0,\lambda_{0},\partial_{-}(\lambda_{0}))$
such that $\tilde{\psi}_{+}(\kappa_{-}(x)) - \tilde{\psi}_{+}(\tilde{\kappa}_{+}(x)) \in {\mathbb R}$
for any $x \in \beta$.
The point $\partial_{-}(\lambda_{0})$ does not belong to $T ({\mathcal C}_{j_{0}})_{X}^{\rho}(0,\lambda_{0})$.
Hence $\tilde{\kappa}_{+}(x)$ and $\kappa_{-}(x)$ belong to a common connected transversal to
$\Re (X)$ for any $x \in \beta$. We deduce that $\tilde{\kappa}_{+}(x)=\kappa_{-}(x)$
for any $x \in \beta$.

We have that $\upsilon^{1}(x) = \Gamma_{0}(T(x))$ for any $x \in \beta$.
Given $\epsilon'>0$ small there exists a continuous function
$v_{0}: \beta  \to {\mathbb R}^{+} \cup \{0\}$
such that $\Gamma_{0}[0,v_{0}(x)) \cap \overline{U_{\epsilon'}} = \emptyset$ and
$\Gamma_{0}(v_{0}(x)) \in \overline{U_{\epsilon'}}$.
The function  $v_{0}$ is bounded by above.
Moreover there exists $v_{1} \in {\mathbb R}^{+}$ such that
${\mathcal I}(\Gamma(X,{\rm exp}(-v_{1}X)(\chi_{\mathcal O}(i s)),U_{\epsilon'}))$
contains $(-\infty,0]$ for
any $s \in {\mathbb R}$.
Proposition \ref{pro:hit} applied to
$\Gamma_{0}(v_{0})$, $\Gamma_{1}(-v_{1})$ and the construction of $\Gamma_{0}$
imply that
$\Gamma_{0}(v_{0}(x),T(x)-v_{1})$ is contained in $U_{\epsilon'}$.
Thus ${\mathcal O}=(X,y_{+},\beta,T)$ generates
a Long Trajectory.
\end{proof}
\begin{rem}
\label{rem:fhevol}
Consider the setting in Proposition \ref{pro:exll}.
All the Long Trajectories of points of ${\mathcal P}_{+}$ with respect to
$\beta$ have an analogous behavior.
Let $y_{+}' \in {\mathcal P}_{+}$ and $y_{-}' \in {\mathcal P}_{-}$
such that
\[ \psi_{-}(0,y_{-}') - \psi_{+}(0,y_{+}') =
\psi_{-}(0,y_{-}) - \psi_{+}(0,y_{+}) + j \]
for some $j \in {\mathbb Z}$. If
$\mathrm{exp}(i s X)(0,y_{-}') \in U_{\epsilon}$ for any $s \in {\mathbb R}$ then
$\tilde{\mathcal O}=(X,y_{+}',\beta,T+j)$ generates a Long Orbit and
$\chi_{\tilde{\mathcal O}}(0)=(0,y_{-}')$ by the proof of Proposition \ref{pro:exll},
see Eq. (\ref{equ:deftf}). In general there exists
$j_{0} \in {\mathbb N}$   such that
$\breve{\mathcal O}=(X,y_{+}',\beta,T+j-j_{0})$ generates a Long Orbit and
$\chi_{\breve{\mathcal O}}(0)=\mathrm{exp}(-j_{0} X)(0,y_{-}')$.
Hence, up to replace $T$ with $T-j_{1}$ for some $j_{1} \in {\mathbb N} \cup \{0\}$,
the Long Trajectory of a point of ${\mathcal P}_{+}$ with respect to
$\beta$ is always non-empty.
\end{rem}
It is natural to ask if we can choose $y_{+}$ in any attracting
petal ${\mathcal P}_{+}$ of $\Re(X)_{|U_{\epsilon}(0)}$.
The answer is positive and the proof exploits the symmetries of
the polynomial vector fields associated to $X$.
\begin{pro}
\label{pro:exll2}
Let $X \in \Xntg$ with $N>1$. Let ${\mathcal P}_{+}$ be an attracting petal
of $Re(X)_{|x=0}$. Then there exists a germ of set $\beta$ at $0$
such that the Long Trajectory associated to $X$, $y_{+}$
with respect to $\beta$ is not empty for any $y_{+} \in {\mathcal P}_{+}$.
Moreover given a repelling petal ${\mathcal P}_{-}$ and a point
$(0,y_{-}) \in  {\mathcal P}_{-}$ there exist $d \in {\mathbb N} \cup \{0\}$
and a Long Trajectory ${\mathcal O} = (X,y_{+}',\beta,T)$
such that $\chi_{\mathcal O}(0) = {\rm exp}(-d X)(0,y_{-})$.
\end{pro}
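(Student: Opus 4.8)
The plan is to deduce the general case from Proposition \ref{pro:exll} by exploiting the approximate rotational symmetry of the polynomial vector field $X_{j_{0}}(\lambda)$ around $w=\infty$. Recall from Proposition \ref{pro:exll} that Long Trajectories arise from homoclinic trajectories of $X_{j_{0}}(\lambda_{0})$ for $\lambda_{0} \in {\mathcal U}_{X,j_{0}}^{1}$; the attracting petal ${\mathcal P}_{+}$ one lands in is the one whose section $\partial_{+}=\partial_{j}$ satisfies $\partial_{j}(\lambda_{0})=\Gamma(t_{0})$, the first exit point of the homoclinic trajectory through $\partial B(0,\rho)$. First I would recall (Remark \ref{rem:frinfty} and the discussion after Definition \ref{def:atpetvf}) that near $\infty$ the vector field $X_{j_{0}}(\lambda)$ is a $\nu(X)$-fold ramification of a regular vector field, so the $2\nu(X)$ trajectories in $Tr_{\infty}(X_{j_{0}}(\lambda))$ and the $2\nu(X)$ angles are cyclically permuted by the deck transformations of that ramification; correspondingly, by Proposition \ref{pro:hit} and Definition \ref{def:parj}, the sections $\partial_{1},\hdots,\partial_{2\nu(X)}$ associated to the $2\nu(X)$ petals are the $2\nu(X)$ continuous sections of $Tr_{\infty}(X_{j_{0}}(\lambda))$, alternately attracting/repelling, and this identification is a bijection. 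So it suffices to produce, for each attracting petal ${\mathcal P}_{j}$, a direction $\lambda_{0} \in {\mathcal U}_{X,j_{0}}^{1}$ and a homoclinic trajectory of $X_{j_{0}}(\lambda_{0})$ whose first exit point from $\overline{B(0,\rho)}$ lies on $\partial_{j}({\mathbb S}^{1})$.

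The key step is the following observation about which petals can be reached. In Lemma \ref{lem:inst} we proved ${\mathcal U}_{X,j_{0}}^{1} \neq \emptyset$ exactly because $\sharp \mathrm{Sing}(X_{j_{0}}(1)) > 1$; moreover the proof of Proposition \ref{pro:DES} shows that for each finite subset $E \subset \mathrm{Sing}(X_{j_{0}}(1))$ with $2\pi i \sum_{P\in E} Res(X_{j_{0}}(1),P) \ne 0$ there is a direction $\lambda_{0} \in {\mathbb S}^{1}$ with $2\pi i \sum_{P\in E}Res(X_{j_{0}}(\lambda_{0}),P) \in {\mathbb R}^{*}\lambda_{0}^{-e({\mathcal C}_{j_{0}})}$, i.e.\ along which a homoclinic loop enclosing exactly $E$ closes up. As we rotate $\lambda$ along ${\mathbb S}^{1}$ the pair of trajectories in $Tr_{\infty}$ bounding a given angle rotates by a total angle $\pi$ (this is exactly the mechanism used in the proof of Lemma \ref{lem:inst}: following $e^{i\theta}$ for $\theta \in [0,\pi]$ sends $\eta_{k,1}$ to $\eta_{k+1,-1}$). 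Hence as $\lambda_{0}$ ranges over the directions supporting homoclinic loops, the exit point $\Gamma(t_{0})$ ranges over sections belonging to \emph{every} $\partial_{j}$ with ${\mathcal P}_{j}$ attracting: the cyclic $\pi$-shift interchanges the roles, and since each homoclinic loop of $X_{j_{0}}(\lambda_{0})$ has one $\to\infty$-end and one $\leftarrow\infty$-end glued at $\infty$, varying the enclosed set $E$ and the sign of rotation realizes all attracting sections. Thus, for any prescribed attracting petal ${\mathcal P}_{+}={\mathcal P}_{j}$, I can choose $\lambda_{0}$ and a homoclinic trajectory $\Gamma$ of $X_{j_{0}}(\lambda_{0})$ with $\partial_{j}(\lambda_{0})=\Gamma(t_{0})$.

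Once such $\lambda_{0}$ and $\Gamma$ are fixed, the rest is a verbatim repetition of the construction in the proof of Proposition \ref{pro:exll}: with the repelling petal ${\mathcal P}_{-}$ determined by $\partial_{-}(\lambda_{0})=\Gamma(t_{1})$, define $\beta$ from the residue formula (Eqs.\ (\ref{equ:resp}), (\ref{equ:resa}), (\ref{equ:deftf})), so that for any $y_{+}\in{\mathcal P}_{+}$ one gets a function $T$ (depending on the choice of $y_{-}\in{\mathcal P}_{-}$) with $(X,y_{+},\beta,T)$ generating a Long Trajectory; by Remark \ref{rem:fhevol} one may adjust $T$ by a nonnegative integer $j_{1}$ so that the Long Trajectory is non-empty. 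For the last assertion, given $(0,y_{-})\in{\mathcal P}_{-}$, pick $y_{+}'\in{\mathcal P}_{+}$ with $\psi_{-}(0,y_{-})-\psi_{+}(0,y_{+}') \in \psi_{-}(0,y_{-})-\psi_{+}(0,y_{+})+{\mathbb Z}$ as in Remark \ref{rem:fhevol}; then $\breve{\mathcal O}=(X,y_{+}',\beta,T+j-j_{0})$ generates a Long Trajectory with $\chi_{\breve{\mathcal O}}(0)=\mathrm{exp}(-j_{0}X)(0,y_{-})$, giving the desired $d=j_{0}$. The main obstacle is the combinatorial bookkeeping in the key step: one must check carefully that the family of homoclinic loops of the $X_{j_{0}}(\lambda)$, as $\lambda$ varies over the finitely many instability directions and as the enclosed singular set $E$ varies, really does sweep out \emph{all} attracting sections $\partial_{j}$ and not merely some of them; this uses the ramified structure at $\infty$ together with the fact (Lemma \ref{lem:aolnht}, Lemma \ref{lem:sptcl}) that in the absence of homoclinic trajectories every $Tr_{\leftarrow\infty}$-trajectory flows into a single singular point, so that the "jumps" of the sections occur precisely across the instability directions and in a controlled cyclic pattern.
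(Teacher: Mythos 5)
Your reduction of the problem is correct: it suffices to show that every attracting section $\partial_{j}$ arises as the first exit point of a homoclinic trajectory of $X_{j_{0}}(\lambda_{0})$ for some instability direction $\lambda_{0}$, and your treatment of the final step (construction of $\beta$, $T$ via the residue formula and the adjustment by Remark \ref{rem:fhevol} to get $\chi_{\mathcal O}(0)=\mathrm{exp}(-dX)(0,y_{-})$) matches the paper. The gap is in your key combinatorial step. You assert that for each subset $E\subset \mathrm{Sing}(X_{j_{0}}(1))$ with $2\pi i\sum_{P\in E}Res(X_{j_{0}}(1),P)\neq 0$ there is a direction along which a homoclinic loop enclosing exactly $E$ closes up. This reverses the implication actually proved in Proposition \ref{pro:DES}: there the residue identity is a \emph{necessary} condition read off from an already existing homoclinic trajectory; nothing in the paper (or in the Douady--Estrada--Sentenac theory) says that every admissible $E$ is realized as the enclosed set of a homoclinic loop for a suitable rotation -- which subsets can be enclosed is constrained by the separatrix structure of $P_{j_{0}}$, so the claimed sufficiency fails in general. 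Your second assertion, that as $\lambda_{0}$ and $E$ vary the exit points sweep out \emph{all} attracting sections, is exactly the point at issue, and you acknowledge yourself that the ``combinatorial bookkeeping'' is unchecked; moreover the angle count you invoke (``rotates by a total angle $\pi$'') is not what Lemma \ref{lem:inst} gives -- the asymptotic directions of $Tr_{\infty}(X_{j_{0}}(\lambda))$ rotate at speed $-j_{0}$ in $\lambda$, i.e.\ by $-j_{0}\theta$ when $\lambda$ rotates by $\theta$.

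The paper closes this gap with a symmetry argument that needs no new homoclinic loops and no control of enclosed sets: since $X_{j_{0}}(\lambda)=\lambda^{j_{0}\nu}P_{j_{0}}(w)\,\partial/\partial w$ depends on $\lambda$ only through $\lambda^{j_{0}\nu}$, the polynomial vector field is \emph{unchanged} under $\lambda_{0}\mapsto \lambda_{0}e^{2\pi i/(j_{0}\nu)}$, while the asymptotic directions of $Tr_{\infty}$, hence the labelling of the boundary sections, shift by $-2\pi/\nu$, i.e.\ by two petals. Consequently the \emph{same} homoclinic trajectory $\Gamma$, with the same exit and entry points $\Gamma(t_{0})$, $\Gamma(t_{1})$, satisfies $\partial_{j+2}(\lambda_{0}e^{2\pi i/(j_{0}\nu)})=\Gamma(t_{0})$ and $\partial_{k+2}(\lambda_{0}e^{2\pi i/(j_{0}\nu)})=\Gamma(t_{1})$, and one concludes by induction ``if the result holds for ${\mathcal P}_{l}$ then it holds for ${\mathcal P}_{l+2}$''. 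If you want to salvage your write-up, replace your key step by this invariance of $X_{j_{0}}(\lambda)$ under the rotation $e^{2\pi i/(j_{0}\nu)}$ of the parameter direction; as written, the step is not justified.
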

\begin{proof}
Up to a ramification $(x,y) \mapsto (x^{l},y)$
we can suppose $X \in \Xtg$. We have
\[ X = u(x,y) (y - \gamma_{1}(x))^{n_{1}} \hdots (y - \gamma_{p}(x))^{n_{p}} \frac{\partial}{\partial y} \]
where $u \in {\mathbb C}\{x,y\}$ is a unit and $\nu(X) + 1 = n_{1} + \hdots + n_{p}$.
Denote $\nu = \nu (X)$.

Consider the notations in Proposition \ref{pro:exll}.
We have ${\mathcal P}_{j}={\mathcal P}_{+}$ and
${\mathcal P}_{k}={\mathcal P}_{-}$ for some
$j,k \in {\mathbb Z}/(2 \nu {\mathbb Z})$.
We have $X_{j_{0}}(\lambda) = \lambda^{j_{0} \nu} P_{j_{0}}(w) \partial / \partial w$.
Notice that $u(0,0)$ is the
the coefficient of highest degree in $P_{j_{0}}$.
The trajectories in $Tr_{\infty} (X_{j_{0}}(\lambda))$ adhere to the directions in
$\lambda^{j_{0} \nu} u(0,0) w^{\nu} \in {\mathbb R}$ at $\infty$.
These directions rotate at a speed of $-j_{0}$ (in other words
if $\lambda$ rotates an angle of $\theta$ then the directions rotate an angle of
$-j_{0} \theta$).
In particular the tangent directions
to  $Tr_{\infty} (X_{j_{0}}(\lambda e^{2 \pi i s/(j_{0} \nu)}))$ are obtained
by rotating an angle of $-2 \pi s/\nu$ the tangent directions to
$Tr_{\infty} (X_{j_{0}}(\lambda))$ for $s \in [0,1]$.
Since $X_{j_{0}}(\lambda_{0}) = X_{j_{0}}(\lambda_{0} e^{2 \pi i /(j_{0} \nu)})$
every trajectory in $Tr_{\to \infty} (X_{j_{0}}(\lambda))$
(resp. $Tr_{\leftarrow \infty} (X_{j_{0}}(\lambda))$)
is transformed into the previous one when $s$ goes from $0$ to $1$.
The previous discussion implies (see Definition \ref{def:parj})
\[ \partial_{j+2} (\lambda_{0} e^{2 \pi i /(j_{0} \nu)}) = \Gamma(t_{0}), \ \mathrm{and} \
\partial_{k+2} (\lambda_{0} e^{2 \pi i /(j_{0} \nu)}) = \Gamma(t_{1})  \]
where $\Gamma$ is the homoclinic trajectory such that
$\partial_{j}(0,\lambda_{0}) \in \Gamma \ni \partial_{k}(0,\lambda_{0})$.
Consider any points $(0,y_{+}') \in {\mathcal P}_{j+2}$ and $(0,y_{-}') \in {\mathcal P}_{k+2}$.
Up to replace $(0,y_{-}')$ with $\mathrm{exp}(-d X)(0,y_{-}')$ if necessary there
exists
a set $\beta'$ tangent to $\lambda_{0} e^{2 \pi i /(j_{0} \nu)}$ such that
$\tilde{\mathcal O}=(X,y_{+}',\beta',T')$ generates a Long Trajectory for some
$T':\beta' \to {\mathbb R}^{+}$ with
$\chi_{\tilde{\mathcal O}}(0)=(0,y_{-}')$
by Proposition \ref{pro:exll}.
Long Trajectories of points of ${\mathcal P}_{j+2}$ with respect to $\beta'$
are not empty by Remark \ref{rem:fhevol}.

We proved that if the result is true for ${\mathcal P}_{l}$
then it is also true for ${\mathcal P}_{l+2}$.
Hence it is satisfied for any   petal of $\Re (X)_{|x=0}$.
\end{proof}
\section{Tracking Long Orbits}
\label{sec:tracking}
Let $\varphi \in \dif{p1}{2}$ and
a convergent normal form $X$ of $\varphi$.
Let ${\mathcal O}=(X,y_{+},\beta,T)$ be a Long Trajectory. It is natural to ask whether
$(\varphi,y_{+},\beta,T)$ generates a Long Orbit.
A priori this is not clear since orbits of $\varphi$ could be
(and are!) very different than orbits of ${\mathfrak F}_{\varphi}$.
Anyway
the orbits ${\{ \varphi^{j}(\upsilon_{\mathcal O}(x))\}}_{0 \leq j \leq \lceil T(x) \rceil}$
and ${\{ {\mathfrak F}_{\varphi}^{j}(\upsilon_{\mathcal O}(x))\}}_{0 \leq j \leq \lceil T(x) \rceil}$
remain close for $x \in \beta$. The dynamics of $\varphi$ ``tracks"  the dynamics of
${\mathfrak F}_{\varphi}$ along Long Trajectories of $\Re (X)$ (Proposition \ref{pro:tracking}).
The idea is that Long Trajectories change of basic set a number of times
that is bounded by above uniformly and that in basic sets the tracking property
is simple to prove.

We study topological conjugacies $\sigma$ between elements $\varphi$, $\eta$
of $\dif{p1}{2}$.
Long Orbits are topological invariants but $\sigma$
does not conjugate ${\mathfrak F}_{\varphi}$ and ${\mathfrak F}_{\eta}$
and does not preserve the dynamical splitting in general. Hence
it is not clear that the image of a Long Orbit of $\varphi$ by $\sigma$
is close to a Long Trajectory of $Y$ where $Y$ is a convergent normal form of $\eta$.
We prove in Section \ref{sec:Rolle} that Long Orbits are always in the neighborhood of
Long Trajectories of the normal form since the latter one satisfies a sort of Rolle property.
The tracking phenomenon allows to generalize the
residue formula for diffeomorphisms (Propositions \ref{pro:ltlo} and \ref{pro:lores}).
\begin{defi}
Let $X \in \Xntg$ with $N \geq 1$.
Let $\beta$ be a germ of connected set at $0 \in {\mathbb C}$.
Consider a family of sub-trajectories
$\Gamma_{x}:[0,T'(x)] \to U_{\epsilon}(x)$ of $\Re (X)$
defined for $x \in \beta$.
We say that ${\{ \Gamma_{x} \}}_{x \in \beta}$ is
{\it stable} if
$\{ x \in \beta: \Gamma_{x} \cap {\mathcal E} \neq \emptyset \}$
does not adhere the directions in
${\mathcal U}_{X}^{{\mathcal E},1}$ (see Definition \ref{def:unstext})
for any   exterior set ${\mathcal E}$.
\end{defi}
The orbits of $\varphi$ and ${\mathfrak F}_{\varphi}$ are very
different in the neighborhood of the indifferent fixed points of $\varphi$.
Roughly speaking a stable family is a family far away from indifferent
fixed points.
\begin{defi}
Let $\varphi \in \diff{p1}{2}$ with
$N \geq 1$. Fix a convergent normal form $X$ of $\varphi$
and a basic set ${\mathcal B}$.
%
%
We define $\nu_{{\mathcal B}}(\Delta_{\varphi}) \in {\mathbb N} \cup \{0\}$ as the integer such that
$\Delta_{\varphi}$ (see Definition \ref{def:delta}) is of the form
$x^{\nu_{{\mathcal B}}(\Delta_{\varphi})} g(x,t)$ in the adapted coordinates
$(x,t)$ associated to ${\mathcal B}$ where $g(0,t) \not \equiv 0$.
\end{defi}
The next propositions provide the tracking properties for basic sets.
\begin{pro}
\label{pro:boufespre}
Let $\varphi \in \diff{p1}{2}$ with
$N \geq 1$. Fix a convergent normal form $X$ of $\varphi$.
Let $\beta$ be a germ of connected set at $0 \in {\mathbb C}$.
Consider an exterior set
${\mathcal E} = \{(x,t) \in B(0,\delta) \times {\mathbb C} :  \eta \geq |t| \geq \rho|x| \}$.
Let $\nu =\nu_{{\mathcal E}}(\Delta_{\varphi}) - e({\mathcal E})$
(see Definitions \ref{def:rvfe}, \ref{def:rvfc}).
Fix a closed set $S \subset {\mathbb S}^{1} \setminus {\mathcal U}_{X}^{{\mathcal E},1}$
(see Definition \ref{def:unstext}).
There exists  $\xi > 0$
such that the properties
$\mathrm{exp}([0,j]X)(x,t_{1}) \subset \cup_{x \in (0,\delta) S} {\mathcal E}(x)$
for some $j \in {\mathbb N} \cup \{ 0 \}$ and
$(x,t_{2}) \in B_{X}((x,t_{1}),1)$
imply
\begin{equation}
\label{equ:trackes}
|\psi_{X} \circ {\varphi}^{j+1}(x,t_{2}) - \psi_{X}  \circ {{\mathfrak F}}_{\varphi}^{j+1}(x,t_{1})| \leq
|\psi_{X}(x,t_{2}) - \psi_{X}(x,t_{1})| + \xi {|x|}^{\nu}.
\end{equation}
Moreover we can choose $\xi>0$ as small as desired by considering a small $\eta>0$.
\end{pro}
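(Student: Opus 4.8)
The plan is to bound the deviation between the orbits of $\varphi$ and those of its normal form ${\mathfrak F}_{\varphi}$ in the Fatou coordinate $\psi_X$ along finite orbit segments contained in the exterior set ${\mathcal E}$, using that $\Delta_\varphi$ (the defect in Definition \ref{def:delta}) is summable along such orbits. Recall $\psi_X \circ \varphi = \psi_X + 1 + \Delta_\varphi$ and $\psi_X \circ {\mathfrak F}_\varphi = \psi_X + 1$, so by telescoping
\[
\psi_X \circ \varphi^{j+1}(x,t_2) - \psi_X \circ {\mathfrak F}_\varphi^{j+1}(x,t_1)
= (\psi_X(x,t_2) - \psi_X(x,t_1)) + \sum_{i=0}^{j} \Delta_\varphi(\varphi^{i}(x,t_2)).
\]
Hence the whole statement reduces to showing $\sum_{i=0}^{j} |\Delta_\varphi(\varphi^{i}(x,t_2))| \leq \xi |x|^{\nu}$ with $\xi>0$ as small as desired for $\eta$ small. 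First I would treat the non-parabolic case ($\nu({\mathcal E})=0$): here Lemma \ref{lem:cansumnp}, applied to the function $\Delta = \Delta_\varphi$ with $a = \nu_{{\mathcal E}}(\Delta_\varphi)$ and $S$ (which lies in ${\mathbb S}^1 \setminus {\mathcal U}_X^{{\mathcal E},1}$, i.e. $Re(\lambda^{e({\mathcal E})}h(0))<0$ possibly after also handling the repelling side via $-X$), gives $|\Delta_\varphi| \le C|x|^{\nu_{{\mathcal E}}(\Delta_\varphi)}/|\psi_{\mathcal E}|^{b}$ in a neighborhood $B_X(\gamma,1)$ of any trajectory $\gamma$ of $\Re(X)$ in ${\mathcal E}\cap\{x\in(0,\delta)S\}$, for any prescribed $b$. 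For the parabolic case ($\nu({\mathcal E})>0$) I would use Lemma \ref{lem:cansum} instead, which yields the same type of bound $|\Delta_\varphi| = O(|x|^{\nu_{{\mathcal E}}(\Delta_\varphi)}/\psi_{\mathcal E}^{b})$ throughout ${\mathcal E}$.

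The next step is to convert this pointwise decay in $\psi_{\mathcal E}$ into a summable bound along the discrete orbit $\{\varphi^i(x,t_2)\}$. Since the orbit segment stays in $B_X((x,t_1),1)$ of a trajectory of $\Re(X)$ and $\psi_X$ (equivalently $\psi_{\mathcal E}$) changes by approximately $1$ at each step of ${\mathfrak F}_\varphi$ — more precisely $\psi_X \circ \varphi = \psi_X + 1 + O(\ldots)$, and the tracking is being bootstrapped — the values $|\psi_{\mathcal E}(\varphi^i(x,t_2))|$ grow at least linearly in $i$ along an attracting (in the direction of the flow) orbit, up to the additive $O(|x|)$ terms one picks up. Thus, taking $b=2$, the sum $\sum_{i\ge 0} C|x|^{\nu_{{\mathcal E}}(\Delta_\varphi)}/|\psi_{\mathcal E}(\varphi^i(x,t_2))|^{2}$ is dominated by $C'|x|^{\nu_{{\mathcal E}}(\Delta_\varphi)}\sum_{i\ge 1} i^{-2} = O(|x|^{\nu_{{\mathcal E}}(\Delta_\varphi)})$. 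Dividing out the factor $|x|^{e({\mathcal E})}$ that distinguishes $X$ from $X_{\mathcal E}$ — which is exactly why the exponent in the statement is $\nu = \nu_{{\mathcal E}}(\Delta_\varphi) - e({\mathcal E})$ rather than $\nu_{{\mathcal E}}(\Delta_\varphi)$ — one gets the claimed $\xi|x|^{\nu}$. The statement that $\xi$ can be made small by shrinking $\eta$ follows from the last sentence of Lemma \ref{lem:cansum} / Remark \ref{rem:psiext} together with the residue-type expansions: as $\eta \to 0$ the Fatou coordinate $\psi_{\mathcal E}$ is larger on ${\mathcal E}$ (it behaves like $1/(t-\gamma_{\mathcal E}(x))^{\nu({\mathcal E})}$ or like $\ln t$), so the constant $C$ in the decay estimate, hence $\xi$, improves.

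The main obstacle I anticipate is the bootstrapping/uniformity issue: to say that $|\psi_{\mathcal E}(\varphi^i(x,t_2))|$ grows like $i$ (so the series is summable) I need to already know that $\varphi$ tracks ${\mathfrak F}_\varphi$ reasonably along the segment, yet that is what I am proving. The clean way around this is an induction on $j$: assume \eqref{equ:trackes} (or a slightly weaker a priori form with a constant $2\xi$, say) holds for all indices $<j$; this keeps $\varphi^i(x,t_2)$ within $B_X(\gamma,1)$ of the true trajectory $\gamma$ of $\Re(X)$ through $(x,t_1)$ and within the region where $\psi_{\mathcal E}$ is univalued and obeys the Lemma \ref{lem:cansumnp}/\ref{lem:cansum} bounds, so that $\psi_{\mathcal E}(\varphi^i(x,t_2)) = \psi_{\mathcal E}(x,t_1) + i + O(\text{small})$, feed that back into the geometric-type sum, and close the induction by choosing $\eta$ small enough that the accumulated error is $\le \xi|x|^\nu$ with room to spare. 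Care is also needed at the turning region of $\Re(X)_{|{\mathcal E}}$ where $\psi_{\mathcal E}$ is smallest (a Fatou flower when $\nu({\mathcal E})>0$, near $t=0$ when $\nu({\mathcal E})=0$): there $|\psi_{\mathcal E}|$ is bounded below by a constant depending on $\eta$ and the number of orbit points spent there is $O(1)$ uniformly, so that portion of the sum contributes $O(|x|^{\nu_{\mathcal E}(\Delta_\varphi)})$ with a small constant as well. Combining the away-from-turning tail (geometric decay) and the bounded turning portion gives the stated bound, uniformly over $x \in (0,\delta)S$.
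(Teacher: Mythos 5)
Your proposal follows essentially the same route as the paper: the telescoping identity reduces everything to bounding $\sum_{k}\Delta_{\varphi}(\varphi^{k}(x,t_{2}))$, the pointwise decay of $\Delta_{\varphi}$ comes from Lemmas \ref{lem:cansumnp} and \ref{lem:cansum}, and the smallness of $\xi$ comes from $\psi_{\mathcal E}\to\infty$ uniformly on ${\mathcal E}$ as $\eta\to 0$, with your explicit induction merely making precise the bootstrap that the paper leaves implicit. One bookkeeping correction: since $X=x^{e({\mathcal E})}X_{\mathcal E}$, the coordinate $\psi_{\mathcal E}$ advances by $x^{e({\mathcal E})}$ (not by $1$) per step of ${\mathfrak F}_{\varphi}$, so the correct estimate is $\sum_{k\geq 0} C|x|^{\nu_{{\mathcal E}}(\Delta_{\varphi})}/\bigl(\min_{0\leq l\leq j}|\psi_{\mathcal E}({\mathfrak F}_{\varphi}^{l}(x,t_{1}))|+k|x|^{e({\mathcal E})}\bigr)^{2}=O\bigl(|x|^{\nu}/\min_{l}|\psi_{\mathcal E}|\bigr)$, which produces the factor $|x|^{-e({\mathcal E})}$ intrinsically rather than by the a posteriori ``dividing out'' step in your write-up.
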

\begin{proof}
We denote
\[ G = (\psi_{X} \circ {\varphi}^{j+1}(x,t_{2}) - \psi_{X}  \circ {{\mathfrak F}}_{\varphi}^{j+1}(x,t_{1})) -
(\psi_{X}(x,t_{2}) - \psi_{X}(x,t_{1})) . \]
We have $G=\sum_{k=0}^{j} \Delta_{\varphi}(\varphi^{j}(x,t_{2}))$.
We obtain
\[ |G| \leq  \sum_{k=0}^{\infty}
\frac{C |x|^{\nu_{\mathcal E}(\Delta_{\varphi})}}{(\min_{0 \leq l \leq j}
|\psi_{\mathcal E}({\mathfrak F}_{\varphi}^{l}(x,t_{1}))|
+k |x|^{e({\mathcal E})})^{2}} =
O \left( \frac{|x|^{\nu_{\mathcal E}(\Delta_{\varphi})-e({\mathcal E})}}{\min_{0 \leq l \leq j}
|\psi_{\mathcal E}({\mathfrak F}_{\varphi}^{l}(x,t_{1}))|} \right) \]
for some $C \in {\mathbb R}^{+}$ by Lemmas \ref{lem:cansumnp} and \ref{lem:cansum}.
Since
$\psi_{\mathcal E} (x,t) \to \infty$ uniformly in ${\mathcal E}$
when $\eta \to 0$ we get Eq. (\ref{equ:trackes}).
\end{proof}
\begin{pro}
\label{pro:bouis}
Let $\varphi \in \diff{p1}{2}$ with
$N > 1$. Fix a convergent normal form $X$ of $\varphi$.
Fix   a compact-like basic set ${\mathcal C}$.
Let $\nu =\nu_{{\mathcal C}}(\Delta_{\varphi}) - e({\mathcal C})$.
Fix $B \in {\mathbb R}^{+}$.
There exists a constant $C'>0$ such that
$\mathrm{exp}([0,j]X)(P) \subset U_{\epsilon}(x) \cap {\mathcal C}$
for some $x \in B(0,\delta)$, $0 \leq j \leq B/|x|^{e({\mathcal C})}$ and $Q \in B_{X}(P,1)$ imply
\begin{equation}
\label{equ:trackcs}
 |\psi_{X} \circ {\varphi}^{j+1}(Q) - \psi_{X} \circ {{\mathfrak F}}_{\varphi}^{j+1}(P)| \leq
|\psi_{X}(Q) - \psi_{X}(P)| + C' {|x|}^{\nu}.
\end{equation}
\end{pro}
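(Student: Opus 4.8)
The plan is to follow the scheme of the proof of Proposition \ref{pro:boufespre}; in the compact-like setting it is in fact somewhat simpler, since the number of admissible iterations is a priori bounded by $B/|x|^{e({\mathcal C})}$ while the error term $\Delta_{\varphi}$ is uniformly small along ${\mathcal C}$, so no quantitative decay estimate along the orbit is needed. Write
\[ G = (\psi_{X} \circ \varphi^{j+1}(Q) - \psi_{X} \circ {\mathfrak F}_{\varphi}^{j+1}(P)) - (\psi_{X}(Q) - \psi_{X}(P)). \]
Since $\psi_{X} \circ {\mathfrak F}_{\varphi} \equiv \psi_{X} + 1$ and $\psi_{X} \circ \varphi = \psi_{X} + 1 + \Delta_{\varphi}$ (Definition \ref{def:delta}), a telescoping computation gives $G = \sum_{k=0}^{j} \Delta_{\varphi}(\varphi^{k}(Q))$, valid as long as the iterates $\varphi^{k}(Q)$, $0 \leq k \leq j$, remain in a region where $\psi_{X}$, continued analytically along the orbit, and $\Delta_{\varphi}$ are controlled. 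By the triangle inequality it then suffices to bound $|G|$ by $C'|x|^{\nu}$.

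First I would record the relevant pointwise bounds on an enlargement ${\mathcal C}'$ of ${\mathcal C}$, obtained by shrinking the balls excised around the points of $S_{\mathcal C}$ and slightly enlarging the $w$-disk, so that ${\mathcal C}$ lies in the interior of ${\mathcal C}'$ and ${\mathcal C}'$ still avoids $\mathrm{Sing}(X)$ for $|x|$ small. In the adapted coordinates $(x,t)$ of ${\mathcal C}$ one has $\Delta_{\varphi} = x^{\nu_{{\mathcal C}}(\Delta_{\varphi})} g(x,t)$ with $g$ holomorphic, hence bounded on the relatively compact set ${\mathcal C}'$; thus $|\Delta_{\varphi}| \leq C |x|^{\nu_{{\mathcal C}}(\Delta_{\varphi})}$ on ${\mathcal C}'$ for some $C>0$. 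Moreover on ${\mathcal C}'$ we have $X = x^{e({\mathcal C})} X_{{\mathcal C}}$ with $X_{{\mathcal C}}$ bounded above and below in modulus, so $\psi_{X}$ is, up to the factor $x^{-e({\mathcal C})}$, a local biholomorphism and the complex flow of $X$ for a time of modulus $\leq 1$ moves $\psi_{X}$ by at most $1$; in particular $|\psi_{X}(Q) - \psi_{X}(P)| \leq 1$ because $Q \in B_{X}(P,1)$. Finally $\nu = \nu_{{\mathcal C}}(\Delta_{\varphi}) - e({\mathcal C}) \geq 1$: indeed $\Delta_{\varphi} \in (X(t))^{2}$ and, writing $t=xw$ so that $X(t) = x\,X(w)$ with $X(w) = x^{e({\mathcal C})}$ times a unit along ${\mathcal C}$, the function $\Delta_{\varphi}$ vanishes in $x$ along ${\mathcal C}$ to order at least $2e({\mathcal C})+2$, whence $|x|^{\nu} = o(1)$ as $x \to 0$.

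The only point requiring care is the confinement step: one must check that the $\varphi$-orbit of $Q$ does not leave ${\mathcal C}'$ before time $j+1$. I would prove by induction on $k \in \{0,\dots,j+1\}$ that $\varphi^{k}(Q) \in {\mathcal C}'$ and $|\psi_{X}\circ\varphi^{k}(Q) - \psi_{X}\circ{\mathfrak F}_{\varphi}^{k}(P)| \leq 2$. Granting this up to $k = K \leq j$, the telescoping identity and the bound on $\Delta_{\varphi}$ give
\[ |\psi_{X}\circ\varphi^{K+1}(Q) - \psi_{X}\circ{\mathfrak F}_{\varphi}^{K+1}(P)| \leq |\psi_{X}(Q) - \psi_{X}(P)| + \sum_{k=0}^{K} |\Delta_{\varphi}(\varphi^{k}(Q))| \leq 1 + (j+1)\,C\,|x|^{\nu_{{\mathcal C}}(\Delta_{\varphi})}; \]
since $j+1 \leq B/|x|^{e({\mathcal C})} + 1$, the right-hand side is $1 + O(|x|^{\nu}) \leq 2$ for $|x|$ small, and because ${\mathfrak F}_{\varphi}^{K+1}(P) \in {\mathcal C}$ and $\psi_{X}$ is a good coordinate on ${\mathcal C}'$, this also forces $\varphi^{K+1}(Q) \in {\mathcal C}'$, closing the induction. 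Applying the same chain of inequalities with $K = j$ yields
\[ |\psi_{X}\circ\varphi^{j+1}(Q) - \psi_{X}\circ{\mathfrak F}_{\varphi}^{j+1}(P)| \leq |\psi_{X}(Q) - \psi_{X}(P)| + (j+1)\,C\,|x|^{\nu_{{\mathcal C}}(\Delta_{\varphi})} \leq |\psi_{X}(Q) - \psi_{X}(P)| + C(B+1)\,|x|^{\nu}, \]
which is exactly Eq. (\ref{equ:trackcs}) with $C' = C(B+1)$. The main obstacle is precisely this confinement: its resolution hinges on the fact that the accumulated error $(j+1)C|x|^{\nu_{{\mathcal C}}(\Delta_{\varphi})}$ is $o(1)$ uniformly in the data, so no drift of the $\varphi$-orbit away from ${\mathcal C}$ can build up over the $O(1/|x|^{e({\mathcal C})})$ admissible steps.
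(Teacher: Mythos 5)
Your proof is correct and follows essentially the same route as the paper's: the telescoping identity $G=\sum_{k=0}^{j}\Delta_{\varphi}(\varphi^{k}(Q))$, the uniform bound $|\Delta_{\varphi}|\leq C\,|x|^{\nu_{{\mathcal C}}(\Delta_{\varphi})}$ on the compact-like set, and the count of at most $B/|x|^{e({\mathcal C})}+1$ admissible iterates, giving $C'=O(C(B+1))$. The confinement induction you add (keeping $\varphi^{k}(Q)$ in an enlargement of ${\mathcal C}$ so the bound on $\Delta_{\varphi}$ applies along the orbit of $Q$) is a detail the paper's two-line proof leaves implicit, and you resolve it correctly via the $o(1)$ accumulated error.
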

\begin{proof}
We denote
\[ G = (\psi_{X} \circ {\varphi}^{j+1}(Q) - \psi_{X}  \circ {{\mathfrak F}}_{\varphi}^{j+1}(P)) -
(\psi_{X}(Q) - \psi_{X}(P)) . \]
We have $G=\sum_{k=0}^{j} \Delta_{\varphi}(\varphi^{j}(Q))$.
The inequalities
\[ |G| \leq |x|^{\nu_{\mathcal C}(\Delta_{\varphi})}
C \sum_{k=0}^{j} 1 \leq B C |x|^{\nu_{\mathcal C}(\Delta_{\varphi})-e({\mathcal C})}   \]
lead us to Eq. (\ref{equ:trackcs}).
\end{proof}
\begin{defi}
\label{def:ab}
Let $X \in \Xnt$ with $N \geq 1$.
Let $\beta$ be a germ of connected set at $0 \in {\mathbb C}$.
Consider a family of sub-trajectories
$\Gamma_{x}:[0,T'(x)] \to U_{\epsilon}(x)$ of $\Re (X)$
defined for $x \in \beta$. We say that the family is $(A,B)$ bounded if
\begin{itemize}
\item $\Gamma_{x}$ changes at most
$A$ times of basic set and
\item $\Gamma_{x}[j,j'] \subset {\mathcal C}$ and
$0 \leq j \leq j' \leq T'(x) \implies j'-j \leq B/|x|^{e({\mathcal C})}$
\end{itemize}
for any compact-like set ${\mathcal C}$ and any $x \in \beta$.
\end{defi}
We compare orbits of $\varphi$ and ${\mathfrak F}_{\varphi}=\mathrm{exp}(X)$
by analyzing the sub-orbits contained in the basic sets of the dynamical
splitting. The first condition in Definition \ref{def:ab} is a natural finiteness property.
The second property
allows to apply Proposition \ref{pro:bouis} to
${\{\Gamma_{x}[0,T'(x)]\}}_{x \in \beta}$. They assure that the dynamics of
$\varphi$ and $\mathrm{exp}(X)$ are similar in a neighborhood of
${\{\Gamma_{x}[0,T'(x)]\}}_{x \in \beta}$.
\begin{rem}
It is clear that the trajectories of $\Re (X)$ ($N >1$) provided by
Propositions \ref{pro:exll} and \ref{pro:exll2}
are $(A,B)$ bounded for some values $A,B \in {\mathbb R}^{+}$.
Indeed all Long Trajectories are $(A,B)$ bounded (Proposition \ref{pro:uniform}).
\end{rem}
Let $\varphi \in \dif{p1}{2}$ with $N>1$.
A posteriori the Long Orbits of $\varphi$ are $(A,B)$ bounded for some values
$A,B \in {\mathbb R}^{+}$ that do not depend on the Long Orbit.
We introduce next these a priori bounds.
\begin{defi}
\label{def:B}
Let $X \in \Xnt$.  Consider the dynamical splitting associated to
$X$ in Section \ref{sec:dynspl}.
The number of boundary transversals (see Definition \ref{def:bdtr}) is bounded by a number
$A_{X} \in {\mathbb N}$ depending only on $X$.
Let ${\mathcal C}_{j}$ be a compact-like set and
$\lambda_{0} \in {\mathbb S}^{1}$. We define
$B_{j,\lambda_{0}}^{0}$ as the maximum of the periods of the closed
trajectories of $X_{j}(\lambda_{0})$  (Definition \ref{def:levels}).
We define $B_{j,\lambda_{0}}^{1}$ as the maximum of the values $s \in {\mathbb R}^{+}$
such that exists a trajectory $\Gamma[0,s]$ of $\Re (X_{j}(\lambda_{0}))$ contained in
${\mathcal C}_{j}(0,\lambda_{0})$ and not contained in a closed trajectory of
$\Re (X_{j}(\lambda_{0}))$ in ${\mathcal C}_{j}(0,\lambda_{0})$.
We define
$B_{\digamma}^{k}= 1+\max_{1 \leq j \leq q, \ \lambda \in {\mathcal U}_{X}^{1}} B_{j,\lambda}^{k}$
for $k \in \{0,1\}$ and
$B_{\digamma} = \max (B_{\digamma}^{0},B_{\digamma}^{1})$.
\end{defi}
\begin{rem}
The polynomial vector fields associated to a
convergent normal form $X$ of $\varphi \in \diff{p1}{2}$
only depend on $\varphi$. Thus the constant $B_{\digamma}$
depends only on $\varphi$ and the dynamical splitting
$\digamma$.
\end{rem}
\begin{defi}
\label{def:A}
Let $\varphi \in \diff{p1}{2}$ with
convergent normal form $X$. The number $A_{X}$
depends on $\varphi$ but not on $X$. We denote $A_{\varphi}=A_{X}$.
We denote $B_{\varphi} = B_{\digamma_{X}}$.
Of course $B_{\varphi}$ depends on the choice of $\digamma_{X}$
(see Definition \ref{def:dynfx}).
\end{defi}
\begin{lem}
\label{lem:abist}
Let $X \in \Xnt$ with $N \geq 1$.
Let $\beta$ be a germ of connected set at $0 \in {\mathbb C}$.
Consider a family of sub-trajectories
${\{\Gamma_{x}[0,T'(x)]\}}_{x \in \beta}$ of $\Re (X)$ such that
$\lim_{x \to \beta} \Gamma_{x}(0)$ exists and it is not $(0,0)$.
Suppose that the family is $(A,B)$ bounded.
Then ${\{\Gamma_{x}[0,T'(x)]\}}_{x \in \beta}$ is stable.
\end{lem}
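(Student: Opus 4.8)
The plan is to argue by contradiction, using the fact that near an unstable direction of a non-parabolic exterior set the restricted flow of $\Re (X)$ is an almost rotational linear spiral, whereas an $(A,B)$ bounded family (Definition \ref{def:ab}) spends only a bounded amount of reparametrized time inside each compact-like set. First I reduce: if $\mathcal{E}$ is a parabolic exterior set then ${\mathcal U}_{X}^{{\mathcal E},1}=\emptyset$ by Definition \ref{def:unstext} and there is nothing to prove, and if $N(X)=1$ then ${\mathcal U}_{X}^{1}=\emptyset$ by Corollary \ref{cor:exir}, hence ${\mathcal U}_{X}^{{\mathcal E},1}=\emptyset$ for every exterior set. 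So I may assume $N(X)>1$ and it suffices to fix one non-parabolic (hence terminal) exterior set $\mathcal{E}$ and show that $\{x \in \beta : \Gamma_{x} \cap \mathcal{E} \neq \emptyset\}$ does not adhere ${\mathcal U}_{X}^{{\mathcal E},1}$. Suppose it does; by Definition \ref{def:dir} there is a sequence $x_{n} \in \beta$ with $x_{n} \to 0$, $x_{n}/|x_{n}| \to \lambda_{0} \in {\mathcal U}_{X}^{{\mathcal E},1}$ and $\Gamma_{x_{n}} \cap \mathcal{E} \neq \emptyset$. By hypothesis $\lim_{x \in \beta, x \to 0} \Gamma_{x}(0) = P_{0} = (0,y_{0})$ with $y_{0} \neq 0$.

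Next I describe the geometry. The terminal non-parabolic set $\mathcal{E} = {\mathcal T}_{\beta,\zeta}$ is the disc $\{|t'| \leq \eta_{\beta,\zeta}\}$ around a simple zero line of $X$, sitting inside the compact-like set ${\mathcal C}_{\beta}$ that encloses it, and in the adapted (linearizing) coordinate $t' = w_{\zeta}$ one has $X = x^{e(\mathcal{E})} h(x) t' \partial/\partial t'$ in a fixed neighbourhood $\{|t'| < \eta^{*}\}$ of that zero line, with $h(0) \neq 0$ and $e(\mathcal{E}) = e({\mathcal C}_{\beta})$ (see Definitions \ref{def:rvfe} and \ref{def:rvfc} and the description of the dynamical splitting). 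Thus for fixed $x = x_{n}$ the restricted flow of $\Re (X)$ is the exact linear spiral $t'(s) = t'(0) e^{c_{n} s}$ with $c_{n} = x_{n}^{e(\mathcal{E})} h(x_{n})$, and $|Re(c_{n})| = |x_{n}|^{e(\mathcal{E})} \, |Re((x_{n}/|x_{n}|)^{e(\mathcal{E})} h(x_{n}))|$, where the last factor tends to $|Re(\lambda_{0}^{e(\mathcal{E})} h(0))| = 0$ because $\lambda_{0} \in {\mathcal U}_{X}^{{\mathcal E},1}$. The collar $\{\eta_{\beta,\zeta} < |t'| < \eta^{*}\}$ is contained in ${\mathcal C}_{\beta}$. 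Finally, since $M_{\beta}$ shrinks to the origin in the $(x,y)$ coordinates while $P_{0} = (0,y_{0})$ with $y_{0} \neq 0$, for $n$ large $\Gamma_{x_{n}}(0)$ lies outside $M_{\beta}$; hence any trajectory running from $\Gamma_{x_{n}}(0)$ into $\mathcal{E}$ must cross $\{|t'| = \eta^{*}\}$.

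Now the estimate. Put $a_{n} = \min\{s \in [0, T'(x_{n})] : \Gamma_{x_{n}}(s) \in \mathcal{E}\}$, so $\Gamma_{x_{n}}(a_{n}) \in \partial\mathcal{E} = \{|t'| = \eta_{\beta,\zeta}\}$, and pass to a subsequence along which the sign of $Re(c_{n})$ is constant. If $Re(c_{n}) \geq 0$ for all $n$, then $|t'|$ is non-increasing in backward time along the linear spiral, so the whole backward trajectory from $a_{n}$ remains in $\{|t'| \leq \eta_{\beta,\zeta}\} = \mathcal{E}$; minimality of $a_{n}$ forces $a_{n} = 0$, hence $\Gamma_{x_{n}}(0) \in \overline{\mathcal{E}}$, and since $\overline{\mathcal{E}}$ meets $\{x=0\}$ only at the origin we obtain $P_{0} = (0,0)$, a contradiction. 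If $Re(c_{n}) < 0$ for all $n$, let $t^{*}_{n} \in [0, a_{n}]$ be the last time at which $|t'| = \eta^{*}$; on $[t^{*}_{n}, a_{n}]$ the trajectory is the linear spiral with $|t'|$ decreasing monotonically from $\eta^{*}$ to $\eta_{\beta,\zeta}$, so it stays in the collar, hence in ${\mathcal C}_{\beta}$, and $a_{n} - t^{*}_{n} = \ln(\eta^{*}/\eta_{\beta,\zeta}) / |Re(c_{n})|$. The $(A,B)$ boundedness then yields $\ln(\eta^{*}/\eta_{\beta,\zeta}) / |Re(c_{n})| \leq B / |x_{n}|^{e({\mathcal C}_{\beta})}$, that is $|Re((x_{n}/|x_{n}|)^{e(\mathcal{E})} h(x_{n}))| \geq \ln(\eta^{*}/\eta_{\beta,\zeta}) / B$, which contradicts the convergence of that quantity to $0$. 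This proves the lemma.

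The main obstacle is the second case: one must check that the slow spiral of the restricted flow genuinely takes place inside the compact-like set ${\mathcal C}_{\beta}$ over a connected time interval (rather than leaking into a neighbouring sub-seed before accumulating enough reparametrized time), for which the uniform size $\eta^{*}$ of the linearizing collar and the identity $e(\mathcal{E}) = e({\mathcal C}_{\beta})$ are essential, together with the coordinate bookkeeping between the adapted coordinate $t'$ near the zero line and the ambient $(x,y)$ coordinates that is needed to guarantee that $\Gamma_{x_{n}}(0)$ really lies outside $M_{\beta}$ for $n$ large and therefore that the crossing of $\{|t'| = \eta^{*}\}$ occurs before time $a_{n}$.
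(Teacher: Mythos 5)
Your argument is correct where the lemma is actually used, and it follows the same skeleton as the paper's proof — contradiction via a sequence $x_{n}$ with $x_{n}/|x_{n}|\to\lambda_{0}\in{\mathcal U}_{X}^{{\mathcal E},1}$, the starting-point hypothesis to force the trajectory to enter ${\mathcal E}$ from outside, and a violation of the second condition of Definition \ref{def:ab} in the enclosing compact-like set — but the key estimate is executed differently. The paper passes to the limit polynomial vector field $X_{\mathcal C}(\lambda_{0})$, notes that the singular point inside ${\mathcal E}$ is indifferent for it, and argues softly that the rescaled trajectories must adhere to all the closed orbits surrounding that point, which never quit ${\mathcal C}$, so the time spent in ${\mathcal C}$ cannot stay below $B/|x|^{e({\mathcal C})}$. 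You instead stay at finite $x_{n}$, exploit the exact linear form $X=x^{e({\mathcal E})}h(x)t'\partial/\partial t'$ in the linearizing collar, and compute the crossing time $\ln(\eta^{*}/\eta_{\beta,\zeta})/|\mathrm{Re}(x_{n}^{e({\mathcal E})}h(x_{n}))|$, which exceeds $B/|x_{n}|^{e({\mathcal C}_{\beta})}$ because $\mathrm{Re}((x_{n}/|x_{n}|)^{e({\mathcal E})}h(x_{n}))\to \mathrm{Re}(\lambda_{0}^{e({\mathcal E})}h(0))=0$ and $e({\mathcal E})=e({\mathcal C}_{\beta})$; the case $\mathrm{Re}(c_{n})\geq 0$ is correctly eliminated by backward invariance of the disc plus $\lim\Gamma_{x}(0)\neq(0,0)$. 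What your route buys is an explicit quantitative bound and the avoidance of the compactness/limit step; what the paper's route buys is not having to split according to the sign of $\mathrm{Re}(c_{n})$ or do the collar bookkeeping. The underlying mechanism (slow rotation near the indifferent point forces unbounded rescaled time in the compact-like set) is the same.

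One caveat on your reduction: the claim that $N(X)=1$ gives ${\mathcal U}_{X}^{{\mathcal E},1}=\emptyset$ for every exterior set does not follow from Corollary \ref{cor:exir}; the inclusion ${\mathcal U}_{X}^{{\mathcal E},1}\subset{\mathcal U}_{X}^{j,1}$ is stated only when ${\mathcal E}$ is enclosed by a compact-like set ${\mathcal C}_{j}$, and when $N=1$ there may be none. Indeed for $m(X)\geq 1$ the first seed can itself be terminal and non-parabolic (e.g. $X=xy\,\partial/\partial y$), in which case ${\mathcal U}_{X}^{{\mathcal E}_{0},1}\neq\emptyset$ while every family is vacuously $(A,B)$ bounded, so the statement read completely literally is problematic there. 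This is not really held against you: the paper's own proof makes the same tacit assumption (it asserts outright that the offending non-parabolic exterior set is enclosed by a compact-like set), and the corner case never occurs in the applications, where $m=0$ and hence ${\mathcal E}_{0}$ is parabolic, so that every non-parabolic exterior set is a deeper seed enclosed by a compact-like set, exactly the situation both your proof and the paper's handle.
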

A family ${\{\Gamma_{x}[0,T'(x)]\}}_{x \in \beta}$ that is $(A,B)$
bounded and stable satisfies the hypotheses in Propositions
\ref{pro:boufespre} and \ref{pro:bouis} that guarantee tracking
in basic sets. The lemma shows that the stability condition
is superfluous.
\begin{proof}
Suppose that it is not stable.
There exists a sequence $x_{n} \in \beta$, $x_{n} \to 0$
such that $\Gamma_{x_{n}}[0,T'(x_{n})] \cap {\mathcal E} \neq \emptyset$,
and $x_{n}/|x_{n}|$ tends to
$\lambda_{0} \in {\mathcal U}_{X}^{{\mathcal E},1}$
for some non-parabolic exterior set ${\mathcal E}$.
The exterior set ${\mathcal E}$ is enclosed by a compact-like
set ${\mathcal C}$. Let $X_{\mathcal C}(\lambda)$ be the polynomial
vector field associated to ${\mathcal C}$.
Since the point in $(\mathrm{Sing} X \cap {\mathcal E})(0,\lambda_{0})$ is indifferent
for $X_{\mathcal C}(\lambda_{0})$ then $\Gamma_{x_{n}}[0,T'(x_{n})]$
adheres in (adapted coordinates) to all periodic trajectories
in ${\mathcal C}(0,\lambda_{0})$
enclosing $(\mathrm{Sing} (X) \cap {\mathcal E})(0,\lambda_{0})$.
Periodic trajectories in ${\mathcal C}$ never quit ${\mathcal C}$.
Thus the family does not satisfy the last condition in
Definition \ref{def:ab}.
\end{proof}
\begin{defi}
\label{def:fam}
Let $\varphi \in \diff{p1}{2}$ with
$N > 1$. Fix a convergent normal form $X$ of $\varphi$.
Suppose that ${\mathcal O}$ is either a weak Long Trajectory
$(X, y_{+},\beta,T)$ or a Long Orbit
$(\varphi, y_{+},\beta,T)$.
Consider $\Gamma_{x}=\Gamma(X, \upsilon_{\mathcal O}(x), U_{\epsilon})$ for $x \in \beta$.
A sub-family associated to ${\mathcal O}$ is a family of the form
${\{ \Gamma_{x}[0,T_{1}(x)]\}}_{x \in \beta}$ for some function
$0 \leq T_{1} \leq T$.
If $[0,T(x)] \subset {\mathcal I}(\Gamma_{x})$ for any
$x \in \beta$ we say that
${\{ \Gamma_{x}[0,T(x)]\}}_{x \in \beta}$ is the family associated to ${\mathcal O}$
\end{defi}
In order to prove that a Long Orbit tracks its associated family it
suffices to show that it is $(A,B)$ bounded by Lemma \ref{lem:abist}.
We  briefly outline the proof. First we see that
there is tracking for $(A,B)$ bounded sub-families (Proposition \ref{pro:tracking}).
Then we prove that $(A,B)$ boundness plus tracking implies that the families
associated to Long Orbits satisfy a Rolle property (Proposition \ref{pro:rolle}).
Finally if the families associated to Long Orbits are not $(A,B)$ bounded we
construct $(A,B)$-bounded subfamilies that do not satisfy the Rolle property,
obtaining a contradiction. Along the way we obtain a formula for the
length of Long Orbits (Propositions \ref{pro:ltlo} and \ref{pro:lores}).
\subsection{The residue formula for diffeomorphisms}
\label{subsec:resdif}
In this section we show that Long Trajectories of a convergent normal form
induce Long Orbits of a diffeomorphism. We also obtain a generalization of the
residue formula.
\begin{pro}
\label{pro:tracking}
Let $\varphi \in \dif{p1}{2}$ with
$N > 1$. Fix a convergent normal form $X$ of $\varphi$.
Suppose that ${\mathcal O}$ is either a weak Long Trajectory
$(X, y_{+},\beta,T)$ or a Long Orbit
$(\varphi, y_{+},\beta,T)$.
Then, up to trimming ${\mathcal O}$, any $(A,B)$ bounded sub-family
${\{ \Gamma_{x}[0,T_{1}(x)]\}}_{x \in \beta}$
of ${\mathcal O}$ satisfies that (see Definition \ref{def:normal})
\[ \varphi^{j}(\upsilon_{\mathcal O}(x)) \in
B_{X}({\mathfrak F}_{\varphi}^{j}(\upsilon_{\mathcal O}(x)),1) \]
for all $0 \leq j \leq \lceil T_{1}(x) \rceil$ and $x \in \beta$.
Moreover, if ${\{ \Gamma_{x}[0,T(x)]\}}_{x \in \beta}$ is $(A,B)$ bounded
and $\lim_{n \to \infty} \varphi^{\lceil T(x_{n} \rceil}(\upsilon_{\mathcal O}(x_{n}))$
converges to $(0,y_{-})$ for some sequence $x_{n} \in \beta$ then
\begin{equation}
\label{equ:noise}
\sum_{j=0}^{\lceil T(x_{n} \rceil -1} \Delta_{\varphi}(\varphi^{j}(\upsilon_{\mathcal O}(x_{n}))) -
\sum_{j=0}^{\infty} \Delta_{\varphi}(\varphi^{j}(0,y_{+}))
-
\sum_{j=1}^{\infty} \Delta_{\varphi}(\varphi^{-j}(0,y_{-}))
\end{equation}
converges to $0$ when $n \to \infty$ (see Definition \ref{def:delta}).
\end{pro}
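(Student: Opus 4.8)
The plan is to reduce both assertions to estimating the partial sums $\sum_{k=0}^{j-1}\Delta_\varphi(\varphi^k(\upsilon_{\mathcal O}(x)))$ along the orbit, using the telescoping identity
\[
(\psi_X\circ\varphi^{\,j})(P)-(\psi_X\circ{\mathfrak F}_\varphi^{\,j})(P)=\sum_{k=0}^{j-1}\Delta_\varphi(\varphi^k(P)),
\]
which is immediate from $\psi_X\circ{\mathfrak F}_\varphi=\psi_X+1$ and Definition \ref{def:delta}. Since $\psi_X$ is injective and the flow of $X$ translates it, the inclusion $\varphi^{\,j}(\upsilon_{\mathcal O}(x))\in B_X({\mathfrak F}_\varphi^{\,j}(\upsilon_{\mathcal O}(x)),1)$ is equivalent to $\big|\sum_{k=0}^{j-1}\Delta_\varphi(\varphi^k(\upsilon_{\mathcal O}(x)))\big|<1$, and the quantity in Eq.~(\ref{equ:noise}) is this same sum with $j=\lceil T(x_n)\rceil$ minus the two one-sided series. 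So everything comes down to summing $\Delta_\varphi$ over three portions of the orbit: a head near $(0,y_+)$, a tail near $(0,y_-)$, and a long middle portion running through the basic sets near $x=0$.

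For the first (tracking) statement I would first trim ${\mathcal O}$ far into a neighbourhood of the origin (Definition \ref{def:trim}) and fix a dynamical splitting with small seed radius $\eta$. By Lemma \ref{lem:abist} an $(A,B)$ bounded sub-family is stable, so after shrinking the representative of $\beta$ one may assume that for every exterior set ${\mathcal E}$ the set $\{x\in\beta:\Gamma_x\cap{\mathcal E}\neq\emptyset\}$ has its directions in a fixed compact subset of ${\mathbb S}^1\setminus{\mathcal U}_X^{{\mathcal E},1}$ (Definition \ref{def:unstext}). Decompose $[0,T_1(x)]$ into the at most $A$ maximal subintervals on which $\Gamma_x$ stays in a single basic set and induct over them, the inductive hypothesis being that upon entering each basic set $\varphi^{\,j}(\upsilon_{\mathcal O}(x))$ lies in the unit flow-box $B_X(\cdot,1)$ around $\Gamma_x$; this is exactly what is needed to invoke Proposition \ref{pro:boufespre} (for exterior sets, with the compact $S$ given by stability) or Proposition \ref{pro:bouis} (for compact-like sets, the second clause of $(A,B)$-boundedness supplying $j'-j\le B/|x|^{e({\mathcal C})}$). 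Each application contributes to the drift only an error of size $\xi_{\mathcal B}\,|x|^{\,\nu_{\mathcal B}(\Delta_\varphi)-e({\mathcal B})}$ times a bounded factor, which can be made arbitrarily small: for compact-like sets and for exterior sets reached after at least one blow-up the exponent $\nu_{\mathcal B}(\Delta_\varphi)-e({\mathcal B})$ is strictly positive, because $\Delta_\varphi$ lies in the square of the ideal $(X(y))$ and is therefore divisible, in the adapted coordinates, by a strictly higher power of $x$ than $e({\mathcal B})$; and for the finitely many outermost exterior sets, where this exponent may vanish, the factor $1/\min_l|\psi_{\mathcal E}({\mathfrak F}_\varphi^l(x,t_1))|$ appearing in the proof of Proposition \ref{pro:boufespre} is small once ${\mathcal O}$ has been trimmed far enough. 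Summing the at most $A$ errors keeps $\big|\sum_{k=0}^{j-1}\Delta_\varphi(\varphi^k(\upsilon_{\mathcal O}(x)))\big|<1$ for all $0\le j\le\lceil T_1(x)\rceil$ and all $x\in\beta$, which is the claim.

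For the residue statement, note that $x_n\to0$ is forced (since $\varphi$ preserves the parameter $x$), apply the first part to the full $(A,B)$ bounded family $\{\Gamma_x[0,T(x)]\}$, and split $\sum_{j=0}^{T_n-1}\Delta_\varphi(\varphi^j(\upsilon_{\mathcal O}(x_n)))$, with $T_n:=\lceil T(x_n)\rceil$, at $j=M$ and $j=T_n-M$. The head $\sum_{j<M}$ converges, by continuity of $\upsilon_{\mathcal O}$ and of the iterates, to $\sum_{j<M}\Delta_\varphi(\varphi^j(0,y_+))$; the tail, reindexed by $k=T_n-j$, equals $\sum_{k=1}^M\Delta_\varphi(\varphi^{-k}(\varphi^{T_n}(\upsilon_{\mathcal O}(x_n))))$ and converges to $\sum_{k=1}^M\Delta_\varphi(\varphi^{-k}(0,y_-))$. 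Because $\varphi^j(0,y_+)$ stays in an attracting petal of $\varphi_{|x=0}$ and $\varphi^{-k}(0,y_-)$ in a repelling one, while $\Delta_\varphi\in(X(y))^2$ forces $|\Delta_\varphi(\varphi^{j}(0,y_+))|=O(j^{-2})$ and likewise at the repelling end, both one-sided series converge and their tails past $M$ tend to $0$ as $M\to\infty$. For the middle block I would use the last clause of Definitions \ref{def:wlt} and \ref{def:lo}: for $M$ large it is contained in $U_{\epsilon''}$, and combining the tracking of the first part with the per-basic-set bounds in the proofs of Propositions \ref{pro:boufespre} and \ref{pro:bouis} yields a bound $\varrho(M)+\varrho_n$, where $\varrho(M)\to0$ as $M\to\infty$ (its two end portions in the outermost parabolic exterior set are entered where $|\psi_{\mathcal E}|$ is large) and $\varrho_n\to0$ as $n\to\infty$ (the compact-like and inner exterior portions contribute $O(|x_n|^{c})$ for some $c>0$). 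A standard approximation argument---fix $M$ to control the series tails and $\varrho(M)$, then let $n\to\infty$---then gives Eq.~(\ref{equ:noise}).

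The hard part will be the first statement: making rigorous that the $\varphi$-orbit runs through the same sequence of basic sets as $\Gamma_x$ even though it crosses the shared boundaries at slightly shifted iteration times, and that the $O(1)$ iterates spent near each of the at most $A$ crossings are negligible---true because those crossings take place at $y$-values tending to $0$, where $\Delta_\varphi$ is small. I would handle this by working with slightly fattened basic sets so that both orbits are captured simultaneously; once this and the exponent bound $\nu_{\mathcal B}(\Delta_\varphi)-e({\mathcal B})>0$ (for all basic sets other than the outermost exterior ones) are in place, the residue estimate follows by the routine summation described above.
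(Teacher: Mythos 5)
Your overall scheme is the same as the paper's: the telescoping identity expressing $\psi_X\circ\varphi^{j}-\psi_X\circ{\mathfrak F}_\varphi^{j}$ as the partial sum of $\Delta_\varphi$ along the orbit, the per-basic-set estimates of Propositions \ref{pro:boufespre} and \ref{pro:bouis}, stability of $(A,B)$ bounded sub-families via Lemma \ref{lem:abist}, trimming to make the one-sided series small, summation over the at most $A$ blocks, and the head/middle/tail splitting for Eq.~(\ref{equ:noise}).

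There is, however, a genuine gap in your first (tracking) statement in the case where ${\mathcal O}$ is a Long Orbit. The only basic set with $\nu_{\mathcal B}(\Delta_\varphi)-e({\mathcal B})=0$ is the outermost exterior set ${\mathcal E}_0$, and there the error of a block is of order $1/\min|\psi_{{\mathcal E}_0}|$ along the corresponding segment of the \emph{normal-form} trajectory; this is small only if that segment lies in $U_{\epsilon''}$ for small $\epsilon''$. You attribute this control to trimming, but trimming only pushes the initial and final passes through ${\mathcal E}_0$ deeper into the petals (where $|\psi_{{\mathcal E}_0}|$ stays large along the pass); it gives no control on intermediate returns of $\Gamma_x$ to ${\mathcal E}_0$ near the exterior boundary $|y|\approx\epsilon$, which $(A,B)$ boundedness does not exclude and which would contribute a fixed error of order $\epsilon^{\nu}$, possibly exceeding the budget $1/(4(A+1))$. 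For a weak Long Trajectory such excursions are excluded directly by the last clause of Definition \ref{def:wlt} (you invoke it only in the second part, but it is needed here too). For a Long Orbit that clause constrains the $\varphi$-orbit, not the trajectory $\Gamma_x$, so a transfer argument is required; the paper supplies it by a bootstrap: choose $M'$ so that $\varphi^{M'}(\upsilon_{\mathcal O}(x)),\hdots,\varphi^{\lceil T(x)\rceil-M'}(\upsilon_{\mathcal O}(x))$ lie in a much smaller $U_{\tilde\epsilon}$, let $T_2(x)$ be the first time in $[M',\min(T_1(x),T(x)-M')]$ at which $\Gamma_x$ leaves $U_{\epsilon''}$, apply the tracking estimate to the truncated (still $(A,B)$ bounded, hence stable) family ${\{\Gamma_x[0,T_2(x)]\}}$, and obtain a contradiction because $\varphi^{[T_2(x)]}(\upsilon_{\mathcal O}(x))$ would then be within unit flow-distance of a point outside $U_{\epsilon''}$ while lying in $U_{\tilde\epsilon}$. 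Without this step your induction over blocks has no valid bound for middle passes through ${\mathcal E}_0$; once it is added, the rest of your argument, including the derivation of Eq.~(\ref{equ:noise}), goes through essentially as in the paper.
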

The idea is that Long Orbits of elements of $\dif{p1}{2}$ have good tracking properties
if their associated families are $(A,B)$ bounded. The convergence to $0$ of the
expression in Eq. (\ref{equ:noise})
is key to generalize the residue formula for diffeomorphisms.
\begin{rem}
Let $\varphi \in \diff{p1}{2}$. Consider
a convergent normal form $X$ of $\varphi$.
There exists $\epsilon'>0$ such that
\[ \left| \sum_{j \geq 0} \Delta_{\varphi} \circ \varphi^{j}(0,y_{+}) \right| < \frac{1}{4}
\ \ \mathrm{and} \ \
 \left| \sum_{j \geq 1} \Delta_{\varphi} \circ \varphi^{-j}(0,y_{-})  \right| < \frac{1}{4} . \]
for all $(0,y_{+})$ in an attracting petal of $\Re (X)_{|U_{\epsilon'}(0)}$ and
$(0,y_{-})$ in a repelling petal.
From now on and up to trimming we suppose that Long Orbits are contained in
$U_{\epsilon'}$.
\end{rem}
\begin{proof}[Proof of Proposition \ref{pro:tracking}]
We have $\nu_{\mathcal B} (\Delta_{\varphi}) - e({\mathcal B})>0$
for any basic set ${\mathcal B}$ different than the first exterior set ${\mathcal E}_{0}$.
On the other hand we have $\nu_{{\mathcal E}_{0}} (\Delta_{\varphi}) - e({\mathcal E}_{0})=0$
if $m=0$.
Let us use the notations for families and sub-families in Definition \ref{def:fam}.

Fix $0 < \xi <1/(4(A+1))$. The first exterior set is parabolic,
so we can choose $\epsilon'' >0$ such that
Eq. (\ref{equ:trackes}) in Proposition \ref{pro:boufespre}
holds for trajectories contained in
\[ {\mathcal E} = \{(x,y) \in B(0,\delta) \times {\mathbb C} :  \epsilon'' \geq |y| \geq \rho|x| \}. \]
We claim that there exists $M'>0$ such that
$\Gamma_{x}[M',\min(T_{1}(x),T(x)-M')]$ is contained in $U_{\epsilon''}$ for any $x \in \beta$.
This is obvious if ${\mathcal O}$ is a weak Long Trajectory. Let us prove it for Long Orbits.
We choose $M' \in {\mathbb N}$ satisfying
that
\begin{equation}
\label{equ:auxu}
\{\varphi^{M'}(\upsilon(x)), \hdots, \varphi^{\lceil T(x) \rceil -M'}(\upsilon(x))\}
\subset U_{\tilde{\epsilon}}
\end{equation}
for some $\tilde{\epsilon} >0$ such that
$\cup_{z \in B(0,2)} \mathrm{exp}(z X)(\overline{U_{\tilde{\epsilon}}}) \subset U_{\epsilon''}$.
If the property does not hold true we define
\[ T_{2}(x) = \min \{ s \in [M',\min(T_{1}(x),T(x)-M')]: \Gamma_{x}(s) \not \in U_{\epsilon''} \}; \]
it is well-defined for a sequence $x_{n} \in \beta$, $x_{n} \to 0$.
The family ${\{\Gamma_{x}[0,T_{2}(x)]\}}_{x \in \beta}$ is stable by
Lemma \ref{lem:abist}.
Propositions \ref{pro:boufespre} and \ref{pro:bouis} imply that
\[ |\psi_{X} (\varphi^{[T_{2}(x_{n})]}(\upsilon_{\mathcal O}(x_{n}))) -
\psi_{X}(\mathrm{exp}([T_{2}(x_{n})] X)(\upsilon_{\mathcal O}(x_{n})))| \leq \frac{1}{4} + A \xi < 1 \
\mathrm{for} \ n >>0. \]
The left hand side of the previous equation is greater than $2-1=1$
by Eq. (\ref{equ:auxu}) and the choice of $T_{2}$.
We obtain a contradiction.

The family ${\{\Gamma_{x}[0,T_{1}(x)]\}}_{x \in \beta}$ is stable by
Lemma \ref{lem:abist}.
Propositions \ref{pro:boufespre} and \ref{pro:bouis} imply
\[ |\psi_{X} (\varphi^{j}(\upsilon_{\mathcal O}(x))) -
\psi_{X}({\mathfrak F}_{\varphi}^{j}(\upsilon_{\mathcal O}(x)))| < \frac{1}{4} + \frac{1}{4} + A \xi < 1  \]
for all $0 \leq j \leq \lceil T_{1}(x) \rceil$ and $x \in \beta$ in a neighborhood of $0$.

Suppose that   ${\{ \Gamma_{x}[0,T(x)]\}}_{x \in \beta}$ is $(A,B)$ bounded and
$\lim_{n \to \infty} \varphi^{\lceil T(x_{n} \rceil}(\upsilon_{\mathcal O}(x_{n})) =(0,y_{-})$.
We denote by $G(x_{n})$ the expression in Eq. (\ref{equ:noise}).
We define
\[ G_{0} = \sum_{j=0}^{\infty} \Delta_{\varphi}(\varphi^{j}(0,y_{+})) \ \mathrm{and} \
G_{1} = \sum_{j=1}^{\infty} \Delta_{\varphi}(\varphi^{-j}(0,y_{-})). \]
We have
\[ G(x_{n})=\psi_{X} (\varphi^{\lceil T(x_{n}) \rceil}(\upsilon(x_{n}))) -
\psi_{X}(\mathrm{exp}(\lceil T(x_{n}) \rceil X)(\upsilon(x_{n})))
- G_{0} - G_{1} \]
Given $0 < \xi <1/(4(A+1))$
Propositions \ref{pro:boufespre} and \ref{pro:bouis} imply
$|G(x_{n})| \leq o(1) + A \xi$ for $n>>1$.
We deduce that $\lim_{n \to \infty} G(x_{n}) =0$.
\end{proof}
The next proposition is the analogue of Remark \ref{rem:udll} for Long Orbits. The non-existence
of Long Orbits is a generic phenomenon in the parameter space.
\begin{pro}
\label{pro:udlo}
Let $\varphi \in \dif{p1}{2}$ with
$N > 1$. Fix a convergent normal form $X$ of $\varphi$.
Consider a Long Orbit
${\mathcal O}=(\varphi, y_{+},\beta,T)$
such that ${\mathcal S}_{\mathcal O}$ is compact.
Then $\beta$ adheres a unique direction in ${\mathcal U}_{X}^{1}$.
\end{pro}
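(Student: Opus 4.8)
The plan is to argue by contradiction, transporting to $\varphi$ — through the tracking property of Proposition \ref{pro:tracking} and the residue formula of Subsection \ref{subsec:res} — the mechanism that lies behind Remark \ref{rem:udll} for weak Long Trajectories. First a reduction: since $\beta$ is connected, the set $\beta_\pi \cap (\{0\}\times{\mathbb S}^1)$ of directions adhered by $\beta$ (Definition \ref{def:dir}) is connected and closed, while ${\mathcal U}_X^1$ is finite (Corollary \ref{cor:exir}); hence either $\beta$ adheres a single direction lying in ${\mathcal U}_X^1$ — and we are done — or $\beta$ adheres some $\lambda_0\in{\mathbb S}^1\setminus{\mathcal U}_X^1$. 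Assume the latter. Up to trimming ${\mathcal O}$ (Definition \ref{def:trim}) we may arrange that $(0,y_+)=\upsilon_{\mathcal O}(0)$ lies so deep in an attracting petal of $\varphi_{|x=0}$, and $U_{\epsilon'}$ is so small, that $(0,0)$ is the only point of $\mathrm{Sing}(X)$ in $\overline{U_{\epsilon'}(0)}$, the series $\sum_{j\ge0}\Delta_\varphi\circ\varphi^{j}(0,y_+)$ and $\sum_{j\ge1}\Delta_\varphi\circ\varphi^{-j}(0,y_-)$ over attracting resp.\ repelling petal points converge and are bounded, and Corollary \ref{cor:stdir} and Lemma \ref{lem:goins} hold on every line $x=x_0$ with $x_0/|x_0|$ in a fixed compact neighbourhood $K$ of $\lambda_0$ disjoint from ${\mathcal U}_X^1$.

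Next I set up the contradiction. Choose $x_n\in\beta$ with $x_n\to 0$ and $x_n/|x_n|\to\lambda_0$; since ${\mathcal S}_{\mathcal O}$ is compact we may pass to a subsequence so that $\vartheta_{\mathcal O}(x_n)\to u\in{\mathcal S}_{\mathcal O}$ and $\lceil T(x_n)\rceil-T(x_n)\to s\in[0,1]$, and then the fourth and fifth properties of Definition \ref{def:lo} give $\varphi^{\lceil T(x_n)\rceil}(\upsilon_{\mathcal O}(x_n))\to\chi_{\mathcal O}(s+iu)$, a point of $U_\epsilon(0)\setminus\{(0,0)\}$ whose negative $\varphi$-orbit stays in $\overline{U_{\epsilon'}(0)}$, hence lying in a repelling petal of $\varphi_{|x=0}$. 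On the line $x=x_n$ the dynamics of $\Re(X)$ is stable, so by Lemma \ref{lem:goins} and Corollary \ref{cor:stdir}, for a dynamical splitting $\digamma_K$ attached to $K$, the flow trajectory $\Gamma_{x_n}=\Gamma(X,\upsilon_{\mathcal O}(x_n),U_\epsilon)$ is defined for all positive times, converges to a point of $\mathrm{Sing}(X)$, and passes through at most $A_\varphi$ basic sets, spending time at most $B_\varphi/|x_n|^{e({\mathcal C})}$ in each compact-like set ${\mathcal C}$; thus the family $\{\Gamma_{x_n}[0,\lceil T(x_n)\rceil]\}_n$ is $(A_\varphi,B_\varphi)$-bounded. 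Applying Propositions \ref{pro:boufespre} and \ref{pro:bouis} (legitimate since $\lambda_0\notin{\mathcal U}_X^{{\mathcal E},1}$ for every exterior set ${\mathcal E}$) exactly as in the proof of Proposition \ref{pro:tracking}, I get that the noise term of Eq.~(\ref{equ:noise}) tends to $0$; reading this through the Fatou coordinate $\psi_X$ and the monodromy picked up by its analytic continuation along the orbit (Subsection \ref{subsec:res}) yields $\lceil T(x_n)\rceil = -2\pi i \sum_{Q\in E_-(x_n)} Res(X,Q) + O(1)$, where $E_-(x_n)$ is the set of singular points of $X$ enclosed by the tracked trajectory and the $O(1)$ is bounded because the local branches of $\psi_X$ at $\upsilon_{\mathcal O}(x_n)$ and at $\varphi^{\lceil T(x_n)\rceil}(\upsilon_{\mathcal O}(x_n))$ converge and the $\Delta_\varphi$-series are bounded.

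To conclude, recall that $x\mapsto \sum_{Q\in E_-(x)} Res(X,Q)$ is meromorphic with a pole at $0$ (Proposition 5.2 of \cite{UPD}, cf.\ Eqs.~(\ref{equ:resa})--(\ref{equ:resp})); for $n$ large the set $E_-(x_n)$ is constant, so this is a fixed meromorphic germ whose leading Laurent term has as its real–positive rays precisely the directions $\mu$ for which the polynomial vector field $X_{j}(\mu)$ attached to the enclosing compact-like set has a homoclinic trajectory (match Eq.~(\ref{equ:resa}) against the residue computation in the proof of Proposition \ref{pro:DES}; the deeper simple compact-like sets carry zero residue and contribute nothing to the leading term). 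Since $\lceil T(x_n)\rceil\in{\mathbb R}^+$ and $\lceil T(x_n)\rceil\to\infty$, the quantity $-2\pi i\sum_{Q\in E_-(x_n)}Res(X,Q)$ is real, positive and tends to $+\infty$, forcing $\lambda_0=\lim x_n/|x_n|$ onto one of those rays, i.e.\ $\lambda_0\in{\mathcal U}_{X,j}^1\subseteq{\mathcal U}_X^1$ — contradicting $\lambda_0\notin{\mathcal U}_X^1$. Therefore $\beta$ adheres only directions of the finite set ${\mathcal U}_X^1$, hence, by connectedness, exactly one.

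The main obstacle is the second step: making the tracking estimate available along the approach to $\lambda_0$. This rests entirely on the stability theory of Section \ref{sec:dpvf} — Corollary \ref{cor:exir} (finiteness of ${\mathcal U}_X^1$), Corollary \ref{cor:stdir} (absence of recurrence in stable directions) and Lemma \ref{lem:goins} (monotone passage through finitely many basic sets) — which is what forces the $(A,B)$-boundedness needed to invoke Propositions \ref{pro:boufespre}, \ref{pro:bouis} and \ref{pro:tracking}, together with a careful accounting of the multivaluedness of $\psi_X$ so that the residue formula delivers exactly the blow-up of $\lceil T(x_n)\rceil$.
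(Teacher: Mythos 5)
Your reduction and your second step (stability of the direction $\lambda_{0}\notin{\mathcal U}_{X}^{1}$, the dynamical splitting $\digamma_{K}$ from Lemma \ref{lem:goins}, Corollary \ref{cor:stdir}, the resulting $(A,B)$-boundedness and the tracking estimate of Propositions \ref{pro:boufespre}, \ref{pro:bouis}, \ref{pro:tracking}) are exactly the paper's; but at that point the paper stops, and for good reason: once $\varphi^{j}(\upsilon_{\mathcal O}(x))\in B_{X}({\mathfrak F}_{\varphi}^{j}(\upsilon_{\mathcal O}(x)),1)$ for \emph{all} $j\geq 0$ along $x\in(0,\delta)K$, Lemma \ref{lem:goins} gives, for every $\epsilon''>0$, an $M$ with ${\mathfrak F}_{\varphi}^{j}(x,y_{+})\in U_{\epsilon''}$ for all $j\geq M$ uniformly in $x$, so $\varphi^{\lceil T(x_{n})\rceil}(\upsilon_{\mathcal O}(x_{n}))$ is forced into arbitrarily small neighborhoods of the origin, contradicting $\chi_{\mathcal O}(s+iu)\in U_{\epsilon}(0)\setminus\{(0,0)\}$ in Definition \ref{def:lo}. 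You have all the ingredients for this direct contradiction but instead detour through a residue identity, and that detour contains the gap.

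Concretely, the identity $\lceil T(x_{n})\rceil=-2\pi i\sum_{Q\in E_{-}(x_{n})}Res(X,Q)+O(1)$ is not available here: in the stable direction the normal-form trajectory $\Gamma_{x_{n}}$ does not exit $U_{\epsilon''}$ again, so there is no return to the boundary, no enclosed region and hence no division $(E_{-},E_{+})$ of $\mathrm{Sing}(X)$; the residue decomposition of Subsection \ref{subsec:res} (and Eq.~(\ref{equ:ltlo1}) of Proposition \ref{pro:ltlo}) requires precisely the weak-Long-Trajectory structure that fails when $\lambda_{0}\notin{\mathcal U}_{X}^{1}$ — the Fatou coordinate of the normal form at time $T(x_{n})$ simply blows up at the singular point, with no residues to collect. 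Second, even granting such an identity, your final inference is invalid: Proposition \ref{pro:DES} shows that a homoclinic trajectory forces $2\pi i\sum_{w\in E}Res(Y,w)\in{\mathbb R}^{*}\mu$, not the converse, so ``the residue sum is real and positive along $\lambda_{0}$'' only places $\lambda_{0}$ in the finite set of candidate directions attached to subsets of $\mathrm{Sing}(X_{j}(1))$, not in ${\mathcal U}_{X}^{1}$; the auxiliary claims (constancy of $E_{-}(x_{n})$, vanishing contribution of the deeper compact-like sets to the leading Laurent term) are likewise unproved. Note also that the residue machinery for Long Orbits (Propositions \ref{pro:lores}, \ref{pro:uniform}) is developed after, and logically depends on, the present proposition, so it cannot be imported here. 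Replace the third paragraph by the direct argument above and the proof is complete and coincides with the paper's.
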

\begin{proof}
Fix $\lambda_{0} \in {\mathbb S}^{1} \setminus {\mathcal U}_{X}^{1}$.
Consider a compact connected small neighborhood $K$ of $\lambda_{0}$
in ${\mathbb S}^{1} \setminus {\mathcal U}_{X}^{1}$ and
$\tilde{\beta} = (0,\delta) K$.
Up to trimming the Long Orbit
we can suppose that $(0,y_{+})$ is in
an attracting petal of $\Re (X)_{|U_{\epsilon}(0)}$.
Fix the dynamical splitting $\digamma_{K}$ provided by
Lemma \ref{lem:goins}. Thus
given $\epsilon''>0$
there exists
$M \in {\mathbb N}$ such that
${\mathfrak F}_{\varphi}^{j}(x,y_{+}) \in U_{\epsilon''}$
for all $j \geq M$ and $x \in \tilde{\beta}$ close to $0$.
Corollary \ref{cor:stdir} implies
$\lim_{n \to \infty} {\mathfrak F}_{\varphi}^{n}(x,y_{+}) \in \mathrm{Fix} (\varphi)$
for any $x \in \tilde{\beta}$ close to $0$.
Consider any family of sub-trajectories
$\Gamma_{x}:[0,T'(x)] \to U_{\epsilon}(x)$ of $\Re (X)$
defined for $x \in \tilde{\beta}$ and such that
$\lim_{x \in \tilde{\beta}, \ x \to 0} \Gamma_{x}(0) =(0,y_{+})$.
Lemma \ref{lem:goins} implies that ${\{ \Gamma_{x}\}}_{x \in \tilde{\beta}}$
is $(A,1+\max_{1 \leq j \leq q} B_{j,\lambda_{0}}^{1})$ bounded for some $A \in {\mathbb R}^{+}$
that depends only on $X$
(see Definition \ref{def:B}).
We can proceed as in the proof of Proposition \ref{pro:tracking} to
show that
$\varphi^{j}(\upsilon_{\mathcal O}(x)) \in
B_{X}({\mathfrak F}_{\varphi}^{j}(\upsilon_{\mathcal O}(x)),1)$
for all $j \geq 0$ and $x \in \tilde{\beta}$.
We deduce that $\tilde{\beta} \cap \beta$ does not contain any
point in the neighborhood of $0$.
Hence $\beta_{\pi} \cap  (\{0\} \times {\mathbb S}^{1})$ is a singleton
since it
is a connected
set contained in $\{0\} \times {\mathcal U}_{X}^{1}$.
\end{proof}
The residue formula (\ref{equ:deftf}) for Long Trajectories of
$\Re (X)$ with $X \in \Xnt$ involves Fatou coordinates
$\psi_{+}$ and $\psi_{-}$ of $X$. In order to obtain a generalization
for Long Orbits of $\varphi \in \diff{p1}{2}$
it is natural to replace the previous functions with
Fatou coordinates of $\varphi_{|x=0}$.
\begin{defi}
\label{def:fatl}
Let $\varphi \in \dif{p1}{2}$ with
$N > 1$. Fix a convergent normal form $X$ of $\varphi$.
Consider an attracting petal ${\mathcal P}_{+}'$
and a repelling petal ${\mathcal P}_{-}'$ of $\varphi_{|x=0}$. We define
\[ \psi_{{\mathcal P}_{+}'}^{\varphi}(0,y) = \psi_{+}(0,y) + \sum_{j=0}^{\infty} \Delta_{\varphi}(\varphi^{j}(0,y)), \ \
\psi_{{\mathcal P}_{-}'}^{\varphi}(0,y) = \psi_{-}(0,y) - \sum_{j=1}^{\infty} \Delta_{\varphi}(\varphi^{-j}(0,y)) \]
in ${\mathcal P}_{+}'$ and ${\mathcal P}_{-}'$ respectively
where $\psi_{+}$, $\psi_{-}$ are Fatou coordinates of $X$.
The function $\psi_{{\mathcal P}_{j}'}^{\varphi}$
is a Fatou coordinates of $\varphi_{|{\mathcal P}_{j}'}$, i.e
$\psi_{{\mathcal P}_{j}'}^{\varphi} \circ \varphi \equiv \psi_{{\mathcal P}_{j}'}^{\varphi} +1$ for
$j \in \{+,-\}$.
\end{defi}
We introduce the main result of this section.
\begin{pro}
\label{pro:ltlo}
Let $\varphi \in \dif{p1}{2}$ with
$N > 1$. Fix a convergent normal form $X$ of $\varphi$.
Suppose that ${\mathcal O}=(X, y_{+},\beta,T)$ is a weak Long Trajectory and
that ${\{ \Gamma_{x}[0,T(x)]\}}_{x \in \beta}$ is $(A,B)$ bounded.
Suppose that, up to trimming ${\mathcal O}$,
$\varphi^{\lceil T(x_{n}) \rceil}(\upsilon_{\mathcal O}(x_{n}))$
converges to $(0,y_{-}) \neq (0,0)$
for some sequence $x_{n} \in \beta$, $x_{n} \to 0$. Then we obtain
\begin{equation}
\label{equ:ltlo1}
\psi_{{\mathcal P}_{-}'}^{\varphi}(0,y_{-}) - \psi_{{\mathcal P}_{+}'}^{\varphi}(0,y_{+}) =
\lim_{n \to \infty} \left( \lceil T(x_{n}) \rceil + 2 \pi i \sum_{Q \in E_{-}(x_{n})} Res(X,Q) \right)
\end{equation}
where $(E_{-},E_{+})$ is the division of $\mathrm{Sing}(X)$ induced by ${\mathcal O}$.
Suppose now that ${\mathcal O}$ is a Long Trajectory. Then
${\mathcal O}'=(\varphi, y_{+},\beta,T)$
is a Long Orbit. Moreover,
${\mathcal O}'$ satisfies ${\mathcal S}_{{\mathcal O}'} = {\mathcal S}_{\mathcal O}$ and
\[ \psi_{{\mathcal P}_{-}'}^{\varphi}(\chi_{{\mathcal O}'}(s+i u))
- \psi_{{\mathcal P}_{+}'}^{\varphi}(0,y_{+}) =
\lim_{\vartheta_{\mathcal O}(x) \to u, \ x \to 0}^{\lceil T(x) \rceil - T(x) \to s}
\left(
\lceil T(x) \rceil + 2 \pi i \sum_{Q \in E_{-}(x)} Res(X,Q) \right) \]
for any $s+i u \in [0,1] + i {\mathcal S}_{{\mathcal O}'}$.
In particular we get
$\psi_{{\mathcal P}_{-}'}^{\varphi}(\chi_{{\mathcal O}'}(z)) =
\psi_{{\mathcal P}_{-}'}^{\varphi}(\chi_{{\mathcal O}'}(0)) +z$
for any $z \in [0,1] + i {\mathcal S}_{{\mathcal O}'}$.
\end{pro}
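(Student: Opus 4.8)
The plan is to reduce the whole statement to one telescoping identity for the Fatou coordinate of the normal form along the $\varphi$-orbit, fed by the tracking estimate of Proposition \ref{pro:tracking} and by the residue formula of Subsection \ref{subsec:res}.

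\emph{Step 1: the basic identity.} By Definition \ref{def:delta} we have $\psi_{X}\circ\varphi=\psi_{X}+1+\Delta_{\varphi}$, hence, iterating,
$$\psi_{X}(\varphi^{m}(P))=\psi_{X}(P)+m+\sum_{j=0}^{m-1}\Delta_{\varphi}(\varphi^{j}(P))$$
as long as we follow the determination of $\psi_{X}$ along the orbit $P,\varphi(P),\dots,\varphi^{m}(P)$. Fix $x\in\beta$, put $\Gamma_{x}=\Gamma(X,\upsilon_{\mathcal O}(x),U_{\epsilon})$ and $m=\lceil T(x)\rceil$. Since $\{\Gamma_{x}[0,T(x)]\}_{x\in\beta}$ is $(A,B)$ bounded, Proposition \ref{pro:tracking} applies (up to trimming) and puts the whole orbit $\varphi^{j}(\upsilon_{\mathcal O}(x))$, $0\le j\le m$, inside the tube $B_{X}(\Gamma_{x},1)$, on which $\psi_{X}$ is single valued; near $\upsilon_{\mathcal O}(x)$ this branch is the $\psi_{+}$ of Definition \ref{def:fatl}, and, by the residue formula (\ref{equ:res}) applied to the trajectory $\Gamma_{x}$, its continuation to a neighborhood of $\Gamma_{x}(T(x))$ is $\psi_{-}-2\pi i\sum_{Q\in E_{-}(x)}Res(X,Q)$, where $\psi_{-}$ is the branch holomorphic at $x=0$ near $\chi_{\mathcal O}(0)$. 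As $\varphi^{m}(\upsilon_{\mathcal O}(x))$ sits near $\Gamma_{x}(T(x))$, we obtain
$$\psi_{-}(\varphi^{m}(\upsilon_{\mathcal O}(x)))-\psi_{+}(\upsilon_{\mathcal O}(x))-\sum_{j=0}^{m-1}\Delta_{\varphi}(\varphi^{j}(\upsilon_{\mathcal O}(x)))=\lceil T(x)\rceil+2\pi i\sum_{Q\in E_{-}(x)}Res(X,Q).\qquad(\star)$$

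\emph{Step 2: Eq. (\ref{equ:ltlo1}).} Evaluate $(\star)$ along the sequence $x_{n}$. The term $\psi_{+}(\upsilon_{\mathcal O}(x_{n}))$ tends to $\psi_{+}(0,y_{+})$ by continuity; $\psi_{-}(\varphi^{\lceil T(x_{n})\rceil}(\upsilon_{\mathcal O}(x_{n})))$ tends to $\psi_{-}(0,y_{-})$ because, after trimming, $(0,y_{-})\neq(0,0)$ lies in a repelling petal ${\mathcal P}_{-}'$ of $\varphi_{|x=0}$ where $\psi_{-}$ is holomorphic; and the finite sum $\sum_{j=0}^{\lceil T(x_{n})\rceil-1}\Delta_{\varphi}(\varphi^{j}(\upsilon_{\mathcal O}(x_{n})))$ converges to $\sum_{j\ge0}\Delta_{\varphi}(\varphi^{j}(0,y_{+}))+\sum_{j\ge1}\Delta_{\varphi}(\varphi^{-j}(0,y_{-}))$ by Proposition \ref{pro:tracking}, Eq. (\ref{equ:noise}). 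Thus the left-hand side of $(\star)$ converges, and by Definition \ref{def:fatl} its limit equals $\psi_{{\mathcal P}_{-}'}^{\varphi}(0,y_{-})-\psi_{{\mathcal P}_{+}'}^{\varphi}(0,y_{+})$; hence the right-hand side converges to the same value, which is (\ref{equ:ltlo1}).

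\emph{Step 3: the Long Orbit.} Now let ${\mathcal O}$ be a genuine Long Trajectory and set ${\mathcal O}'=(\varphi,y_{+},\beta,T)$ with $\vartheta_{{\mathcal O}'}=\vartheta_{\mathcal O}$, ${\mathcal S}_{{\mathcal O}'}={\mathcal S}_{\mathcal O}$, $\upsilon_{{\mathcal O}'}=\upsilon_{\mathcal O}$. Evaluating $\psi_{X}$ at $\Gamma_{x}(T(x))$ along $\Gamma_{x}$ gives $\psi_{-}(\Gamma_{x}(T(x)))=\psi_{+}(x,y_{+})+T(x)+2\pi i\sum_{Q\in E_{-}(x)}Res(X,Q)$; letting $\vartheta_{\mathcal O}(x)\to u$, $x\to0$, and using $\Gamma_{x}(T(x))\to\chi_{\mathcal O}(iu)$ (Definition \ref{def:lt}) together with $\psi_{-}(\chi_{\mathcal O}(iu))=\psi_{-}(\chi_{\mathcal O}(0))+iu$, we see that $T(x)+2\pi i\sum_{Q\in E_{-}(x)}Res(X,Q)$ converges, hence so does $\lceil T(x)\rceil+2\pi i\sum_{Q\in E_{-}(x)}Res(X,Q)$ along any sequence with $\vartheta_{\mathcal O}(x)\to u$ and $\lceil T(x)\rceil-T(x)\to s$, with limit $\psi_{-}(\chi_{\mathcal O}(0))-\psi_{+}(0,y_{+})+s+iu$. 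Along such a sequence, tracking confines $\varphi^{\lceil T(x)\rceil}(\upsilon_{\mathcal O}(x))$ to a fixed compact subset of ${\mathcal P}_{-}'$; for any accumulation point $(0,y^{*})$, identity $(\star)$ and Eq. (\ref{equ:noise}) give $\psi_{{\mathcal P}_{-}'}^{\varphi}(0,y^{*})-\psi_{{\mathcal P}_{+}'}^{\varphi}(0,y_{+})$ equal to that limit, so $\psi_{{\mathcal P}_{-}'}^{\varphi}(0,y^{*})$ is the same for all accumulation points; by injectivity of $\psi_{{\mathcal P}_{-}'}^{\varphi}$ on ${\mathcal P}_{-}'$, the limit $\chi_{{\mathcal O}'}(s+iu):=\lim\varphi^{\lceil T(x)\rceil}(\upsilon_{\mathcal O}(x))$ exists and depends only on $z=s+iu$. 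The displayed formula for $\psi_{{\mathcal P}_{-}'}^{\varphi}(\chi_{{\mathcal O}'}(s+iu))-\psi_{{\mathcal P}_{+}'}^{\varphi}(0,y_{+})$ follows at once, and substituting $\psi_{-}(\chi_{\mathcal O}(iu))=\psi_{-}(\chi_{\mathcal O}(0))+iu$ gives $\psi_{{\mathcal P}_{-}'}^{\varphi}(\chi_{{\mathcal O}'}(z))=\psi_{{\mathcal P}_{-}'}^{\varphi}(\chi_{{\mathcal O}'}(0))+z$. Finally one checks the axioms of Definition \ref{def:lo}: the connected-curve and section conditions and the membership properties $\varphi^{j}(\upsilon_{\mathcal O}(x))\in U_{\epsilon}$, $\varphi^{M}(\upsilon)\dots\in U_{\epsilon'}$ are inherited from ${\mathcal O}$ and from Proposition \ref{pro:tracking}, the relation $\chi_{{\mathcal O}'}(1+is)=\varphi(\chi_{{\mathcal O}'}(is))$ comes from the affine relation just proved together with $\psi_{{\mathcal P}_{-}'}^{\varphi}\circ\varphi=\psi_{{\mathcal P}_{-}'}^{\varphi}+1$ and injectivity, and the convergence axiom is exactly what was established.

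\emph{Main obstacle.} The delicate point is in Step 3: tracking only gives that $\varphi^{\lceil T(x)\rceil}(\upsilon_{\mathcal O}(x))$ \emph{accumulates} in a small ball around $\mathrm{exp}(sX)(\chi_{\mathcal O}(iu))$, and upgrading this to genuine convergence requires knowing that $T(x)+2\pi i\sum_{Q\in E_{-}(x)}Res(X,Q)$ has a limit; this is precisely where the full Long Trajectory structure (the existence of $\chi_{\mathcal O}$ with the spiraling relation $\chi_{\mathcal O}(is)=\mathrm{exp}(isX)(\chi_{\mathcal O}(0))$) is indispensable and cannot be replaced by the weak version. The other point needing care is the book-keeping of the determinations of the multivalued $\psi_{X}$ in Step 1, handled through the single-valuedness of $\psi_{X}$ on the tube $B_{X}(\Gamma_{x},1)$ and the residue formula.
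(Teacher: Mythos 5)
Your proof is correct and follows essentially the same route as the paper: the residue identity $T(x)=\psi_{-}(\Gamma_x(T(x)))-\psi_{+}(\upsilon_{\mathcal O}(x))-2\pi i\sum_{Q\in E_{-}(x)}Res(X,Q)$, the tracking estimate and Eq. (\ref{equ:noise}) to pass to the limit and rewrite everything via $\psi_{{\mathcal P}_{\pm}'}^{\varphi}$ (Definition \ref{def:fatl}), and then, in the Long Trajectory case, the convergence of $\lceil T\rceil+2\pi i\sum Res$ coming from $\chi_{\mathcal O}$ together with injectivity of $\psi_{{\mathcal P}_{-}'}^{\varphi}$ to get convergence of $\varphi^{\lceil T(x)\rceil}(\upsilon_{\mathcal O}(x))$ and the affine formula, from which $\chi_{{\mathcal O}'}(1+iu)=\varphi(\chi_{{\mathcal O}'}(iu))$ follows. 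Your write-up merely makes explicit some steps the paper leaves terse (the telescoped identity $(\star)$, the accumulation-point argument, and the verification of the axioms of Definition \ref{def:lo}).
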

Let us remark that $(0,y_{+})$ belongs to an attractive petal ${\mathcal P}_{+}'$
and all possible limits of sequences of the form
$\varphi^{\lceil T(x_{n}) \rceil}(\upsilon_{\mathcal O}(x_{n}))$
are contained in a repelling petal ${\mathcal P}_{-}'$ of $\varphi_{|U_{\epsilon}(0)}$.

The family associated to a weak Long Trajectory
is always $(A_{\varphi},B_{\varphi})$ bounded.
A direct proof is not difficult and it is also a consequence
of Proposition \ref{pro:uniform}.
The corresponding hypothesis in Proposition \ref{pro:ltlo} is
a posteriori unnecessary.
Long Trajectories of $\Re (X)$ always induce Long Orbits of $\varphi$.
\begin{proof}
Denote $\upsilon^{1}(x)=\mathrm{exp}(T(x)X)(\upsilon_{\mathcal O}(x))$.
By defining $\psi_{+}$ and $\psi_{-}$ as in Subsection \ref{subsec:res}
we obtain
\begin{equation}
\label{equ:ltlo3}
T(x) = \psi_{-}(\upsilon^{1}(x)) - \psi_{+}(\upsilon_{\mathcal O}(x))  -
2 \pi i \sum_{Q \in E_{-}(x)} Res(X,Q) \ \ \forall  x \in \beta
\end{equation}
for  some division $(E_{-},E_{+})$ of $\mathrm{Sing} (X)$.
Denote $G(x)=- 2 \pi i \sum_{Q \in E_{-}(x)} Res(X,Q)$.
We want to express $T$ as a function of data depending on $\varphi$.
Since
\[ \psi_{-} \circ \varphi^{\lceil T(x) \rceil}(\upsilon_{\mathcal O}(x)) -
\psi_{-} \circ  \mathrm{exp}(\lceil T(x) \rceil X) (\upsilon_{\mathcal O}(x)) =
\sum_{j=0}^{\lceil T(x) \rceil -1} \Delta_{\varphi}(\varphi^{j}(\upsilon_{\mathcal O}(x))) \]
(see Definition \ref{def:delta}) we obtain
\[ \lceil T(x) \rceil= \psi_{-} \circ \varphi^{\lceil T(x) \rceil}(\upsilon_{\mathcal O}(x)) -
\sum_{j=0}^{\lceil T(x) \rceil-1} \Delta_{\varphi}(\varphi^{j}(\upsilon_{\mathcal O}(x)))
 - \psi_{+}(\upsilon_{\mathcal O}(x))  + G(x) \]
for  any $x \in \beta$. We obtain
$\varphi^{\lceil T(x) \rceil}(\upsilon_{\mathcal O}(x)) \in
B_{X}({\mathfrak F}_{\varphi}^{\lceil T(x) \rceil}(\upsilon_{\mathcal O}(x)),1)$
by the tracking phenomenon. Thus any
$\varphi^{\lceil T(x_{n}) \rceil}(\upsilon_{\mathcal O}(x_{n}))$ has a convergent subsequence.
Suppose that $\varphi^{\lceil T(x_{n}) \rceil}(\upsilon_{\mathcal O}(x_{n}))$
converges to $(0,y_{-}) \neq (0,0)$.
Proposition \ref{pro:tracking} implies Eq. (\ref{equ:ltlo1}), see Definition \ref{def:fatl}.

Suppose that ${\mathcal O}$ is a Long Trajectory. We have
\[ \lim_{\vartheta_{\mathcal O}(x) \to u, \ x \to 0}^{\lceil T(x) \rceil - T(x) \to s}
\left(
\lceil T(x) \rceil + 2 \pi i \sum_{Q \in E_{-}(x)} Res(X,Q) \right)=
\psi_{-}(\chi_{\mathcal O}(iu)) -\psi_{+}(0,y_{+}) + s \]
for all $s \in [0,1]$ and $u \in {\mathcal S}_{\mathcal O}$.
We obtain
\begin{equation}
\label{equ:ltlo2}
\psi_{{\mathcal P}_{-}'}^{\varphi}(\chi_{{\mathcal O}'}(s+i u)) - \psi_{{\mathcal P}_{+}'}^{\varphi}(0,y_{+}) =
\psi_{-}(\chi_{\mathcal O}(iu)) -\psi_{+}(0,y_{+}) + s .
\end{equation}
by Eq. (\ref{equ:ltlo1}) since
$\psi_{{\mathcal P}_{-}'}^{\varphi}$ is injective ${\mathcal P}_{-}'$.
Moreover $\chi_{{\mathcal O}'}$ satisfies
\[ \psi_{{\mathcal P}_{-}'}^{\varphi}(\chi_{{\mathcal O}'}(z)) -
\psi_{{\mathcal P}_{-}'}^{\varphi}(\chi_{{\mathcal O}'}(0)) = z \]
for any $z \in [0,1] + i {\mathcal S}_{{\mathcal O}'}$.
We deduce $\chi_{{\mathcal O}'}(1+iu) = \varphi(\chi_{{\mathcal O}'}(iu))$
for any $u \in {\mathcal S}_{{\mathcal O}'}$.
\end{proof}
\begin{rem}
The Long Trajectories provided in Propositions \ref{pro:exll} and \ref{pro:exll2}
are $(A,B)$ bounded. Thus given $\varphi \in \dif{p1}{2}$ with $N>1$ and a point
$(0,y_{+})$ contained in an attracting petal of $\varphi_{|x=0}$
there exists a Long Orbit $(\varphi,y_{+},\beta,T)$.
\end{rem}
\subsection{The Rolle property}
\label{sec:Rolle}
Let $\varphi \in \dif{p1}{2}$ with $N>1$ and let $X$ be a convergent normal form.
Long Trajectories of $\Re (X)$ induce Long Orbits of $\varphi$
but the reciprocal is not clear.
If $\varphi(x,y)=(x,f(x,y))$ is multi-parabolic, i.e. if
$(\partial f/\partial y)_{|\mathrm{Fix} (\varphi)} \equiv 1$ the situation is much simpler.
Indeed trajectories of $\Re (X)$ satisfy the Rolle property, i.e.
they do not intersect twice connected transversals (Proposition 2.1.1 of \cite{rib-mams}).
In particular $\Re (X)$ has no closed trajectories.
We deduce that any family of trajectories of $\Re (X)$ is $(A_{\varphi},B_{\varphi})$
bounded. This situation is quite special and corresponds to the case when
orbits of $\varphi$ {\it always} track orbits of ${\mathfrak F}_{\varphi}$.
In the general case the Rolle property still holds true for families associated to Long Orbits.
\begin{defi}
\label{def:rolle}
Let $\varphi \in \dif{p1}{2}$ with
$N > 1$. Fix a convergent normal form $X$ of $\varphi$.
Suppose that ${\mathcal O}=(\varphi, y_{+},\beta,T)$ generates a Long Orbit.
We say that a sub-family
${\{ \Gamma_{x}[0,T_{1}(x)]\}}_{x \in \beta}$
of ${\mathcal O}$ satisfies the Rolle property if there is no choice of
a sequence $x_{n} \in \beta$, $x_{n} \to 0$ such that
\begin{itemize}
\item There exist $0 \leq T_{2}(x_{n}) < T_{3}(x_{n}) \leq T_{1}(x_{n})$
for any $n \in {\mathbb N}$
such that $\lim_{n \to \infty} \Gamma_{x_{n}}(T_{j}(x_{n})) = (0,0)$ for
$j \in \{2,3\}$.
\item There exists a trajectory $\gamma_{n}:[0,T_{4}(x_{n})] \to U_{\epsilon}(x_{n})$
of $\Re (iX)$ or $\Re (-iX)$ such that $\gamma_{n}(0)=\Gamma_{x_{n}}(T_{2}(x_{n}))$
and $\gamma_{n}(T_{4}(x_{n}))=\Gamma_{x_{n}}(T_{3}(x_{n}))$ for any $n \in {\mathbb N}$.
\item Given any $\epsilon''>0$ there exists $n_{0} \in {\mathbb N}$ such that
$\gamma_{n}[0,T_{4}(x_{n})] \subset U_{\epsilon''}$ for any $n \geq n_{0}$.
\end{itemize}
We can always suppose that
$\Gamma_{x_{n}}[T_{2}(x_{n}), T_{3}(x_{n})] \cup \gamma_{n}[0,T_{4}(x_{n})]$
is a closed simple curve by changing slightly the trajectories.
We denote by $D_{n}$ the bounded component of
$(\{x_{n}\} \times {\mathbb C})
\setminus (\Gamma_{x_{n}}[T_{2}(x_{n}), T_{3}(x_{n})] \cup \gamma_{n}[0,T_{4}(x_{n})])$.
We define $\mathrm{gap}_{n}=T_{4}(x_{n})$.
\end{defi}
We prove that families associated to Long Orbits
are $(A,B)$ bounded by reductio ad absurdum.
More precisely we construct sub-families that are $(A,B)$ bounded
and fail to satisfy the Rolle property. This contradicts the
next proposition.
\begin{pro}
\label{pro:rolle}
Let $\varphi \in \dif{p1}{2}$ with
$N > 1$. Fix a convergent normal form $X$ of $\varphi$.
Suppose that ${\mathcal O}=(\varphi, y_{+},\beta,T)$ generates a Long Orbit.
Suppose that ${\mathcal S}_{\mathcal O}$ is a compact set.
Then, up to trimming ${\mathcal O}$, any $(A,B)$ bounded sub-family
${\{ \Gamma_{x}[0,T_{1}(x)]\}}_{x \in \beta}$
of ${\mathcal O}$ satisfies the Rolle property.
\end{pro}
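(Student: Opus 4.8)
The plan is to argue by \emph{reductio ad absurdum}. Suppose the $(A,B)$ bounded sub-family ${\{ \Gamma_{x}[0,T_{1}(x)]\}}_{x \in \beta}$ does not satisfy the Rolle property, so that Definition \ref{def:rolle} furnishes a sequence $x_{n} \in \beta$ with $x_{n} \to 0$, times $0 \leq T_{2}(x_{n}) < T_{3}(x_{n}) \leq T_{1}(x_{n})$, and trajectories $\gamma_{n} : [0,T_{4}(x_{n})] \to U_{\epsilon}(x_{n})$ of $\Re(iX)$ or $\Re(-iX)$ joining $\Gamma_{x_{n}}(T_{2}(x_{n}))$ to $\Gamma_{x_{n}}(T_{3}(x_{n}))$, with $\Gamma_{x_{n}}(T_{j}(x_{n})) \to (0,0)$ for $j \in \{2,3\}$ and $\gamma_{n}$ eventually contained in any $U_{\epsilon''}$. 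Write $C_{n} = \Gamma_{x_{n}}[T_{2}(x_{n}),T_{3}(x_{n})] \cup \gamma_{n}$ for the associated simple closed curve and $D_{n}$ for the bounded component of $(\{x_{n}\} \times {\mathbb C}) \setminus C_{n}$. First I would normalize the situation: up to trimming ${\mathcal O}$ we may assume the whole Long Orbit lies in $U_{\epsilon'}$ for a small $\epsilon'$ fixed in advance, after a ramification $(x,y) \mapsto (x^{l},y)$ we may assume $X \in \Xtg$, and by Proposition \ref{pro:udlo} (applicable since ${\mathcal S}_{\mathcal O}$ is compact) we may assume that $x_{n}/|x_{n}|$ converges to the unique direction $\lambda_{0} \in {\mathcal U}_{X}^{1}$ adhered by $\beta$.

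The first substantive step is a finiteness reduction coming from $(A,B)$ boundedness together with tracking. The arc $\Gamma_{x_{n}}[T_{2}(x_{n}),T_{3}(x_{n})]$ changes basic set at most $A$ times and, in each compact-like set ${\mathcal C}$ it meets, stays for a time at most $B/|x_{n}|^{e({\mathcal C})}$; by Proposition \ref{pro:tracking} the orbits of $\varphi$ track those of ${\mathfrak F}_{\varphi}=\mathrm{exp}(X)$ along this piece. Hence the combinatorial type of $C_{n}$ -- in particular the subset $E(x_{n}) = D_{n} \cap \mathrm{Sing}(X)$ of the $N$ fixed points of $\varphi_{|x=x_{n}}$ and the list of basic sets traversed -- takes finitely many values, and after passing to a subsequence I would fix it, so that $E(x_{n})$ is a continuously varying family in the sense of Proposition 5.2 of \cite{UPD}. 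Now comes the residue identity. Let $\omega_{n}$ be the meromorphic $1$-form on $\{x=x_{n}\}$ with $\omega_{n}(X) \equiv 1$ (Definitions \ref{def:res1} and \ref{def:res2}); since $\omega_{n}(\pm iX) \equiv \pm i$, integrating $\omega_{n}$ around $\partial D_{n}$ and applying the residue theorem gives
\[ \pm\big(T_{3}(x_{n}) - T_{2}(x_{n})\big) \mp i\,T_{4}(x_{n}) = 2\pi i \sum_{P \in E(x_{n})} Res(X,P) . \]
Taking real parts yields $T_{3}(x_{n}) - T_{2}(x_{n}) = \pm 2\pi\, Im\big(\sum_{P \in E(x_{n})} Res(X,P)\big)$, and since the left side is positive this forces $E(x_{n}) \neq \emptyset$: the loop $C_{n}$ necessarily encircles a nonempty, continuously varying subset of $\mathrm{Sing}(X)$.

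The contradiction is obtained by passing to the limit inside the most exterior compact-like set ${\mathcal C}_{j_{0}}$ of a simple sequence with ${\mathcal U}_{X,j_{0}}^{1} \neq \emptyset$ through whose interior ${\mathcal O}$ passes near the origin. Rescaling by $|x_{n}|^{e({\mathcal C}_{j_{0}})}$ we have $X/|x_{n}|^{e({\mathcal C}_{j_{0}})} \to X_{j_{0}}(\lambda_{0})$ in the adapted coordinates; by the construction of ${\mathcal O}$ in Propositions \ref{pro:exll} and \ref{pro:exll2} the trajectory $\Gamma_{x_{n}}$ is, inside ${\mathcal C}_{j_{0}}$, a perturbation of one fixed homoclinic trajectory of $\Re(X_{j_{0}}(\lambda_{0}))$ that enters and leaves a neighborhood of $\infty$ through the two angles bounding it (Remark \ref{rem:frinfty}, Definition \ref{def:angle}), and the $(A,B)$ bound forces $\Gamma_{x_{n}}$ to wind around the enclosed singular set a bounded number of times. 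Because $\Gamma_{x_{n}}(T_{j}(x_{n})) \to (0,0)$, in the adapted coordinates both endpoints tend to $\infty$, and by the local description of $\Re(\pm i X_{j_{0}}(\lambda_{0}))$ near $\infty$ together with the tangency statements of Lemmas \ref{lem:tgpt20} and \ref{lem:tgpt30}, the transversal arc $\gamma_{n}$ must remain in a neighborhood of $\infty$; combined with the monotone structure of the underlying trajectory of ${\mathcal O}$ this forces $D_{n}$ to contract, in the limit, into a region free of singular points of $X_{j_{0}}(\lambda_{0})$. Since the singular points are isolated and $E(x_{n})$ varies continuously, we get $E(x_{n}) = \emptyset$ for $n$ large, contradicting the residue identity. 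The main obstacle is exactly this last step: controlling, uniformly in $n$ and after rescaling, how the trajectory piece and the transversal arc sit with respect to the finitely many singular points -- i.e.\ proving that the configuration imposed by $\Gamma_{x_{n}}(T_{j}(x_{n})) \to (0,0)$ and $\gamma_{n} \subset U_{\epsilon''}$ cannot enclose any of them -- which is where the $(A,B)$ bound, the shape of the Long Orbit, and the geometry of polynomial vector fields near $\infty$ have to be combined with care.
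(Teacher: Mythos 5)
Your argument has a genuine gap, and it sits exactly where you locate it yourself: the final step. You want to conclude that, in the limit, $D_{n}$ encloses no singular point of $X$, so that $E(x_{n})=\emptyset$ contradicts your residue identity. But $\overline{D_{n}}\to\{(0,0)\}$ gives no such conclusion: since $N(\varphi)>1$, all $N$ points of $\mathrm{Sing}(X)\cap\{x=x_{n}\}$ themselves converge to $(0,0)$ as $x_{n}\to 0$, so a domain shrinking to the origin can perfectly well contain singular points for every $n$ (and by your own residue computation it \emph{must}: $T_{3}(x_{n})-T_{2}(x_{n})>0$ forces $E(x_{n})\neq\emptyset$). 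Passing to adapted coordinates of ${\mathcal C}_{j_{0}}$ does not repair this, because nothing in Definition \ref{def:rolle} confines $\gamma_{n}$ or the enclosed region to a neighborhood of $\infty$ in those coordinates; the hypothesis is only $\gamma_{n}\subset U_{\epsilon''}$. There is also a circularity in the intermediate step: you treat $\Gamma_{x_{n}}$ inside ${\mathcal C}_{j_{0}}$ as a perturbation of the homoclinic trajectory used in Propositions \ref{pro:exll} and \ref{pro:exll2}, but Proposition \ref{pro:rolle} must hold for an \emph{arbitrary} Long Orbit of $\varphi$ (it is the tool used in Proposition \ref{pro:uniform} to prove that arbitrary Long Orbits are $(A_{\varphi},B_{\varphi})$ bounded and track the normal form), so you cannot assume the Long Orbit comes from that construction.

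The paper's contradiction is of a different, dynamical nature and embraces the presence of fixed points inside $D_{n}$. One first shows (using tracking and $y_{+}\neq 0$) that $\Re(X)$ must point into $D_{n}$ along $\gamma_{n}$, so $D_{n}$ is forward invariant for the flow, and that $\mathrm{gap}_{n}=T_{4}(x_{n})\to\infty$ (the bounded-gap case is excluded by local injectivity of $z\mapsto\mathrm{exp}(zX)(x,y)$ near the origin plus the fact that the Long Orbit passes close to $(0,0)$ at time $\approx T_{3}-T_{2}$, which would force $y_{+}=0$). With $\mathrm{gap}_{n}$ large, a careful tracking argument shows $\varphi$ itself eventually maps $B_{X}(\overline{D_{n}},1)$ into $D_{n}$; then $\varphi^{s_{n}}$ contracts the Poincar\'{e} metric of $D_{n}$, so the orbit of $\varphi^{\lceil T_{2}(x_{n})\rceil}(\upsilon_{\mathcal O}(x_{n}))$ is trapped in $B_{X}(\overline{D_{n}},2)$ and converges to an attracting fixed point there. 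Consequently $\varphi^{\lceil T(x_{n})\rceil}(\upsilon_{\mathcal O}(x_{n}))$ stays near $\overline{D_{n}}\to\{(0,0)\}$, forcing $\chi_{\mathcal O}(z)=(0,0)$ for some $z\in[0,1]+i{\mathcal S}_{\mathcal O}$, which contradicts Definition \ref{def:lo}. Your residue identity is correct and is in the spirit of the quantitative estimates elsewhere in the paper, but by itself it only tells you the loop encircles fixed points; without the trapping argument it does not produce a contradiction.
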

\begin{proof}
Suppose that the Rolle property is not satisfied.
The set $\chi_{\mathcal O} ([0,1] + i {\mathcal S}_{\mathcal O})$ is compact and it does not
contain $(0,0)$.
The first condition in Definition \ref{def:rolle} and
Proposition \ref{pro:tracking} imply that
$\lim_{n \to \infty} T_{2}(x_{n}) =  \lim_{n \to \infty} (T-T_{3})(x_{n}) = \infty$
and that $\Gamma_{x_{n}}[T_{2}(x_{n}), T_{3}(x_{n})]$ converges to
$\{(0,0)\}$ in the
Hausdorff topology for compact sets. Hence
$\overline{D_{n}}$ converges to $\{(0,0)\}$
by the last condition in Definition \ref{def:rolle}.

If $\Re (X)$ points towards $D_{n}$ at $\gamma_{n}(0,T_{4}(x_{n}))$ then
$D_{n}$ is invariant by the positive flow of $\Re (X)$.
Otherwise $D_{n}$ is invariant by the negative flow of $\Re (X)$.
We claim that we are always in the former situation for $n>>0$.
Otherwise $\Gamma_{x_{n}}(0) \in \overline{D_{n}}$ for a subsequence and
we obtain a contradiction since
\[ (0,y_{+}) = \lim_{n \to \infty} \upsilon_{\mathcal O}(x_{n}) =
\lim_{n \to \infty} \Gamma_{x_{n}}(0) = (0,0) \]
and $y_{+} \neq 0$. The last equality is a consequence of
$\lim_{n \to \infty} \overline{D_{n}} = \{(0,0)\}$.

Consider a subsequence such that $\mathrm{gap}_{n} \leq K'$ for some
$K' \in {\mathbb R}^{+}$. Let us prove that
$\lim_{n \to \infty} (T_{3}-T_{2})(x_{n}) = \infty$.
The vector field $X_{|x=0}$ has a multiple singular point
at $(0,0)$. Thus the diffeomorphism
$(z,x,y) \mapsto (z, \mathrm{exp}(zX)(x,y))$ defined in a neighborhood of
${\mathbb C} \times \{(0,0)\}$ is of
the form $(z,x,y+z u(z,x,y) X(y))$ where $u(z,0,0) \equiv 1$.
Hence given $C \in {\mathbb R}^{+}$
there exists a neighborhood $W$ of $(0,0)$ in ${\mathbb C}^{2}$
such that $z \mapsto \mathrm{exp}(zX)(x,y)$ is injective in $B(0,C)$
for any $(x,y) \in W \setminus \mathrm{Sing} (X)$.
If   a subsequence satisfies $\mathrm{gap}_{n} \leq K'$
and $(T_{3}-T_{2})(x_{n}) < K''$ then
$\partial D_{n}$ and $D_{n}$ are contained in
$B_{X}(\Gamma_{x_{n}}(T_{2}(x_{n})),K'+K''+1)$
where clearly trajectories of $\Re (X)$ can not intersect
twice trajectories of $\Re (iX)$. We obtain
$\lim_{n \to \infty} (T_{3}-T_{2})(x_{n}) = \infty$. We have
\[  {\mathfrak F}_{\varphi}^{[T_{3}(x_{n})-T_{2}(x_{n})]}(\upsilon_{\mathcal O}(x_{n}))=
\mathrm{exp}([T_{3}(x_{n})-T_{2}(x_{n})]X)(\upsilon_{\mathcal O}(x_{n}))
\in B_{X}(\upsilon_{\mathcal O}(x_{n}),K'+1) .  \]
The tracking phenomenon
(Proposition \ref{pro:tracking}) implies that
\[ |\psi_{X} (\varphi^{[T_{3}(x_{n})-T_{2}(x_{n})]}(\upsilon_{\mathcal O}(x_{n}))) -
\psi_{X} ({\mathfrak F}_{\varphi}^{[T_{3}(x_{n})-T_{2}(x_{n})]}(\upsilon_{\mathcal O}(x_{n})))| < 1  . \]
Since
$\lim_{n \to \infty} (T_{3}-T_{2})(x_{n}) = \lim_{n \to \infty} T(x_{n})- (T_{3}-T_{2})(x_{n}) =\infty$
and ${\mathcal O}$ is a Long Orbit we have
$\lim_{n \to \infty} \varphi^{[T_{3}(x_{n})-T_{2}(x_{n})]}(\upsilon_{\mathcal O}(x_{n})) = (0,0)$.
This leads us to
\[ \lim_{n \to \infty} {\mathfrak F}_{\varphi}^{[T_{3}(x_{n})-T_{2}(x_{n})]}(\upsilon_{\mathcal O}(x_{n})) = (0,0) \
\mathrm{and} \
(0,y_{+}) = \lim_{n \to \infty} \upsilon_{\mathcal O}(x_{n}) =(0,0). \]
The last property contradicts $y_{+} \neq 0$.

Resuming $D_{n}$ is invariant by the positive flow of $\Re (X)$ and
$\lim_{n \to \infty} \mathrm{gap}_{n} = \infty$.
Let us suppose that $\mathrm{gap}_{n} \geq 4$ for any $n \in {\mathbb N}$.
Our goal is proving that there exists $s_{n} \in {\mathbb N}$ such that
$\varphi^{j}(B_{X}(\overline{D_{n}},1)) \subset B_{X}(\overline{D_{n}},2)$ and
$\varphi^{s_{n}}(B_{X}(\overline{D_{n}},1)) \subset D_{n}$ for  $0 \leq j <s_{n}$
and $n>>0$.
Consider the segment
\[ \eta_{n} = \Gamma(iX, \Gamma_{x_{n}}(T_{3}(x_{n})), U_{\epsilon})(-\mathrm{gap}_{n}+1,\mathrm{gap}_{n}-1) .\]
It satisfies $\mathrm{exp}(s X)(\eta_{n}) \subset D_{n}$ for all
$s \in {\mathbb R}^{+}$ and $n \in {\mathbb N}$.
We define
\[ \tilde{D}_{n} = \cup_{s \in {\mathbb R}^{+}} \mathrm{exp}(s X)(\eta_{n}), \ \
\iota_{n}^{c} = B_{X}(\Gamma_{x_{n}}[T_{2}(x_{n}), T_{3}(x_{n})],c) . \]

If $Q \in \iota_{n}^{2}$ there exists $s \in [T_{2}(x_{n}), T_{3}(x_{n})]$
such that $Q \in B_{X}(\Gamma_{x_{n}}(s),2)$. The tracking phenomenon implies
that $\varphi^{[T_{3}(x_{n})-s] +4}(Q) \in \tilde{D}_{n}$.

If $Q \in B_{X}(\overline{D_{n}},1) \setminus (\tilde{D}_{n} \cup \iota_{n}^{2} )$
then $Q$ is of the form $\mathrm{exp}(s X)(Q_{0})$ where $s \in [-1,0]$
and $Q_{0}$ belongs to $\gamma_{n}[\sqrt{3},T_{4}(x_{n})-\sqrt{3}]$.
We obtain that $\varphi^{2}(Q)$ belongs to $\tilde{D}_{n}$.
It suffices to prove that $\varphi^{j}$ is well-defined in $\tilde{D}_{n}$
and $\varphi^{j}(\tilde{D}_{n}) \subset D_{n}$
for all $j \geq 0$ and $n \in {\mathbb N}$.
We can define $s_{n}=[T_{3}(x_{n})-T_{2}(x_{n})] +4$.

Suppose that  $\varphi^{j}(Q) \not \in D_{n}$ for some $Q \in \tilde{D}_{n}$
and $j \in {\mathbb N}$. We can assume that
$\varphi^{k}(Q) \in D_{n}$ for $0 <k<j$. We obtain that
$\varphi^{j}(Q) \in B_{X}(\overline{D_{n}},\epsilon_{n})$ where
$\lim_{n \to \infty} \epsilon_{n}=0$.
We claim that $\varphi^{j}(Q) \in \iota_{n}^{1/2}$;
otherwise we obtain $\varphi^{j-1}(Q) \not \in D_{n}$.
The point $\varphi^{j}(Q)$ belongs to $B_{X}(\Gamma_{x_{n}}(s),1/2)$
for some
$s \in [T_{2}(x_{n}), T_{3}(x_{n})]$.
The tracking phenomenon implies
that there exists $0 \leq j' \leq [s-T_{2}(x)]+2$ such
that $\varphi^{-k}(\varphi^{j}(Q)) \in \iota_{n}^{3/4}$ for $0 \leq k <j'$ and
$\varphi^{-j'}(\varphi^{j}(Q)) \not \in D_{n}$. This is impossible since
$\varphi^{-1}(\iota_{n}^{3/4}) \cap \tilde{D}_{n} =\emptyset$.

It is clear that $\varphi^{s_{n}}$ contracts the Poincar\'{e} metric in $D_{n}$.
Thus $\varphi^{j s_{n}}$ converges uniformly to a point $P_{n}$ in
$B_{X}(\overline{D_{n}},1)$. The point $P_{n}$ is an attractor for $\varphi^{s_{n}}$
and then for $\varphi$.
The orbit
${\{ \varphi^{j}(Q)\}}_{j \in {\mathbb N}}$ is contained in
$B_{X}(\overline{D_{n}},2)$ for any $Q \in B_{X}(\overline{D_{n}},1)$
and $\lim_{j \to \infty} \varphi^{j}(Q) = P_{n}$.
The point $\varphi^{\lceil T_{2}(x_{n}) \rceil}(\upsilon_{\mathcal O}(x_{n}))$ belongs to
$B_{X}(\overline{D_{n}},1)$. We deduce that
$\varphi^{\lceil T(x_{n}) \rceil}(\upsilon_{\mathcal O}(x_{n}))$ belongs to $B_{X}(\overline{D_{n}},2)$
for $n>>0$. This implies that
there exists $z \in [0,1]+i {\mathcal S}_{\mathcal O}$
such that $\chi_{\mathcal O} (z)=(0,0)$.
This property contradicts Definition \ref{def:lo}.
\end{proof}
Let us remark that the analogue for Long Trajectories admits a much simpler proof.
It is a version of the proof of the case $\mathrm{gap}_{n} \not \to 0$.
Indeed this condition guarantees that the attracting nature of $D_{n}$
is stable under small deformations.
\begin{pro}
\label{pro:uniform}
Let $\varphi \in \dif{p1}{2}$ with
$N > 1$. Fix a convergent normal form $X$ of $\varphi$.
Suppose that ${\mathcal O}=(\varphi, y_{+},\beta,T)$ generates a Long Orbit.
Then, up to trimming ${\mathcal O}$, the family
${\{ \Gamma_{x}[0,T(x)]\}}_{x \in \tilde{\beta}}$
of ${\mathcal O}$ is $(A_{\varphi},B_{\varphi})$ bounded
(see Definition \ref{def:A})
where $\tilde{\beta}$ is a subset of $\beta$ such that
$\tilde{\mathcal O}=(\varphi, y_{+},\tilde{\beta},T)$
satisfies ${\mathcal S}_{\tilde{\mathcal O}} = {\mathcal S}_{\mathcal O}$.
\end{pro}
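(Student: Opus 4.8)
The plan is to argue by \emph{reductio ad absurdum}, using Proposition \ref{pro:rolle} as the engine. First I would reduce to the case where the parameter interval is compact. For each $n \in {\mathbb N}$ set $K_{n} = \overline{B(0,n)} \cap {\mathcal S}_{\mathcal O}$ and $\beta_{n} = \vartheta_{\mathcal O}^{-1}(K_{n})$; then ${\mathcal O}_{n}=(\varphi,y_{+},\beta_{n},T)$ generates a Long Orbit with ${\mathcal S}_{{\mathcal O}_{n}} = K_{n}$ compact. I would prove that, up to trimming, the family associated to ${\mathcal O}_{n}$ is $(A_{\varphi},B_{\varphi})$ bounded for $x$ in a neighbourhood $\{|x| < \delta_{n}\}$ of $0$. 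Granting this for every $n$, the set $\tilde{\beta} = \bigcup_{n}(\beta_{n} \cap \{|x| < \delta_{n}\})$ with $\delta_{n} \downarrow 0$ still satisfies ${\mathcal S}_{\tilde{\mathcal O}} = {\mathcal S}_{\mathcal O}$, since each fibre $\vartheta_{\mathcal O}^{-1}(s)$ survives near $0$, and ${\{\Gamma_{x}[0,T(x)]\}}_{x \in \tilde{\beta}}$ is $(A_{\varphi},B_{\varphi})$ bounded, which is the assertion.

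So fix $n$ and suppose the family associated to ${\mathcal O}_{n}$ fails to be $(A_{\varphi},B_{\varphi})$ bounded. Then there is a sequence $x_{m} \in \beta_{n}$, $x_{m}\to 0$, such that either (i) $\Gamma_{x_{m}}[0,T(x_{m})]$ changes basic set more than $A_{\varphi}$ times, or (ii) there is a compact-like set ${\mathcal C}$ and times $0 \le a_{m} < b_{m} \le T(x_{m})$ with $\Gamma_{x_{m}}[a_{m},b_{m}] \subset {\mathcal C}$ and $b_{m}-a_{m} > B_{\varphi}/|x_{m}|^{e({\mathcal C})}$. Passing to a subsequence assume $x_{m}/|x_{m}| \to \lambda_{0}$. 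In case (ii) I would pass to adapted coordinates of ${\mathcal C}$ and rescale time by $|x_{m}|^{e({\mathcal C})}$: since $X/|x|^{e({\mathcal C})} \to X_{j}(\lambda_{0})$ on ${\mathcal C}$ (Definition \ref{def:pol}), the rescaled sub-trajectories converge to a trajectory of $\Re(X_{j}(\lambda_{0}))$ contained in ${\mathcal C}_{j}(0,\lambda_{0})$ of length at least $B_{\varphi} > B_{j,\lambda_{0}}^{1}$; by the definition of $B_{j,\lambda_{0}}^{1}$ this limiting trajectory lies on a closed trajectory of $\Re(X_{j}(\lambda_{0}))$, necessarily of period at most $B_{j,\lambda_{0}}^{0} < B_{\varphi}$ (so in particular $\lambda_{0} \in {\mathcal U}_{X}^{1}$, by Lemma \ref{lem:aolnht}). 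Consequently the $\Gamma_{x_{m}}$ spiral, staying close to this closed trajectory, for a rescaled time $\gg 1$. Case (i) is handled similarly: by the truncated Fatou flower description of the intermediate basic sets (Figure \ref{EVfig6}) and the bounded spiralling in exterior sets (Proposition \ref{pro:estext}), the only way to accumulate arbitrarily many basic-set changes is to wind near an indifferent singular point or a closed trajectory inside some enclosing compact-like set, which again forces the spiralling of case (ii).

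Now I would manufacture the Rolle violation. Cut each $\Gamma_{x_{m}}$ at a time $T_{1}(x_{m})$ chosen just after two full turns of the spiral, so that, using the period bound $B_{j,\lambda_{0}}^{0}$, the sub-family ${\{\Gamma_{x_{m}}[0,T_{1}(x_{m})]\}}$ is $(A,B)$ bounded for suitable $A,B$, yet picks up two times $T_{2}(x_{m}) < T_{3}(x_{m}) \le T_{1}(x_{m})$ at which $\Gamma_{x_{m}}$ returns close to itself. Since ${\mathcal C}(x_{m})$ shrinks to $(0,0)$ as $x_{m}\to 0$ (because $e({\mathcal C}) \ge 1$ and $t = xw$), one gets $\Gamma_{x_{m}}(T_{2}(x_{m})),\Gamma_{x_{m}}(T_{3}(x_{m})) \to (0,0)$, and the two points are joined by a short arc of $\Re(iX)$ or $\Re(-iX)$ which also shrinks to $(0,0)$. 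This is exactly a configuration forbidden by Proposition \ref{pro:rolle} for an $(A,B)$ bounded sub-family, giving the contradiction. The main obstacle I expect is this last step: making the passage ``spiralling in ${\mathcal C}$ after rescaling $\Longrightarrow$ genuine Rolle-violating configuration in the original coordinates'' precise and uniform in $m$ — in particular controlling the transversal arc and ensuring $\mathrm{gap}_{m}$ behaves as in Definition \ref{def:rolle} — together with the bookkeeping needed to certify that the constructed sub-families really are $(A,B)$ bounded.
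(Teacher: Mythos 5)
Your overall architecture coincides with the paper's: reduce to compact ${\mathcal S}_{\mathcal O}$, cut each trajectory at the first occurrence of a bad event, and contradict Proposition \ref{pro:rolle} with an $(A,B)$ bounded sub-family that violates the Rolle property; your case (ii) is essentially the paper's argument (rescaling by $|x_{m}|^{e({\mathcal C})}$, the limit is a trajectory of $X_{j}(\lambda_{0})$ of length greater than $B_{j,\lambda_{0}}^{1}$, hence contained in a closed trajectory of period less than $B_{\varphi}$, and adherence to a periodic trajectory produces the forbidden configuration). The genuine gap is your case (i). You assert that exceeding $A_{\varphi}$ changes of basic set ``forces the spiralling of case (ii)'', i.e.\ a long consecutive stay near a closed trajectory or an indifferent point inside one compact-like set. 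This reduction is unjustified and is in general false: a trajectory can exceed $A_{\varphi}$ basic-set changes by passing repeatedly near a homoclinic trajectory of $X_{j_{0}}(\lambda_{0})$ through $\infty$, exiting the compact-like set across its exterior boundary into the enclosing exterior set and re-entering, with each consecutive stay in the compact-like set of rescaled duration at most $B_{j,\lambda_{0}}^{1}<B_{\varphi}$, so no overstay of type (ii) ever occurs. (This looping mechanism is exactly the one that generates Long Trajectories in Proposition \ref{pro:exll}, so it is not exotic for the trajectories at hand; only the Rolle property, coming from the diffeomorphism dynamics, excludes repeated loops.) The constant $A_{\varphi}$ is the number of boundary transversals, and the intended argument is a pigeonhole: more than $A_{\varphi}$ changes forces the trajectory to meet some boundary transversal twice; one then replaces that circle arc by a nearby sub-trajectory of $\Re (iX)$ contained in the corresponding basic set ${\mathcal B}\neq {\mathcal E}_{0}$ (whose fibers shrink to the origin as $x\to 0$), obtaining directly the Rolle-violating configuration, or, if the second crossing happens only after time $T(x)$, the tracking of the (by construction bounded) family forces $\varphi^{\lceil T(x_{n})\rceil}(\upsilon_{\mathcal O}(x_{n}))\to (0,0)$, contradicting the definition of Long Orbit. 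Your case (i) needs this argument; it cannot be funneled through case (ii).

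A second, smaller deficit is precisely the ``bookkeeping'' you defer: Proposition \ref{pro:rolle} only applies to sub-families that are genuinely $(A,B)$ bounded, so the cut times must be defined as the first time at which a bad event occurs, with the events ordered (tracking failure, double crossing of a transversal, overstay in a compact-like set), so that the trajectory up to the cut is bounded by construction. This is what the paper's labelling scheme does, and it also supplies the separate case in which tracking itself fails before any geometric event, which is ruled out by Proposition \ref{pro:tracking}; your proposal does not account for that possibility.
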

The proposition implies that the $(A,B)$ boundness condition is automatic
for Long Orbits.
Let us explain the setup of the proof.

A priori all the trajectories in the family
${\{ \Gamma_{x} \}}_{x \in \beta}$ are labeled {\textbf{(a)}}.
We want to have
\begin{equation}
\label{equ:pba}
\varphi^{j}(\upsilon_{\mathcal O}(x)) \in B_{X}({\mathfrak F}_{\varphi}^{j}(\upsilon_{\mathcal O}(x)),1)
\end{equation}
for any $0 \leq j \leq \lceil T(x) \rceil$.
Suppose that the property does not hold true for some $x \in \beta$.
Then we replace $T(x)$ with $T_{1}(x) \leq T(x)$ such that
Eq. (\ref{equ:pba}) holds true for $0 \leq j < \lceil T_{1}(x)  \rceil$
but it is false for $j= \lceil T_{1}(x)  \rceil$.
In such a case the label {\textbf{(a)}} for $\Gamma_{x}$ is replaced with {\textbf{(b)}}.
Otherwise we define $T_{1}(x)=T(x)$.

Given a trajectory $\Gamma_{x}[0,T_{1}(x)]$ we keep the label if
$\Gamma_{x}[0,T_{1}(x)]$ does not intersect twice a boundary transversal
(see Definition \ref{def:bdtr}).
Otherwise we replace the previous label with {\textbf{(c)}}.
We also replace $T_{1}(x)$ with a smaller or equal value such that
$\Gamma_{x}[0,T_{1}(x)]$ intersects twice a boundary transversal
$\gamma$ but $\Gamma_{x}[0,T_{1}(x))$ does not.
The curve $\gamma$ is contained in the exterior boundary of a basic set ${\mathcal B} \neq {\mathcal E}_{0}$.
It is easy to consider $T_{1}'(x)$ such that
$\Gamma_{x}[0,T_{1}'(x)]$
intersects twice a sub-trajectory of $\Re (iX)$ contained in ${\mathcal B}$
(passing through a point in $T{\mathcal B}_{i X}(x)$)
and either $T_{1}'(x) \leq T_{1}(x)$ or $T_{1}'(x) > T_{1}(x)$ and
$\Gamma_{x}[T_{1}(x),T_{1}'(x)]$ is contained in
${\mathcal B}$ (see Figure (\ref{dr14})).
\begin{figure}[h]
\begin{center}
\includegraphics[height=5cm,width=10cm]{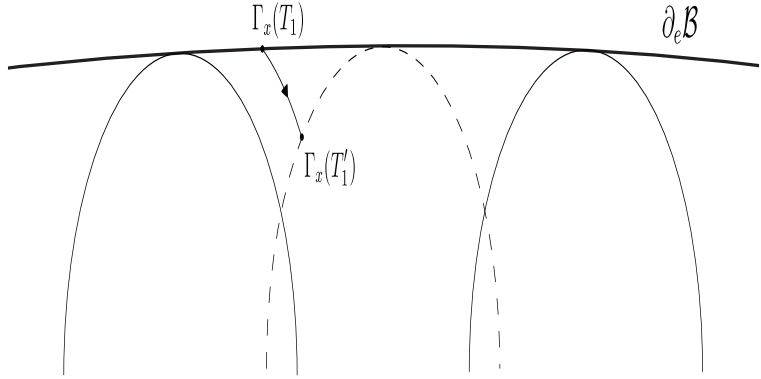}
\end{center}
\caption{The trajectories of $\Re (X)$ and $\Re (iX)$ are solid and dashed
thin curves respectively; the boundary of ${\mathcal B}$ is a thick curve} \label{dr14}
\end{figure}
The trajectories $\Gamma_{x}[0,T_{1}(x)]$ and $\Gamma_{x}[0,T_{1}'(x)]$
change  of basic set a number of times that is bounded
a priori.

We replace the label if   there exists a compact-like set ${\mathcal C}'$,
a sequence $x_{n} \in \beta$, $x_{n} \to 0$ and
sequences $0 \leq j_{n} < j_{n}' \leq T_{1}(x_{n})$ such that
\begin{equation}
\label{equ:auxlb}
\Gamma_{x_{n}}[j_{n},j_{n}'] \subset {\mathcal C}', \ \mathrm{and} \
|x_{n}|^{e({\mathcal C}')} (j_{n}' - j_{n}) > B_{\varphi} \ \forall n \in {\mathbb N}.
\end{equation}
We can refine the sequence and consider a smaller value of $T_{1}$, $T_{1}< T_{2} \leq T$
and a compact-like basic set ${\mathcal C}$ such  that
\[ \Gamma_{x_{n}}(T_{1}(x_{n})) \in \partial {\mathcal C}, \
\Gamma_{x_{n}}[T_{1}(x_{n})), T_{2}(x_{n}))] \subset {\mathcal C}, \
|x_{n}|^{e({\mathcal C})} (T_{2}  - T_{1})(x_{n}) > B_{\varphi} \]
for any $n \in {\mathbb N}$
and there is no choice of subsequences $j_{n}$, $j_{n}'$ and a compact-like set
${\mathcal C}'$ such that Eq. (\ref{equ:auxlb}) holds true.
We replace the previous label with {\textbf{(d)}} for the parameters in
the sequence $x_{n}$.
\begin{proof}
Let us prove that the labels {\textbf{(b)}}, {\textbf{(c)}} and {\textbf{(d)}}
never happen if ${\mathcal S}_{\mathcal O}$ is compact.
The set $\beta$ adheres a unique direction $\lambda_{0}$ in ${\mathcal U}_{X}^{1}$
(Proposition \ref{pro:udlo}).
Let us remark that $T_{1}(x) \leq T(x)$
and $T_{2}(x) \leq T(x)$ for any $x \in \beta$.

Suppose that we are in the situation {\textbf{(d)}}.
The
family ${\{ \Gamma_{x_{n}}[0,T_{1}(x_{n})]\}}_{n \in {\mathbb N}}$ is
$(A_{\varphi},B_{\varphi})$ bounded by the previous construction.
The sequence $x_{n}/|x_{n}|$ tends to $\lambda_{0}$.
We can also suppose that $\Gamma_{x_{n}}(T_{1}(x_{n}))$ converges to
a point $(0,w_{0})$ in the adapted coordinates $(x,w)$ of ${\mathcal C}$ up to consider
a subsequence. Let $X_{\mathcal C}(\lambda)$ be the polynomial vector field
associated to ${\mathcal C}$. Since
$X /|x|^{e({\mathcal C})} \to X_{\mathcal C}(\lambda_{0})$ in ${\mathcal C}$
if $x \to 0$ and $x/|x| \to \lambda_{0}$ the condition
$|x_{n}|^{e({\mathcal C})} (T_{2}  - T_{1})(x_{n}) > B_{\varphi}$ for any $n \in {\mathbb N}$
implies that the trajectory $\Gamma$ of $\Re (X_{\mathcal C}(\lambda_{0}))$ through
$(r,\lambda,w)=(0,\lambda_{0},w_{0})$ in ${\mathcal C}$ is closed.
The period of $\Gamma$ is less or equal than $B_{\varphi}-1$. We can replace
$T_{2}(x_{n})$ with $T_{1}(x_{n}) + B_{\varphi} / |x_{n}|^{e({\mathcal C})}$.
It is clear that $\Gamma_{x_{n}}[0,T_{2}(x_{n})]$ is $(A_{\varphi},B_{\varphi})$ bounded
and does not satisfy the Rolle property for $n >>0$ since it adheres
a periodic trajectory. This contradicts Proposition \ref{pro:rolle}.

Suppose that we are in the situation {\textbf{(c)}}, i.e.
the set $E$ of parameters in $\beta$ having the label {\textbf{(c)}}
adheres $0$.
The families ${\{ \Gamma_{x}[0,T_{1}(x)]\}}_{x \in E}$ and
${\{ \Gamma_{x}[0,T_{1}'(x)]\}}_{x \in E}$ are $(A_{\varphi}+1,B_{\varphi})$ bounded
by construction.
If the set $\{x \in E : T_{1}'(x) < T(x) \}$ adheres $0$
then we obtain a violation of the Rolle property (Proposition  \ref{pro:rolle}).
Otherwise we have $T_{1}(x) \leq T(x) \leq T_{1}'(x)$ for any $x \in \beta$.
The tracking phenomenon
(Proposition \ref{pro:tracking}) implies that
$\varphi^{\lceil T(x_{n}) \rceil}(\upsilon_{\mathcal O}(x_{n}))$ tends to $(0,0)$.
This contradicts the definition of Long Orbit.

Suppose that we are in the situation {\textbf{(b)}}, i.e.
the set $E$ of parameters in $\beta$ having the label {\textbf{(b)}}
adheres $0$.
The family ${\{ \Gamma_{x}[0,T_{1}(x)]\}}_{x \in E}$ is
$(A_{\varphi},B_{\varphi})$ bounded by construction.
This setup is incompatible with the tracking phenomenon
(Proposition \ref{pro:tracking}).

We can suppose that every point $x \in \beta$ has the label {\textbf{(a)}}.
As a consequence the family associated to ${\mathcal O}$ is
$(A_{\varphi},B_{\varphi})$ bounded.

Consider the general case, i.e. ${\mathcal S}_{\mathcal O}$ is not necessarily compact.
Let $\beta_{n}$ be a neighborhood  of $0$ in $\vartheta_{\mathcal O}^{-1}[-n,n]$
such that ${\{ \Gamma_{x}[0,T(x)]\}}_{x \in \beta_{n}}$ is
$(A_{\varphi},B_{\varphi})$ bounded for $n \in {\mathbb N}$. The family
${\{ \Gamma_{x}[0,T(x)]\}}_{x \in \tilde{\beta}}$ is
$(A_{\varphi},B_{\varphi})$ bounded for $\tilde{\beta} = \cup_{n \in {\mathbb N}} \beta_{n}$.
\end{proof}
Next we provide the generalization of the residue formula in the discrete case.
\begin{pro}
\label{pro:lores}
Let $\varphi \in \dif{p1}{2}$ with
$N > 1$. Fix a convergent normal form $X$ of $\varphi$.
Suppose that ${\mathcal O}=(\varphi, y_{+},\beta,T)$ is a Long Orbit.
Then, up to trimming ${\mathcal O}$, $\chi_{{\mathcal O}}$
satisfies
\[ \psi_{{\mathcal P}_{-}'}^{\varphi}(\chi_{{\mathcal O}}(s+iu)) - \psi_{{\mathcal P}_{+}'}^{\varphi}(0,y_{+}) =
\lim_{\vartheta_{\mathcal O}(x) \to u, \ x \to 0}^{\lceil T(x) \rceil - T(x) \to s}
\left(
\lceil T(x) \rceil + 2 \pi i \sum_{Q \in E_{-}(x)} Res(X,Q) \right) \]
for any $s +iu \in [0,1]+i{\mathcal S}_{\mathcal O}$
where $(E_{-},E_{+})$ is the division of $\mathrm{Fix} (\varphi)$ induced by ${\mathcal O}$.
\end{pro}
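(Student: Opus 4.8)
The plan is to reduce Proposition \ref{pro:lores} to the already-established residue formula for \emph{weak} Long Trajectories of the normal form (Proposition \ref{pro:ltlo}) by interpolating between the diffeomorphism $\varphi$ and its convergent normal form ${\mathfrak F}_{\varphi}=\mathrm{exp}(X)$. The key new ingredient compared to Proposition \ref{pro:ltlo} is that here we begin with a Long \emph{Orbit} of $\varphi$ rather than a Long \emph{Trajectory} of $X$; the tracking machinery of Section \ref{sec:tracking} has been set up precisely so that the two points of view become interchangeable. Concretely, let $\Gamma_{x}=\Gamma(X,\upsilon_{\mathcal O}(x),U_{\epsilon})$ be the trajectory of $\Re (X)$ through the section point of the Long Orbit. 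By Proposition \ref{pro:uniform}, up to trimming ${\mathcal O}$ and passing to $\tilde{\beta}$ with ${\mathcal S}_{\tilde{\mathcal O}}={\mathcal S}_{\mathcal O}$, the family ${\{\Gamma_{x}[0,T(x)]\}}_{x\in\tilde\beta}$ is $(A_{\varphi},B_{\varphi})$ bounded. Hence by the tracking phenomenon (Proposition \ref{pro:tracking}) we have $\varphi^{j}(\upsilon_{\mathcal O}(x))\in B_{X}({\mathfrak F}_{\varphi}^{j}(\upsilon_{\mathcal O}(x)),1)$ for all $0\le j\le\lceil T(x)\rceil$, so the bounded family $\Gamma_{x}[0,T(x)]$ does not escape any fixed $U_{\epsilon'}$ away from the origin for $x$ near $0$, and it actually generates a \emph{weak} Long Trajectory $(X,y_{+},\tilde\beta,T)$.

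Next I would upgrade the weak Long Trajectory to a Long Trajectory. The point is that for each $s+iu\in[0,1]+i{\mathcal S}_{\mathcal O}$, and each sequence $x_{n}\in\tilde\beta$ with $\vartheta_{\mathcal O}(x_{n})\to u$ and $\lceil T(x_{n})\rceil-T(x_{n})\to s$, the hypothesis that ${\mathcal O}$ is a Long Orbit says $\varphi^{\lceil T(x_{n})\rceil}(\upsilon_{\mathcal O}(x_{n}))\to\chi_{\mathcal O}(s+iu)\ne(0,0)$. Tracking then forces ${\mathfrak F}_{\varphi}^{\lceil T(x_{n})\rceil}(\upsilon_{\mathcal O}(x_{n}))=\mathrm{exp}(\lceil T(x_{n})\rceil X)(\upsilon_{\mathcal O}(x_{n}))$ to stay within $B_{X}(\cdot,1)$ of it, hence to be bounded away from $(0,0)$ as well; therefore $\Gamma_{x_{n}}(T(x_{n}))=\mathrm{exp}(T(x_{n})X)(\upsilon_{\mathcal O}(x_{n}))$ converges (after trimming the extra integer amount) to a point $\chi_{\mathcal O}^{X}(iu)$ in a repelling petal of $\Re (X)_{|U_{\epsilon}(0)}$, and the dependence on $s$ is just the extra $\mathrm{exp}((\lceil T\rceil-T)X)$. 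This yields the continuous map $\chi_{\mathcal O}^{X}$ required in Definition \ref{def:lt}, so $(X,y_{+},\tilde\beta,T)$ is a Long Trajectory. Now Proposition \ref{pro:ltlo} applies verbatim: its second half gives that $(\varphi,y_{+},\tilde\beta,T)$ is a Long Orbit with ${\mathcal S}_{{\mathcal O}'}={\mathcal S}_{\mathcal O}$ and
\[
\psi_{{\mathcal P}_{-}'}^{\varphi}(\chi_{{\mathcal O}'}(s+iu))-\psi_{{\mathcal P}_{+}'}^{\varphi}(0,y_{+})=
\lim_{\vartheta_{\mathcal O}(x)\to u,\ x\to 0}^{\lceil T(x)\rceil - T(x)\to s}
\left(\lceil T(x)\rceil + 2\pi i\sum_{Q\in E_{-}(x)}Res(X,Q)\right).
\]

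It remains to identify the abstract data $({\mathcal S}_{{\mathcal O}'},\chi_{{\mathcal O}'},\vartheta_{{\mathcal O}'},\upsilon_{{\mathcal O}'})$ coming out of this construction with the data of the Long Orbit ${\mathcal O}$ we started from, and to check that the division $(E_{-},E_{+})$ of $\mathrm{Sing}(X)$ induced by the Long Trajectory coincides, under the correspondence $\mathrm{Sing}(X)=\mathrm{Fix}(\varphi)$ (recall $N(\varphi)=N(X)$ and the singular/fixed sets match for a convergent normal form), with the division of $\mathrm{Fix}(\varphi)$ induced by ${\mathcal O}$. For the first point, both Long Orbits have the same $\upsilon_{\mathcal O}$ and the same defining sequences, and $\chi_{{\mathcal O}'}$ is uniquely determined by the limiting property in Definition \ref{def:lo}; since $\chi_{\mathcal O}$ satisfies the same property, they agree, and likewise ${\mathcal S}_{{\mathcal O}'}={\mathcal S}_{\mathcal O}$, $\vartheta_{{\mathcal O}'}=\vartheta_{\mathcal O}$. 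For the second point, the division is read off from which components of $(\mathrm{Fix}(\varphi))(x)$ the finite trajectory segment $\Gamma_{x}[0,T(x)]$ separates from the "outside", and tracking guarantees that the $\varphi$-orbit segment $\{\varphi^{j}(\upsilon_{\mathcal O}(x))\}_{0\le j\le\lceil T(x)\rceil}$ and the $\mathrm{exp}(X)$-orbit segment lie within $B_{X}(\cdot,1)$ of each other, hence enclose the same singular points for $x$ small. This gives the stated formula. The main obstacle is the second step, namely producing the continuous limit map $\chi_{\mathcal O}^{X}$ and verifying uniformity in $s\in[0,1]$ and in $u$ over compact subsets of ${\mathcal S}_{\mathcal O}$: one must rule out, via the $(A_{\varphi},B_{\varphi})$ boundedness and the tracking estimate, any oscillation of $\mathrm{exp}(T(x)X)(\upsilon_{\mathcal O}(x))$ between different petals or any escape of the normal-form orbit toward $(0,0)$ while the $\varphi$-orbit stays away from it — this is exactly where the quantitative control of $\Delta_{\varphi}$ in parabolic exterior sets (Lemma \ref{lem:cansum}) and in compact-like sets (Proposition \ref{pro:bouis}) is used.
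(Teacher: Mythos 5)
Your opening matches the paper's argument exactly: Proposition \ref{pro:uniform} gives $(A_{\varphi},B_{\varphi})$ boundedness of the family associated to ${\mathcal O}$, and the tracking phenomenon (Proposition \ref{pro:tracking}) then shows that $(X,y_{+},\beta,T)$ is a weak Long Trajectory, whose induced division of $\mathrm{Sing}(X)=\mathrm{Fix}(\varphi)$ is by definition the one attached to ${\mathcal O}$. Where you diverge, and where the gap lies, is the ``upgrade'' to a genuine Long Trajectory. From $\varphi^{\lceil T(x_{n})\rceil}(\upsilon_{\mathcal O}(x_{n}))\to\chi_{\mathcal O}(s+iu)\neq(0,0)$ and the tracking bound you conclude that ${\mathfrak F}_{\varphi}^{\lceil T(x_{n})\rceil}(\upsilon_{\mathcal O}(x_{n}))$ is bounded away from $(0,0)$ and ``therefore'' converges. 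Boundedness away from the origin only yields convergent subsequences: the two endpoints differ by $\mathrm{exp}(z_{n}X)$ with $z_{n}\in\overline{B(0,1)}$, and $z_{n}$ need not converge, so the limit of the normal-form endpoint could a priori depend on the subsequence. Note also that $T$ here is the abstract time function of Definition \ref{def:lo}, not one built from the residue function as in Proposition \ref{pro:exll}, so convergence of $\mathrm{exp}(T(x)X)(\upsilon_{\mathcal O}(x))$ is genuinely not automatic. To pin the limit down you would have to pass through Fatou coordinates: Eq. (\ref{equ:noise}) controls $\psi_{X}\circ\varphi^{\lceil T\rceil}-\psi_{X}\circ{\mathfrak F}_{\varphi}^{\lceil T\rceil}$, and injectivity of $\psi_{-}$ near the repelling petal then forces convergence of the normal-form endpoint --- but that is precisely the computation contained in the first half of Proposition \ref{pro:ltlo}. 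You would moreover need to verify the equivariance $\chi(is)=\mathrm{exp}(isX)(\chi(0))$ required by Definition \ref{def:lt} before the second half of Proposition \ref{pro:ltlo} becomes applicable.

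The paper avoids this detour entirely: the first statement of Proposition \ref{pro:ltlo}, Eq. (\ref{equ:ltlo1}), was formulated exactly for this situation. It requires only a weak Long Trajectory with $(A,B)$ bounded family together with convergence of the $\varphi$-endpoints along a sequence, and the latter is supplied for free by the definition of Long Orbit, the limit being $\chi_{\mathcal O}(s+iu)$. Applying Eq. (\ref{equ:ltlo1}) sequence by sequence (the left-hand side depends only on $s+iu$) gives the stated formula directly, with no need to produce a limit for the normal-form endpoints, to invoke the second half of Proposition \ref{pro:ltlo}, or to identify the resulting Long Orbit with ${\mathcal O}$. So your plan is repairable, but as written the convergence step is a genuine gap, and closing it amounts to re-deriving the part of Proposition \ref{pro:ltlo} that you could simply have quoted.
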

The concept of division of the fixed points induced by a Long Orbit is introduced during
the proof.
\begin{proof}
The family ${\{\Gamma_{x}[0,T(x)]\}}_{x \in \beta}$ associated to ${\mathcal O}$
is $(A_{\varphi},B_{\varphi})$ bounded by Proposition \ref{pro:uniform}.
The tracking phenomenon (Proposition \ref{pro:tracking}) implies that
$(X,y_{+},\beta,T)$ is a weak Long Trajectory.
Hence it induces a division $(E_{-},E_{+})$ of $\mathrm{Fix} (\varphi) = \mathrm{Sing} (X)$;
it is the division induced by ${\mathcal O}$.
We complete the proof by applying
Eq. (\ref{equ:ltlo1}) in Proposition \ref{pro:ltlo}.
\end{proof}
Long Orbits are invariant under translations in Fatou coordinates.
\begin{pro}
\label{pro:evol}
Let $\varphi \in \dif{p1}{2}$ with
$N > 1$. Fix a convergent normal form $X$ of $\varphi$.
Suppose that ${\mathcal O}=(\varphi, y_{+},\beta,T)$ is a Long Orbit.
Let ${\mathcal P}_{+}'$ and ${\mathcal P}_{-}'$ be the petals of $\varphi_{|x=0}$ containing
$y_{+}$ and $\chi_{\mathcal O}(0)$ respectively.
Consider   $y_{+}' \in  {\mathcal P}_{+}'$ such that
there exists $\chi':[0,1] + i {\mathcal S}_{\mathcal O} \to {\mathcal P}_{-}'$
satisfying
\[ \psi_{{\mathcal P}_{-}'}^{\varphi}(\chi'(z)) -\psi_{{\mathcal P}_{+}'}^{\varphi}(0,y_{+}') =
\psi_{{\mathcal P}_{-}'}^{\varphi}(\chi_{\mathcal O}(z))
- \psi_{{\mathcal P}_{+}'}^{\varphi}(0,y_{+}) \]
for any $z \in [0,1] + i {\mathcal S}_{\mathcal O}$. Then
${\mathcal O}'=(\varphi, y_{+}',\beta,T)$ generates a Long Orbit
such that
$\chi_{{\mathcal O}'} \equiv \chi'$.
\end{pro}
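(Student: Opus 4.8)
The plan is to verify directly that the tuple $\vartheta_{{\mathcal O}'}:=\vartheta_{\mathcal O}$, ${\mathcal S}_{{\mathcal O}'}:={\mathcal S}_{\mathcal O}$, the constant section $\upsilon_{{\mathcal O}'}(x):=(x,y_{+}')$, and $\chi_{{\mathcal O}'}:=\chi'$ satisfies Definition \ref{def:lo}. The purely algebraic conditions are immediate: $\vartheta_{{\mathcal O}'}^{-1}(s)$ is a germ of connected curve because $\vartheta_{{\mathcal O}'}=\vartheta_{\mathcal O}$; $\upsilon_{{\mathcal O}'}$ is a section with $\upsilon_{{\mathcal O}'}(0)=(0,y_{+}')$ by construction; and $\chi'(1+is)=\varphi(\chi'(is))$ follows from the defining identity of $\chi'$, the identity $\psi_{{\mathcal P}_{-}'}^{\varphi}\circ\varphi\equiv\psi_{{\mathcal P}_{-}'}^{\varphi}+1$, the corresponding relation $\chi_{\mathcal O}(1+is)=\varphi(\chi_{\mathcal O}(is))$, and the injectivity of $\psi_{{\mathcal P}_{-}'}^{\varphi}$ on ${\mathcal P}_{-}'$. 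Since $(0,y_{+}')\in{\mathcal P}_{+}'$ lies in an attracting petal of $\varphi_{|x=0}$, the iterates $\varphi^{j}(0,y_{+}')$ are well-defined, stay in $U_{\epsilon}$ and converge to $(0,0)$, and $\chi'$ takes values in ${\mathcal P}_{-}'$, away from $(0,0)$. What remains is to control the iterates $\varphi^{j}(x,y_{+}')$ for $x\in\beta$, $0\le j\le\lceil T(x)\rceil$, and to identify their limits with $\chi'$.

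For this I would pass to the convergent normal form $X$. The petal ${\mathcal P}_{+}'$ of $\varphi_{|x=0}$ is also a petal of $\Re(X)_{|x=0}$, so $(0,y_{+})$ and $(0,y_{+}')$ lie in a common attracting petal ${\mathcal P}_{+}$ of $\Re(X)_{|U_{\epsilon}(0)}$, and similarly $\chi_{\mathcal O}(0)$, $\chi'(0)$ lie in a common repelling petal ${\mathcal P}_{-}$. By Proposition \ref{pro:uniform} the family associated to ${\mathcal O}$ is $(A_{\varphi},B_{\varphi})$ bounded, and by Proposition \ref{pro:tracking} the tuple $(X,y_{+},\beta,T)$ is a weak Long Trajectory; let $(E_{-},E_{+})$ be its induced division of $\mathrm{Sing}(X)$. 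I would then re-run the construction of Proposition \ref{pro:exll} (exactly as in Remark \ref{rem:fhevol} and the proof of Proposition \ref{pro:exll2}) with the base points $y_{+}'$ and $\chi'(0)$ in place of $y_{+}$ and $\chi_{\mathcal O}(0)$: the set $\beta$ and the time function depend on the endpoints only through their Fatou-coordinate values and on the geometry only through which singular points are enclosed by the trajectory. Since $y_{+}$ and $y_{+}'$ are in the same petal, the trajectory $\Gamma(X,(x,y_{+}'),U_{\epsilon})$ enters the relevant compact-like set ${\mathcal C}_{j_{0}}$ through the same boundary transversal of $\partial_{\downarrow}{\mathcal E}_{0}$ as $\Gamma(X,(x,y_{+}),U_{\epsilon})$ (Proposition \ref{pro:hit}) and stays Hausdorff-close to it inside ${\mathcal C}_{j_{0}}$, so both trajectories enclose the same singular points. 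Hence $(X,y_{+}',\beta,T)$ is again a weak Long Trajectory, $(A,B)$ bounded, with the same induced division $(E_{-},E_{+})$. The hypothesis that $\chi'$ exists, maps into ${\mathcal P}_{-}'$, and satisfies the stated identity with no integer shift is precisely what forbids the integer ambiguity of Remark \ref{rem:fhevol} and produces the original $T$ (and the original ${\mathcal S}_{\mathcal O}$).

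Finally I would invoke Proposition \ref{pro:ltlo} to upgrade the weak Long Trajectory $(X,y_{+}',\beta,T)$ to the Long Orbit ${\mathcal O}'=(\varphi,y_{+}',\beta,T)$ with ${\mathcal S}_{{\mathcal O}'}={\mathcal S}_{\mathcal O}$, and Proposition \ref{pro:lores} then gives
\[
\psi_{{\mathcal P}_{-}'}^{\varphi}(\chi_{{\mathcal O}'}(s+iu))-\psi_{{\mathcal P}_{+}'}^{\varphi}(0,y_{+}')=\lim_{\vartheta_{\mathcal O}(x)\to u,\ x\to 0}^{\lceil T(x)\rceil-T(x)\to s}\Bigl(\lceil T(x)\rceil+2\pi i\sum_{Q\in E_{-}(x)}Res(X,Q)\Bigr).
\]
By Proposition \ref{pro:lores} applied to ${\mathcal O}$ the right-hand side equals $\psi_{{\mathcal P}_{-}'}^{\varphi}(\chi_{\mathcal O}(s+iu))-\psi_{{\mathcal P}_{+}'}^{\varphi}(0,y_{+})$, which by hypothesis equals $\psi_{{\mathcal P}_{-}'}^{\varphi}(\chi'(s+iu))-\psi_{{\mathcal P}_{+}'}^{\varphi}(0,y_{+}')$; injectivity of $\psi_{{\mathcal P}_{-}'}^{\varphi}$ on ${\mathcal P}_{-}'$ forces $\chi_{{\mathcal O}'}\equiv\chi'$.

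I expect the middle step to be the real obstacle: one has to re-examine the existence argument of Propositions \ref{pro:exll}--\ref{pro:exll2} with the shifted base point carefully enough to certify that the very same $\beta$, the very same $T$, and the very same division of $\mathrm{Sing}(X)$ reappear — in particular that no integer shift is introduced and that the set of enclosed singular points does not jump — which is exactly where the precise normalization in the hypothesis on $\chi'$ is consumed.
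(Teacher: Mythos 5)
Your overall skeleton does match the paper's (transfer boundedness to the $X$-trajectories through $(x,y_{+}')$, get a weak Long Trajectory with the same division, finish with the residue formula and injectivity of $\psi_{{\mathcal P}_{-}'}^{\varphi}$), but the step you yourself call ``the real obstacle'' is genuinely open in your write-up, and the route you propose for it is misdirected. You want to re-run the existence construction of Propositions \ref{pro:exll}--\ref{pro:exll2} with base points $y_{+}'$ and $\chi'(0)$ and certify that the very same $\beta$ and the very same $T$ reappear with no integer shift. Two problems: first, $T$ is the time function of an arbitrary Long Orbit, not necessarily of the constructed form (\ref{equ:deftf}), so there is nothing to ``reproduce''; second, the normalization in the hypothesis on $\chi'$ is written in Fatou coordinates of $\varphi_{|x=0}$, i.e.\ $\psi_{{\mathcal P}_{\pm}'}^{\varphi}$, whereas the time function in Proposition \ref{pro:exll} is normalized by Fatou coordinates $\psi_{\pm}$ of the normal form $X$; these differ by the point-dependent correction sums $\sum_{j}\Delta_{\varphi}\circ \varphi^{\pm j}$ of Definition \ref{def:fatl}, so equality of the $\varphi$-Fatou differences does not give equality of the $X$-Fatou differences, and the ``same $T$, no shift'' conclusion does not follow from the matching you propose. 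The paper never reconstructs $T$: it keeps $\beta$ and $T$ fixed, notes that since $(0,y_{+}')$ and $(0,y_{+})$ lie in the same petal the two families of $X$-trajectories differ by a bounded complex flow time, so $\{\Gamma(X,(x,y_{+}'),U_{\epsilon})[0,T(x)]\}_{x\in\beta}$ is again $(A,B)$ bounded and, by Proposition \ref{pro:tracking}, a weak Long Trajectory inducing the same division $(E_{-},E_{+})$; the hypothesis on $\chi'$ is consumed only later, inside the residue formula.

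The endgame is also circular as written. Proposition \ref{pro:ltlo} upgrades a genuine Long Trajectory to a Long Orbit; a weak Long Trajectory, which is all you have for $(X,y_{+}',\beta,T)$, only entitles you to Eq.\ (\ref{equ:ltlo1}) for subsequential limits, and Proposition \ref{pro:lores} applies to objects already known to be Long Orbits, so applying it to ${\mathcal O}'$ presupposes the conclusion. The correct closing, which is the paper's, is: apply Proposition \ref{pro:lores} to the given Long Orbit ${\mathcal O}$ and Eq.\ (\ref{equ:ltlo1}) to the weak Long Trajectory $(X,y_{+}',\beta,T)$; any subsequential limit $(0,y_{-})$ of $\varphi^{\lceil T(x_{n})\rceil}(x_{n},y_{+}')$ then satisfies the residue identity, whose right-hand side the hypothesis converts into $\psi_{{\mathcal P}_{-}'}^{\varphi}(\chi'(z))-\psi_{{\mathcal P}_{+}'}^{\varphi}(0,y_{+}')$, and injectivity of $\psi_{{\mathcal P}_{-}'}^{\varphi}$ forces $(0,y_{-})=\chi'(z)$. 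This pins down all limits at once, which simultaneously yields the Long Orbit property of ${\mathcal O}'$ and $\chi_{{\mathcal O}'}\equiv\chi'$, with no need to produce a strong Long Trajectory for the shifted base point and no integer-shift issue to worry about.
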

A consequence of the proposition is that
a topological conjugacy $\sigma$ between $\varphi, \eta \in \dif{p1}{2}$
with $N>1$ conjugates translations in
Fatou coordinates of $\varphi_{|x=0}$ and $\eta_{|x=0}$.
We will see that
$\sigma_{|x=0}$ is affine in Fatou coordinates
(Corollary \ref{cor:orp}).
\begin{proof}
Let
$(E_{-},E_{+})$ be the division of $\mathrm{Sing}(X)$ induced by ${\mathcal O}$.
The family associated to ${\mathcal O}$ is $(A_{\varphi},B_{\varphi})$ bounded
by Proposition \ref{pro:uniform}.
Consider $\Gamma_{x} =\Gamma(X,(x,y_{+}'),U_{\epsilon})$ for $x \in \beta$.
We deduce that the family
${\{\Gamma_{x}[0,T(x)]\}}_{x \in \beta}$ is also $(A,B)$ bounded.
It is a weak Long Trajectory by the tracking phenomenon.
The trajectories in the family ${\{\Gamma_{x}[0,T(x)]\}}_{x \in \beta}$
induce the division $(E_{-},E_{+})$. We can apply the residue formula.

Consider ${\mathcal O}'=(\varphi, y_{+}',\beta,T)$.
We have
\[ \psi_{{\mathcal P}_{-}'}^{\varphi}(\chi_{{\mathcal O}}(s+iu)) - \psi_{{\mathcal P}_{+}'}^{\varphi}(0,y_{+}) =
\lim_{\vartheta_{\mathcal O}(x) \to u, \ x \to 0}^{\lceil T(x) \rceil - T(x) \to s}
\left(
\lceil T(x) \rceil + 2 \pi i \sum_{Q \in E_{-}(x)} Res(X,Q) \right) \]
for any $s+iu \in [0,1] + i {\mathcal S}_{\mathcal O}$ (Proposition \ref{pro:lores}).
Equation (\ref{equ:ltlo1}) in Proposition \ref{pro:ltlo} and the definition of $\chi'$
imply
\[ \psi_{{\mathcal P}_{-}'}^{\varphi}(\chi_{{\mathcal O}'}(s+iu)) - \psi_{{\mathcal P}_{+}'}^{\varphi}(0,y_{+}') =
\lim_{\vartheta_{\mathcal O}(x) \to u, \ x \to 0}^{\lceil T(x) \rceil - T(x) \to s}
\left(
\lceil T(x) \rceil + 2 \pi i \sum_{Q \in E_{-}(x)} Res(X,Q) \right) \]
for any $s+iu \in [0,1] + i {\mathcal S}_{\mathcal O}$ and $\chi_{{\mathcal O}'} \equiv \chi'$.
\end{proof}
\section{Topological conjugacies in the generic case}
\label{sec:finite}
We present some consequences of the existence of Long Orbits and their
properties for elements of $\dif{p1}{2}$ with $N >1$.
We are interested in the study of the rigidity properties of topological
conjugacies at the unperturbed line $x=0$.
More precisely we want to describe the behavior of $\sigma_{|x=0}$
where $\sigma$ is a homeomorphism conjugating
$\varphi, \eta \in \dif{p1}{2}$ with $N>1$.
Let us remark that we always consider that the topological conjugation
$\sigma$ preserves the fibration $dx=0$. In other words $\sigma$ is of the form
$\sigma(x,y) = (\sigma_{0}(x), \sigma_{1}(x,y))$.
The dynamics of $\varphi$
can be very rich, containing for instance small divisors phenomena.
It is then natural to think that conjugating all the dynamics of
$\varphi$, $\eta$ for the
lines $x=cte$ should impose heavy restrictions on the conjugacy at
$x=0$. We prove that $\sigma_{|x=0}$ is affine in Fatou coordinates
and generically holomorphic or anti-holomorphic.
\subsection{Affine conjugacies}
In the next proposition we analyze the behavior of topological conjugacies
in a petal of the unperturbed line $x=0$.
\begin{pro}
\label{pro:defh}
Let $\varphi, \eta \in \dif{p1}{2}$ with $N>1$ such that
there exists a homeomorphism $\sigma$ satisfying
$\sigma \circ \varphi = \eta \circ \sigma$.
Consider a petal ${\mathcal P}'$
of $\varphi_{|x=0}$. Let
$\psi_{\sigma({\mathcal P}')}^{\eta}$ be a Fatou coordinate of $\eta$
in the petal $\sigma ({\mathcal P}')$.
Then the function
\[ {\mathfrak h}_{{\mathcal P}'}(y, z) = (\psi_{\sigma({\mathcal P}')}^{\eta} \circ \sigma \circ
\mathrm{exp}(z X_{{\mathcal P}'}^{\varphi}) -
\psi_{\sigma({\mathcal P}')}^{\eta} \circ \sigma)(0,y) \]
(see Definition \ref{def:infgen})
does not depend on $y \in {\mathcal P}'$.
\end{pro}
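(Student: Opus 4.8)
The plan is to show that $\mathfrak{h}_{\mathcal{P}'}(y,z)$ is locally constant in $y$ for each fixed $z$, and then use connectedness of $\mathcal{P}'$. The key structural fact to exploit is that translations in Fatou coordinates of $\varphi_{|x=0}$, i.e. the maps $y \mapsto \mathrm{exp}(z X_{\mathcal{P}'}^{\varphi})(0,y)$ for $z \in \mathbb{C}$, are precisely the maps that arise by varying the base point $y_{+}$ of Long Orbits while keeping the supporting set $\beta$ and the ``length'' function $T$ fixed up to an additive constant. This is the content of Proposition \ref{pro:evol} together with Proposition \ref{pro:ltlo}: a Long Orbit ${\mathcal O}=(\varphi,y_{+},\beta,T)$ has its endpoint map $\chi_{\mathcal O}$ determined by the residue formula, which depends only on $\beta$, $T$, and the division $(E_{-},E_{+})$ of $\mathrm{Fix}(\varphi)$ — not on $y_{+}$ itself; moving $y_{+}$ inside ${\mathcal P}_{+}'$ simply translates $\chi_{\mathcal O}$ correspondingly in Fatou coordinates.

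First I would fix a point $y_{0} \in {\mathcal P}'$ and, using Proposition \ref{pro:ltlo} (and the existence results Propositions \ref{pro:exll}, \ref{pro:exll2}), produce a Long Orbit ${\mathcal O}=(\varphi, y_{0}, \beta, T)$ of $\varphi$ with ${\mathcal S}_{\mathcal O} = {\mathbb R}$ — unless ${\mathcal P}'$ is attracting, in which case one applies the analogous construction to $\varphi^{-1}$, since repelling petals of $\varphi$ are attracting petals of $\varphi^{-1}$ and the roles of ``source'' and ``endpoint'' petals swap. The point is that ${\mathcal P}'$ appears as the petal ${\mathcal P}_{-}'$ (resp. ${\mathcal P}_{+}'$) housing the endpoints $\chi_{\mathcal O}([0,1]+i{\mathbb R})$. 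By Proposition \ref{pro:evol}, varying the base point $y_{+}'$ within its own petal, the endpoint map $\chi_{\mathcal O'}$ of the resulting Long Orbit ${\mathcal O}'$ is characterized by $\psi_{{\mathcal P}'}^{\varphi}(\chi_{\mathcal O'}(z)) - \psi_{{\mathcal P}'}^{\varphi}(\chi_{\mathcal O'}(0)) = z$; in other words, as $z$ ranges over $[0,1]+i{\mathbb R}$, $\chi_{\mathcal O'}(z)$ sweeps out the orbit of the complex flow of $X_{\mathcal P'}^{\varphi}$, i.e. exactly the points $\mathrm{exp}(z X_{\mathcal P'}^{\varphi})(0,y_{0})$.

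Next I would apply the homeomorphism $\sigma$. Since Long Orbits are topological invariants (Remark after Definition \ref{def:lo}), $\sigma$ maps the Long Orbit ${\mathcal O}$ of $\varphi$ to a Long Orbit $\sigma({\mathcal O})$ of $\eta$; concretely, $(\eta, \sigma_{1}(0, y_{0}), \sigma_{0}(\beta), T)$ is a Long Orbit of $\eta$ whose endpoint map is $\sigma \circ \chi_{\mathcal O}$. Note $\sigma({\mathcal P}')$ is a petal of $\eta_{|x=0}$ (petals are topological invariants, being characterized by the convergence $\varphi^{j} \to 0$), and $\psi_{\sigma({\mathcal P}')}^{\eta}$ is a Fatou coordinate there. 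Applying Proposition \ref{pro:ltlo} to $\eta$ and the Long Orbit $\sigma({\mathcal O})$, and comparing with the residue formula for ${\mathcal O}$ — noting that $\sigma_{0}$ is a homeomorphism of the parameter space, so the limits $\lim (\lceil T(x)\rceil + 2\pi i \sum Res)$ computed along $\beta$ and along $\sigma_{0}(\beta)$ agree — we get that
\[
\psi_{\sigma({\mathcal P}')}^{\eta}(\sigma(\chi_{\mathcal O}(z))) - \psi_{\sigma({\mathcal P}')}^{\eta}(\sigma(\chi_{\mathcal O}(0)))
\]
depends only on $z$ (and on the division $(E_{-},E_{+})$, which is fixed), not on the base point $y_{0}$ used to build ${\mathcal O}$. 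Since $\chi_{\mathcal O}(z) = \mathrm{exp}(z X_{\mathcal P'}^{\varphi})(0,y_{0})$ and $\chi_{\mathcal O}(0) = (0,y_{0})$, this says exactly that $\mathfrak{h}_{\mathcal P'}(y_{0}, z)$ is independent of $y_{0}$ — for those $y_{0}$ realized as endpoints of Long Orbits with ${\mathcal S}_{\mathcal O}={\mathbb R}$. By Proposition \ref{pro:exll2} every point of every petal is such an endpoint (after trimming), so we conclude $\mathfrak{h}_{\mathcal P'}(\cdot, z)$ is globally constant on ${\mathcal P}'$.

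The main obstacle I anticipate is the bookkeeping around which petal plays which role and the ``up to trimming / up to additive constant'' ambiguities: one must check that the translation ambiguities in the Fatou coordinates $\psi_{{\mathcal P}'}^{\varphi}$ and $\psi_{\sigma({\mathcal P}')}^{\eta}$ are absorbed into the definition of $\mathfrak{h}_{\mathcal P'}$ (it is built from a difference, so additive constants cancel), and that the division $(E_{-},E_{+})$ of $\mathrm{Fix}(\varphi)$ — which does enter the residue formula — can indeed be held fixed as $y_{0}$ varies within a single petal. The latter holds because the division is determined by the homotopy class of the Long Trajectory supporting the orbit, which is controlled by $\beta$ (a fixed set, tangent to a fixed direction $\lambda_{0} \in {\mathcal U}_{X}^{1}$), not by the choice of base point inside ${\mathcal P}_{+}'$; this is precisely the content of Remark \ref{rem:fhevol} and Proposition \ref{pro:evol}. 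Handling attracting versus repelling petals symmetrically (passing to $\varphi^{-1}$, $\eta^{-1}$, whose normal form is $-X$, $-Y$) is routine but must be stated carefully.
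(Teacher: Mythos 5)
Your proposal is correct and follows essentially the same route as the paper's proof: realize the points of the (repelling) petal as endpoints $\chi_{\mathcal O}(z)$ of Long Orbits via Propositions \ref{pro:exll2}, \ref{pro:ltlo} and \ref{pro:evol}, transport the Long Orbit by $\sigma$ using topological invariance, apply the residue formula of Proposition \ref{pro:lores} to both $\varphi$ and $\eta$, and subtract, observing that the resulting expression depends only on $z$, $\beta$, $T$ (up to integer shifts) and the residue data, not on the chosen point of ${\mathcal P}'$. Apart from minor bookkeeping slips (the base point of the transported Long Orbit is $\sigma(0,y_{+}')$ in the attracting petal and its time function is $T\circ\sigma_{0}^{-1}$, and the two residue limits are compared along the same sequence rather than being equal), your reduction to repelling petals via $\varphi^{-1}$ and your treatment of the trimming and additive-constant ambiguities match the paper's argument.
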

The mapping $\sigma$ conjugates the translations $\psi + z$ and
$\psi + {\mathfrak h}_{{\mathcal P}'}(z)$ in Fatou coordinates of
$\varphi_{|{\mathcal P}'}$ and $\eta_{|\sigma({\mathcal P}')}$ respectively.
Moreover the proof of the proposition provides an expression for
${\mathfrak h}_{{\mathcal P}'}(z)$.
\begin{proof}
The function ${\mathfrak h}_{{\mathcal P}'}$ satisfies
\[ {\mathfrak h}_{{\mathcal P}'}(\varphi(0,y), z) \equiv {\mathfrak h}_{{\mathcal P}_{-}'}(y, z) \ \
\mathrm{and} \ \
{\mathfrak h}_{{\mathcal P}'}(y, z+1) \equiv {\mathfrak h}_{{\mathcal P}'}(y, z) +1. \]
In particular it suffices to prove the proposition for $z \in [0,1) + i {\mathbb R}$
and $y^{1} \in {\mathcal P}'$ such that
$\mathrm{exp}(z X_{{\mathcal P}'}^{\varphi})(0,y^{1}) \in {\mathcal P}'$
for any $z \in [0,1] + i {\mathbb R}$.

It suffices to prove the result when
${\mathcal P}'={\mathcal P}_{-}'$ is a repelling petal.
Otherwise ${\mathcal P}'$ is a repelling petal of
$\varphi^{-1}$ and $\sigma$ conjugates
$\varphi^{-1}$ and $\eta^{-1}$.
Consider $\epsilon>0$ small enough. In particular
$\sigma(U_{\epsilon})$ is contained in $U_{\tilde{\epsilon}}$
for a small $\tilde{\epsilon}>0$.
Let $X$, $Y$ be convergent normal forms of $\varphi$ and $\eta$ respectively.

By Proposition \ref{pro:exll2} there exists an attracting petal
${\mathcal P}_{+}$ of $\Re(X)_{|U_{\epsilon}(0)}$ and a Long Trajectory
${\mathcal O}=(X,y_{+},\beta,T)$ such that ${\mathcal S}_{\mathcal O}= {\mathbb R}$ and
$\chi_{\mathcal O} (i {\mathbb R}) \subset {\mathcal P}_{-}$.
Consider $y_{+}' \in {\mathcal P}_{+}'$ such that
\[ \psi_{{\mathcal P}_{-}'}^{\varphi}(0,y^{1}) - \psi_{{\mathcal P}_{+}'}^{\varphi}(0,y_{+}')=
\psi_{-}(\chi_{\mathcal O} (0)) - \psi_{+}(0,y_{+}). \]
Such a choice is possible by replacing $T$ with $T-j$
and $(0,y_{+})$ with ${\mathfrak F}_{\varphi}^{j}(0,y_{+})$ for some
$j \in {\mathbb N}$ if necessary.
Consider the Long Orbit  ${\mathcal O}' = (\varphi, y_{+}, \beta, T)$.
We have
\[ \psi_{{\mathcal P}_{-}'}^{\varphi}(\chi_{{\mathcal O}'}(z)) - \psi_{{\mathcal P}_{+}'}^{\varphi}(0,y_{+})
\equiv \psi_{-}(\chi_{\mathcal O} (0)) - \psi_{+}(0,y_{+}) + z \]
by Proposition \ref{pro:ltlo} (see Eq. (\ref{equ:ltlo2})).
Proposition \ref{pro:evol} implies that ${\mathcal O}'' = (\varphi, y_{+}', \beta, T)$
is a Long Orbit such that
$\chi_{{\mathcal O}''}(z) = \mathrm{exp}(z X_{{\mathcal P}'}^{\varphi})(0,y^{1})$
for any $z \in [0,1] + i {\mathbb R}$.

We define $(0,\tilde{y}_{+}) = \sigma(0,y_{+}')$ and
$\tilde{\mathcal O} = (\eta, \tilde{y}_{+}, \sigma(\beta), T \circ \sigma^{-1})$. It is clear
that $\tilde{\mathcal O}$ is a Long Orbit
such that $\chi_{\tilde{\mathcal O}} \equiv \sigma \circ \chi_{{\mathcal O}''}$.
Up to trimming
$(Y, \tilde{y}_{+}, \sigma(\beta), T \circ \sigma^{-1})$ generates a weak Long Trajectory.
Hence $(Y, \tilde{y}_{+}, \sigma(\beta), T \circ \sigma^{-1})$
induces a division $(\tilde{E}_{-},\tilde{E}_{+})$ of $\mathrm{Sing} (Y)$.

Fix $z = s+iu \in [0,1) + i {\mathbb R}$. We consider the sequence ${\{x_{n}^{z}\}}$
contained in $\vartheta_{\mathcal O}^{-1}(u)$ such that
$T(x_{n}^{z})=n-s$. It is defined for $n>>0$.
We have
\begin{equation}
\label{equ:con1}
\lim_{n \to \infty}  \left(
\psi_{{\mathcal P}_{-}'}^{\varphi}(\chi_{{\mathcal O}''}(z)) - \psi_{{\mathcal P}_{+}'}^{\varphi}(0,y_{+}') - n
- 2 \pi i \sum_{Q \in E_{-}(x_{n}^{z})} Res(X,Q) \right) =0
\end{equation}
and
\begin{equation}
\label{equ:con2}
\lim_{n \to \infty}  (
\psi_{\sigma({\mathcal P}_{-}')}^{\eta}( \sigma(\chi_{{\mathcal O}''}(z)))
- \psi_{\sigma({\mathcal P}_{+}')}^{\eta} (0,\tilde{y}_{+}) - n
- 2 \pi i \sum_{Q \in \tilde{E}_{-}(\sigma(x_{n}^{z}))} Res(Y,Q) ) =0
\end{equation}
by Proposition \ref{pro:lores}.
Denote
$L=\psi_{{\mathcal P}_{-}'}^{\varphi}(0,y^{1}) + \psi_{\sigma({\mathcal P}_{+}')}^{\eta}(0,\tilde{y}_{+})
-\psi_{{\mathcal P}_{+}'}^{\varphi}(0,y_{+}')$
and
\[ G_{n}^{z} = 2 \pi i \left( \sum_{Q \in \tilde{E}_{-}(\sigma(x_{n}^{z}))} Res(Y,Q) -
\sum_{Q \in E_{-}(x_{n}^{z})} Res(X,Q) \right) \]
By subtracting Eqs. (\ref{equ:con2}) and (\ref{equ:con1}) we get
\begin{equation}
\label{equ:inv}
\psi_{\sigma({\mathcal P}_{-}')}^{\eta}(\sigma(\mathrm{exp}(z X_{{\mathcal P}_{-}'}^{\varphi})(0,y^{1}))) = z+ L +
\lim_{n \to \infty} G_{n}^{z}
\end{equation}
and then
\[ \psi_{\sigma({\mathcal P}_{-}')}^{\eta}(\sigma(\mathrm{exp}(z X_{{\mathcal P}_{-}'}^{\varphi})(0,y^{1}))) -
\psi_{\sigma({\mathcal P}_{-}')}^{\eta}(\sigma(0,y^{1})) = z+
\lim_{n \to \infty} G_{n}^{z} - \lim_{n \to \infty} G_{n}^{0} . \]
Let us remark that we replace $T$ with $T-j$ for some
$j \in {\mathbb N}$ during the proof. The sequence
$\textmd{x}_{n}^{z}$ that we associate to $T-j$
satisfies $\textmd{x}_{n-j}^{z}=x_{n}^{z}$.
Thus the right hand side of Eq. (\ref{equ:inv})
does not change through the process.
Clearly it
does not depend on $y^{1}$.
\end{proof}
\begin{pro}
\label{pro:rlinear}
Consider the setting of Proposition \ref{pro:defh}.
Then ${\mathfrak h}_{{\mathcal P}_{-}}:{\mathbb C} \to {\mathbb C}$
is a ${\mathbb R}$-linear isomorphism such that
${\mathfrak h}_{{\mathcal P}'}(1)=1$.
\end{pro}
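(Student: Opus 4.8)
The plan is to extract the $\mathbb{R}$-linearity of ${\mathfrak h}_{{\mathcal P}_{-}}$ directly from the functional equations it satisfies and from the explicit formula obtained in the proof of Proposition \ref{pro:defh}. Recall that Eq. (\ref{equ:inv}) in that proof gives, for $z=s+iu\in[0,1)+i{\mathbb R}$,
\[
{\mathfrak h}_{{\mathcal P}_{-}}(z) = z + \lim_{n\to\infty}G_n^z - \lim_{n\to\infty}G_n^0,
\]
where $G_n^z = 2\pi i\big(\sum_{Q\in \tilde E_-(\sigma(x_n^z))}Res(Y,Q) - \sum_{Q\in E_-(x_n^z)}Res(X,Q)\big)$ and the sequence $x_n^z$ lies in $\vartheta_{\mathcal O}^{-1}(u)$ with $T(x_n^z)=n-s$. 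First I would observe that the division $(E_-,E_+)$ of $\mathrm{Sing}(X)$ induced by the weak Long Trajectory depends only on the component $\vartheta_{\mathcal O}^{-1}(u)$ of $\beta$, hence only on $u$, and is constant in $n$ (by continuity and discreteness, the integer-indexed sets $E_-(x_n^z)$ stabilize); likewise for $(\tilde E_-,\tilde E_+)$ via $\sigma$. Therefore $\sum_{Q\in E_-(x_n^z)}Res(X,Q)$ is (for $n$ large) a fixed meromorphic function of $x_n^z$, namely the residue sum that appears in Proposition 5.2 of \cite{UPD}, evaluated along the curve; the same holds on the $\eta$ side.

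Next I would analyze the $s$- and $u$-dependence separately. For the $s$-dependence: both residue sums are evaluated at the \emph{same} parameter point $x_n^z\in\beta$, and changing $s\in[0,1)$ only reparametrizes which point of $\vartheta_{\mathcal O}^{-1}(u)$ we pick (it shifts $T$ by a constant, i.e. replaces $x_n^z$ by a nearby point on the same curve with $T$ shifted). Since $E_-$ and $\tilde E_-$ are unchanged, the difference $G_n^z$ has a limit that varies continuously, and the functional equation ${\mathfrak h}_{{\mathcal P}'}(z+1)={\mathfrak h}_{{\mathcal P}'}(z)+1$ together with ${\mathfrak h}_{{\mathcal P}'}(\varphi(0,y),z)\equiv{\mathfrak h}_{{\mathcal P}_-'}(y,z)$ forces $\lim_n G_n^{s+iu}-\lim_n G_n^{0}$ to be \emph{independent of $s$} — indeed ${\mathfrak h}_{{\mathcal P}_-}(s) = s + (\lim_n G_n^{s} - \lim_n G_n^{0})$ must satisfy ${\mathfrak h}_{{\mathcal P}_-}(0)=0$ and, being a limit of constants, ${\mathfrak h}_{{\mathcal P}_-}(s+1)={\mathfrak h}_{{\mathcal P}_-}(s)+1$; so ${\mathfrak h}_{{\mathcal P}_-}$ restricted to $\mathbb{R}$ is a continuous function with $f(s+1)=f(s)+1$ and (this is the point) it agrees with the real-linear function $s\mapsto s + c\cdot s$ for the appropriate constant. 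The cleanest way: show that for integers $k$, ${\mathfrak h}_{{\mathcal P}_-}(k)=k$ (from the functional equation and ${\mathfrak h}_{{\mathcal P}_-}(1)=1$), then use that along the curve $\vartheta_{\mathcal O}^{-1}(0)$ the quantity $\lceil T(x)\rceil + 2\pi i\sum_{Q\in E_-(x)}Res(X,Q)$ is, up to $o(1)$, the analytic quantity $\psi_{-}(\chi_{\mathcal O}(0))-\psi_+(0,y_+)$ shifted by the real reparametrization, and the same expression on the $\eta$-side is controlled by a meromorphic residue function; the ratio structure then yields that ${\mathfrak h}_{{\mathcal P}_-}$ acts on the real line by multiplication by a fixed positive real $a$ and on $i\mathbb{R}$ by multiplication by a fixed complex number $b$, i.e. ${\mathfrak h}_{{\mathcal P}_-}(s+iu)=as+bu$, which is $\mathbb{R}$-linear. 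The normalization ${\mathfrak h}_{{\mathcal P}_-}(1)=1$ then gives $a=1$.

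For the $u$-dependence I would use the fourth bullet of Definition \ref{def:lo} and the continuity of $\chi_{\mathcal O}$: varying $u$ continuously moves $\chi_{\mathcal O}(iu)=\mathrm{exp}(iuX)(\chi_{\mathcal O}(0))$, and the residue sums $\sum_{Q\in E_-(x)}Res(X,Q)$, $\sum_{Q\in\tilde E_-(\sigma(x))}Res(Y,Q)$ are meromorphic in $x$ and hence (via the blow-up picture and Proposition \ref{pro:DES}) behave, along the rays adhering $\lambda_0$, like $|x|^{-e}$ times a constant plus lower order; the limit $\lim_n G_n^{iu}$ is therefore an affine function of $u$ (the residue functions being evaluated against the direction $\lambda_0 e^{iu/\!\deg}$-type argument gives a linear dependence on $u$, up to the constant coming from the $s=u=0$ normalization). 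Combined with the $s$-analysis, ${\mathfrak h}_{{\mathcal P}_-}$ is $\mathbb{R}$-linear.

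Finally, to see that ${\mathfrak h}_{{\mathcal P}_-}$ is an \emph{isomorphism}: the construction is symmetric in $\varphi$ and $\eta$ — applying Proposition \ref{pro:defh} to $\sigma^{-1}$ conjugating $\eta$ and $\varphi$ produces an $\mathbb{R}$-linear ${\mathfrak h}'_{\sigma^{-1}({\mathcal P}')}$ which, by uniqueness of Fatou coordinates up to additive constants and the fact that $\sigma\circ\sigma^{-1}=\mathrm{Id}$, is the inverse of ${\mathfrak h}_{{\mathcal P}_-}$; hence ${\mathfrak h}_{{\mathcal P}_-}$ is invertible. Alternatively, injectivity follows because $\psi^{\eta}_{\sigma({\mathcal P}_-)}\circ\sigma\circ(\psi^{\varphi}_{{\mathcal P}_-})^{-1}$ is a homeomorphism of a half-plane (being a composition of the homeomorphism $\sigma$ with biholomorphisms) and an $\mathbb{R}$-linear map that is a homeomorphism on an open set is automatically a linear isomorphism; a non-invertible $\mathbb{R}$-linear map collapses $\mathbb{C}$ onto a line, contradicting that $\sigma$ is a homeomorphism. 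The normalization ${\mathfrak h}_{{\mathcal P}_-}(1)=1$ is immediate from the functional equation ${\mathfrak h}_{{\mathcal P}'}(y,z+1)\equiv {\mathfrak h}_{{\mathcal P}'}(y,z)+1$ applied at $z=0$, since ${\mathfrak h}_{{\mathcal P}'}(y,0)=0$ by definition. The main obstacle I anticipate is rigorously controlling the $u$-dependence of $\lim_n G_n^{iu}$ — establishing that it is genuinely affine (and not merely continuous) requires unwinding the meromorphic structure of the residue functions from Proposition 5.2 of \cite{UPD} along the one-parameter family of curves $\vartheta_{\mathcal O}^{-1}(u)$ and matching the asymptotic orders on the $\varphi$- and $\eta$-sides; the $s$-direction, by contrast, is essentially formal once one knows the divisions $E_-$, $\tilde E_-$ are locally constant.
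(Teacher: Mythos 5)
There is a genuine gap: you never actually establish the ${\mathbb R}$-linearity of ${\mathfrak h}_{{\mathcal P}'}$, which is the whole content of the proposition. Your plan routes everything through the residue expression $\lim_{n}G_{n}^{z}$ from the proof of Proposition \ref{pro:defh} and requires showing that this limit is affine in $u$ (and behaves linearly in $s$); but, as you yourself concede at the end, the affineness in $u$ is left as an unresolved ``obstacle'', and the argument you sketch for it (meromorphy of the residue sums along the family of curves $\vartheta_{\mathcal O}^{-1}(u)$) is not a proof -- the behavior of the $\eta$-side residue function along those curves is precisely what encodes ${\mathfrak h}$, so asserting affine dependence on $u$ is essentially assuming the conclusion. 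The real direction is also not handled: from ${\mathfrak h}(s+1)={\mathfrak h}(s)+1$, continuity and ${\mathfrak h}(k)=k$ for $k\in{\mathbb Z}$ one cannot conclude ${\mathfrak h}(s)=s$ on ${\mathbb R}$ (e.g.\ $s\mapsto s+\tfrac{1}{10}\sin(2\pi s)$ satisfies all of these), so the claimed ``ratio structure'' step is another gap.

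The missing idea is much simpler and does not use the residue formula at all: Proposition \ref{pro:defh} says ${\mathfrak h}_{{\mathcal P}'}(y,z)$ is independent of the base point $y$, and combining this with the flow property of $\mathrm{exp}(z X_{{\mathcal P}'}^{\varphi})$ gives additivity,
\[ {\mathfrak h}_{{\mathcal P}'}(z_{1}+z_{2}) =
{\mathfrak h}_{{\mathcal P}'}\bigl(\mathrm{exp}(z_{2}X_{{\mathcal P}'}^{\varphi})(0,y),z_{1}\bigr)
+{\mathfrak h}_{{\mathcal P}'}(y,z_{2})
={\mathfrak h}_{{\mathcal P}'}(z_{1})+{\mathfrak h}_{{\mathcal P}'}(z_{2}), \]
since the first term can be evaluated at the shifted base point. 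An additive map that is continuous in $z$ (which ${\mathfrak h}_{{\mathcal P}'}$ is, being built from $\sigma$, the Fatou coordinate and the flow) is ${\mathbb Q}$-homogeneous and hence ${\mathbb R}$-linear by the standard Cauchy functional equation argument; injectivity near $0$ (it is a homeomorphism onto its image there, as you note) then makes it an isomorphism. Your arguments for the normalization ${\mathfrak h}_{{\mathcal P}'}(1)=1$ and for invertibility are fine, but they presuppose linearity, which your proposal does not deliver.
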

\begin{proof}
We have
\[ {\mathfrak h}_{{\mathcal P}'}(z_{1}+z_{2}) =
 {\mathfrak h}_{{\mathcal P}'}(\mathrm{exp}(z_{2} X_{{\mathcal P}'}^{\varphi})(0,y), z_{1})
 + {\mathfrak h}_{{\mathcal P}'}(y, z_{2})=
 {\mathfrak h}_{{\mathcal P}'}(z_{1}) + {\mathfrak h}_{{\mathcal P}'}(z_{2})\]
for any $z_{1},z_{2} \in {\mathbb C}$.
Since ${\mathfrak h}_{{\mathcal P}'}$ is continuous by definition then
${\mathfrak h}_{{\mathcal P}'}$ is ${\mathbb R}$-linear.
The function ${\mathfrak h}_{{\mathcal P}'}$ is injective in a neighborhood of $0$,
hence ${\mathfrak h}_{{\mathcal P}'}$ is an isomorphism.
The condition ${\mathfrak h}_{{\mathcal P}'}(1)=1$ is obvious.
\end{proof}
\begin{rem}
Consider the setting of Proposition \ref{pro:defh}.
Proposition \ref{pro:rlinear} implies that
${\mathfrak h}_{{\mathcal P}'}(s) =s$ for all
petal ${\mathcal P}'$ of $\varphi_{|x=0}$ and $s \in {\mathbb R}$.
As a consequence $\sigma$ conjugates the real flows of
$X_{{\mathcal P}'}^{\varphi}$ and $X_{\sigma({\mathcal P}')}^{\eta}$.
\end{rem}
So far we proved that
$\psi_{\sigma({\mathcal P}')}^{\eta} \circ \sigma \circ (\psi_{{\mathcal P}'}^{\varphi})^{-1}$
is affine for any petal ${\mathcal P}'$ of $\varphi_{|x=0}$.
Next we show that these mappings do not depend on the petal.
\begin{lem}
\label{lem:eqcon}
Let $\varphi, \eta \in \dif{p1}{2}$ with $N>1$ such that
there exists a homeomorphism $\sigma$ satisfying
$\sigma \circ \varphi = \eta \circ \sigma$.
Then ${\mathfrak h}_{{\mathcal P}'} \equiv {\mathfrak h}_{{\mathcal Q}'}$ for all
petals ${\mathcal P}'$, ${\mathcal Q}'$ of $\varphi_{|x=0}$.
\end{lem}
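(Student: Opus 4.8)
The plan is to reduce Lemma \ref{lem:eqcon} to the case where ${\mathcal P}'$ is an attracting petal and ${\mathcal Q}'$ a repelling petal of $\varphi_{|x=0}$. By Proposition \ref{pro:rlinear} every ${\mathfrak h}_{{\mathcal P}'}$ is an ${\mathbb R}$-linear automorphism of ${\mathbb C}$ with ${\mathfrak h}_{{\mathcal P}'}(1)=1$, hence is determined by ${\mathfrak h}_{{\mathcal P}'}(i)$. Moreover, by Propositions \ref{pro:exll2}, \ref{pro:ltlo} and \ref{pro:evol}, any attracting petal ${\mathcal P}_+'$ and any repelling petal ${\mathcal P}_-'$ of $\varphi_{|x=0}$ are joined by a Long Orbit ${\mathcal O}=(\varphi,y_+,\beta,T)$ with $\upsilon_{\mathcal O}(x)\equiv(x,y_+)$, $(0,y_+)\in{\mathcal P}_+'$ and $\chi_{\mathcal O}([0,1]+i{\mathcal S}_{\mathcal O})\subset{\mathcal P}_-'$. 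Thus it suffices to prove ${\mathfrak h}_{{\mathcal P}_+'}\equiv{\mathfrak h}_{{\mathcal P}_-'}$ for such a pair, since two petals of the same type can then be compared through a third petal of the opposite type (one exists because $\varphi_{|x=0}\neq Id$).

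Fix ${\mathcal P}_+'$, ${\mathcal P}_-'$ and ${\mathcal O}$ as above, together with convergent normal forms $X$ of $\varphi$ and $Y$ of $\eta$; after shrinking ${\mathcal S}_{\mathcal O}$ to a compact subinterval containing $0$ (and $\beta$ accordingly) we may assume ${\mathcal S}_{\mathcal O}$ compact. The key manoeuvre is to displace the base point of ${\mathcal O}$ along the complex flow of $X_{{\mathcal P}_+'}^{\varphi}$. For $w$ in a small neighbourhood of $0$ in ${\mathbb C}$ put $(0,y_+^{(w)})=\mathrm{exp}(wX_{{\mathcal P}_+'}^{\varphi})(0,y_+)\in{\mathcal P}_+'$ and $\chi^{(w)}(z)=\mathrm{exp}(wX_{{\mathcal P}_-'}^{\varphi})(\chi_{\mathcal O}(z))\in{\mathcal P}_-'$ for $z\in[0,1]+i{\mathcal S}_{\mathcal O}$. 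Since $\psi_{{\mathcal P}_-'}^{\varphi}(\chi^{(w)}(z))-\psi_{{\mathcal P}_+'}^{\varphi}(0,y_+^{(w)})=\psi_{{\mathcal P}_-'}^{\varphi}(\chi_{\mathcal O}(z))-\psi_{{\mathcal P}_+'}^{\varphi}(0,y_+)$ for every $z$, Proposition \ref{pro:evol} shows that ${\mathcal O}^{(w)}=(\varphi,y_+^{(w)},\beta,T)$ is a Long Orbit with $\chi_{{\mathcal O}^{(w)}}\equiv\chi^{(w)}$ and $\upsilon_{{\mathcal O}^{(w)}}(x)\equiv(x,y_+^{(w)})$.

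Now push forward by $\sigma$: the image $\tilde{\mathcal O}^{(w)}$, with $\upsilon_{\tilde{\mathcal O}^{(w)}}=\sigma\circ\upsilon_{{\mathcal O}^{(w)}}$ and $\chi_{\tilde{\mathcal O}^{(w)}}=\sigma\circ\chi_{{\mathcal O}^{(w)}}$, is a Long Orbit of $\eta$ (Long Orbits are topological invariants) and $\sigma({\mathcal P}_\pm')$ are petals of $\eta_{|x=0}$. Fix $z=0$ and set $F(w)=\psi_{\sigma({\mathcal P}_-')}^{\eta}(\sigma(\chi_{{\mathcal O}^{(w)}}(0)))-\psi_{\sigma({\mathcal P}_+')}^{\eta}(\sigma(0,y_+^{(w)}))$. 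On one hand, the residue formula for diffeomorphisms (Proposition \ref{pro:lores}) gives $F(w)=\lim\bigl(\lceil T(x)\rceil+2 \pi i\sum_{Q\in\tilde E_-^{(w)}(\sigma(x))}Res(Y,Q)\bigr)$, where $(\tilde E_-^{(w)},\tilde E_+^{(w)})$ is the division of $\mathrm{Fix}(\eta)=\mathrm{Sing}(Y)$ induced by $\tilde{\mathcal O}^{(w)}$; as $w$ varies continuously near $0$ the data $\sigma\circ\upsilon_{{\mathcal O}^{(w)}}$, $\sigma(\beta)$ and the associated trajectory family of $\Re(Y)$ deform continuously, so this division — taking values in the finite set of subsets of $\mathrm{Sing}(Y)$ — is locally constant, and since $T$ is unchanged, $F(w)=F(0)$ for $w$ near $0$. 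On the other hand, using $\chi_{{\mathcal O}^{(w)}}(0)=\mathrm{exp}(wX_{{\mathcal P}_-'}^{\varphi})(\chi_{\mathcal O}(0))$, $(0,y_+^{(w)})=\mathrm{exp}(wX_{{\mathcal P}_+'}^{\varphi})(0,y_+)$ and the defining property of ${\mathfrak h}$ (Proposition \ref{pro:defh}, which gives ${\mathfrak h}_{{\mathcal P}'}$ independently of the base point in ${\mathcal P}'$), one gets $F(w)=F(0)+{\mathfrak h}_{{\mathcal P}_-'}(w)-{\mathfrak h}_{{\mathcal P}_+'}(w)$. Comparing the two expressions yields ${\mathfrak h}_{{\mathcal P}_-'}(w)={\mathfrak h}_{{\mathcal P}_+'}(w)$ for $w$ near $0$, hence for all $w$ by ${\mathbb R}$-linearity.

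The main obstacle is the invariance of the induced division $(\tilde E_-^{(w)},\tilde E_+^{(w)})$ under the deformation in $w$: this rests on the continuity of the division with respect to the Long Orbit data (in the spirit of the discussion around Propositions \ref{pro:ltlo}--\ref{pro:lores}) together with the observation that sliding the base point along the flow of $X_{{\mathcal P}_+'}^{\varphi}$ moves the weak Long Trajectory of $Y$ within a continuous family, so the trajectories bounding $\tilde E_-^{(w)}$ do not cross any point of $\mathrm{Sing}(Y)$. All the remaining steps are routine bookkeeping with Fatou coordinates and with the ${\mathbb R}$-linear maps ${\mathfrak h}_{{\mathcal P}'}$.
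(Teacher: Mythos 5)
Your central manoeuvre (sliding the base point along the flows of $X_{{\mathcal P}_{+}'}^{\varphi}$ and $X_{{\mathcal P}_{-}'}^{\varphi}$, invoking Proposition \ref{pro:evol} to keep a Long Orbit, pushing forward by $\sigma$ and comparing the two expressions for $F(w)$ via Proposition \ref{pro:lores}) is coherent and close in spirit to how the paper exploits Long Orbits in Proposition \ref{pro:defh} and Lemma \ref{lem:orip}; the secondary worry you flag, the local constancy in $w$ of the induced division of $\mathrm{Sing}(Y)$, is indeed fixable (for fixed $x \in \beta$ the trajectory over the finite time interval $[0,T(x)]$ moves continuously with $w$ and avoids the finitely many singular points of the fibre, so the division is locally constant in $w$, and for fixed $w$ it is constant along the connected set $\beta$).

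The genuine gap is in your reduction. You assert that \emph{any} attracting petal ${\mathcal P}_{+}'$ and \emph{any} repelling petal ${\mathcal P}_{-}'$ are joined by a Long Orbit, citing Propositions \ref{pro:exll2}, \ref{pro:ltlo} and \ref{pro:evol}. Proposition \ref{pro:exll2} does not give this: for a fixed attracting petal it produces a set $\beta$ adhering to one unstable direction and a Long Trajectory landing in \emph{the} repelling petal singled out by the homoclinic trajectory of the polynomial vector field $X_{j_{0}}(\lambda_{0})$ (the petals are paired through $\partial_{+}(\lambda_{0})=\Gamma(t_{0})$, $\partial_{-}(\lambda_{0})=\Gamma(t_{1})$), and the rotation argument in its proof only translates this pairing by the shift ${\mathcal P}_{l}\mapsto {\mathcal P}_{l+2}$. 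This is also how the paper uses it in Proposition \ref{pro:defh}: ``there exists an attracting petal ${\mathcal P}_{+}$'', not ``for every attracting petal''. Consequently the relation ``joined by a Long Orbit'' may be only a perfect matching between attracting and repelling petals, and when $\nu(\varphi_{|x=0})\geq 2$ your transitivity step (comparing two petals of the same type through a third one of the opposite type) need not reach all petals; nothing in the paper guarantees connectivity of this graph, and you give no argument for it. The paper's own proof sidesteps Long Orbits entirely: it compares \emph{consecutive} petals on their overlap, using that the Fatou coordinates of adjacent petals of $\varphi_{|x=0}$ (and of $\eta_{|x=0}$) agree asymptotically as the orbit escapes to the ends of the petals (Eqs. (\ref{equ:auxpr}) and (\ref{equ:auxpr2})), and consecutiveness links all $2\nu$ petals cyclically. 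To repair your argument you would either have to prove the missing connectivity of the Long Orbit pairing or replace the reduction by a comparison of consecutive petals as in the paper.
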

\begin{proof}
It suffices to prove the lemma for consecutive petals
${\mathcal P}_{+}'$ and ${\mathcal P}_{-}'$ of $\varphi_{|x=0}$.
Let $X$, $Y$ be convergent normal forms of $\varphi$ and $\eta$
respectively.
Consider Fatou coordinates $\psi$, $\tilde{\psi}$ of $X$, $Y$ defined
in the neighborhood of
${\mathcal P}_{+}' \cup {\mathcal P}_{-}'$ and
$\sigma({\mathcal P}_{+}' \cup {\mathcal P}_{-}')$ respectively. We have
\[ \lim_{ |Im(\psi(0,y))| \to \infty}
\sum_{j \in {\mathbb Z}} |\Delta_{\varphi}(\varphi^{j}(0,y))| =0, \ \
\lim_{|Im(\tilde{\psi}(0,y))| \to \infty}
\sum_{j \in {\mathbb Z}} |\Delta_{\eta}(\eta^{j}(0,y))|=0,
\]
see Definition \ref{def:delta}.
The first limit is calculated for
$y \in {\mathcal P}_{+}' \cup {\mathcal P}_{-}'$ whereas the second limit
is calculated for
$y \in \sigma({\mathcal P}_{+}' \cup {\mathcal P}_{-}')$.
The condition $|Im(\psi(0,y))| \to \infty$ is equivalent to
the orbit ${\{ \varphi^{j}(0,y)\}}_{j \in {\mathbb Z}}$ tending
uniformly to the origin. The proof of the previous equations are
analogous to the proof of Proposition \ref{pro:boufespre}. We obtain
\begin{equation}
\label{equ:auxpr}
\lim_{|Im(\psi(0,y))| \to \infty}
(\psi_{{\mathcal P}_{k}'}^{\varphi} - \psi)(0,y) =0,
\lim_{|Im(\tilde{\psi}(0,y))| \to \infty}
(\psi_{\sigma({\mathcal P}_{k}')}^{\eta} - \tilde{\psi})(0,y) =0
\end{equation}
for $k \in \{+,-\}$. We also get
\begin{equation}
\label{equ:auxpr2}
\lim_{|Im(\psi(0,y))| \to \infty}
(\psi_{{\mathcal P}_{+}'}^{\varphi} - \psi_{{\mathcal P}_{-}'}^{\varphi})(0,y) =
\lim_{|Im(\tilde{\psi}(0,y))| \to \infty}
(\psi_{\sigma({\mathcal P}_{+}')}^{\eta} - \psi_{\sigma({\mathcal P}_{-}')}^{\eta})(0,y) =0.
\end{equation}

Consider a sequence $y_{n}$ in ${\mathcal P}_{+}' \cap {\mathcal P}_{-}'$
such that $|Im(\psi(0,y_{n}))| \to \infty$. Fix $z \in {\mathbb C}$.
We have $|Im(\psi(\mathrm{exp}(z X_{{\mathcal P}'}^{\varphi})(0,y_{n})))| \to \infty$
by Eq. (\ref{equ:auxpr}). Let $z_{n}$ be the complex number such that
$\mathrm{exp}(z X_{{\mathcal P}_{-}'}^{\varphi})(0,y_{n})=
\mathrm{exp}(z_{n} X_{{\mathcal P}_{+}'}^{\varphi})(0,y_{n})$.
The sequence $z_{n}$ satisfies $\lim_{n \to \infty} z_{n}=z$
by Eq. (\ref{equ:auxpr2}). We have
\[ {\mathfrak h}_{{\mathcal P}_{-}'}(z) =
(\psi_{\sigma({\mathcal P}_{-}')}^{\eta} \circ \sigma \circ
\mathrm{exp}(z X_{{\mathcal P}_{-}'}^{\varphi}) -
\psi_{\sigma({\mathcal P}_{-}')}^{\eta} \circ \sigma)(0,y_{n}) \]
and
\[ {\mathfrak h}_{{\mathcal P}_{+}'}(z_{n}) =
(\psi_{\sigma({\mathcal P}_{+}')}^{\eta} \circ \sigma \circ
\mathrm{exp}(z_{n} X_{{\mathcal P}_{+}'}^{\varphi}) -
\psi_{\sigma({\mathcal P}_{+}')}^{\eta} \circ \sigma)(0,y_{n}). \]
Equation (\ref{equ:auxpr2}) implies
$\lim_{n \to \infty}({\mathfrak h}_{{\mathcal P}_{-}'}(z)-{\mathfrak h}_{{\mathcal P}_{+}'}(z_{n})) =0$.
Since $\lim_{n \to \infty} z_{n}=z$ and ${\mathfrak h}_{{\mathcal P}_{+}'}$ is continuous
we obtain ${\mathfrak h}_{{\mathcal P}_{-}'}(z)={\mathfrak h}_{{\mathcal P}_{+}'}(z)$ for any $z \in {\mathbb C}$.
\end{proof}
\begin{defi}
Let $\varphi, \eta \in \dif{p1}{2}$ with $N>1$ such that
there exists a homeomorphism $\sigma$ satisfying
$\sigma \circ \varphi = \eta \circ \sigma$. We denote
by ${\mathfrak h}_{\varphi,\eta,\sigma}$ any of the functions
${\mathfrak h}_{{\mathcal P}'}$ defined in
Proposition \ref{pro:defh}.
We denote ${\mathfrak h}={\mathfrak h}_{\varphi,\eta,\sigma}$ if the data are implicit.
\end{defi}
\begin{defi}
Let $\varphi, \eta \in \diff{p1}{2}$ such that
a homeomorphism $\sigma$ conjugates $\varphi$ and $\eta$.
The mapping $\sigma$ is of the form
$\sigma(x,y) = (\sigma_{0}(x), \sigma_{1}(x,y))$.
We say that the action of $\sigma$ on the parameter
space is holomorphic (resp. anti-holomorphic, orientation-preserving)
if $\sigma_{0}$ is
holomorphic (resp. anti-holomorphic, orientation-preserving).
\end{defi}
The orientation properties of the restriction of the conjugation
to $x=0$ and of its action on the parameter space are the same.
\begin{lem}
\label{lem:orip}
Let $\varphi, \eta \in \dif{p1}{2}$ with $N>1$ such that
there exists a homeomorphism $\sigma$ satisfying
$\sigma \circ \varphi = \eta \circ \sigma$.
Then the isomorphism
${\mathfrak h}_{\varphi,\eta,\sigma}$ is orientation-preserving if and only if
the action of $\sigma$ on the parameter space is
orientation-preserving.
\end{lem}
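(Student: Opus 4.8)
The plan is to connect the real-linear isomorphism ${\mathfrak h}={\mathfrak h}_{\varphi,\eta,\sigma}$ to the germ of $\sigma_0$ at $0$ through the residue formula for Long Orbits (Propositions \ref{pro:lores} and \ref{pro:evol}). Recall from the proof of Proposition \ref{pro:defh} that, for a fixed $z=s+iu$, we chose a sequence $x_n^z\in\vartheta_{\mathcal O}^{-1}(u)$ with $T(x_n^z)=n-s$, and that the key identity (\ref{equ:inv}) expresses $\psi_{\sigma({\mathcal P}_-')}^{\eta}(\sigma(\mathrm{exp}(zX_{{\mathcal P}_-'}^{\varphi})(0,y^1)))$ in terms of $z$, a constant $L$, and $\lim_{n\to\infty}G_n^z$, where $G_n^z$ is $2\pi i$ times the difference of the residue sums $\sum_{Q\in\tilde E_-(\sigma_0(x_n^z))}Res(Y,Q)$ and $\sum_{Q\in E_-(x_n^z)}Res(X,Q)$. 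Since ${\mathfrak h}(z)$ equals $z+\lim_n G_n^z-\lim_n G_n^0$, the orientation of ${\mathfrak h}$ is governed by how the residue-difference varies with the argument $u=\vartheta_{\mathcal O}(x)$, i.e.\ by the way the parameter curves $\vartheta_{\mathcal O}^{-1}(u)$ rotate around $0$ as $u$ increases, composed with the local action of $\sigma_0$ on those directions.

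First I would pin down the geometric meaning of $\vartheta_{\mathcal O}$: the curves $\{\vartheta_{\mathcal O}^{-1}(u)\}_{u}$ form a one-parameter family of curves adhering to a single direction $\lambda_0\in{\mathcal U}_X^1$ (Proposition \ref{pro:udlo}), and by the residue formula (\ref{equ:resa})--(\ref{equ:deftf}) the parameter $u$ is, up to a bounded correction, the imaginary part of $-2\pi i\,|x|^{-e({\mathcal C}_{j_0})}\lambda_0^{e}\mu^{-e}\sum_{P\in E_-}Res(X_{j_0}(\lambda_0),P)$ with $\mu=x/|x|$. Increasing $u$ while keeping $T=\mathrm{Re}$ fixed thus moves $x$ along a definite rotational sense around $0$ inside the thin sector near $\lambda_0$; call this the ``positive $u$-rotation.'' The point is that this rotational sense on the $\varphi$-side is intrinsic: it is determined by the sign of $e({\mathcal C}_{j_0})$ and of $\mathrm{Im}\sum Res$, which are the same data appearing on the $\eta$-side through $Y$ and $X_{j_0}(\lambda_0)$ vs.\ $Y_{j_0'}(\cdot)$, because $\sigma$ maps the Long Orbit of $\varphi$ to a Long Orbit of $\eta$ with $\mathcal S$ preserved and maps $E_-(x)$ continuously to $\tilde E_-(\sigma_0(x))$. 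So the $u$-rotation on the $\eta$-side is the image under $\sigma_0$ of the $u$-rotation on the $\varphi$-side.

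Next I would make the bookkeeping precise. On the $\varphi$-side, as $u$ runs over ${\mathbb S}_{\mathcal O}$ the quantity $\mathrm{Im}(\psi_{\sigma({\mathcal P}_-')}^{\eta}\circ\sigma\circ\mathrm{exp}(zX_{{\mathcal P}_-'}^{\varphi})(0,y^1))$ equals $\mathrm{Im}\,{\mathfrak h}(z)$ up to a constant, so the sign of $d(\mathrm{Im}\,{\mathfrak h})/du$ along $z=s+iu$ records whether ${\mathfrak h}$ preserves or reverses orientation (combined with the trivial fact ${\mathfrak h}(1)=1$, i.e.\ ${\mathfrak h}$ fixes the real axis pointwise). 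On the other hand, by (\ref{equ:inv}) this same derivative equals the rate of change with $u$ of $\lim_n G_n^z=2\pi i\lim_n(\sum_{\tilde E_-(\sigma_0(x_n^z))}Res(Y,\cdot)-\sum_{E_-(x_n^z)}Res(X,\cdot))$. Because the $X$-term is, to leading order, independent of $u$ after the normalization $T(x_n^z)=n-s$ (it is precisely what was used to define $u$), the $u$-dependence is carried by the $Y$-term evaluated along $\sigma_0(x_n^z)$. Thus $d(\mathrm{Im}\,{\mathfrak h})/du$ has the sign of the $Y$-residue function's variation as $\sigma_0$ transports the positive $u$-rotation, i.e.\ the sign is $+$ iff $\sigma_0$ preserves the rotational sense around $0$ near $\lambda_0$, which is exactly orientation-preservation of $\sigma_0$. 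Assembling: ${\mathfrak h}$ is orientation-preserving $\iff$ $\sigma_0$ is orientation-preserving.

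The main obstacle will be the last matching step: showing rigorously that the $X$-residue contribution is $u$-independent in the limit and that the $Y$-contribution's $u$-variation genuinely detects the orientation of $\sigma_0$, rather than some spurious monodromy of the (multivalued, meromorphic with poles) residue sums $\sum_{E_-(x)}Res(X,\cdot)$. This requires using that these residue sums are meromorphic in $x$ with a pole of order $e({\mathcal C}_{j_0})>0$ (Proposition 5.2 of \cite{UPD}, invoked in the proof of Proposition \ref{pro:exll}), so that near $\lambda_0$ the imaginary part, after dividing by the leading $|x|^{-e}$ behavior, is a genuine function of $\mu=x/|x|$ whose monotonicity in the angular variable is controlled by the argument of a single nonzero complex coefficient; the same holds for $Y$. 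One then checks that $\sigma_0$, being a homeomorphism fixing $0$, either preserves or reverses the cyclic order of directions at $0$, and that this dichotomy is precisely transported to the dichotomy for ${\mathfrak h}$ through (\ref{equ:inv}). Care is needed because $\sigma_0$ need not send $\lambda_0$ to a direction in ${\mathcal U}_Y^1$ literally — it sends $\beta$ to $\sigma_0(\beta)$, which supports the $\eta$-Long Orbit, hence adheres a direction in ${\mathcal U}_Y^1$ by Proposition \ref{pro:udlo}; this is the fact that licenses the $Y$-side residue computation. Once these points are in place the conclusion is immediate.
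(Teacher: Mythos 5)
Your proposal follows essentially the same route as the paper's proof of Lemma \ref{lem:orip}: read $\mathrm{Im}\,{\mathfrak h}(iu)$ off the residue formula for Long Orbits (Propositions \ref{pro:lores} and \ref{pro:evol}, via Eq. (\ref{equ:inv})), then use the meromorphy with a pole of positive order of the residue sums to compare the rotational sense of the curves $\vartheta_{\mathcal O}^{-1}(u)$ with that of their images under $\sigma_{0}$, which is exactly the paper's bookkeeping with $G_{0}$, $G$, $\varsigma_{u}$ and $\iota_{u}$, including the appeal to Proposition \ref{pro:udlo} for the image curves. One bookkeeping slip worth fixing: the $X$-residue term is \emph{not} $u$-independent along $\vartheta_{\mathcal O}^{-1}(u)$ — its imaginary part equals $u$ plus a constant (this is precisely what defines $u$) and it cancels the explicit $iu$ contributed by $z$ in Eq. (\ref{equ:inv}), a term you also dropped, so the two slips compensate and the working identity you actually use ($\mathrm{Im}\,{\mathfrak h}(iu)$ equals the variation of the $Y$-residue sums along the image curves, i.e. the paper's $c_{0}-c_{u}$) is the correct one.
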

\begin{proof}
Suppose that $\sigma$ does not preserve orientation in the parameter space.
We denote
\[ \zeta(x,y) = (\overline{x},\overline{y}), \
\tilde{\eta} = \zeta \circ \eta \circ \zeta \ \mathrm{and} \
\tilde{\sigma} = \zeta \circ \sigma \]
where $\overline{x}$ is the complex conjugation.
The diffeomorphism $\tilde{\eta}$ belongs to $\diff{p1}{2}$
and $\tilde{\sigma} \circ \varphi = \tilde{\eta} \circ \tilde{\sigma}$.
The action of $\tilde{\sigma}$ on the parameter space is
orientation-preserving. We have
\[ {\mathfrak h}_{\varphi, \tilde{\eta}, \tilde{\sigma}}(z) = \overline{{\mathfrak h}_{\varphi, \eta, \sigma}(z)} \]
for any $z \in {\mathbb C}$.
Therefore it suffices to prove that if the action of $\sigma$ in the
parameter space is orientation-preserving so is ${\mathfrak h}={\mathfrak h}_{\varphi,\eta,\sigma}$.

Fix a repelling petal ${\mathcal P}'={\mathcal P}_{-}'$.
Consider the notations in Proposition \ref{pro:defh}. We define
\[ G_{0}(x) = - 2 \pi i \sum_{Q \in {E}_{-}(x)} Res(X,Q) \ \mathrm{and} \
G(x) = - 2 \pi i \sum_{Q \in \tilde{E}_{-}(x)} Res(Y,Q) . \]
We have
\[ \psi_{\sigma({\mathcal P}_{-}')}^{\eta}(\chi_{\tilde{\mathcal O}}(z)) -
\psi_{\sigma({\mathcal P}_{+}')}^{\eta}(0,\tilde{y}_{+}) =
\lim_{(\lceil T \rceil - T)(x) \to s, \ x \in \vartheta_{\tilde{\mathcal O}}^{-1}(u), \ x \to 0} (
\lceil T(x) \rceil - G(x) ) \]
for any $z = s+iu \in [0,1] +i {\mathbb R}$
by Proposition \ref{pro:lores}.
Since ${\mathfrak h}_{|{\mathbb R}} \equiv Id$,
$\chi_{\tilde{\mathcal O}} \equiv \sigma \circ \chi_{{\mathcal O}''}$ and
$\psi_{{\mathcal P}_{-}'}^{\varphi}(\chi_{{\mathcal O}''}(z)) -
\psi_{{\mathcal P}_{-}'}^{\varphi}(\chi_{{\mathcal O}''}(0)) \equiv z$
we deduce that
\begin{equation}
\label{equ:auxor}
\lim_{x \in \vartheta_{\tilde{\mathcal O}}^{-1}(u), \ x \to 0} Im (G(x)) = c_{u} \ \mathrm{and} \
\lim_{x \in \vartheta_{\tilde{\mathcal O}}^{-1}(u), \ x \to 0} Re (G(x)) = \infty.
\end{equation}
where $c_{u}=
- Im(\psi_{\sigma({\mathcal P}_{-}')}^{\eta}( \sigma(\chi_{{\mathcal O}''}(i u))) -
\psi_{\sigma({\mathcal P}_{+}')}^{\eta}(0,\tilde{y}_{+}))$.
The function $G$ is meromorphic (Proposition 5.2 of \cite{UPD})
and has a pole of order greater than $0$.
Every curve $\vartheta_{\tilde{\mathcal O}}^{-1}(u)$
adheres to the same direction in the parameter space (Proposition \ref{pro:udlo}).
We obtain
\[ \lim_{x \in \vartheta_{\tilde{\mathcal O}}^{-1}(u), \ x \to 0} Im (G(x)) -
\lim_{x \in \vartheta_{\tilde{\mathcal O}}^{-1}(0), \ x \to 0} Im (G(x)) =
- Im ({\mathfrak h}(iu))  \]
for any $u \in {\mathbb R}$.

Consider the connected curve $\iota_{u}$ such that $\iota_{u}$ adheres to the
same direction as $\vartheta_{\tilde{\mathcal O}}^{-1}(0)$ and
\[ \iota_{u} \subset \{ x \in B(0,\delta) \setminus \{0\} :
Im (G(x)) = c_{u}. \}
\]
We have $\lim_{x \in \iota_{u}, \ x \to 0} Re (G(x)) = \infty$
for any $u \in {\mathbb R}$. The curves (see Eq. (\ref{equ:deftf}))
\[ \varsigma_{u} = \{ x \in {\mathbb C}: \psi_{-}(0,y_{-}) - \psi_{+}(0,y_{+})
+  x +iu \in {\mathbb R}^{+} \} \]
move in clock wise sense when we increase $u$.
The function $G_{0}$ is meromorphic,
it satisfies $G_{0}^{-1}(\varsigma_{u}) = \vartheta_{{\mathcal O}''}^{-1}(u)$
for $u \in {\mathbb R}$
and has a pole of order greater than $0$.
Thus the curves
$\vartheta_{{\mathcal O}''}^{-1}(u)$ move in
counter clock wise sense when $u$ increases.
Since the action of $\sigma$ on the parameter space
is orientation-preserving the same property holds true for
$\vartheta_{\tilde{\mathcal O}}^{-1}(u)$.
Equation (\ref{equ:auxor})
implies that the curves $\iota_{u}$ move in
counter clockwise sense when $u$ increases.
The function $G$ is meromorphic
and has a pole of order greater than $0$.
Thus the curves
$\{x \in {\mathbb C}: Re(x) \in {\mathbb R}^{+} \ \mathrm{and} \ Im(x)=c_{u} \}$
move in clock wise sense when we increase $u$.
Hence $c_{u}$ is a decreasing function of $u$. We obtain
\[ Im ({\mathfrak h}(i)) = c_{0} - c_{1} >0 . \]
As a consequence ${\mathfrak h}$ is orientation-preserving.
\end{proof}
\begin{cor}
\label{cor:holact}
Let $\varphi, \eta \in \dif{p1}{2}$ with $N>1$ such that
there exists a homeomorphism $\sigma$ satisfying
$\sigma \circ \varphi = \eta \circ \sigma$.
Suppose that the action of $\sigma$ on the parameter space is
holomorphic (resp. anti-holomorphic). Then
$\sigma_{|x=0}$ is holomorphic (resp.  anti-holomorphic).
\end{cor}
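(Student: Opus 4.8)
The plan is to reduce the statement to a computation of the ${\mathbb R}$-linear isomorphism ${\mathfrak h}={\mathfrak h}_{\varphi,\eta,\sigma}$ and then to evaluate ${\mathfrak h}$ explicitly via the residue formula. First I would record that, by the definition of ${\mathfrak h}_{{\mathcal P}'}$ in Proposition~\ref{pro:defh} together with Proposition~\ref{pro:rlinear} and Lemma~\ref{lem:eqcon}, for every petal ${\mathcal P}'$ of $\varphi_{|x=0}$ the map $\psi_{\sigma({\mathcal P}')}^{\eta}\circ\sigma_{|x=0}\circ(\psi_{{\mathcal P}'}^{\varphi})^{-1}$ equals the affine map $w\mapsto {\mathfrak h}(w)+c_{{\mathcal P}'}$. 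Since Fatou coordinates are biholomorphisms onto their images and the petals cover a punctured neighborhood of the origin, $\sigma_{|x=0}$ is holomorphic (resp. anti-holomorphic) on such a punctured neighborhood, hence everywhere by continuity at $0$ and the removable singularity theorem, as soon as ${\mathfrak h}$ is ${\mathbb C}$-linear (resp. conjugate-linear). Moreover the anti-holomorphic case reduces to the holomorphic one via $\zeta(x,y)=(\overline{x},\overline{y})$, $\tilde{\eta}=\zeta\circ\eta\circ\zeta$, $\tilde{\sigma}=\zeta\circ\sigma$, exactly as in the proof of Lemma~\ref{lem:orip}, because $\tilde{\sigma}$ acts holomorphically on the parameter space and ${\mathfrak h}_{\varphi,\tilde{\eta},\tilde{\sigma}}(z)=\overline{{\mathfrak h}_{\varphi,\eta,\sigma}(z)}$. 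So it suffices to show that if $\sigma_{0}$ is holomorphic then ${\mathfrak h}=\mathrm{Id}$.

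For that I would re-run the construction in the proof of Proposition~\ref{pro:defh}: fix a repelling petal ${\mathcal P}_{-}'$, the Long Trajectory ${\mathcal O}=(X,y_{+},\beta,T)$ with ${\mathcal S}_{\mathcal O}={\mathbb R}$ provided by Proposition~\ref{pro:exll2}, the direction $\lambda_{0}\in{\mathcal U}_{X}^{1}$ adhered by $\beta$, and the meromorphic residue functions $G_{0}(x)=-2\pi i\sum_{Q\in E_{-}(x)}Res(X,Q)$ and $G(x)=-2\pi i\sum_{Q\in\tilde{E}_{-}(x)}Res(Y,Q)$, each with a pole of positive order at $0$. The key point is that along the level curves $\vartheta_{\mathcal O}^{-1}(u)$, $u\in{\mathbb R}$, the length is essentially the parameter of $\chi$: the sequence $x_{n}^{z}\in\vartheta_{\mathcal O}^{-1}(u)$ appearing in that proof satisfies $T_{0}(x_{n}^{z})=n-z$, hence $G_{0}(x_{n}^{z})=n-z-C$ with $C$ a constant (see Equation~(\ref{equ:deftf})). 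Substituting this into the identity ${\mathfrak h}(z)=z+\lim_{n}(G_{n}^{z}-G_{n}^{0})$ obtained in that proof (see Equation~(\ref{equ:inv})) and writing $G_{n}^{z}=G_{0}(x_{n}^{z})-\hat{G}(x_{n}^{z})$ with $\hat{G}=G\circ\sigma_{0}$, the $G_{0}$-contribution is $-z$ and cancels the leading $z$, leaving ${\mathfrak h}(z)=-\lim_{n}(\hat{G}(x_{n}^{z})-\hat{G}(x_{n}^{0}))$. This is where the hypothesis enters: since $\sigma_{0}$ is holomorphic, hence a local biholomorphism fixing $0$, $\hat{G}$ is meromorphic at $0$ with a pole of some order $q>0$; writing $p$ for the pole order of $G_{0}$ and $\xi\mapsto x(\xi)$ for the branch of $G_{0}^{-1}$ whose image adheres to $\lambda_{0}$ (so that $x_{n}^{z}=x(n-z-C)$), a Puiseux expansion at $\infty$ yields $\hat{G}(x(\xi))=\Psi(\xi)=c_{0}\xi^{q/p}+(\text{lower Puiseux terms})$ with $c_{0}\neq 0$.

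The last step uses that ${\mathfrak h}$ is a finite ${\mathbb R}$-linear isomorphism with ${\mathfrak h}(1)=1$ (Proposition~\ref{pro:rlinear}): from the Puiseux expansion, $\Psi(n-z-C)-\Psi(n-C)=(q/p)\,c_{0}(-z)\,n^{q/p-1}+o(n^{q/p-1})$ as $n\to\infty$, the lower terms also contributing $o(n^{q/p-1})$. A finite, not identically zero limit forces $q/p=1$, whence ${\mathfrak h}(z)=-c_{0}(-z)=c_{0}z$; then ${\mathfrak h}(1)=1$ gives $c_{0}=1$, so ${\mathfrak h}=\mathrm{Id}$ and in particular ${\mathfrak h}$ is ${\mathbb C}$-linear, so $\sigma_{|x=0}$ is holomorphic. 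The hard part will be the asymptotic analysis in the middle paragraph: singling out the branch of $G_{0}^{-1}$ adhering to the instability direction $\lambda_{0}$, justifying the Puiseux expansion of $\Psi$ and the termwise limit of $\Psi(n-z-C)-\Psi(n-C)$, and verifying that the identity ${\mathfrak h}(z)=z+\lim_{n}(G_{n}^{z}-G_{n}^{0})$ holds for enough values of $z$ (for instance $z\in[0,1)+i{\mathbb R}$, which determines the ${\mathbb R}$-linear map ${\mathfrak h}$).
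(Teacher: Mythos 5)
Your proposal is correct in substance, but the decisive step is carried out by a genuinely different (and heavier) mechanism than the paper's. You share the whole skeleton: the reduction to showing ${\mathfrak h}={\mathfrak h}_{\varphi,\eta,\sigma}=Id$, the identity ${\mathfrak h}(z)=z+\lim_{n}(G_{n}^{z}-G_{n}^{0})$ from Eq.~(\ref{equ:inv}) in Proposition~\ref{pro:defh}, the reduction of the anti-holomorphic case via $\zeta(x,y)=(\overline{x},\overline{y})$, and the final passage petal-by-petal plus Riemann's removable singularity theorem. The paper then finishes in two lines: since $\sigma_{0}$ is holomorphic, the single function $x\mapsto 2\pi i\bigl(\sum_{Q\in\tilde{E}_{-}(\sigma(x))}Res(Y,Q)-\sum_{Q\in E_{-}(x)}Res(X,Q)\bigr)$ is meromorphic, and a meromorphic germ with a finite limit along one sequence tending to $0$ has the same limit along every such sequence; hence $\lim_{n}G_{n}^{z}=\lim_{n}G_{n}^{0}$ and ${\mathfrak h}(z)=z$ at once, with no reference to pole orders. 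You instead separate $G_{n}^{z}=G_{0}(x_{n}^{z})-\hat{G}(x_{n}^{z})$, use $G_{0}(x_{n}^{z})=n-z-C$ (which is a correct reading of Eq.~(\ref{equ:deftf}) and the definition of $x_{n}^{z}$), and then compute $\lim_{n}(\hat{G}(x_{n}^{z})-\hat{G}(x_{n}^{0}))$ by a Puiseux expansion of $\hat{G}$ along the inverse branch of $G_{0}$, forcing equality of pole orders and ${\mathfrak h}(z)=c_{0}z$, with $c_{0}=1$ from ${\mathfrak h}(1)=1$. This works, and it even yields extra information (the two residue functions have the same pole order along $\lambda_{0}$, in the spirit of the paper's own proof of Lemma~\ref{lem:orip}), but it carries technical burdens the paper's argument avoids: you must justify that all the level curves $\vartheta_{\mathcal O}^{-1}(u)$ lie on one and the same Puiseux branch of $G_{0}^{-1}$ adhering $\lambda_{0}$ (this uses the construction of $\beta$ in Proposition~\ref{pro:exll} and Proposition~\ref{pro:udlo}), and you should keep in mind that the residue sums are in general meromorphic only after a ramification $x=t^{k}$, so $p$ and $q$ are to be read as pole orders in the ramified variable — your $q/p$ formulation absorbs this, but it must be said. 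If you notice that your $\hat{G}-G_{0}$ is itself meromorphic (this is exactly where the holomorphy of $\sigma_{0}$ enters) and already known to have a finite limit along $x_{n}^{0}$, the whole asymptotic analysis collapses to the paper's shorter argument.
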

The corollary implies Proposition \ref{pro:holpar} in the case
$m(\varphi) = 0$.
The case $m(\varphi) > 0$ is treated in Corollary \ref{cor:holpar}.
\begin{proof}
Suppose that the action of $\sigma$ on the parameter space is holomorphic.
Consider the notations in Proposition \ref{pro:defh}.
Let ${\mathcal P}'$ be a petal of $\varphi_{|x=0}$.
The functions
\[ \sum_{Q \in E_{-}(x)} Res(X,Q) \ \ \mathrm{and} \
\sum_{Q \in \tilde{E}_{-}(\sigma(x))} Res(Y,Q) \]
are meromorphic (Proposition 5.2 of \cite{UPD}).
The function
\[ 2 \pi i \left(
\sum_{Q \in \tilde{E}_{-}(\sigma(x))} Res(Y,Q) -  \sum_{Q \in E_{-}(x)} Res(X,Q)
\right) \]
is meromorphic. Hence all the limits of the previous function in sequences
tending to $0$ are equal. We deduce that
$\lim_{n \to \infty} G_{n}^{z} = \lim_{n \to \infty} G_{n}^{0}$
for any $z \in {\mathbb C}$. We obtain
\[ \psi_{\sigma({\mathcal P}')}^{\eta}(\sigma(\mathrm{exp}(z X_{{\mathcal P}'}^{\varphi})(0,y))) -
\psi_{\sigma({\mathcal P}')}^{\eta}(\sigma(0,y)) = z   \]
for $y \in {\mathcal P}'$.
The mappings $\psi_{\sigma({\mathcal P}')}^{\eta}$ and $z \to \mathrm{exp}(z X_{{\mathcal P}'}^{\varphi})(0,y)$
are biholomorphic. As a consequence $\sigma_{|x=0}$ is holomorphic in
${\mathcal P}'$. Since the union of the petals is a pointed neighborhood of
the origin we obtain that $\sigma_{|x=0}$ is holomorphic by
Riemann's removable singularity theorem.

Suppose that the action of $\sigma$ on the parameter space is anti-holomorphic.
Denote $\zeta(x,y) = (\overline{x},\overline{y})$,
$\tilde{\eta} = \zeta \circ \eta \circ \zeta$ and
$\tilde{\sigma} = \zeta \circ \sigma$. We have
$\tilde{\sigma} \circ \varphi = \tilde{\eta} \circ \tilde{\sigma}$.
The action of $\tilde{\sigma}$ on the parameter space is holomorphic.
Therefore $\tilde{\sigma}_{|x=0}$ is holomorphic and
$\sigma_{|x=0}$ is anti-holomorphic.
\end{proof}
The next result is the General Theorem for the case $m(\varphi)=0$.
\begin{cor}
\label{cor:orp}
Let $\varphi, \eta \in \dif{p1}{2}$ with $N>1$ such that
there exists a homeomorphism $\sigma$ satisfying
$\sigma \circ \varphi = \eta \circ \sigma$.
Then $\sigma_{|x=0}$ is affine in Fatou coordinates (see Definition \ref{def:afffc}).
Moreover $\sigma_{|x=0}$ is orientation-preserving if and only if
the action of $\sigma$ on the parameter space is
orientation-preserving.
\end{cor}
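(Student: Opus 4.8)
The plan is to deduce Corollary \ref{cor:orp} as a routine consequence of the machinery developed in this section, essentially assembling Propositions \ref{pro:defh}, \ref{pro:rlinear}, Lemma \ref{lem:eqcon} and Lemma \ref{lem:orip}. First I would recall that for every petal ${\mathcal P}'$ of $\varphi_{|x=0}$, Proposition \ref{pro:defh} produces a function ${\mathfrak h}_{{\mathcal P}'}(z)$ independent of $y \in {\mathcal P}'$ and satisfying
\[ (\psi_{\sigma({\mathcal P}')}^{\eta} \circ \sigma \circ \mathrm{exp}(z X_{{\mathcal P}'}^{\varphi}) - \psi_{\sigma({\mathcal P}')}^{\eta} \circ \sigma)(0,y) = {\mathfrak h}_{{\mathcal P}'}(z). \]
By Proposition \ref{pro:rlinear} this ${\mathfrak h}_{{\mathcal P}'}$ is an ${\mathbb R}$-linear isomorphism of ${\mathbb C}$ fixing $1$, and by Lemma \ref{lem:eqcon} it is independent of the choice of petal; denote the common value ${\mathfrak h} = {\mathfrak h}_{\varphi,\eta,\sigma}$. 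Setting $z = \psi_{{\mathcal P}'}^{\varphi}(0,y) - \psi_{{\mathcal P}'}^{\varphi}(0,y_0)$ for a fixed base point $y_0 \in {\mathcal P}'$ and using $\mathrm{exp}(z X_{{\mathcal P}'}^{\varphi})(0,y_0) = (0,y)$ together with $X_{{\mathcal P}'}^{\varphi}(\psi_{{\mathcal P}'}^{\varphi}) \equiv 1$, the displayed identity immediately gives that
\[ \psi_{\sigma({\mathcal P}')}^{\eta} \circ \sigma \circ (\psi_{{\mathcal P}'}^{\varphi})^{-1} \]
is the affine map $w \mapsto {\mathfrak h}(w) + c_{{\mathcal P}'}$ for a constant $c_{{\mathcal P}'}$ depending on the choices of Fatou coordinates. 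Since the same ${\mathbb R}$-linear part ${\mathfrak h}$ works for every petal, this is precisely the statement of Definition \ref{def:afffc}: $\sigma_{|x=0}$ is affine in Fatou coordinates.

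For the orientation statement, I would simply invoke Lemma \ref{lem:orip}: ${\mathfrak h}$ is orientation-preserving if and only if the action of $\sigma$ on the parameter space is orientation-preserving. It then remains to check that $\sigma_{|x=0}$ is orientation-preserving (as a germ of homeomorphism of a punctured disk, or equivalently as a real-analytic map outside the origin, which it is by the affine-in-Fatou-coordinates property) if and only if ${\mathfrak h}$ is. This follows because in Fatou coordinates $\sigma_{|x=0}$ is, on each petal, the affine map $w \mapsto {\mathfrak h}(w) + c_{{\mathcal P}'}$, whose orientation behavior is exactly that of the linear part ${\mathfrak h}$; and the passage from the petal coordinate $w = \psi_{{\mathcal P}'}^{\varphi}$ back to the $y$-coordinate, respectively from $\psi_{\sigma({\mathcal P}')}^{\eta}$ back to the target $y$-coordinate, is biholomorphic, hence orientation-preserving. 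Composing, $\sigma_{|x=0}$ preserves orientation exactly when ${\mathfrak h}$ does, which by Lemma \ref{lem:orip} happens exactly when $\sigma_{0}$ does.

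The main point — and the only place where anything genuinely happens — is already absorbed into the preceding results: Proposition \ref{pro:defh} (the $y$-independence of ${\mathfrak h}_{{\mathcal P}'}$, which rests on the residue formula for Long Orbits, Proposition \ref{pro:lores}, and on Proposition \ref{pro:evol}) and Lemma \ref{lem:orip} (tracking the clockwise/counter-clockwise motion of the parameter curves $\vartheta^{-1}(u)$ via meromorphicity of the residue sums). Given those, the corollary is a bookkeeping assembly. Thus the expected obstacle is not in this proof itself but lies upstream; here I anticipate only the minor care needed to verify that the affine description of $\sigma_{|x=0}$ on each individual petal patches consistently across the finitely many petals — which is exactly what Lemma \ref{lem:eqcon} guarantees — and that the base-point constants $c_{{\mathcal P}'}$, while petal-dependent, do not affect either the affine-in-Fatou-coordinates conclusion or the orientation conclusion.
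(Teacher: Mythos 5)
Your proposal is correct and follows essentially the same route as the paper, which states the corollary as a direct consequence of Propositions \ref{pro:defh}, \ref{pro:rlinear} and Lemmas \ref{lem:eqcon}, \ref{lem:orip}. The details you add (passing from the flow identity to the affine map in Fatou coordinates, and identifying the orientation of $\sigma_{|x=0}$ with that of ${\mathfrak h}$ via the biholomorphic Fatou charts) are exactly the bookkeeping the paper leaves implicit.
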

Let us remark that if $\sigma_{|x=0}$ is affine in Fatou coordinates then it is
real analytic in $\{x=0\} \setminus \{(0,0)\}$.
The corollary is a consequence of Propositions \ref{pro:defh},
\ref{pro:rlinear} and  Lemmas \ref{lem:eqcon}, \ref{lem:orip}.
\begin{defi}
Let $\phi \in \diff{1}{}$.
We say that $\phi$ is analytically trivial if
there exists a local holomorphic singular vector field $Z=a(z) \partial / \partial z$
such that $\phi = \mathrm{exp}(Z)$.
This condition is equivalent to
$X_{{\mathcal P}'}^{\phi} \equiv X_{{\mathcal Q}'}^{\phi}$ for all petals
${\mathcal P}'$, ${\mathcal Q}'$ of $\phi$ such that
${\mathcal P}' \cap {\mathcal Q}' \neq \emptyset$. It is also equivalent to
$\psi_{{\mathcal P}'}^{\phi} - \psi_{{\mathcal Q}'}^{\phi}$ being constant
for all petals
${\mathcal P}'$, ${\mathcal Q}'$ of $\phi$ such that
${\mathcal P}' \cap {\mathcal Q}' \neq \emptyset$
($\psi_{{\mathcal P}'}^{\phi}$ and $\psi_{{\mathcal Q}'}^{\phi}$ are Fatou
coordinates of $\phi$ in ${\mathcal P}'$ and ${\mathcal Q}'$ respectively).
\end{defi}
\begin{rem}
The condition of being non-analytically trivial is generic among the
tangent to the identity local diffeomorphisms in one variable.
More precisely, every formal class of conjugacy (i.e. a class of equivalence
for the relation given by the formal conjugation) contains a continuous moduli
of analytic classes of conjugacy and a unique analytically trivial class.
These properties are a consequence of the analytic classification of tangent to
the identity diffeomorphisms (see \cite{Loray5}).
\end{rem}
It is possible to construct affine conjugacies in Fatou coordinates
that are not holomorphic or anti-holomorphic by restriction to
$x=0$ if both $\varphi$ and $\eta$ are embedded in analytic flows
(see Section \ref{sec:build}). They are essentially the only examples.
\begin{pro}
\label{pro:natc}
Let $\varphi, \eta \in \dif{p1}{2}$ with $N>1$ such that
there exists a homeomorphism $\sigma$ satisfying
$\sigma \circ \varphi = \eta \circ \sigma$.
Suppose that either $\varphi_{|x=0}$ or $\eta_{|x=0}$
is non-analytically trivial.
Then either ${\mathfrak h}_{\varphi,\eta,\sigma} \equiv z$ and $\sigma_{|x=0}$ is
holomorphic or ${\mathfrak h}_{\varphi,\eta,\sigma} \equiv \overline{z}$ and $\sigma_{|x=0}$ is
anti-holomorphic.
\end{pro}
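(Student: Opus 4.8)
The plan is to feed the conclusion of Corollary~\ref{cor:orp} into the Ecalle--Voronin description of the transition maps between Fatou coordinates, the point being that non-analytic triviality forces those transition maps to be genuinely nonaffine. By Corollary~\ref{cor:orp} the map $\sigma_{|x=0}$ is affine in Fatou coordinates: by Proposition~\ref{pro:rlinear} there is an $\mathbb{R}$-linear isomorphism $\mathfrak{h}=\mathfrak{h}_{\varphi,\eta,\sigma}$ with $\mathfrak{h}(1)=1$, and by Lemma~\ref{lem:eqcon} it is independent of the petal, so that for every petal $\mathcal{P}'$ of $\varphi_{|x=0}$ one has $\psi_{\sigma(\mathcal{P}')}^{\eta}\circ\sigma_{|x=0}=\mathfrak{h}\circ\psi_{\mathcal{P}'}^{\varphi}+c_{\mathcal{P}'}$ on $\mathcal{P}'$ for suitable $c_{\mathcal{P}'}\in\mathbb{C}$; here $\sigma_{|x=0}$ sends the petals of $\varphi_{|x=0}$ bijectively onto those of $\eta_{|x=0}$, respecting intersections. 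It therefore suffices to show $\mathfrak{h}(z)\equiv z$ or $\mathfrak{h}(z)\equiv\overline z$: in the first case each $\sigma_{|x=0}|_{\mathcal{P}'}$ is a composition of biholomorphisms, so $\sigma_{|x=0}$ is holomorphic on the punctured neighborhood of the origin covered by the petals, hence holomorphic at $0$ by Riemann's removable singularity theorem, and symmetrically $\mathfrak{h}(z)\equiv\overline z$ gives $\sigma_{|x=0}$ anti-holomorphic. We may assume $\varphi_{|x=0}$ is non-analytically trivial; if instead $\eta_{|x=0}$ is, we run the argument with $\sigma^{-1}$ in place of $\sigma$ (it conjugates $\eta$ to $\varphi$ and has $\mathfrak{h}_{\eta,\varphi,\sigma^{-1}}=\mathfrak{h}^{-1}$), observing that $\mathfrak{h}^{-1}\in\{z,\overline z\}$ if and only if $\mathfrak{h}\in\{z,\overline z\}$.

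Next I would fix a pair of consecutive (hence overlapping) petals $\mathcal{P}'_{+},\mathcal{P}'_{-}$ of $\varphi_{|x=0}$ for which $\psi_{\mathcal{P}'_{+}}^{\varphi}-\psi_{\mathcal{P}'_{-}}^{\varphi}$ is non-constant on some connected component of $\mathcal{P}'_{+}\cap\mathcal{P}'_{-}$; such a pair exists exactly because $\varphi_{|x=0}$ is non-analytically trivial. In the coordinate $w=\psi_{\mathcal{P}'_{+}}^{\varphi}$, write the transition maps $\tau_{\varphi}=\psi_{\mathcal{P}'_{-}}^{\varphi}\circ(\psi_{\mathcal{P}'_{+}}^{\varphi})^{-1}$ and, on the $\eta$ side, $\tau_{\eta}=\psi_{\sigma(\mathcal{P}'_{-})}^{\eta}\circ(\psi_{\sigma(\mathcal{P}'_{+})}^{\eta})^{-1}$. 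Since Fatou coordinates conjugate $\varphi_{|x=0}$ and $\eta_{|x=0}$ to $w\mapsto w+1$, both $\tau_{\varphi}$ and $\tau_{\eta}$ commute with $w\mapsto w+1$, so $p_{\varphi}:=\tau_{\varphi}-\mathrm{id}$ and $p_{\eta}:=\tau_{\eta}-\mathrm{id}$ are $1$-periodic holomorphic functions on the relevant ``horn'' domain, which is invariant under $w\mapsto w+1$, and $p_{\varphi}$ is non-constant. Comparing the two expressions for $\sigma_{|x=0}$ on the overlap in the charts $(\mathcal{P}'_{+},\sigma(\mathcal{P}'_{+}))$ and $(\mathcal{P}'_{-},\sigma(\mathcal{P}'_{-}))$ yields the intertwining identity
\[ \tau_{\eta}(\mathfrak{h}(w)+c_{+})=\mathfrak{h}(\tau_{\varphi}(w))+c_{-}, \]
which, using $\mathfrak{h}(1)=1$ and the $\mathbb{R}$-linearity of $\mathfrak{h}$, reduces (after the substitution $W=\mathfrak{h}(w)+c_{+}$ and absorbing a constant translation of the argument into $p_{\varphi}$) to $p_{\eta}(W)=\mathfrak{h}\bigl(p_{\varphi}(\mathfrak{h}^{-1}(W))\bigr)+\mathrm{const}$. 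In particular $\mathfrak{h}\circ p_{\varphi}\circ\mathfrak{h}^{-1}$ is holomorphic.

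Finally I would conclude with a Cauchy--Riemann computation. Write $\mathfrak{h}(z)=az+b\overline z$ and $\mathfrak{h}^{-1}(W)=\alpha W+\beta\overline W$; applying $\partial/\partial\overline W$ to $\mathfrak{h}(p_{\varphi}(\mathfrak{h}^{-1}(W)))$ and using that $p_{\varphi}$ is holomorphic in its variable gives $a\beta\,p_{\varphi}'(\zeta)+b\overline{\alpha}\,\overline{p_{\varphi}'(\zeta)}=0$ for all $\zeta$ in an open set. Suppose, for contradiction, $ab\neq 0$. Then $\alpha\neq 0$ and $\beta\neq 0$, for otherwise $\mathfrak{h}$ would be $\mathbb{C}$-linear or $\mathbb{C}$-antilinear, forcing $b=0$ or $a=0$; hence $a\beta\neq 0$ and $b\overline\alpha\neq 0$, and at every $\zeta$ with $p_{\varphi}'(\zeta)\neq 0$ the ratio $\overline{p_{\varphi}'(\zeta)}/p_{\varphi}'(\zeta)$ equals the fixed nonzero constant $-a\beta/(b\overline{\alpha})$. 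If $p_{\varphi}'$ is non-constant this is impossible, since the image of a non-constant holomorphic function is open while a constant unimodular ratio forces that image onto a line through the origin; if $p_{\varphi}'$ is constant, the $1$-periodicity of $p_{\varphi}$ forces $p_{\varphi}'\equiv 0$. In either case $p_{\varphi}$ is constant, contradicting the choice of $\mathcal{P}'_{\pm}$. Therefore $a=0$ or $b=0$, and $\mathfrak{h}(1)=1$ gives $\mathfrak{h}(z)\equiv z$ or $\mathfrak{h}(z)\equiv\overline z$, which by the first paragraph yields the proposition. The one delicate point is this last step: the $1$-periodicity of the Ecalle--Voronin transition map is precisely what excludes the spurious possibility that an affine non-constant $p_{\varphi}$ be intertwined by a genuinely (non-holomorphic and non-antiholomorphic) $\mathbb{R}$-linear map; everything else is a direct consequence of Corollary~\ref{cor:orp}.
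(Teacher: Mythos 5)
Your proposal is correct and follows essentially the same route as the paper's proof: affineness in Fatou coordinates with a petal-independent $\mathbb{R}$-linear ${\mathfrak h}$, the observation that ${\mathfrak h}$ must conjugate the (holomorphic, $1$-periodic-up-to-translation) Ecalle--Voronin transition maps of $\varphi_{|x=0}$ and $\eta_{|x=0}$, and a Cauchy--Riemann/open-mapping computation showing that unless ${\mathfrak h}\equiv z$ or ${\mathfrak h}\equiv\overline{z}$ the transition map is a translation, contradicting non-analytic triviality. The only differences are cosmetic: you work with $p=\tau-\mathrm{id}$ and one chosen ``bad'' pair of petals (and do not simplify $-a\beta/(b\overline{\alpha})$, which in fact equals $1$), whereas the paper works with the transition map itself and argues over all consecutive pairs.
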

The proposition implies the Main Theorem.
\begin{proof}
The isomorphism ${\mathfrak h}_{\eta,\varphi,\sigma^{-1}}$
is the inverse of ${\mathfrak h}_{\varphi,\eta,\sigma}$.
Thus we can suppose that $\varphi_{|x=0}$ is non-analytically trivial.

Let ${\mathcal P}'$ be a petal of $\varphi_{|x=0}$. Fix $(0,y_{+}) \in {\mathcal P}'$.
We have
\[ \psi_{\sigma({\mathcal P}')}^{\eta} (\sigma(0,y)) =
 \psi_{\sigma({\mathcal P}')}^{\eta} (\sigma(0,y_{+})) +
{\mathfrak h}( \psi_{{\mathcal P}'}^{\varphi}(0,y) - \psi_{{\mathcal P}'}^{\varphi}(0,y_{+})) \]
and then
\[ (\psi_{\sigma({\mathcal P}')}^{\eta} \circ \sigma)(0,y) =
((z+ c_{{\mathcal P}'}) \circ {\mathfrak h} \circ  \psi_{{\mathcal P}'}^{\varphi})(0,y)  \]
for some $c_{{\mathcal P}'} \in {\mathbb C}$ and any $y \in {\mathcal P}'$.
Consider two consecutive petals ${\mathcal P}'$ and ${\mathcal Q}'$
of $\varphi_{|x=0}$. We consider the changes of charts
\[ \psi_{\sigma({\mathcal Q}')}^{\eta} \circ (\psi_{\sigma({\mathcal P}')}^{\eta})^{-1}=
(\psi_{\sigma({\mathcal Q}')}^{\eta} \circ \sigma) \circ
(\psi_{\sigma({\mathcal P}')}^{\eta} \circ \sigma)^{-1} = \]
\[ (z+ c_{{\mathcal Q}'}) \circ {\mathfrak h} \circ  \psi_{{\mathcal Q}'}^{\varphi} \circ
(\psi_{{\mathcal P}'}^{\varphi})^{-1} \circ {\mathfrak h}^{-1} \circ (z- c_{{\mathcal P}'}). \]
The left hand side is holomorphic, thus
${\mathfrak h} \circ  \psi_{{\mathcal Q}'}^{\varphi} \circ
(\psi_{{\mathcal P}'}^{\varphi})^{-1} \circ {\mathfrak h}^{-1}$
is also holomorphic.
We denote $H = \psi_{{\mathcal Q}'}^{\varphi} \circ
(\psi_{{\mathcal P}'}^{\varphi})^{-1}$, it is holomorphic.
The isomorphisms ${\mathfrak h}$ and ${\mathfrak h}^{-1}$ are of the form
${\mathfrak h}(z) = \varsigma_{0} z + \varsigma_{1} \overline{z}$ and
${\mathfrak h}^{-1}(z) = \varrho_{0} z + \varrho_{1} \overline{z}$
where $\varrho_{0} = \overline{\varsigma_{0}}/(|\varsigma_{0}|^{2}- |\varsigma_{1}|^{2})$
and $\varrho_{1} = -\varsigma_{1}/(|\varsigma_{0}|^{2}- |\varsigma_{1}|^{2})$.
We have
\[ \frac{\partial ({\mathfrak h} \circ H \circ {\mathfrak h}^{-1})}{\partial \overline{z}} =
\frac{\partial {\mathfrak h}}{\partial z} \frac{\partial (H \circ {\mathfrak h}^{-1})}{\partial \overline{z}} +
\frac{\partial {\mathfrak h}}{\partial \overline{z}}
\frac{\partial (\overline{H \circ {\mathfrak h}^{-1}})}{\partial \overline{z}}=
\varsigma_{0} \frac{\partial H}{\partial z} \varrho_{1} +
\varsigma_{1} \overline{ \frac{\partial H}{\partial z}} \overline{\varrho_{0}} =\]
\[ \frac{\varsigma_{0} \varsigma_{1}}{|\varsigma_{0}|^{2}- |\varsigma_{1}|^{2}}
\left( \overline{ \frac{\partial H}{\partial z}} - \frac{\partial H}{\partial z} \right). \]
Suppose $\varsigma_{0}=0$. Since ${\mathfrak h}(1)=1$ we deduce ${\mathfrak h} \equiv \overline{z}$.
Hence $\sigma_{|x=0}$ is anti-holomorphic.
Suppose $\varsigma_{1}=0$. Since ${\mathfrak h}(1)=1$ we deduce ${\mathfrak h} \equiv z$.
Hence $\sigma_{|x=0}$ is holomorphic.
Since ${\mathfrak h} \circ H \circ {\mathfrak h}^{-1}$ is holomorphic we can suppose that
$\overline{\partial H/\partial z} \equiv \partial H/\partial z$.
The function $\partial H / \partial z$ is real and then constant
by the open mapping theorem. Therefore
$H$ is of the form $az + b$ for some constants $a,b \in {\mathbb C}$.
The constant $a$ is equal to $1$ since $H(z+1) \equiv H(z)+1$.
We obtain $\psi_{{\mathcal Q}'}^{\varphi} \equiv \psi_{{\mathcal P}'}^{\varphi} + b$.
We deduce that $X_{{\mathcal Q}'}^{\varphi} \equiv X_{{\mathcal P}'}^{\varphi}$.
The last property does not hold true for every pair of consecutive petals
of $\varphi_{|x=0}$ by hypothesis. Thus $\sigma_{|x=0}$ is either
holomorphic or anti-holomorphic.
\end{proof}
\begin{cor}
Let $\varphi, \eta \in \dif{p1}{2}$ with $N>1$ such that
there exists a homeomorphism $\sigma$ satisfying
$\sigma \circ \varphi = \eta \circ \sigma$.
Then $\varphi_{|x=0}$ is analytically trivial if and only if
$\eta_{|x=0}$ is analytically trivial.
\end{cor}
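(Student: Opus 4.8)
The plan is to deduce this equivalence directly from Proposition \ref{pro:natc}. First I would dispose of the easy cases: if $\varphi_{|x=0}$ and $\eta_{|x=0}$ are both analytically trivial, or both non-analytically trivial, the asserted equivalence is tautological, so the only content is to rule out the possibility that exactly one of them is analytically trivial. By replacing $\sigma$ with $\sigma^{-1}$, which conjugates $\eta$ and $\varphi$, I may assume without loss of generality that $\varphi_{|x=0}$ is analytically trivial while $\eta_{|x=0}$ is non-analytically trivial, and then argue by contradiction.

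Next I would record the elementary fact that $\sigma_{|x=0}$ conjugates $\varphi_{|x=0}$ and $\eta_{|x=0}$: writing $\sigma(x,y)=(\sigma_{0}(x),\sigma_{1}(x,y))$ and restricting the identity $\sigma \circ \varphi = \eta \circ \sigma$ to the line $x=0$ yields $\sigma_{|x=0} \circ \varphi_{|x=0} = \eta_{|x=0} \circ \sigma_{|x=0}$. Since $\eta_{|x=0}$ is non-analytically trivial, the hypothesis of Proposition \ref{pro:natc} is met, and that proposition gives that $\sigma_{|x=0}$ is holomorphic or anti-holomorphic.

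Finally I would invoke the invariance of analytic triviality under these two kinds of conjugation. If $\varphi_{|x=0}=\mathrm{exp}(Z)$ for a local holomorphic singular vector field $Z = a(y)\,\partial/\partial y$ and $\sigma_{|x=0}$ is holomorphic, then $\eta_{|x=0} = \mathrm{exp}\big((\sigma_{|x=0})_{*}Z\big)$ with $(\sigma_{|x=0})_{*}Z$ again holomorphic, so $\eta_{|x=0}$ is analytically trivial. If instead $\sigma_{|x=0}$ is anti-holomorphic, I would set $\zeta(y)=\overline{y}$: then $\zeta \circ \sigma_{|x=0}$ is holomorphic, and $\zeta \circ \varphi_{|x=0} \circ \zeta = \mathrm{exp}(\zeta_{*}Z)$ where $\zeta_{*}Z = \overline{a(\overline{y})}\,\partial/\partial y$ is holomorphic, so $\zeta \circ \varphi_{|x=0} \circ \zeta$ is analytically trivial; since $\zeta \circ \eta_{|x=0} \circ \zeta = (\zeta \circ \sigma_{|x=0}) \circ (\zeta \circ \varphi_{|x=0} \circ \zeta) \circ (\zeta \circ \sigma_{|x=0})^{-1}$ is holomorphically conjugate to it, it is analytically trivial too, hence so is $\eta_{|x=0}$. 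In both cases we contradict the assumption that $\eta_{|x=0}$ is non-analytically trivial, which completes the proof.

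The argument is essentially immediate once Proposition \ref{pro:natc} is available, so there is no genuine obstacle; the only point that requires a little care is the anti-holomorphic case, where one must verify that conjugating the infinitesimal generator by complex conjugation again produces a holomorphic vector field (the Schwarz reflection $a(y)\mapsto \overline{a(\overline{y})}$) and thereby reduces the situation to the holomorphic case.
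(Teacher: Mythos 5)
Your argument is correct and is precisely the derivation the paper intends: the corollary is stated without proof immediately after Proposition \ref{pro:natc}, and the expected reasoning is exactly your contradiction argument (one side non-analytically trivial forces $\sigma_{|x=0}$ to be holomorphic or anti-holomorphic, and analytic triviality is preserved under such conjugation, the anti-holomorphic case via the Schwarz-type reflection $a(y)\mapsto\overline{a(\overline{y})}$). No gaps; your handling of the anti-holomorphic case is the only point needing care and you address it correctly.
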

Proposition \ref{pro:natc} describes the invariance of
the analytic classes of the unperturbed
diffeomorphisms by
topological conjugation. Next lemma describes the action on formal invariants.
\begin{lem}
\label{lem:res}
Let $\varphi, \eta \in \dif{p1}{2}$ with $N>1$ such that
there exists a homeomorphism $\sigma$ satisfying
$\sigma \circ \varphi = \eta \circ \sigma$. Then we have
\[ {\mathfrak h}_{\varphi,\eta,\sigma}(2 \pi i Res_{\varphi}(0,0)) =  2 \pi i Res_{\eta}(0,0) \ \mathrm{or} \
{\mathfrak h}_{\varphi,\eta,\sigma}(2 \pi i Res_{\varphi}(0,0)) =  -2 \pi i Res_{\eta}(0,0) \]
(see Definition \ref{def:res12})
depending on whether or not the action of $\sigma$ on the parameter space is
orientation-preserving.
In particular
$Re (Res_{\varphi}(0,0))$ and  $Re (Res_{\eta}(0,0))$
have the same sign.
\end{lem}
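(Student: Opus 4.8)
The statement is a residue-transformation formula: the $\mathbb{R}$-linear isomorphism ${\mathfrak h}_{\varphi,\eta,\sigma}$ carries $2\pi i\,Res_{\varphi}(0,0)$ to $\pm 2\pi i\,Res_{\eta}(0,0)$, with the sign governed by whether $\sigma_0$ preserves orientation. The plan is to extract this from the residue formula for Long Orbits (Proposition \ref{pro:lores}) together with the fact that $\sigma$ conjugates Long Orbits of $\varphi$ to Long Orbits of $\eta$ while preserving the function $\chi_{\mathcal O}$ up to $\sigma$. The key structural input is that the residue $Res_{\varphi}(0,0)=Res_{\varphi_{|x=0}}(0)$ of the singularity at the origin shows up, via the residue formula, as the \emph{imaginary period} detected when the curve $\vartheta_{\mathcal O}^{-1}(u)$ winds around the parameter space; equivalently, $2\pi i\, Res$ measures the monodromy of the relevant Fatou coordinate around $x=0$.

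\textbf{Key steps.} First I would set up, as in the proof of Proposition \ref{pro:defh}, a Long Trajectory ${\mathcal O}=(X,y_+,\beta,T)$ of a convergent normal form $X$ of $\varphi$ with ${\mathcal S}_{\mathcal O}={\mathbb R}$, whose associated Long Orbit ${\mathcal O}''=(\varphi,y_+',\beta,T)$ has $\chi_{{\mathcal O}''}(z)=\mathrm{exp}(z X_{{\mathcal P}_-'}^{\varphi})(0,y^1)$ on a repelling petal, and push it forward by $\sigma$ to the Long Orbit $\tilde{\mathcal O}=(\eta,\tilde y_+,\sigma(\beta),T\circ\sigma^{-1})$ of $\eta$ with $\chi_{\tilde{\mathcal O}}\equiv\sigma\circ\chi_{{\mathcal O}''}$. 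Second, I would compare the two instances of the residue formula (Proposition \ref{pro:lores}) for ${\mathcal O}''$ and $\tilde{\mathcal O}$: on the $\varphi$-side the Fatou coordinate $\psi_{{\mathcal P}_-'}^{\varphi}$ evaluated along $\chi_{{\mathcal O}''}$ increments by $z$, and on the $\eta$-side the corresponding increment is ${\mathfrak h}(z)$ by the definition of ${\mathfrak h}_{\varphi,\eta,\sigma}$. Third, I would track how the divisions $(E_-,E_+)$ of $\mathrm{Fix}(\varphi)$ and $(\tilde E_-,\tilde E_+)$ of $\mathrm{Fix}(\eta)$ are related under $\sigma$: $\sigma$ maps $\mathrm{Fix}(\varphi)$ to $\mathrm{Fix}(\eta)$ fibrewise, and the division induced by ${\mathcal O}''$ maps to the one induced by $\tilde{\mathcal O}$, but the point $(0,0)$ lies on the \emph{boundary} of the dividing trajectory, so the residue at $(0,0)$ contributes an ambiguous $\pm$ term depending on which side of the Long Trajectory it is placed. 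Fourth, using that the curves $\vartheta_{{\mathcal O}''}^{-1}(u)$ rotate in counter-clockwise sense and $\vartheta_{\tilde{\mathcal O}}^{-1}(u)$ rotates counter- or clock-wise according to the orientation of $\sigma_0$ (exactly as exploited in the proof of Lemma \ref{lem:orip}), I would identify the coefficient of the imaginary-period term: varying $u$ by $2\pi$ (one full turn of $\beta$) changes the meromorphic residue sum $\sum_{Q\in E_-(x)}Res(X,Q)$ by $\pm Res(X,(0,0))=\pm Res_{\varphi}(0,0)$, and similarly on the $\eta$-side by $\pm Res_{\eta}(0,0)$; matching these through the identity $\psi^{\eta}\circ\sigma\circ(\psi^{\varphi})^{-1}$ being affine with linear part ${\mathfrak h}$ yields ${\mathfrak h}(2\pi i\, Res_{\varphi}(0,0))=\pm 2\pi i\, Res_{\eta}(0,0)$. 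Finally, since ${\mathfrak h}$ is $\mathbb{R}$-linear with ${\mathfrak h}(1)=1$ (Proposition \ref{pro:rlinear}) and orientation-preserving iff $\sigma_0$ is (Lemma \ref{lem:orip}), applying ${\mathfrak h}^{-1}$ and comparing real parts gives that $Re(Res_{\varphi}(0,0))$ and $Re(Res_{\eta}(0,0))$ have the same sign.

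\textbf{Main obstacle.} The delicate point is pinning down the sign and, in particular, correctly bookkeeping the residue at $(0,0)$, which is not an interior singular point of the dividing trajectory but lies on its closure — so one must argue that the \emph{difference} of residue sums on the $\varphi$- and $\eta$-sides, as $u$ completes a loop, sees exactly $\pm(2\pi i\,Res_{\varphi}(0,0))$ transported by ${\mathfrak h}$ against $\pm 2\pi i\,Res_{\eta}(0,0)$, with the two signs linked precisely by the orientation of the winding of the parameter curves. This is where the orientation argument from Lemma \ref{lem:orip} must be re-used carefully: the sense in which $\vartheta_{\mathcal O}^{-1}(u)$ rotates fixes whether increasing $u$ adds or subtracts the residue at the origin. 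I expect the rest — the two applications of Proposition \ref{pro:lores}, the affineness of the transition map, and the passage from the period identity to the sign statement — to be essentially formal once this orientation/sign accounting is in place.
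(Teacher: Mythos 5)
Your overall frame (use affineness of $\sigma_{|x=0}$ in Fatou coordinates, link the sign to orientation via Lemma \ref{lem:orip}, and finish the real-part statement from ${\mathfrak h}_{|{\mathbb R}}\equiv Id$) is fine, but the central mechanism you propose for obtaining ${\mathfrak h}(2\pi i\,Res_{\varphi}(0,0))=\pm 2\pi i\,Res_{\eta}(0,0)$ does not work. You want to read off $Res_{\varphi}(0,0)$ as a jump of the partial residue sum $\sum_{Q\in E_{-}(x)}Res(X,Q)$ under ``one full turn of $\beta$'' as $u$ increases by $2\pi$. Three things go wrong. First, the parameter set $\beta$ of a Long Orbit adheres a single direction of ${\mathcal U}_{X}^{1}$ (Proposition \ref{pro:udlo}), and all the curves $\vartheta_{\mathcal O}^{-1}(u)$, $u\in{\mathbb R}$, adhere that same direction; they rotate slightly (this is what Lemma \ref{lem:orip} exploits) but never wind around $x=0$, and the variable $u$ is a vertical Fatou time, not an angle, so ``$u\mapsto u+2\pi$'' has no winding meaning. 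Second, $\sum_{Q\in E_{-}(x)}Res(X,Q)$ is a single-valued meromorphic function of $x$ (Proposition 5.2 of \cite{UPD}, as used repeatedly in the paper), so even an actual loop of $x$ around the origin would not change it by $\pm Res_{\varphi}(0,0)$. Third, the point $(0,0)$ is not a singular point of any fiber $x\neq 0$ and never appears as a term in the residue formula, so there is no boundary ``$\pm$ ambiguity'' to bookkeep; the limits in Proposition \ref{pro:lores} are finite differences of Fatou coordinates at $y_{+}$, $y_{-}$ and do not contain $Res_{\varphi}(0,0)$ explicitly. In short, the Long Orbit residue formula alone does not see the unperturbed residue, which is why your sketch has a genuine gap exactly at its key step.

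The paper's proof is a short case analysis instead. If $\varphi_{|x=0}$ or $\eta_{|x=0}$ is non-analytically trivial, Proposition \ref{pro:natc} forces ${\mathfrak h}\equiv z$ (with $\sigma_{|x=0}$ holomorphic) or ${\mathfrak h}\equiv\overline{z}$ (anti-holomorphic) according to the orientation of the parameter action (Corollary \ref{cor:orp}); then $Res_{\eta}(0,0)=Res_{\varphi}(0,0)$ or $\overline{Res_{\varphi}(0,0)}$ because the residue is an analytic invariant, and the formula follows. If both restrictions are analytically trivial, they are exponentials of holomorphic vector fields, so there are global (multivalued) Fatou coordinates $\psi_{X}^{\varphi}$, $\psi_{X}^{\eta}$ on punctured neighborhoods whose additive monodromy around $0$ is $2\pi i\,Res$; the relation $(\psi_{X}^{\eta}\circ\sigma)(0,y)\equiv({\mathfrak h}\circ\psi_{X}^{\varphi})(0,y)+c$ (affineness in Fatou coordinates plus triviality) transported along a loop around the origin gives ${\mathfrak h}(2\pi i\,Res_{\varphi}(0,0))=\pm 2\pi i\,Res_{\eta}(0,0)$, the sign being the degree $\pm 1$ of $\sigma_{|x=0}$ on the loop, i.e.\ its orientation, which Lemma \ref{lem:orip} identifies with the orientation of the parameter action. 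If you want to salvage a Long-Orbit-based argument you would have to compare the divisions induced by Long Orbits attached to the different petal pairs and let the comparison telescope around the origin (the total fiberwise residue sum does converge to $Res_{\varphi}(0,0)$), but that essentially reconstructs the gluing of petal Fatou coordinates, i.e.\ the dichotomy trivial/non-trivial that the paper's proof uses directly.
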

\begin{proof}
The sign can be positive, negative or $0$.
We denote $R_{\varphi}=Res_{\varphi}(0,0)$ and $R_{\eta}=Res_{\eta}(0,0)$.
Suppose that either $\varphi_{|x=0}$ or $\eta_{|x=0}$ is non-analytically trivial.
If $\sigma$ is orientation-preserving on the parameter space then
${\mathfrak h} \equiv z$ and $\sigma_{|x=0}$ is holomorphic by Corollary
\ref{cor:orp} and Proposition \ref{pro:natc}.
The result is a consequence of the residues being analytic invariants.
If $\sigma$ is orientation-reversing on the parameter space then
$\sigma_{|x=0}$ is anti-holomorphic and ${\mathfrak h} \equiv \overline{z}$ by Corollary
\ref{cor:orp} and Proposition \ref{pro:natc}.
The equation $R_{\eta} = \overline{R_{\varphi}}$
implies ${\mathfrak h}(2 \pi i R_{\varphi}) =  -2 \pi i R_{\eta}$.

Suppose that both $\varphi_{|x=0}$ and $\eta_{|x=0}$ are analytically trivial.
Let $X^{\varphi}$ and $X^{\eta}$ the local vector fields defined in $x=0$
such that $\varphi_{|x=0} =\mathrm{exp}  (X^{\varphi})$ and
$\eta_{|x=0} =\mathrm{exp}  (X^{\eta})$.
Consider Fatou coordinates $\psi_{X}^{\varphi}$ and $\psi_{X}^{\eta}$
of $X^{\varphi}$ and $X^{\eta}$ respectively.
The complex number $2 \pi i R_{\varphi}$ is the
additive monodromy of $\psi_{X}^{\varphi}$ along a path
turning once around the origin in counter clock wise sense.
Since we have
\[ (\psi_{X}^{\eta} \circ \sigma)(0,y) \equiv ({\mathfrak h} \circ \psi_{X}^{\varphi})(0,y) + c \]
for some $c \in {\mathbb C}$ then
${\mathfrak h}(2 \pi i R_{\varphi}) = \pm 2 \pi i R_{\eta}$
depending on whether ${\mathfrak h}$ is orientation-preserving
or orientation-reversing.

Suppose that ${\mathfrak h}$ is orientation-preserving.
We obtain
\[ \mathrm{sign}(Re(R_{\eta})) =\mathrm{sign}(Im({\mathfrak h}(2 \pi i R_{\varphi})))=
\mathrm{sign}(Im(2 \pi i R_{\varphi})) = \mathrm{sign}(Re(R_{\varphi})) . \]
We have
\[ \mathrm{sign}(Re(R_{\eta})) =-\mathrm{sign}(Im({\mathfrak h}(2 \pi i R_{\varphi})))=
\mathrm{sign}(Im(2 \pi i R_{\varphi})) = \mathrm{sign}(Re(R_{\varphi}))  \]
when ${\mathfrak h}$ is orientation-reversing.
\end{proof}
\subsection{Analytically trivial case}
In this section we study the properties of affine conjugacies (in Fatou coordinates)
in the non-analytically trivial case. Examples of the specific kind of behavior
described in next propositions are presented in Section \ref{sec:build}.

Consider $\varphi, \eta \in \dif{p1}{2}$ with $N>1$ and a topological conjugation $\sigma$.
Suppose that $\varphi_{|x=0}$ is analytically trivial.
There are two fundamentally different cases.
On the one hand if $Res_{\varphi}(0,0) \in i {\mathbb R}$ there are plenty of (Fatou) affine mappings
conjugating $\varphi_{|x=0}$ and $\eta_{|x=0}$ but $\varphi_{|x=0}$ and $\eta_{|x=0}$ are
always analytically or anti-analytically conjugated.
On the other hand if $Res_{\varphi}(0,0) \not \in i {\mathbb R}$ there
is rigidity of conjugations. In fact there are at most two conjugations
(up to precomposition with elements of the center of $\varphi_{|x=0}$ in $\diff{}{}$)
and in general none of them is holomorphic or anti-holomorphic.
\begin{proof}[Proof of Proposition \ref{pro:atui}]
Both residues $Res_{\varphi}(0,0)$ and $Res_{\eta}(0,0)$ belong to
$i {\mathbb R}$ by Lemma \ref{lem:res}.
Suppose that  $\sigma$ is orientation-preserving on the parameter space.
The map ${\mathfrak h}$ satisfies
${\mathfrak h}_{|{\mathbb R}} \equiv Id$ (Proposition \ref{pro:rlinear}).
Then we have
\[ 2 \pi i Res_{\varphi}(0,0) = {\mathfrak h}(2 \pi i Res_{\varphi}(0,0)) = 2 \pi i Res_{\eta}(0,0) \]
by Lemma \ref{lem:res}.
Since $\nu(\varphi_{|x=0}) = \nu(\eta_{|x=0})$,
$Res_{\varphi}(0,0) = Res_{\eta}(0,0)$ and
$\varphi_{|x=0}$, $\eta_{|x=0}$ are analytically trivial
then $\varphi_{|x=0}$ and  $\eta_{|x=0}$ are analytically conjugated.

Suppose that  $\sigma$ is orientation-reversing on the parameter space.
We have
\[ 2 \pi i Res_{\varphi}(0,0) = {\mathfrak h}(2 \pi i Res_{\varphi}(0,0)) = -2 \pi i Res_{\eta}(0,0)=
2 \pi i \overline{Res_{\eta}(0,0)} \]
by Lemma \ref{lem:res}.
Since $\nu(\varphi_{|x=0}) = \nu(\eta_{|x=0})$,
$Res_{\varphi}(0,0) = \overline{Res_{\eta}(0,0)}$ and
$\varphi_{|x=0}$, $\eta_{|x=0}$ are analytically trivial
then $\varphi_{|x=0}$, $\eta_{|x=0}$ are anti-holomorphically conjugated.
\end{proof}
\begin{pro}
\label{pro:atu}
Let $\varphi, \eta \in \dif{p1}{2}$ with $N>1$ such that
there exists a homeomorphism $\sigma$ satisfying
$\sigma \circ \varphi = \eta \circ \sigma$.
Suppose that either $\varphi_{|x=0}$ or $\eta_{|x=0}$ is analytically trivial.
Suppose that either $Res_{\varphi}(0,0) \not \in i {\mathbb R}$ or
$Res_{\eta}(0,0) \not \in i {\mathbb R}$.
Then
\begin{itemize}
\item If $\sigma_{|x=0}$ is orientation-preserving
then $\sigma_{|x=0}$ is holomorphic if and only if $Res_{\varphi}(0,0) =Res_{\eta}(0,0)$.
\item If $\sigma_{|x=0}$ is orientation-reversing
then $\sigma_{|x=0}$ is anti-holomorphic if and only if $Res_{\varphi}(0,0) =\overline{Res_{\eta}(0,0)}$.
\item If $Res_{\varphi}(0,0) =Res_{\eta}(0,0) \in {\mathbb R}^{*}$
then $\sigma_{|x=0}$ is holomorphic or anti-holomorphic.
\item If $Res_{\varphi}(0,0) \not \in
\{Res_{\eta}(0,0), \overline{Res_{\eta}(0,0)}\}$ then
$\varphi_{|x=0}$ and $\eta_{|x=0}$ are neither holomorphically nor
anti-holomorphically conjugated. In particular
$\sigma_{|x=0}$ is neither holomorphic
nor anti-holomorphic.
\end{itemize}
Consider a pair of homeomorphisms $\sigma$, $\tilde{\sigma}$ conjugating
$\varphi$, $\eta$ and such that both are orientation-preserving or
orientation-reversing. Then we obtain
$\tilde{\sigma}_{|x=0} = \sigma_{|x=0} \circ \phi$ for some
holomorphic $\phi \in \diff{}{}$ commuting with
$\varphi_{|x=0}$.
\end{pro}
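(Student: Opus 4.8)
The plan is to treat the several bullet points of Proposition \ref{pro:atu} as consequences of Lemma \ref{lem:res}, the fact that $\sigma_{|x=0}$ is affine in Fatou coordinates (Corollary \ref{cor:orp}), and the explicit form ${\mathfrak h}(z) = \varsigma_{0} z + \varsigma_{1} \overline{z}$ with ${\mathfrak h}(1)=1$ used in the proof of Proposition \ref{pro:natc}. Throughout I write $R_{\varphi}=Res_{\varphi}(0,0)$, $R_{\eta}=Res_{\eta}(0,0)$, and I note that since either $\varphi_{|x=0}$ or $\eta_{|x=0}$ is analytically trivial, by the Corollary following Proposition \ref{pro:natc} both are; hence the analytic and formal classes of $\varphi_{|x=0}$ (resp. $\eta_{|x=0}$) coincide and are determined by the pair $(\nu,R_{\varphi})$ (resp. $(\nu,R_{\eta})$), where $\nu=\nu(\varphi_{|x=0})=\nu(\eta_{|x=0})$ is a topological invariant.

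First I would establish the two equivalences. Suppose $\sigma_{|x=0}$ is orientation-preserving; by Lemma \ref{lem:orip} the action on the parameter space is orientation-preserving, so ${\mathfrak h}(2\pi i R_{\varphi}) = 2\pi i R_{\eta}$ by Lemma \ref{lem:res}. If $\sigma_{|x=0}$ is holomorphic, then $\psi_{\sigma({\mathcal P}')}^{\eta}\circ\sigma\circ(\psi_{{\mathcal P}'}^{\varphi})^{-1}$ is an affine biholomorphism, hence ${\mathfrak h}$ is ${\mathbb C}$-linear with ${\mathfrak h}(1)=1$, i.e. ${\mathfrak h}\equiv z$, giving $R_{\varphi}=R_{\eta}$. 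Conversely, if $R_{\varphi}=R_{\eta}$, then since at least one of them lies outside $i{\mathbb R}$, the real number $\mathrm{Im}(2\pi i R_{\varphi}) = 2\pi\,\mathrm{Re}(R_{\varphi})$ is nonzero; writing ${\mathfrak h}(z)=\varsigma_{0}z+\varsigma_{1}\overline{z}$, the equations ${\mathfrak h}(1)=1$ and ${\mathfrak h}(2\pi i R_{\varphi})=2\pi i R_{\varphi}$ give two ${\mathbb R}$-linear conditions on $(\varsigma_{0},\varsigma_{1})$ whose unique solution (because $1$ and $2\pi i R_{\varphi}$ are ${\mathbb R}$-linearly independent in ${\mathbb C}$ precisely when $R_{\varphi}\notin i{\mathbb R}$) is $\varsigma_{0}=1$, $\varsigma_{1}=0$; so ${\mathfrak h}\equiv z$ and $\sigma_{|x=0}$ is holomorphic by the argument in Corollary \ref{cor:orp}. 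The orientation-reversing case is identical after conjugating by $\zeta(x,y)=(\overline{x},\overline{y})$: replacing $\eta$ by $\zeta\circ\eta\circ\zeta$ turns anti-holomorphicity into holomorphicity and $R_{\eta}$ into $\overline{R_{\eta}}$, so $\sigma_{|x=0}$ is anti-holomorphic iff $R_{\varphi}=\overline{R_{\eta}}$.

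For the third bullet, if $R_{\varphi}=R_{\eta}\in{\mathbb R}^{*}$ then $2\pi i R_{\varphi}$ and $2\pi i R_{\eta}$ are both purely imaginary and equal; if $\sigma_{|x=0}$ is orientation-preserving it is holomorphic by the first bullet, and if it is orientation-reversing then since $R_{\varphi}$ is real we have $R_{\varphi}=\overline{R_{\eta}}$, so it is anti-holomorphic by the second bullet. For the fourth bullet, suppose $R_{\varphi}\notin\{R_{\eta},\overline{R_{\eta}}\}$; a holomorphic conjugacy between $\varphi_{|x=0}$ and $\eta_{|x=0}$ would force equality of formal invariants $R_{\varphi}=R_{\eta}$, and an anti-holomorphic one would give $R_{\varphi}=\overline{R_{\eta}}$ (conjugating by $z\mapsto\overline{z}$ sends $\eta_{|x=0}$ to a diffeomorphism with residue $\overline{R_{\eta}}$), both excluded; in particular $\sigma_{|x=0}$, being a conjugacy, is neither. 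Finally, for the last assertion, given $\sigma$ and $\tilde\sigma$ with the same orientation behavior on the parameter space, both ${\mathfrak h}_{\varphi,\eta,\sigma}$ and ${\mathfrak h}_{\varphi,\eta,\tilde\sigma}$ are the unique ${\mathbb R}$-linear solution of ${\mathfrak h}(1)=1$, ${\mathfrak h}(2\pi i R_{\varphi})=\pm 2\pi i R_{\eta}$ (same sign), hence coincide; then for any petal ${\mathcal P}'$ the mapping $\psi_{\sigma({\mathcal P}')}^{\eta}\circ\sigma\circ(\psi_{\tilde\sigma({\mathcal P}')}^{\eta}\circ\tilde\sigma)^{-1}$, suitably interpreted via $\tilde\sigma_{|x=0}^{-1}\circ\sigma_{|x=0}$, is affine in the Fatou coordinate $\psi_{{\mathcal P}'}^{\varphi}$ with linear part $1$, i.e. a translation, so $\tilde\sigma_{|x=0}^{-1}\circ\sigma_{|x=0}$ commutes with $\varphi_{|x=0}$; and since $\varphi_{|x=0}$ is analytically trivial its centralizer in $\diff{}{}$ consists of holomorphic diffeomorphisms (flows of its infinitesimal generator together with the finite cyclic symmetries), so $\tilde\sigma_{|x=0}=\sigma_{|x=0}\circ\phi$ for a holomorphic $\phi\in\diff{}{}$ commuting with $\varphi_{|x=0}$.

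The main obstacle is the bookkeeping in the last paragraph: one must be careful that $\tilde\sigma_{|x=0}^{-1}\circ\sigma_{|x=0}$ genuinely translates every Fatou coordinate $\psi_{{\mathcal P}'}^{\varphi}$ by the \emph{same} constant across all petals (this is where analytic triviality of $\varphi_{|x=0}$ and the consistency of the ${\mathfrak h}$'s via Lemma \ref{lem:eqcon} are used), and that the resulting commuting map is not merely a formal or petal-wise symmetry but a genuine holomorphic element of $\diff{}{}$ — which again follows from analytic triviality, since the infinitesimal generator is convergent and its real-time flow, composed with the finitely many rotational symmetries permitted by $R_{\varphi}$, exhausts the centralizer. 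Everything else is the linear algebra of the single ${\mathbb R}$-linear map ${\mathfrak h}$ pinned down by its values at $1$ and at $2\pi i R_{\varphi}$, which are ${\mathbb R}$-independent exactly under the standing hypothesis $R_{\varphi}\notin i{\mathbb R}$ (or $R_{\eta}\notin i{\mathbb R}$).
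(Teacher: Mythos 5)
Your handling of the four itemized assertions is correct and is essentially the paper's own argument: Lemma \ref{lem:res} together with ${\mathfrak h}_{|{\mathbb R}} \equiv Id$ (Proposition \ref{pro:rlinear}) pins the ${\mathbb R}$-linear map ${\mathfrak h}$ down at $1$ and at $2 \pi i Res_{\varphi}(0,0)$, and these are ${\mathbb R}$-linearly independent precisely because the residues are not purely imaginary (both of them, by the same-sign statement of Lemma \ref{lem:res}); the two equivalences and the third and fourth items then follow exactly as you write them.

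The genuine gap is in the last assertion. After correctly observing that ${\mathfrak h}_{\varphi,\eta,\sigma} = {\mathfrak h}_{\varphi,\eta,\tilde{\sigma}}$, so that $\phi := (\sigma^{-1} \circ \tilde{\sigma})_{|x=0}$ acts petal by petal as a translation in Fatou coordinates of $\varphi_{|x=0}$, you conclude that $\phi$ is holomorphic from the claim that, by analytic triviality, the centralizer of $\varphi_{|x=0}$ ``consists of holomorphic diffeomorphisms'' (its flow together with finitely many symmetries). Read inside \diff{}{} this is vacuous, and read inside the group of germs of homeomorphisms it is unproved and cannot be used as a black box: commuting topologically with an analytically trivial germ does not by itself force holomorphy. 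Indeed Section \ref{sec:build} (the case $a=b \in i{\mathbb R}$ of Proposition \ref{pro:sigma}) produces homeomorphisms $\sigma$ with $\sigma \circ \mathrm{exp}(X) = \mathrm{exp}(X) \circ \sigma$ whose restriction to $x=0$ commutes with the analytically trivial diffeomorphism $\mathrm{exp}(X)_{|x=0}$ and is neither holomorphic nor anti-holomorphic. Note also that the commutation of $\phi$ with $\varphi_{|x=0}$, which you obtain from the translation property, is automatic, since $\sigma^{-1} \circ \tilde{\sigma}$ conjugates $\varphi$ with itself; so that part of the argument buys nothing, and the whole weight rests on the invalid centralizer step.

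The repair is short and is what the paper does, and your intermediate step already contains everything needed: since $1$ and $2 \pi i Res_{\varphi}(0,0)$ are ${\mathbb R}$-independent, ${\mathfrak h}_{\varphi,\varphi,\sigma^{-1} \circ \tilde{\sigma}} = {\mathfrak h}_{\varphi,\eta,\sigma}^{-1} \circ {\mathfrak h}_{\varphi,\eta,\tilde{\sigma}} = Id$. Hence on every petal ${\mathcal P}'$ one has $\psi_{\phi({\mathcal P}')}^{\varphi} \circ \phi = \psi_{{\mathcal P}'}^{\varphi} + c_{{\mathcal P}'}$, and since Fatou coordinates are injective and holomorphic on petals, $\phi$ is holomorphic on each petal; continuity at the origin and Riemann's removable singularity theorem (the argument already used in Corollary \ref{cor:holact} and Proposition \ref{pro:natc}) give holomorphy of $\phi$ at $0$, while commutation is automatic as noted. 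In particular neither analytic triviality nor the equality of the translation constants on the different petals (the point you single out as the main obstacle) plays any role in this final step.
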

Proposition \ref{pro:atu} implies Proposition \ref{pro:rigi}.
\begin{proof}
The isomorphism ${\mathfrak h}$ satisfies
${\mathfrak h}(2 \pi i Res_{\varphi}(0,0)) =\pm 2 \pi i Res_{\eta}(0,0)$
(Lemma \ref{lem:res}) and
${\mathfrak h}_{|{\mathbb R}} \equiv Id$ (Proposition \ref{pro:rlinear}).
Thus ${\mathfrak h}$ depends only on wether or not $\sigma_{|x=0}$
is orientation-preserving.
We deduce
\[ {\mathfrak h}_{\varphi,\varphi, \sigma^{-1} \circ \tilde{\sigma}} =
{\mathfrak h}_{\varphi,\eta, \sigma}^{-1}\circ
{\mathfrak h}_{\varphi,\eta, \tilde{\sigma}} = Id \]
if $\sigma$, $\tilde{\sigma}$ have the same orientation.
The mapping $(\sigma^{-1} \circ \tilde{\sigma})_{|x=0}$ is holomorphic
and commutes with $\varphi_{|x=0}$.

We denote $R_{\varphi}=Res_{\varphi}(0,0)$ and $R_{\eta}=Res_{\eta}(0,0)$.
Suppose that $\sigma_{|x=0}$ is orientation-preserving.
The equation ${\mathfrak h}(2 \pi i R_{\varphi})=2 \pi i R_{\eta}$
in Lemma \ref{lem:res} implies that ${\mathfrak h} \equiv z$ is equivalent
to $R_{\varphi} = R_{\eta}$. Hence $\sigma_{|x=0}$ is holomorphic
if and only if $R_{\varphi} = R_{\eta}$.

Suppose that $\sigma_{|x=0}$ is orientation-reversing.
The equation ${\mathfrak h}(2 \pi i R_{\varphi})=-2 \pi i R_{\eta}$
in Lemma \ref{lem:res} implies that ${\mathfrak h} \equiv \overline{z}$ is equivalent
to $R_{\varphi} = \overline{R_{\eta}}$. Hence $\sigma_{|x=0}$ is anti-holomorphic
if and only if $R_{\varphi} = \overline{R_{\eta}}$.

The third item is a consequence of the previous ones.
Suppose
$R_{\varphi} \not \in \{R_{\eta}, \overline{R_{\eta}}\}$.
Then $\varphi_{|x=0}$ and $\eta_{|x=0}$ are neither holomorphically nor
anti-holomorphically conjugated.
\end{proof}
\begin{rem}
The results in this section (except the statements involving
orientation-preserving actions on the parameter space) hold true for
higher dimensional unfoldings
$\varphi(x_{1},\hdots,x_{n},y) =(x_{1},\hdots,x_{n}, F(x_{1},\hdots,x_{n},y))$
of tangent to the identity diffeomorphisms with $N>1$.
The idea is that Long Orbits ${\mathcal O}= (\varphi,y_{+},\beta,T)$
satisfy Propositions \ref{pro:tracking}, \ref{pro:lores} and \ref{pro:evol}.
Otherwise there exists a sequence $a_{n} \in \beta$
such that no subsequence satisfies the tracking properties in Proposition \ref{pro:tracking}.
The construction of the dynamical splitting amounts to desingularize the fixed points set
by a finite sequence of blow-ups. Then up to take a subsequence of
$a_{n}$ we can use the machinery in Section \ref{sec:tracking}
to obtain a contradiction.
\end{rem}
\section{Topological conjugacies for unfoldings of the identity map}
\label{sec:infinite}
Let $\varphi, \eta \in \diff{p1}{2}$ such that
there exists a homeomorphism $\sigma$ satisfying
$\sigma \circ \varphi = \eta \circ \sigma$.
Suppose that $m \stackrel{def}{=} m(\varphi) >0$.
The unfoldings of the identity map are easier
to study than the elements of $\dif{p1}{2}$.
We can construct analogues of the Long Orbits
by iterating $O(1/|x|^{m})$ times a diffeomorphism starting at
a point $(x,y_{0})$.
In this way we obtain analogous results to the ones in
Section \ref{sec:finite}.

The number $\tilde{m} \stackrel{def}{=} m(\eta)$ is positive too.
Consider convergent normal forms $X$, $Y$ of $\varphi$ and
$\eta$ respectively. The vector fields $X$, $Y$ can be written
in the form $X=x^{m} X_{0}$ and $Y=x^{\tilde{m}} Y_{0}$
where $\mathrm{Sing} (X_{0})$ and $\mathrm{Sing} (Y_{0})$ do not contain $x=0$.
We obtain
$\Delta_{\varphi} \in (x^{2m})$ and $\Delta_{\eta} \in (x^{2 \tilde{m}})$
(see Definition \ref{def:delta}).

Fix $\mu \in {\mathbb S}^{1}$.
Let $x_{n}$ be a sequence   such that $x_{n} \to 0$
and $x_{n}/|x_{n}| \to \mu$. Consider $s \in {\mathbb R}$ and
$T_{n} = [s/|x_{n}|^{m}]$. We want to study the behavior of
the sequence $\varphi^{T_{n}}(x_{n},y_{0})$. Let $\psi$ be
a Fatou coordinate of $X$. We have
\[
(\psi \circ \varphi^{T_{n}} - (\psi +T_{n}))(x_{n},y_{0}) =
\sum_{j=0}^{T_{n}-1} (\Delta_{\varphi} \circ \varphi^{j})(x_{n},y_{0}) = O(x_{n}^{m}) . \]
It implies
\begin{equation}
\label{equ:short}
\lim_{n \to \infty} \varphi^{T_{n}}(x_{n},y_{0}) = \lim_{n \to \infty}
\mathrm{exp}(T_{n} X) (x_{n},y_{0}) = \mathrm{exp}(s \mu^{m} X_{0})(0,y_{0})
\end{equation}
for any $y_{0} \in B(0,\epsilon)$.

The next two lemmas are intended to show that the conjugation $\sigma$
is well-behaved even if we consider a real blow-up in the parameter space.
\begin{lem}
There is no subsequence of $|\sigma(x_{n})|^{\tilde{m}}/|x_{n}|^{m}$ converging to
$0$ or $\infty$.
\end{lem}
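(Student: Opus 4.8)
The plan is to argue by contradiction using the short-trajectory phenomenon of Eq. (\ref{equ:short}). Suppose first that a subsequence of $|\sigma(x_{n})|^{\tilde{m}}/|x_{n}|^{m}$ converges to $0$; after passing to that subsequence we may also assume $x_{n}/|x_{n}| \to \mu$ for some $\mu \in {\mathbb S}^{1}$ and $\sigma(x_{n})/|\sigma(x_{n})| \to \tilde{\mu}$ for some $\tilde{\mu} \in {\mathbb S}^{1}$. Fix a point $y_{0} \in B(0,\epsilon)$ with $(0,y_{0}) \notin \mathrm{Sing}(X_{0})$, so that $s \mapsto \mathrm{exp}(s \mu^{m} X_{0})(0,y_{0})$ is a non-constant curve. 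For each $s \in {\mathbb R}$ set $T_{n} = [s/|x_{n}|^{m}]$. By Eq. (\ref{equ:short}) applied to $\varphi$ we have $\varphi^{T_{n}}(x_{n},y_{0}) \to \mathrm{exp}(s \mu^{m} X_{0})(0,y_{0})$. Applying $\sigma$ and using $\sigma \circ \varphi^{T_{n}} = \eta^{T_{n}} \circ \sigma$ together with continuity of $\sigma$ we obtain $\eta^{T_{n}}(\sigma(x_{n},y_{0})) \to \sigma(\mathrm{exp}(s \mu^{m} X_{0})(0,y_{0}))$.

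The contradiction should come from comparing $T_{n}$ with the natural time scale for $\eta$, namely $|\sigma(x_{n})|^{\tilde{m}}$. Since $|\sigma(x_{n})|^{\tilde{m}}/|x_{n}|^{m} \to 0$, we have $T_{n} |\sigma(x_{n})|^{\tilde{m}} = (s + o(1)) |\sigma(x_{n})|^{\tilde{m}}/|x_{n}|^{m} \to 0$. Writing $\sigma(x_{n},y_{0}) = (\sigma_{0}(x_{n}), \sigma_{1}(x_{n},y_{0}))$ and letting $\tilde{\psi}$ be a Fatou coordinate of $Y$, the same estimate $\Delta_{\eta} \in (x^{2\tilde{m}})$ gives $(\tilde{\psi} \circ \eta^{T_{n}} - (\tilde{\psi} + T_{n}))(\sigma(x_{n},y_{0})) = O(T_{n} |\sigma_{0}(x_{n})|^{2\tilde{m}}) = O(|\sigma_{0}(x_{n})|^{\tilde{m}} \cdot T_{n}|\sigma_{0}(x_{n})|^{\tilde{m}}) = o(|\sigma_{0}(x_{n})|^{\tilde{m}})$, so in fact $\eta^{T_{n}}(\sigma(x_{n},y_{0})) = \mathrm{exp}(T_{n} Y)(\sigma(x_{n},y_{0})) + o(1) = \mathrm{exp}(T_{n}|\sigma_{0}(x_{n})|^{\tilde{m}} \tilde{\mu}^{\tilde{m}} Y_{0})(\sigma(x_{n},y_{0})) + o(1)$. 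Since $T_{n}|\sigma_{0}(x_{n})|^{\tilde{m}} \to 0$ and $\sigma(x_{n},y_{0}) \to \sigma(0,y_{0})$, the right-hand side converges to $\sigma(0,y_{0})$, independently of $s$. Hence $\sigma(\mathrm{exp}(s \mu^{m} X_{0})(0,y_{0})) = \sigma(0,y_{0})$ for all $s \in {\mathbb R}$, contradicting injectivity of $\sigma$ (the curve $s \mapsto \mathrm{exp}(s \mu^{m} X_{0})(0,y_{0})$ is non-constant). The case of a subsequence converging to $\infty$ is handled symmetrically by exchanging the roles of $\varphi$ and $\eta$ and of $\sigma$ and $\sigma^{-1}$: then $|\sigma^{-1}(z_{n})|^{m}/|z_{n}|^{\tilde{m}} \to 0$ for $z_{n} = \sigma(x_{n})$, and the previous argument applied to $\sigma^{-1}$, $\eta$, $\varphi$ yields the same contradiction.

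The main obstacle I anticipate is making the uniformity in $s$ precise enough that the limit curve degenerates: one must ensure that the error terms $o(1)$ above are genuinely uniform as $s$ ranges over a compact interval and that the Fatou-coordinate estimates hold uniformly for $\sigma_{0}(x_{n})$ in a sector around $\tilde{\mu}$, which is where Eq. (\ref{equ:short}) and the containment $\Delta_{\eta} \in (x^{2\tilde{m}})$ have to be invoked carefully — essentially one is asserting the analogue of Eq. (\ref{equ:short}) for $\eta$ along the sequence $\sigma(x_{n})$ but with iteration count $T_{n}$ that is $o(1/|\sigma_{0}(x_{n})|^{\tilde{m}})$, for which the limiting short trajectory is a single point. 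A secondary point to check is that we may choose $y_{0}$ so that $(0,y_{0})$ is a regular point of $X_{0}$; this is possible because $\mathrm{Sing}(X_{0})$ does not contain $x=0$, so $(\mathrm{Sing}(X_{0}))(0)$ is a finite subset of $B(0,\epsilon)$, and likewise we should verify the non-degeneracy of the image curve, which follows since $\sigma$ is a homeomorphism.
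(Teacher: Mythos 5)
Your proof is correct and follows essentially the same route as the paper: both exploit the mismatch of time scales in Eq. (\ref{equ:short}), obtaining a non-degenerate limit flow on one side of the conjugacy and a degenerate (single-point) limit on the other, which contradicts the fact that $x=0$ is not contained in the singular set (equivalently, injectivity of $\sigma$). The only cosmetic difference is that the paper treats the case $a_{n}\to\infty$ directly by choosing the iteration count $T_{n}'=[s/|\sigma(x_{n})|^{\tilde{m}}]$, whereas you reduce it to the other case via $\sigma^{-1}$; the substance is identical.
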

The conjugating mapping $\sigma$ is of the form $\sigma(x,y) =(\sigma_{0}(x), \sigma_{1}(x,y))$.
We define $\sigma(x) = \sigma_{0}(x)$.
\begin{proof}
We denote $a_{n}=|\sigma(x_{n})|^{\tilde{m}}/|x_{n}|^{m}$. Suppose $\lim_{n \to \infty} a_{n}=\infty$.
Up to consider a subsequence we can suppose that
$\sigma(x_{n})/|\sigma(x_{n})|$ converges to $\tilde{\mu} \in {\mathbb S}^{1}$.
Since   $|\sigma(x_{n})|^{\tilde{m}} = a_{n}|x_{n}|^{m}$ then
$\lim_{n \to \infty} a_{n}|x_{n}|^{m}=0$.
We define
\[ T_{n}'= \left[ \frac{s}{a_{n}|x_{n}|^{m}} \right]= \left[ \frac{s}{|\sigma(x_{n})|^{\tilde{m}}} \right]. \]
We deduce
$\lim_{n \to \infty} \varphi^{T_{n}'}(x_{n},y_{0}) = (0,y_{0})$
for any $y_{0} \in B(0,\epsilon)$. The equation
\[ \lim_{n \to \infty} \eta^{T_{n}'}(\sigma(x_{n},y_{0})) = \lim_{n \to \infty}
\mathrm{exp}(T_{n}' Y)(\sigma(x_{n},y_{0})) = \mathrm{exp}(s \tilde{\mu}^{m} Y_{0})(\sigma(0,y_{0}))  \]
is the analogue of Eq. (\ref{equ:short}) for $\eta$.
The mappings $\varphi$ and $\eta$ are conjugated, hence we obtain
$\sigma(0,y) \equiv \mathrm{exp}(s \tilde{\mu}^{\tilde{m}} Y_{0})(\sigma(0,y))$
for any $s \in {\mathbb R}$. This is impossible since $x=0$ is not contained in
$\mathrm{Sing} (Y_{0})$.

The case $\lim_{n \to \infty} a_{n}=0$ is impossible too. The proof is analogous by defining
$T_{n}' = [s/|x_{n}|^{m}]$.
\end{proof}
\begin{lem}
The limits
\[ {\sigma}_{\sharp}(\mu) \stackrel{def}{=}
\lim_{x/|x| \to \mu, \ x \to 0} \frac{|\sigma(x)|^{\tilde{m}/m}}{|x|} \ \ \mathrm{and} \ \
\breve{\sigma}(\mu) \stackrel{def}{=} \lim_{x/|x| \to \mu, \ x \to 0} \frac{\sigma(x)}{|\sigma(x)|}  \]
exist. In particular
${\sigma}_{\sharp}:{\mathbb S}^{1} \to {\mathbb R}^{+}$ and
$\breve{\sigma}:{\mathbb S}^{1} \to {\mathbb S}^{1}$ are continuous.
\end{lem}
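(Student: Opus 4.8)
The statement asserts that the two limits $\sigma_\sharp(\mu)$ and $\breve\sigma(\mu)$ exist for every $\mu\in{\mathbb S}^1$, and are continuous functions of $\mu$. The natural strategy is to extract the value of these limits from the conjugacy equation by probing $\sigma$ with the ``short orbits'' of Eq.~(\ref{equ:short}), exactly as in the previous lemma, but this time recovering an equality rather than a contradiction. First I would fix $\mu\in{\mathbb S}^1$ and a sequence $x_n\to 0$ with $x_n/|x_n|\to\mu$ along which $|\sigma(x_n)|^{\tilde m/m}/|x_n|$ converges to some value $\ell\in[0,\infty]$ (a subsequence exists in the compactified line) and $\sigma(x_n)/|\sigma(x_n)|$ converges to some $\tilde\mu\in{\mathbb S}^1$. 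By the previous lemma $\ell\notin\{0,\infty\}$, so $\ell\in{\mathbb R}^+$. Now I would iterate: for $s\in{\mathbb R}$ put $T_n=[s/|x_n|^m]$. Then on the $\varphi$-side Eq.~(\ref{equ:short}) gives $\varphi^{T_n}(x_n,y_0)\to \mathrm{exp}(s\mu^m X_0)(0,y_0)$, while on the $\eta$-side the same computation (using $\Delta_\eta\in(x^{2\tilde m})$, $|\sigma(x_n)|^{\tilde m}=(\ell+o(1))^m|x_n|^m$, and $T_n|\sigma(x_n)|^{\tilde m}\to s\ell^m$) gives $\eta^{T_n}(\sigma(x_n,y_0))\to \mathrm{exp}(s\ell^m\tilde\mu^{\tilde m}Y_0)(\sigma(0,y_0))$. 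Since $\sigma\circ\varphi=\eta\circ\sigma$ and $\sigma$ is continuous, these two limits are identified by $\sigma_{|x=0}$:
\[
\sigma_{|x=0}\bigl(\mathrm{exp}(s\mu^m X_0)(0,y_0)\bigr)=\mathrm{exp}(s\ell^m\tilde\mu^{\tilde m}Y_0)(\sigma(0,y_0))
\]
for all $s\in{\mathbb R}$ and all $y_0\in B(0,\epsilon)$.

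\textbf{Uniqueness of the limit.} The displayed identity determines $(\ell^m\tilde\mu^{\tilde m})$ uniquely in terms of $\mu$, which is what forces the limits to exist (independently of the chosen subsequence). Concretely, I would argue as follows. The left-hand side, as $s$ ranges over ${\mathbb R}$, traces the real trajectory of $\Re(\mu^m X_0)$ through $(0,y_0)$; the right-hand side traces the real trajectory of $\Re(\ell^m\tilde\mu^{\tilde m}Y_0)$ through $\sigma(0,y_0)$. Thus $\sigma_{|x=0}$ carries the real flow of $\mu^m(X_0)_{|x=0}$ onto the real flow of $\ell^m\tilde\mu^{\tilde m}(Y_0)_{|y=0}$ — and this must hold for the one fixed homeomorphism $\sigma_{|x=0}$. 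Applying this for two choices of $\mu$ (say $\mu$ and $1$) and composing, $\sigma_{|x=0}$ would conjugate $\mu^m(X_0)_{|x=0}$ to a fixed multiple of itself; since $(X_0)_{|x=0}$ has an isolated singularity of finite order $\nu+1\ge 1$ (it is $\not\equiv 0$ and its vanishing order is $\nu(X)+1$, see the definitions around Definition~\ref{def:afffc2}), the real-flow-conjugacy class of $c(X_0)_{|x=0}$ determines $c$ up to the obvious finite ambiguity — essentially because the eigenvalue/residue data at the singularity, or the time needed to leave a petal, is a numerical invariant. This pins down the scalar $\ell^m\tilde\mu^{\tilde m}$ as a definite function of $\mu$, call it $c(\mu)$, and hence shows both $\lim |\sigma(x)|^{\tilde m/m}/|x|=|c(\mu)|^{1/m}=:\sigma_\sharp(\mu)$ and $\lim \sigma(x)/|\sigma(x)|=(c(\mu)/|c(\mu)|)^{1/\tilde m}=:\breve\sigma(\mu)$ exist (the $\tilde m$-th root chosen continuously once a reference direction is fixed).

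\textbf{Continuity.} Once existence is established, continuity of $\sigma_\sharp$ and $\breve\sigma$ is a soft compactness argument: if $\mu_k\to\mu$ but, say, $\breve\sigma(\mu_k)\not\to\breve\sigma(\mu)$, I would choose $x_k$ with $x_k/|x_k|$ close to $\mu_k$ and $\sigma(x_k)/|\sigma(x_k)|$ close to $\breve\sigma(\mu_k)$, pass to a subsequence, and run the identification above to contradict the uniqueness already proved for the limit direction at $\mu$. The same works for $\sigma_\sharp$ using the boundedness away from $0$ and $\infty$ from the previous lemma to keep the relevant quantities in a compact subset of ${\mathbb R}^+$.

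\textbf{Main obstacle.} The crux is the uniqueness step: showing that the scalar $\ell^m\tilde\mu^{\tilde m}$ is forced to be a single-valued function of $\mu$, i.e.\ that $\sigma_{|x=0}$ cannot conjugate the real flows of $cX_0$ and $c'X_0$ (up to the base-point shift by $Y_0$) for genuinely different ratios $c/c'$. This is where one must use that $(X_0)_{|x=0}$ is a nonzero vector field with an isolated singularity of finite order — hence its real flow near the singularity has a rigid Fatou-flower (or focus/node) structure whose ``angular speed'' is a topological invariant — together with the fact that a topological conjugacy between $\Re(cX_0)$ and $\Re(c'X_0)$ must match their singular-point structure and the (bounded, by Proposition~\ref{pro:estext}) spiralling rates. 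I would expect to invoke the description of petals below Definition~\ref{def:atpetvf} and the residue/angle bookkeeping already set up in Section~\ref{sec:dpvf} to make this precise; everything else in the argument is routine application of Eq.~(\ref{equ:short}) and compactness.
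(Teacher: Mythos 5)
Your setup is exactly the paper's: probing $\sigma$ with the iterates $T_n=[s/|x_n|^m]$ and Eq.~(\ref{equ:short}) to arrive at $\sigma_{|x=0}(\mathrm{exp}(s\mu^m X_0)(0,y_0))=\mathrm{exp}(s c\,Y_0)(\sigma(0,y_0))$ with $c=\ell^m\tilde\mu^{\tilde m}$, and the compactness argument for continuity at the end is fine. The gap is in your uniqueness step. The scalar $c$ is pinned down not by any rigidity of the conjugacy class of $\Re(cX_0)$, but by the much simpler observation that two subsequential limits $c_0,c_1$ at the \emph{same} $\mu$ give $\mathrm{exp}(s c_0 Y_0)(\sigma(0,y_0))=\mathrm{exp}(s c_1 Y_0)(\sigma(0,y_0))$ for all $s\in{\mathbb R}$ and all $y_0$: the same flow of the same field $Y_0$, with two time scalings, agrees at every point of an open set, and since $\{x=0\}\not\subset \mathrm{Sing}(Y_0)$ local injectivity of $z\mapsto \mathrm{exp}(zY_0)(Q)$ at a nonsingular $Q$ forces $c_0=c_1$ (this is how the paper gets $a_0\mu_0^{\tilde m}=a_1\mu_1^{\tilde m}$). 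Your proposed mechanism --- compare two directions, conclude $\sigma_{|x=0}$ conjugates $\mu^m X_0$ to a multiple of itself, and invoke that the topological class of $\Re(cX_0)$ determines $c$ up to finite ambiguity via eigenvalue/residue/petal-time invariants --- would not close. In this section $(X_0)_{|x=0}$ may be nonsingular ($N=0$, e.g. $X=x^m\partial/\partial y$) or have a simple zero ($N=1$, e.g. $x^m y\,\partial/\partial y$), where $\Re(cX_0)$ and $\Re(c'X_0)$ are conjugate for essentially arbitrary nonzero $c,c'$; even for parabolic $X_0=y^{\nu+1}\partial/\partial y$ the linear map $y\mapsto\alpha y$ with $\alpha^{-\nu}=c'/c$ conjugates the two real flows; and residues are not topological invariants (the paper stresses this, and Section~\ref{sec:build} constructs non-holomorphic conjugacies of precisely such real flows).

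A second, smaller gap: knowing $c(\mu)=\ell^m\tilde\mu^{\tilde m}$ determines $\ell$, but determines $\tilde\mu$ only up to an $\tilde m$-th root of unity, so the existence of $\lim \sigma(x)/|\sigma(x)|$ does not follow by ``choosing the root continuously''; you must exclude that different subsequences select different roots. The paper does this with a connectedness argument: for a connected set $E$ adhering only to the direction $\mu$, the set of directions adhered by $\sigma(E)$ is connected and contained in the finite set $\{\lambda\in{\mathbb S}^1:\lambda^{\tilde m}=\mu_0^{\tilde m}\}$, hence is a single point. Some such argument is needed and is absent from your proposal.
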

\begin{proof}
Consider sequences $x_{n}$, $\tilde{x}_{n}$ such that
$\lim_{n \to \infty} x_{n} = \lim_{n \to \infty} \tilde{x}_{n} =0$,
both sequences $x_{n}/|x_{n}|$, $\tilde{x}_{n}/|\tilde{x}_{n}|$ converge to $\mu$
and  the limits
\[ a_{0} = \lim_{n \to \infty} \frac{|\sigma(x_{n})|^{\tilde{m}}}{|x_{n}|^{m}}, \
\mu_{0} =  \lim_{n \to \infty} \frac{\sigma(x_{n})}{|\sigma(x_{n})|}, \
a_{1} = \lim_{n \to \infty} \frac{|\sigma(\tilde{x}_{n})|^{\tilde{m}}}{|\tilde{x}_{n}|^{m}}, \
\mu_{1} =  \lim_{n \to \infty} \frac{\sigma(\tilde{x}_{n})}{|\sigma(\tilde{x}_{n})|} \]
are well-defined. It suffices to prove that $a_{0}=a_{1}$ and $\mu_{0}=\mu_{1}$.

We define $T_{n}=[s/|x_{n}|^{m}]$ and $\tilde{T}_{n}=[s/|\tilde{x}_{n}|^{m}]$. We obtain
\[ \lim_{n \to \infty} \varphi^{T_{n}}(x_{n},y_{0}) =
\lim_{n \to \infty} \mathrm{exp}(s \mu^{m} X_{0})(0,y_{0})=
\lim_{n \to \infty} \varphi^{\tilde{T}_{n}}(\tilde{x}_{n},y_{0}) \]
for any $y_{0} \in B(0,\epsilon)$ whereas we have
\[ \lim_{n \to \infty} \eta^{T_{n}}(\sigma(x_{n},y_{0})) =
\mathrm{exp}(s a_{0} \mu_{0}^{\tilde{m}} Y_{0})(\sigma(0,y_{0})) \]
and
\[ \lim_{n \to \infty} \eta^{\tilde{T}_{n}}(\sigma(\tilde{x}_{n},y_{0})) =
\mathrm{exp}(s a_{1} \mu_{1}^{\tilde{m}} Y_{0})(\sigma(0,y_{0})) \]
for $y_{0} \in B(0,\epsilon)$ and $s \in {\mathbb R}$.
We deduce $a_{0} \mu_{0}^{\tilde{m}} =a_{1} \mu_{1}^{\tilde{m}}$ and then
$a_{0}=a_{1}$ and $\mu_{0}^{\tilde{m}} =\mu_{1}^{\tilde{m}}$.
Consider a connected set $E$ such that $E_{\pi} \cap (\{0\} \times {\mathbb S}^{1}) = \{(0,\mu)\}$
and containing the sequences $x_{n}$ and $\tilde{x}_{n}$.
Hence $\sigma(E)$ adheres to a connected set of directions contained in the finite set
$\{ \lambda \in {\mathbb S}^{1}: \lambda^{\tilde{m}} = \mu_{0}^{\tilde{m}} \}$.
As a consequence $\sigma(E)$ adheres to a unique direction and $\mu_{0}=\mu_{1}$.
\end{proof}
\begin{lem}
\label{lem:defhmp}
Let $\tilde{\psi}$ be a Fatou coordinate of $Y_{0}$.
There exists a ${\mathbb R}$-linear isomorphism
${\mathfrak h}:{\mathbb C} \to {\mathbb C}$
such that
\[ (\tilde{\psi} \circ \sigma \circ \mathrm{exp}(z X_{0}))(0,y) -
(\tilde{\psi} \circ \sigma)(0,y) \equiv {\mathfrak h}(z) . \]
In particular ${\mathfrak h}$ does not depend on $y$.
The mapping $\sigma$ is orientation-preserving on the parameter space if and
only if the mapping ${\mathfrak h}$ is orientation-preserving.
Moreover $\breve{\sigma}: {\mathbb S}^{1} \to {\mathbb S}^{1}$ is a homeomorphism
and $m=\tilde{m}$.
\end{lem}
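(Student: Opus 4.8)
The plan is to mimic, in the setting $m(\varphi)>0$, the construction of the function ${\mathfrak h}$ that was carried out in Proposition~\ref{pro:defh} and Proposition~\ref{pro:rlinear} for the case $N>1$. The role of Long Orbits is played here by the ``short orbits'' of Eq.~(\ref{equ:short}): iterating $\varphi$ roughly $[s/|x|^{m}]$ times starting at $(x,y_{0})$ produces, in the limit $x\to 0$ with $x/|x|\to\mu$, the point $\mathrm{exp}(s\mu^{m}X_{0})(0,y_{0})$. First I would fix a Fatou coordinate $\psi$ of $X_{0}$ in $x=0$ and a Fatou coordinate $\tilde\psi$ of $Y_{0}$, and define, for $z\in{\mathbb C}$ and $y\in B(0,\epsilon)$,
\[ {\mathfrak h}_{y}(z) = (\tilde\psi\circ\sigma\circ\mathrm{exp}(zX_{0}))(0,y) - (\tilde\psi\circ\sigma)(0,y). \]
To see this is well-defined and independent of $y$ one writes $z=s+iu$, approximates $\mathrm{exp}(zX_{0})(0,y)$ by $\varphi^{T_{n}}(x_{n},y)$ along a suitable sequence $x_{n}$ (using Eq.~(\ref{equ:short}) and the previously established continuity of ${\sigma}_{\sharp}$ and $\breve{\sigma}$, which allow one to control $\eta^{T_{n}}(\sigma(x_{n},y))$), and then transports the identity through the conjugation $\sigma\circ\varphi=\eta\circ\sigma$. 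The cocycle identity $\mathrm{exp}((z_{1}+z_{2})X_{0}) = \mathrm{exp}(z_{1}X_{0})\circ\mathrm{exp}(z_{2}X_{0})$ together with $y$-independence gives ${\mathfrak h}(z_{1}+z_{2})={\mathfrak h}(z_{1})+{\mathfrak h}(z_{2})$; continuity of $\sigma$ then forces ${\mathfrak h}$ to be ${\mathbb R}$-linear, and injectivity near $0$ (again from injectivity of $\sigma$ and of the exponential flow) makes it an isomorphism, exactly as in Proposition~\ref{pro:rlinear}.

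Next I would identify $\breve\sigma$ as the action of ${\mathfrak h}$ on the circle at infinity of the blow-up. Concretely, the flow $\mathrm{exp}(s\mu^{m}X_{0})(0,y_{0})$ converges to a singular point of $X_{0}$ as $s\to\pm\infty$ along directions for which $\mu^{m}$ points into a hyperbolic (or parabolic) sector; matching this against the corresponding limiting behavior of $\mathrm{exp}(s\,{\sigma}_{\sharp}(\mu)\,\breve\sigma(\mu)^{\tilde m}Y_{0})(\sigma(0,y_{0}))$, and using that $\sigma$ conjugates these asymptotic regimes, pins down $\breve\sigma(\mu)^{\tilde m}$ in terms of $\mu^{m}$ and ${\mathfrak h}$. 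Since $\breve\sigma$ is a continuous map ${\mathbb S}^{1}\to{\mathbb S}^{1}$ whose $\tilde m$-th power is determined by the $m$-th power of $\mu$ composed with the fixed linear isomorphism ${\mathfrak h}$, a degree/winding-number count shows $\breve\sigma$ must be a homeomorphism and forces $m=\tilde m$: the map $\mu\mapsto\mu^{m}$ has degree $m$, the linear isomorphism ${\mathfrak h}$ restricted to the unit circle (after normalizing) has degree $\pm 1$, and $\lambda\mapsto\lambda^{\tilde m}$ has degree $\tilde m$, so $m=\pm\tilde m$, whence $m=\tilde m$ as both are positive, and $\breve\sigma$ has degree $\pm 1$.

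Finally, the orientation statement: $\sigma$ is orientation-preserving on the parameter space if and only if $\breve\sigma:{\mathbb S}^{1}\to{\mathbb S}^{1}$ has degree $+1$ (since ${\sigma}_{\sharp}>0$ the real blow-up of $\sigma_{0}$ at $0$ has the same orientation behavior as $\breve\sigma$), and by the matching in the previous paragraph this degree equals the sign of the determinant of ${\mathfrak h}$ as an ${\mathbb R}$-linear map, i.e. ${\mathfrak h}$ is orientation-preserving. This argument runs parallel to Lemma~\ref{lem:orip}, but is in fact simpler because here we have the explicit blow-up data ${\sigma}_{\sharp}$, $\breve\sigma$ at our disposal rather than having to extract orientation from the motion of the curves $\vartheta_{\mathcal O}^{-1}(u)$. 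The main obstacle I anticipate is the first step: establishing that ${\mathfrak h}_{y}(z)$ is genuinely independent of $y$ and that the limits defining it exist uniformly enough to transport the conjugacy relation --- this requires care with the error term $O(x_{n}^{m})$ in Eq.~(\ref{equ:short}) and with the fact that $\sigma$, being only a homeomorphism, does not a priori respect the Fatou coordinates; everything downstream (linearity, the $m=\tilde m$ count, orientation) is then formal.
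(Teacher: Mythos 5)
Your first two paragraphs follow essentially the same route as the paper: iterate $\varphi$ roughly $[|z|/|x|^{m}]$ times along the ray $\mu{\mathbb R}^{+}$ with $\mu^{m}=z/|z|$, use Eq.~(\ref{equ:short}) and its analogue for $\eta$ together with $\sigma\circ\varphi=\eta\circ\sigma$, and read off ${\mathfrak h}(z)=|z|\,\sigma_{\sharp}(\mu)^{m}\breve{\sigma}(\mu)^{\tilde{m}}$ (note the exponent: $\sigma_{\sharp}(\mu)^{m}$, not $\sigma_{\sharp}(\mu)$), after which independence of $y$, additivity and ${\mathbb R}$-linearity go exactly as in Proposition \ref{pro:rlinear}. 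The detour through asymptotics near singular points is unnecessary: since $\tilde{\psi}$ is a Fatou coordinate of $Y_{0}$, the identity $\sigma(\mathrm{exp}(s\mu^{m}X_{0})(0,y))=\mathrm{exp}(s\,\sigma_{\sharp}(\mu)^{m}\breve{\sigma}(\mu)^{\tilde{m}}Y_{0})(\sigma(0,y))$ obtained from the two limits already gives the displayed formula for ${\mathfrak h}$.

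The genuine gap is in your last step. From ${\mathfrak h}(\mu^{m})=\sigma_{\sharp}(\mu)^{m}\breve{\sigma}(\mu)^{\tilde{m}}$ the winding-number count only yields $\tilde{m}\cdot\deg(\breve{\sigma})=\pm m$; your deduction ``so $m=\pm\tilde{m}$ \ldots{} and $\breve{\sigma}$ has degree $\pm1$'' is circular, since it uses $\deg(\breve{\sigma})=\pm1$ to get $m=\tilde{m}$ and then $m=\tilde{m}$ to get $\deg(\breve{\sigma})=\pm1$ (for instance $m=4$, $\tilde{m}=2$, $\deg(\breve{\sigma})=2$ is consistent with the identity), and in any case degree $\pm1$ alone would not make a continuous circle map a homeomorphism. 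You need an independent argument that $\breve{\sigma}$ is a homeomorphism, which is how the paper proceeds (it records this first and only then does the winding count). Two ways to supply it: (i) run the same limit construction for $\sigma^{-1}$, which conjugates $\eta$ with $\varphi$, to obtain a continuous direction map $\breve{(\sigma^{-1})}$, and verify $\breve{(\sigma^{-1})}\circ\breve{\sigma}=\mathrm{id}$ and $\breve{\sigma}\circ\breve{(\sigma^{-1})}=\mathrm{id}$ by evaluating along the sequences $\sigma(x_{n})$ with $x_{n}\to 0$ in a fixed direction; or (ii) note that the convergence defining $\breve{\sigma}$ is uniform in $\mu$, so for small $r$ the loop $\theta\mapsto\sigma_{0}(re^{i\theta})/|\sigma_{0}(re^{i\theta})|$ is homotopic to $\breve{\sigma}$, and an injective loop around $0$ has winding number $\pm1$; combined with the fact that $p_{\tilde{m}}\circ\breve{\sigma}={\mathfrak h}(\mu^{m})/|{\mathfrak h}(\mu^{m})|$ is a local homeomorphism (so $\breve{\sigma}$ is one too), degree $\pm1$ forces $\breve{\sigma}$ to be a homeomorphism. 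With that in hand, $\tilde{m}\deg(\breve{\sigma})=\pm m$ gives $m=\tilde{m}$, and the orientation equivalence follows as you state.
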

\begin{proof}
Fix $z = \lambda |z|$. Consider $\mu \in {\mathbb S}^{1}$ with $\lambda = \mu^{m}$.
We define $T'(x) = [|z|/|x|^{m}]$. We have
\[ \lim_{x \to 0}^{x \in \mu {\mathbb R}^{+}} \varphi^{T'(x)}(x,y_{0}) =
 \lim_{x \to 0}^{x \in \mu {\mathbb R}^{+}} \mathrm{exp}
 \left( \left[\frac{|z|}{|x|^{m}} \right] x^{m} X_{0} \right)(x,y_{0}) =
\mathrm{exp}(z X_{0})(0,y_{0}) \]
for $y_{0} \in B(0,\epsilon)$.
We obtain
\[ \lim_{x \to 0}^{x \in \mu {\mathbb R}^{+}} \eta^{T'(x)}(\sigma(x,y_{0})) =
 \lim_{x \to 0}^{x \in \mu {\mathbb R}^{+}} \mathrm{exp}
\left( \left[ \frac{|z|}{|x|^{m}} \right] \sigma(x)^{\tilde{m}} Y_{0} \right)(\sigma(x,y_{0}))  \]
and then
\[ \lim_{x \to 0}^{x \in \mu {\mathbb R}^{+}} \eta^{T'(x)}(\sigma(x,y_{0})) =
\mathrm{exp}(|z| \sigma_{\sharp}(\mu)^{m} \breve{\sigma}(\mu)^{\tilde{m}} Y_{0})(\sigma(0,y_{0})) \]
for $y_{0} \in B(0,\epsilon)$.
This implies ${\mathfrak h}(z) = |z| \sigma_{\sharp}(\mu)^{m} \breve{\sigma}(\mu)^{\tilde{m}}$.
Clearly the value of ${\mathfrak h}$ does not depend on $y_{0}$.

Analogously as in Proposition \ref{pro:rlinear} we can show that
${\mathfrak h}$ is a linear isomorphism.
The mapping $\breve{\sigma}$ is a homeomorphism that satisfies
${\mathfrak h}(\mu^{m}) = \sigma_{\sharp}(\mu)^{m} \breve{\sigma}(\mu)^{\tilde{m}}$ for
any $\mu \in {\mathbb S}^{1}$. Thus ${\mathfrak h}$ is orientation-preserving if and
only if the action of $\sigma$ on the parameter space is orientation-preserving.
Notice that when $\mu$ turns once around $0$ the path
$\mu \to {\mathfrak h}(\mu^{m})$ turns $m$ times and $\mu \to \breve{\sigma}(\mu)^{\tilde{m}}$
turns $\tilde{m}$ times. We obtain $m=\tilde{m}$.
\end{proof}
\begin{cor}
\label{cor:affm}
Let $\varphi, \eta \in \diff{p1}{2}$ with $m(\varphi)>0$ such that
there exists a homeomorphism $\sigma$ satisfying
$\sigma \circ \varphi = \eta \circ \sigma$.
Then we obtain $m(\varphi)=m(\eta)$.
Moreover
$\sigma_{|x=0}$ is affine in Fatou coordinates.
The mapping $\sigma_{|x=0}$ is orientation-preserving if and only if
the action of $\sigma$ on the parameter space is
orientation-preserving.
\end{cor}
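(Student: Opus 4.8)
The strategy is to read the Corollary off from Lemma~\ref{lem:defhmp} together with Definition~\ref{def:afffc2}; no new dynamical input is needed, and the argument is the $m(\varphi)>0$ analogue of the way Corollary~\ref{cor:orp} was deduced in the case $m(\varphi)=0$. The equality $m(\varphi)=m(\eta)$ is already part of the statement of Lemma~\ref{lem:defhmp}, so only the last two assertions remain.

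The first remaining step is to integrate the functional relation produced by Lemma~\ref{lem:defhmp}. Let $\psi_{0}^{\varphi}$ be a Fatou coordinate of $(X_{0})_{|x=0}$ and $\psi_{0}^{\eta}$ a Fatou coordinate of $(Y_{0})_{|x=0}$, and let ${\mathfrak h}$ be the $\mathbb{R}$-linear isomorphism of Lemma~\ref{lem:defhmp}. For points $(0,y)$, $(0,y')$ in a common simply connected domain of definition of $\psi_{0}^{\varphi}$, put $z=\psi_{0}^{\varphi}(0,y')-\psi_{0}^{\varphi}(0,y)$, so that $\mathrm{exp}(zX_{0})(0,y)=(0,y')$; Lemma~\ref{lem:defhmp} then gives
\[ (\psi_{0}^{\eta}\circ\sigma)(0,y') - (\psi_{0}^{\eta}\circ\sigma)(0,y) = {\mathfrak h}\big(\psi_{0}^{\varphi}(0,y')-\psi_{0}^{\varphi}(0,y)\big). \]
Fixing a base point and using the $\mathbb{R}$-linearity of ${\mathfrak h}$ yields $(\psi_{0}^{\eta}\circ\sigma)(0,y)={\mathfrak h}(\psi_{0}^{\varphi}(0,y))+c$ for a constant $c$, that is $\psi_{0}^{\eta}\circ\sigma\circ(\psi_{0}^{\varphi})^{-1}={\mathfrak h}(\cdot)+c$ is an affine isomorphism. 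By Definition~\ref{def:afffc2} this is precisely the statement that $\sigma_{|x=0}$ is affine in Fatou coordinates. In case $(X_{0})_{|x=0}$ is singular at the origin one checks, exactly as ${\mathfrak h}$ is globally defined, that the constant $c$ is independent of the petal, so the conclusion persists.

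For the orientation equivalence, recall that Fatou coordinates are holomorphic, hence orientation-preserving local diffeomorphisms; therefore $\sigma_{|x=0}$ is orientation-preserving if and only if the affine map ${\mathfrak h}(\cdot)+c$ is, i.e.\ if and only if ${\mathfrak h}$ is orientation-preserving, and Lemma~\ref{lem:defhmp} identifies the latter with $\sigma$ being orientation-preserving on the parameter space. This finishes the proof. I do not expect a genuine obstacle: all the effort has been front-loaded into the two lemmas constructing $\sigma_{\sharp}$, $\breve{\sigma}$ and then Lemma~\ref{lem:defhmp}; the only point deserving a line of care is the regular-versus-singular dichotomy for $(X_{0})_{|x=0}$ at the origin and the fact that ``affine in Fatou coordinates'' here must be read in the sense of Definition~\ref{def:afffc2} rather than Definition~\ref{def:afffc}.
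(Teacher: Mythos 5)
Your proposal is correct and follows essentially the same route as the paper: the paper deduces Corollary~\ref{cor:affm} directly from Lemma~\ref{lem:defhmp} (which already contains $m(\varphi)=m(\eta)$ and the orientation equivalence), in a manner analogous to how Corollary~\ref{cor:orp} follows from Propositions~\ref{pro:defh} and~\ref{pro:rlinear}. Your integration of the functional equation to get $\psi_{0}^{\eta}\circ\sigma\circ(\psi_{0}^{\varphi})^{-1}={\mathfrak h}(\cdot)+c$, i.e.\ affineness in the sense of Definition~\ref{def:afffc2}, and the use of holomorphy of Fatou coordinates for the orientation statement, is exactly the intended argument.
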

Corollary \ref{cor:affm} is the General Theorem for the case $m(\varphi)>0$.
Let us remark that
$\sigma_{|x=0}$ is real analytic in $x=0$
if $N=0$ and it is real analytic in
$\{x=0\} \setminus \{(0,0)\}$ if $N \geq 1$.
The proof of Corollary \ref{cor:affm}
is analogous to the proof of Corollary \ref{cor:orp}.

The next result completes the proof of Proposition \ref{pro:holpar}.
\begin{cor}
\label{cor:holpar}
Let $\varphi, \eta \in \diff{p1}{2}$ with $m(\varphi)>0$ such that
there exists a homeomorphism $\sigma$ satisfying
$\sigma \circ \varphi = \eta \circ \sigma$.
Suppose that the action of $\sigma$ on the parameter space is
holomorphic (resp. anti-holomorphic). Then
$\sigma_{|x=0}$ is holomorphic (resp.  anti-holomorphic).
\end{cor}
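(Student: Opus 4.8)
The plan is to reduce to the linear-algebra data isolated in Lemma~\ref{lem:defhmp} and then to show that holomorphicity of $\sigma$ on the parameter space forces the associated isomorphism ${\mathfrak h}$ to be $\mathbb{C}$-linear. Recall from Corollary~\ref{cor:affm} and Lemma~\ref{lem:defhmp} that $m(\varphi)=m(\eta)=:m$, that $X=x^{m}X_{0}$ and $Y=x^{m}Y_{0}$ with $\mathrm{Sing}(X_{0})$, $\mathrm{Sing}(Y_{0})$ not containing $x=0$, and that there is an $\mathbb{R}$-linear isomorphism ${\mathfrak h}\colon\mathbb{C}\to\mathbb{C}$ together with a constant $c_{0}\in\mathbb{C}$ such that
\[ (\psi_{0}^{\eta}\circ\sigma)(0,y) = {\mathfrak h}\big(\psi_{0}^{\varphi}(0,y)\big) + c_{0}, \]
where $\psi_{0}^{\varphi}$ and $\psi_{0}^{\eta}$ are Fatou coordinates of $(X_{0})_{|x=0}$ and $(Y_{0})_{|x=0}$; this is exactly the statement that $\sigma_{|x=0}$ is affine in Fatou coordinates, read through Definition~\ref{def:afffc2}. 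Since $\psi_{0}^{\varphi}$ and $\psi_{0}^{\eta}$ are biholomorphisms onto their images (away from $(0,0)$ when $N\geq 1$, where the Fatou coordinate may be singular at the origin), the map $\sigma_{|x=0}=(\psi_{0}^{\eta})^{-1}\circ({\mathfrak h}\circ\psi_{0}^{\varphi}+c_{0})$ is holomorphic if and only if ${\mathfrak h}$ is $\mathbb{C}$-linear; in the case $N\geq 1$ one then extends holomorphically across $(0,0)$ by Riemann's removable singularity theorem, using that $\sigma$ is a homeomorphism and hence bounded.

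So it remains to show that if the action of $\sigma$ on the parameter space is holomorphic then ${\mathfrak h}$ is $\mathbb{C}$-linear. First I would observe that $\sigma_{0}(0)=0$: the line $x=0$ is the only fibre entirely contained in $\mathrm{Fix}(\varphi)$ (respectively $\mathrm{Fix}(\eta)$), because $m(\varphi)=m(\eta)>0$ while $N<\infty$ on every other fibre, and $\sigma$ preserves the fibration $dx=0$ and carries $\mathrm{Fix}(\varphi)$ onto $\mathrm{Fix}(\eta)$. Hence $\sigma_{0}$ is a germ of biholomorphism fixing $0$, say $\sigma_{0}(x)=cx+O(x^{2})$ with $c\in\mathbb{C}^{*}$. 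Writing $x=r\mu$ with $r\to 0^{+}$ and using $m=\tilde{m}$, we get
\[ {\sigma}_{\sharp}(\mu)=\lim_{r\to 0^{+}}\frac{|\sigma_{0}(r\mu)|^{\tilde{m}/m}}{r}=|c|, \qquad
\breve{\sigma}(\mu)=\lim_{r\to 0^{+}}\frac{\sigma_{0}(r\mu)}{|\sigma_{0}(r\mu)|}=\frac{c}{|c|}\,\mu . \]
Substituting into the identity ${\mathfrak h}(\mu^{m})=\sigma_{\sharp}(\mu)^{m}\,\breve{\sigma}(\mu)^{\tilde{m}}$ established in the proof of Lemma~\ref{lem:defhmp} yields ${\mathfrak h}(\mu^{m})=|c|^{m}(c/|c|)^{m}\mu^{m}=c^{m}\mu^{m}$ for every $\mu\in\mathbb{S}^{1}$. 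Since every element of $\mathbb{S}^{1}$ is of the form $\mu^{m}$ and ${\mathfrak h}$ is positively homogeneous, writing $z=|z|\lambda$ with $\lambda=\mu^{m}$ gives ${\mathfrak h}(z)=|z|\,{\mathfrak h}(\mu^{m})=c^{m}z$ for all $z\in\mathbb{C}$. Thus ${\mathfrak h}$ is $\mathbb{C}$-linear, and by the previous paragraph $\sigma_{|x=0}$ is holomorphic.

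For the anti-holomorphic case I would use the usual conjugation by complex conjugation: set $\zeta(x,y)=(\overline{x},\overline{y})$, $\tilde{\eta}=\zeta\circ\eta\circ\zeta\in\diff{p1}{2}$ and $\tilde{\sigma}=\zeta\circ\sigma$, so that $\tilde{\sigma}\circ\varphi=\tilde{\eta}\circ\tilde{\sigma}$ and the action of $\tilde{\sigma}$ on the parameter space is holomorphic. By the case already treated $\tilde{\sigma}_{|x=0}$ is holomorphic, and since $\sigma=\zeta\circ\tilde{\sigma}$ it follows that $\sigma_{|x=0}$ is anti-holomorphic. I do not expect a genuine obstacle: the substantive work is already carried out in Lemma~\ref{lem:defhmp} (existence of ${\mathfrak h}$, of the boundary maps ${\sigma}_{\sharp},\breve{\sigma}$, and the equality $m=\tilde{m}$), and what remains is the short computation above together with the routine verifications that $\sigma_{0}(0)=0$ and that the Fatou coordinate of $(X_{0})_{|x=0}$ fails to be a local biholomorphism at worst at $(0,0)$.
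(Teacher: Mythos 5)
Your proof is correct and follows essentially the same route as the paper: it deduces from the holomorphy of $\sigma_{0}$ that $\sigma_{\sharp}$ is constant and $\breve{\sigma}$ is a rotation, so the identity ${\mathfrak h}(\mu^{m})=\sigma_{\sharp}(\mu)^{m}\breve{\sigma}(\mu)^{\tilde{m}}$ from Lemma \ref{lem:defhmp} forces ${\mathfrak h}(z)=\varsigma z$, and it handles the anti-holomorphic case by conjugating with $(x,y)\mapsto(\overline{x},\overline{y})$ exactly as in Corollary \ref{cor:holact}. The extra details you supply (the computation with $\sigma_{0}(x)=cx+O(x^{2})$ and the Riemann-extension step from $\mathbb{C}$-linearity of ${\mathfrak h}$ to holomorphy of $\sigma_{|x=0}$) are just an expansion of steps the paper leaves implicit.
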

\begin{proof}
Suppose that the action of $\sigma$ on the parameter space is holomorphic.
Hence ${\sigma}_{\sharp}$ is constant and $\breve{\sigma}$ is of the form
$\lambda \mapsto \mu \lambda$ for some $\mu \in {\mathbb S}^{1}$.
We obtain that ${\mathfrak h}$ is of the form $\varsigma z$ for some
$\varsigma \in {\mathbb C}^{*}$. Thus $\sigma_{|x=0}$ is holomorphic.
If the action of $\sigma$ is anti-holomorphic we argue as in
Corollary \ref{cor:holact} to reduce the situation to the previous one.
\end{proof}
We present two examples of conjugating mappings $\sigma$ such that
$\sigma_{|x=0}$ is not necessarily holomorphic or anti-holomorphic.
They correspond to the cases $N=0$ and $N=1$ respectively.
\begin{exa}
Consider $X=x^{m} \partial /\partial y$. Fix a ${\mathbb R}$-linear
mapping ${\mathfrak h}:{\mathbb C} \to {\mathbb C}$.
We define $\sigma_{\sharp}:{\mathbb S}^{1} \to {\mathbb R}^{+}$
and $\breve{\sigma}:{\mathbb S}^{1} \to {\mathbb S}^{1}$ by using
the formula ${\mathfrak h}(\lambda^{m}) = \sigma_{\sharp}(\lambda)^{m} \breve{\sigma}(\lambda)^{m}$
for $\lambda \in {\mathbb S}^{1}$.
We define the homeomorphism
\[ \sigma (r \lambda, y) = (r \sigma_{\sharp}(\lambda)  \breve{\sigma}(\lambda), {\mathfrak h}(y)) .\]
The vector field $X_{|x=\sigma(r \lambda)}$ is
$r^{m} \sigma_{\sharp}(\lambda)^{m} \breve{\sigma}(\lambda)^{m}  \partial / \partial y=
r^{m} {\mathfrak h}(\lambda^{m}) \partial / \partial y$. The real flows of
$X_{|x=r \lambda}$ and $X_{|x=\sigma(r \lambda)}$ are
\[  \theta_{s}(y) = y + s r^{m} \lambda^{m} \ \ \mathrm{and} \ \
\tilde{\theta}_{s}(y) = y + s r^{m} {\mathfrak h}(\lambda^{m}) \]
respectively for $s \in {\mathbb R}$. Clearly the ${\mathbb R}$-linear mapping ${\mathfrak h}$
conjugates them. Thus $\sigma$  commutes with
${\rm exp}(X)$. It is ${\mathbb R}$-linear when $m=1$.
Moreover $\sigma_{|x=0}$ is holomorphic (resp. anti-holomorphic)
if and only if ${\mathfrak h}$ is holomorphic (resp. anti-holomorphic).
\end{exa}
\begin{exa}
Consider $X = x^{m} y \partial / \partial y$ for some $m \in {\mathbb N}$.
We define ${\mathfrak h}(z) = 3z/2 + \overline{z}/2$ and the homeomorphism
\[ \sigma (r \lambda, y) = (r \sigma_{\sharp}(\lambda)  \breve{\sigma}(\lambda), y |y|) \]
by using
the formula ${\mathfrak h}(\lambda^{m}) = \sigma_{\sharp}(\lambda)^{m} \breve{\sigma}(\lambda)^{m}$.
The vector fields $X_{|x=r \lambda}$ and $X_{|x=\sigma(r \lambda)}$
are equal to
$r^{m} \lambda^{m} y \partial / \partial y$ and
${\mathfrak h}(r^{m} \lambda^{m}) y \partial / \partial y$ respectively.
The real flows of
$X_{|x=r \lambda}$ and $X_{|x=\sigma(r \lambda)}$ are
\[  \theta_{s}(y) =  e^{s r^{m} \lambda^{m}} y   \ \ \mathrm{and} \ \
\tilde{\theta}_{s}(y) = e^{s r^{m} {\mathfrak h}(\lambda^{m})} y   \]
respectively for $s \in {\mathbb R}$. We have
\[ (y |y|) \circ \theta_{s}(y) = e^{s r^{m} (\lambda^{m} + Re(\lambda^{m}))} y |y| =
e^{s r^{m} {\mathfrak h}(\lambda^{m})} y |y| =\tilde{\theta}_{s}(y |y|)\]
for $y \in {\mathbb C}$ and $s \in {\mathbb R}$. Thus
$\sigma$ commutes with the real flow of $X$ and then with $\mathrm{exp}(X)$.
The mapping $\sigma_{|x=0}$ is neither holomorphic nor anti-holomorphic.
Moreover $\sigma_{|x=0}$ is real analytic in $\{x=0\} \setminus \{(0,0)\}$
and it is not a $C^{1}$ diffeomorphism in the neighborhood of $0$.
\end{exa}
%
%
%
%
\begin{rem}
Let $\varphi, \eta \in \diff{p1}{n+1}$ with
${\mathfrak l} \stackrel{def}{=} \{x_{1}=\hdots=x_{n}=0\} \subset \mathrm{Fix} (\varphi)$ such that
there exists a homeomorphism $\sigma$ satisfying
$\sigma \circ \varphi = \eta \circ \sigma$.
It is easy to prove analogous results to the ones in this
section if every irreducible component of $\mathrm{Fix} (\varphi)$
containing ${\mathfrak l}$ is a union  of fibers of
the fibration $dx_{1}=\hdots=d x_{n}=0$.
In general we  can reduce the situation to the previous one by
blowing up centers that are union of lines of the form
$\{x_{1}=c_{1}\} \cap \hdots \cap \{x_{n}=c_{n}\}$.
\end{rem}
\section{Building examples}
\label{sec:build}
Let us construct topological conjugations between real flows
of vector fields in $\Xnt$ whose restrictions to $x=0$
are neither holomorphic nor anti-holomorphic
in the case $N>1$, $m=0$.
We choose the case $N=2$ for the sake of simplicity.
This section provides examples for the exceptional
cases covered by Propositions \ref{pro:atui} and \ref{pro:atu}.
\subsection{Description of the construction}
Consider complex numbers $a,b \in {\mathbb C}$
and a ${\mathbb R}$-linear
orientation-preserving
isomorphism ${\mathfrak h}:{\mathbb C} \to {\mathbb C}$ such that ${\mathfrak h}(1)=1$ and
${\mathfrak h}(2 \pi i a) = 2 \pi i b$.
In particular $Re (a)$ and $Re(b)$ have the same sign. We define
\[ X = \frac{y^{2}-x}{1+ay} \frac{\partial}{\partial y} \ \ \mathrm{and} \ \
Y = \frac{y^{2}-x}{1+by} \frac{\partial}{\partial y} . \]
The residue functions of $X$ satisfy
\[ Res_{X}(x,\sqrt{x}) = \frac{1}{2} \left( \frac{1}{\sqrt{x}} + a \right), \ \
Res_{X}(x,-\sqrt{x}) = \frac{1}{2} \left( \frac{-1}{\sqrt{x}} + a \right). \]
Consider a germ of homeomorphism $\tau$ defined in a neighborhood of $0$
in ${\mathbb C}$ such that ${\mathfrak h}(\pi i /\sqrt{x}) = \pi i / \sqrt{\tau(x)}$
for any $x$ defined in a neighborhood of $0$. In this way we obtain
\begin{equation}
\label{equ:hpres}
{\mathfrak h}(2 \pi i Res_{X}(x,\pm \sqrt{x})) = 2 \pi i Res_{Y}(\tau(x),\pm \sqrt{\tau(x)})
\end{equation}
for any $x$ in a neighborhood of $0$. The isomorphism ${\mathfrak h}$ is of the form
${\mathfrak h}(z) = \varsigma_{0} z + \varsigma_{1} \overline{z}$ with $|\varsigma_{0}|>|\varsigma_{1}|$.
We obtain
\begin{equation}
\label{equ:dtq}
\frac{\varsigma_{0}}{\sqrt{x}} - \frac{\varsigma_{1}}{\overline{\sqrt{x}}} \equiv
\frac{1}{\sqrt{\tau(x)}} \implies
\frac{\sqrt{x}}{\sqrt{\tau(x)}} \equiv
\varsigma_{0} - \varsigma_{1} \frac{\sqrt{x}}{\overline{\sqrt{x}}}.
\end{equation}
Fix a point $y_{0} \in B(0,\epsilon) \setminus \{0\}$ close to $0$.
Let $\psi_{X}$, $\psi_{Y}$ be Fatou coordinates of $X$, $Y$ such that
$\psi_{X}(x,y_{0}) \equiv \psi_{Y}(x,y_{0}) \equiv 0$.
We want to define a homeomorphism $\sigma$ conjugating
$\Re (X)$ and $\Re (Y)$ such that
\begin{itemize}
\item $\sigma$ is of the form $\sigma(x,y) =(\tau(x), \sigma_{\natural}(x,y))$.
\item $\sigma(x,y_{0})= (\tau(x),y_{0})$ for any $x$ in a neighborhood of $0$.
\item $\psi_{Y} \circ \sigma \circ \mathrm{exp}(z X) - \psi_{Y} \circ \sigma \equiv {\mathfrak h}(z)$
for any $z \in {\mathbb C}$.
\end{itemize}
Let us remark that the monodromies of $({\mathfrak h} \circ\psi_{X})(x_{0},y)$ and
$\psi_{Y}(\tau(x_{0}), y)$ around
$(x_{0}, \pm \sqrt{x_{0}})$ and $(\tau(x_{0}), \pm \sqrt{\tau(x_{0})})$
respectively coincide by Eq. (\ref{equ:hpres}).
The natural way of defining $\sigma$ is by using the
equation ${\mathfrak h} \circ\psi_{X} \equiv \psi_{Y} \circ \sigma$.
\subsection{The method of the path}
In order to prove the existence of $\sigma$ satisfying
${\mathfrak h} \circ\psi_{X} \equiv \psi_{Y} \circ \sigma$ we apply
the method of the path (see \cite{Rou:ast}, \cite{Mar:ast}).
First we relocate the points in $\mathrm{Sing}(Y)$ by considering
the change of coordinates $y = y \sqrt{\tau(x)/x}$ for the
parameter $\tau(x)$. This corresponds to the change
of coordinates $\tilde{\sigma}(x,y) = (x, y \sqrt{x/\tau^{-1}(x)})$.
We obtain
\[ Y(\tau(x), y) =
\frac{{ \left( \frac{\sqrt{\tau(x)}}{\sqrt{x}}  y \right) }^{2} -
\tau(x)}{1 + b y \sqrt{\tau(x)} / \sqrt{x}}
\frac{\sqrt{x}}{\sqrt{\tau(x)}} \frac{\partial}{\partial y} =
\frac{\sqrt{\tau(x)}}{\sqrt{x}} \frac{y^{2}-x}{1 + b y \sqrt{\tau(x)} / \sqrt{x}}
\frac{\partial}{\partial y} \]
in the new coordinates. Let us point out that the change of coordinates $\tilde{\sigma}$
is not well-defined at $x=0$. It is not very pathological either since
$\sqrt{x/\tau^{-1}(x)}$ is bounded away from $0$ and $\infty$ for
$x$ in a neighborhood of $0$ (see Eq. (\ref{equ:dtq})).


Let us consider the function
\[ \Psi = (1-s) ({\mathfrak h} \circ \psi_{X})(x,y) + s \psi_{Y}
\left( \tau(x), \frac{\sqrt{\tau(x)}}{\sqrt{x}} y \right) = \Psi_{1} + i \Psi_{2}. \]
In general $\Psi$ is neither holomorphic nor anti-holomorphic.
Roughly speaking all the functions $\Psi(s_{0},x,y)$ have the same poles and
the monodromy around those poles does not depend on $s_{0}$.
We are trying to conjugate $\Re (X)$ and $\tilde{\sigma}^{*} (\Re (Y))$. In order
to do this we want to find a continuous vector field $Z$
defined in coordinates $(x,y,s)$ in a neighborhood of $\{(0,0)\} \times [0,1]$
in $(({\mathbb C}^{*} \times {\mathbb C}) \cup \{(0,0)\}) \times {\mathbb C}$
of the form
\[ Z = \frac{\partial}{\partial s} + c_{1}(x,y,s) \frac{\partial}{\partial y_{1}} +
c_{2}(x,y,s) \frac{\partial}{\partial y_{2}} \]
where $y=y_{1}+iy_{2}$. Moreover we ask $Z$ to satisfy
\begin{itemize}
\item $Z(\Psi) \equiv 0$.
\item $c_{j}$ is a continuous function defined in a neighborhood of $\{(0,0)\} \times [0,1]$
in $(({\mathbb C}^{*} \times {\mathbb C}) \cup \{(0,0)\}) \times {\mathbb C}$ and
vanishing at $y^{2}-x=0$ for any $j \in \{1,2\}$.
\end{itemize}
The idea is that for such $Z$ the mapping $\mathrm{exp}(Z)(x,y,0)$
conjugates $\Re(X)$ and $\tilde{\sigma}^{*}(\Re (Y))$
in a neighborhood of $(0,0)$
in $({\mathbb C}^{*} \times {\mathbb C}) \cup \{(0,0)\}$.
We define
\[ \rho =   ({\mathfrak h} \circ \psi_{X})(x,y) -  \psi_{Y}
\left( \tau(x), \frac{\sqrt{\tau(x)}}{\sqrt{x}} y \right)  . \]
The equation $Z(\Psi) \equiv 0$ is equivalent to
\[ \left\{
\begin{array}{ccccl}
c_{1} \frac{\partial \Psi_{1}}{\partial y_{1}} & + & c_{2} \frac{\partial \Psi_{1}}{\partial y_{2}} &
= & Re (\rho) \\
c_{1} \frac{\partial \Psi_{2}}{\partial y_{1}} & + & c_{2} \frac{\partial \Psi_{2}}{\partial y_{2}} &
= & Im (\rho).
\end{array}
\right.  \]
The solutions are
\begin{equation}
\label{equ:c1c2}
c_{1} =
\frac{\left|
\begin{array}{cc}
Re(\rho) & \frac{\partial \Psi_{1}}{\partial y_{2}} \\
Im(\rho) & \frac{\partial \Psi_{2}}{\partial y_{2}}
\end{array}
\right|}{\left|
\begin{array}{cc}
\frac{\partial \Psi_{1}}{\partial y_{1}} & \frac{\partial \Psi_{1}}{\partial y_{2}} \\
\frac{\partial \Psi_{2}}{\partial y_{1}} & \frac{\partial \Psi_{2}}{\partial y_{2}}
\end{array}
\right|}, \ \ \
c_{2} =
\frac{\left|
\begin{array}{cc}
\frac{\partial \Psi_{1}}{\partial y_{1}} & Re(\rho) \\
\frac{\partial \Psi_{2}}{\partial y_{1}} & Im(\rho)
\end{array}
\right|}{\left|
\begin{array}{cc}
\frac{\partial \Psi_{1}}{\partial y_{1}} & \frac{\partial \Psi_{1}}{\partial y_{2}} \\
\frac{\partial \Psi_{2}}{\partial y_{1}} & \frac{\partial \Psi_{2}}{\partial y_{2}}
\end{array}
\right|}.
\end{equation}
The denominator $D$ of the previous expressions satisfies
$D= |\partial \Psi/\partial y|^{2} - |\partial \Psi/\partial \overline{y}|^{2}$.
We have
\[ \left\{
\begin{array}{ccl}
\frac{\partial \Psi}{\partial y} & = &
(1-s) \varsigma_{0} \frac{1+ay}{y^{2}-x} + s \frac{\sqrt{x}}{\sqrt{\tau(x)}}
\frac{1 + b y \sqrt{\tau(x)} / \sqrt{x}}{y^{2}-x} \\
\frac{\partial \Psi}{\partial \overline{y}} & = &
(1-s) \varsigma_{1} \overline{ \left( \frac{1+ay}{y^{2}-x} \right) }  .
\end{array}
\right. \]
\begin{lem}
\label{lem:den}
The function $D |y^{2}-x|^{2}$ is bounded away from $0$ and
$\infty$.
\end{lem}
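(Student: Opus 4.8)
The plan is to multiply through by $y^{2}-x$ so as to clear the poles of $\partial\Psi/\partial y$ and $\partial\Psi/\partial\overline{y}$ and reduce the statement to a bound on bounded continuous functions that do not vanish near $y=0$. Using the displayed formula for $\partial\Psi/\partial y$ and the simplification $s\,\frac{\sqrt{x}}{\sqrt{\tau(x)}}\cdot\frac{by\sqrt{\tau(x)}}{\sqrt{x}}=sby$, I would set
\[
P := (y^{2}-x)\,\frac{\partial\Psi}{\partial y}=(1-s)\varsigma_{0}(1+ay)+s\,\frac{\sqrt{x}}{\sqrt{\tau(x)}}+sby,
\qquad
\left|(y^{2}-x)\,\frac{\partial\Psi}{\partial\overline{y}}\right|=(1-s)\,|\varsigma_{1}|\,|1+ay|,
\]
so that $D\,|y^{2}-x|^{2}=|P|^{2}-(1-s)^{2}|\varsigma_{1}|^{2}|1+ay|^{2}$. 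Note that $P$ and $(y^{2}-x)\,\partial\Psi/\partial\overline{y}$ are bounded continuous functions of $(x,y,s)$ on a neighborhood of $\{(0,0)\}\times[0,1]$ that remain perfectly regular across the singular locus $y^{2}=x$, which is the reason the product $D\,|y^{2}-x|^{2}$ can be controlled there.

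For the upper bound I would invoke Eq.~(\ref{equ:dtq}): it gives $\sqrt{x}/\sqrt{\tau(x)}=\varsigma_{0}-\varsigma_{1}\,\sqrt{x}/\overline{\sqrt{x}}$, whose modulus lies in the compact annulus $[\,|\varsigma_{0}|-|\varsigma_{1}|,\ |\varsigma_{0}|+|\varsigma_{1}|\,]\subset\mathbb{R}^{+}$. Hence $P$ is bounded for $x,y$ near $0$ and $s\in[0,1]$, and therefore $D\,|y^{2}-x|^{2}\le|P|^{2}$ is bounded above.

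For the lower bound the key point is to evaluate at $y=0$. Writing $u=\sqrt{x}/\overline{\sqrt{x}}$ (a complex number of modulus $1$), one gets $P|_{y=0}=(1-s)\varsigma_{0}+s(\varsigma_{0}-\varsigma_{1}u)=\varsigma_{0}-s\varsigma_{1}u$, so
\[
\bigl(D\,|y^{2}-x|^{2}\bigr)\big|_{y=0}=|\varsigma_{0}-s\varsigma_{1}u|^{2}-(1-s)^{2}|\varsigma_{1}|^{2}
\ \ge\ (|\varsigma_{0}|-s|\varsigma_{1}|)^{2}-(1-s)^{2}|\varsigma_{1}|^{2}
=(|\varsigma_{0}|-|\varsigma_{1}|)\bigl(|\varsigma_{0}|+(1-2s)|\varsigma_{1}|\bigr)\ \ge\ (|\varsigma_{0}|-|\varsigma_{1}|)^{2}>0,
\]
where the first inequality uses $|\varsigma_{0}|>|\varsigma_{1}|\ge s|\varsigma_{1}|$ and the last two use $s\in[0,1]$ together with $|\varsigma_{0}|>|\varsigma_{1}|$ (i.e.\ the hypothesis that ${\mathfrak h}$ is an orientation-preserving $\mathbb{R}$-linear isomorphism). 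Since $D\,|y^{2}-x|^{2}$ is continuous in $y$ and this estimate at $y=0$ is uniform over $s\in[0,1]$ and over the compact range of $\sqrt{x}/\sqrt{\tau(x)}$, there is $\epsilon_{0}>0$ with $D\,|y^{2}-x|^{2}\ge\tfrac12(|\varsigma_{0}|-|\varsigma_{1}|)^{2}$ for $|y|<\epsilon_{0}$ and $x,s$ in the relevant neighborhood; shrinking the neighborhood of $\{(0,0)\}\times[0,1]$ accordingly gives the claim.

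The only genuinely delicate point, which I would flag, is that $\sqrt{x}/\sqrt{\tau(x)}$ and $\sqrt{x}/\overline{\sqrt{x}}$ have no continuous extension to $x=0$, so one cannot simply pass to a limit at the origin; the fix is exactly the observation, via Eq.~(\ref{equ:dtq}), that both quantities stay in a fixed compact annulus of $\mathbb{C}^{*}$, which makes all the estimates uniform. Everything else is a routine continuity-and-compactness argument.
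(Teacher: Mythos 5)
Your proof is correct and follows essentially the same route as the paper: after clearing the factor $y^{2}-x$, everything reduces to the quantity $\kappa=(1-s)\varsigma_{0}+s\sqrt{x}/\sqrt{\tau(x)}=\varsigma_{0}-s\varsigma_{1}\sqrt{x}/\overline{\sqrt{x}}$ via Eq.~(\ref{equ:dtq}), which is exactly the paper's argument, with your factorization $(|\varsigma_{0}|-s|\varsigma_{1}|)^{2}-(1-s)^{2}|\varsigma_{1}|^{2}=(|\varsigma_{0}|-|\varsigma_{1}|)(|\varsigma_{0}|+(1-2s)|\varsigma_{1}|)$ replacing the paper's two-step bound through $Re(\varsigma_{0}\overline{\sqrt{x}/\sqrt{\tau(x)}})>0$. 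You also make explicit the absorption of the $O(|y|)$ terms (uniform because the coefficients $(1-s)\varsigma_{0}a+sb$ and $\kappa$ are uniformly bounded), a step the paper leaves implicit in its ``it suffices'' reduction.
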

\begin{proof}
Consider the function $\kappa = (1-s) \varsigma_{0} + s \sqrt{x} / \sqrt{\tau(x)}$.
It suffices to prove that
$\kappa$ is bounded by
below for $x$ in a neighborhood of $0$ and $s$ in a neighborhood of $[0,1]$
and that we have $|\kappa(x,s)|/|(1-s) \varsigma_{1}| \geq C >1$
for some constant $C \in {\mathbb R}^{+}$.
Notice that $\kappa$ is bounded by above. The property
$\sqrt{x}/\sqrt{\tau(x)} \equiv
\varsigma_{0} - \varsigma_{1} \sqrt{x} / \overline{\sqrt{x}}$ implies
\[ |\kappa(x,s)| =
\left|   \varsigma_{0}  - s \varsigma_{1} \frac{\sqrt{x}}{\overline{\sqrt{x}}} \right| \geq
| \varsigma_{0}| - | \varsigma_{1}| >0 \]
for $s \in [0,1]$. We have
\[
|\kappa(x,s)|^{2}  = (1-s)^{2} | \varsigma_{0}|^{2} +
s^{2} {\left| \frac{\sqrt{x}}{\sqrt{\tau(x)}} \right|}^{2}  +
2 s (1-s) Re \left(
 \varsigma_{0} \overline{\frac{\sqrt{x}}{\sqrt{\tau(x)}}}
\right) \]
and
\[ \varsigma_{0} \overline{\frac{\sqrt{x}}{\sqrt{\tau(x)}}} =
\varsigma_{0}
\left( \overline{\varsigma_{0}} - \overline{\varsigma_{1}} \frac{\overline{\sqrt{x}}}{\sqrt{x}} \right)
\implies
Re \left(
 \varsigma_{0} \overline{\frac{\sqrt{x}}{\sqrt{\tau(x)}}}
\right) \geq |\varsigma_{0}|^{2}-|\varsigma_{0}||\varsigma_{1}| >0. \]
We deduce that $|\kappa| \geq |(1-s) \varsigma_{0}| \geq C |(1-s) \varsigma_{1}|$
for some constant $C >1$.
\end{proof}
The next step of the proof is showing that $(y^{2}-x) \rho$ can be extended as a
continuous function vanishing at $y^{2}-x=0$. We define the auxiliary functions
\[ R_{\pm} = \frac{\pm 1}{2 \sqrt{x}}
\left( \varsigma_{0} - \frac{\sqrt{x}}{\sqrt{\tau(x)}} \right) + \frac{1}{2} (\varsigma_{0} a-b)  \]
and
\[ \tilde{\rho} = R_{+}(x) \ln |y - \sqrt{x}|^{2} +  R_{-}(x) \ln |y + \sqrt{x}|^{2} . \]
Notice that ${\mathfrak h}(2 \pi i a) = 2 \pi i b$ implies $b = \varsigma_{0} a - \varsigma_{1} \overline{a}$.
We have
\[ \frac{\partial \tilde{\rho}}{\partial y} =
\frac{R_{+}(x) (y+\sqrt{x}) + R_{-}(x)(y-\sqrt{x}) }{y^{2}-x} =\frac{\partial \rho}{\partial y} \]
and
\[ \frac{\partial \tilde{\rho}}{\partial \overline{y}} =
\frac{(\varsigma_{0} a -b) \overline{y} +\varsigma_{1} }{\overline{y^{2}-x}} =
\varsigma_{1} \overline{\left( \frac{1+ay}{y^{2}-x} \right)}=
\frac{\partial \rho}{\partial y} . \]
\begin{lem}
\label{lem:tilrho}
The function $\rho - \tilde{\rho}$ is a bounded function of $x$.
\end{lem}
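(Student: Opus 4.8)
The claim is that $\rho-\tilde\rho$ depends only on $x$ (is bounded as a function of $y$), and the natural route is to show that $\rho-\tilde\rho$ is locally constant in $y$ by checking that both of its first-order partial derivatives with respect to $y_1$ and $y_2$ vanish. Concretely, I would combine the two displayed identities immediately preceding the lemma, namely $\partial\tilde\rho/\partial y=\partial\rho/\partial y$ and $\partial\tilde\rho/\partial\overline y=\partial\rho/\partial\overline y$ (the latter being what the excerpt writes, with $\partial\rho/\partial y$ on the right-hand side a typo for $\partial\rho/\partial\overline y$, since both are equal to $\varsigma_1\overline{(1+ay)/(y^2-x)}$). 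Since the Wirtinger derivatives $\partial/\partial y$ and $\partial/\partial\overline y$ together span the same space as $\partial/\partial y_1$ and $\partial/\partial y_2$, vanishing of $\partial(\rho-\tilde\rho)/\partial y$ and $\partial(\rho-\tilde\rho)/\partial\overline y$ forces $\rho-\tilde\rho$ to be locally constant in $y$ on the connected set $\{y^2-x\neq 0\}$ for each fixed $x$ in a neighborhood of $0$.

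The first step is therefore to verify carefully the two partial-derivative identities. For $\partial\rho/\partial y$ I would expand $\rho=({\mathfrak h}\circ\psi_X)(x,y)-\psi_Y(\tau(x),\sqrt{\tau(x)}\,y/\sqrt x)$, use $\psi_X$ is a Fatou coordinate so $X(\psi_X)\equiv1$, giving $\partial\psi_X/\partial y=(1+ay)/(y^2-x)$, apply ${\mathfrak h}(z)=\varsigma_0 z+\varsigma_1\overline z$ so that $\partial({\mathfrak h}\circ\psi_X)/\partial y=\varsigma_0\,\partial\psi_X/\partial y$ and $\partial({\mathfrak h}\circ\psi_X)/\partial\overline y=\varsigma_1\,\overline{\partial\psi_X/\partial y}$, and similarly compute the derivative of the $\psi_Y$ term using $Y(\psi_Y)\equiv1$ and the chain rule in $y\mapsto\sqrt{\tau(x)}\,y/\sqrt x$. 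Matching this against the partial fraction decomposition $R_+(x)(y+\sqrt x)+R_-(x)(y-\sqrt x)$ over $y^2-x$ is exactly the computation that defines $R_\pm$; this is routine algebra, using the identity $\sqrt x/\sqrt{\tau(x)}\equiv\varsigma_0-\varsigma_1\sqrt x/\overline{\sqrt x}$ from Eq.~(\ref{equ:dtq}) and $b=\varsigma_0 a-\varsigma_1\overline a$ (which follows from ${\mathfrak h}(2\pi i a)=2\pi i b$).

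The second step is to conclude. Once $\rho-\tilde\rho$ is known to be independent of $y$, it is a function $g(x)$ of $x$ alone, and boundedness follows because: the residue-matching identity (\ref{equ:hpres}) guarantees the logarithmic monodromies of ${\mathfrak h}\circ\psi_X$ and of $\psi_Y(\tau(x),\cdot)$ around the two singular points agree with those of $\tilde\rho$, so $\rho-\tilde\rho$ is genuinely single-valued in $y$; and one evaluates $g(x)=(\rho-\tilde\rho)(x,y_0)$ at the fixed base point $y_0$, where $\psi_X(x,y_0)\equiv\psi_Y(x,y_0)\equiv0$ and $\sqrt x/\sqrt{\tau(x)}$ is bounded away from $0$ and $\infty$ (again Eq.~(\ref{equ:dtq})), so each term of $\rho(x,y_0)$ and of $\tilde\rho(x,y_0)=R_+(x)\ln|y_0-\sqrt x|^2+R_-(x)\ln|y_0+\sqrt x|^2$ is individually bounded near $x=0$ — note $R_\pm(x)=O(1/\sqrt x)$ but $\ln|y_0\mp\sqrt x|^2\to\ln|y_0|^2$ is bounded, and more care shows the potentially unbounded $1/\sqrt x$ pieces cancel between $R_+$ and $R_-$ when paired with the bounded logarithms, leaving a bounded remainder.

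The main obstacle is the bookkeeping in Step 1: one must be meticulous about which Wirtinger derivative is which (the excerpt itself contains a sign/bar typo), about the chain rule through the substitution $y\mapsto\sqrt{\tau(x)}\,y/\sqrt x$ which is anti/holomorphic-mixing through its dependence on $\tau(x)$, and about consistently using the two normalization identities for $\tau$ and $b$. There is no deep difficulty — the geometry is all encoded in (\ref{equ:hpres}) and (\ref{equ:dtq}) — but the computation is where an error would most plausibly creep in, so I would organize it as: (i) record $\partial\psi_X/\partial y$ and $\partial\psi_Y/\partial y$ from the Fatou equations; (ii) apply ${\mathfrak h}$ and the chain rule; (iii) do the partial fractions to read off $R_\pm$; (iv) check $\partial(\rho-\tilde\rho)/\partial y=\partial(\rho-\tilde\rho)/\partial\overline y=0$; (v) integrate and evaluate at $y_0$ for the bound.
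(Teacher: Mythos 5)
Your proposal is correct and follows essentially the same route as the paper: constancy of $\rho-\tilde{\rho}$ in $y$ from the two Wirtinger-derivative identities (the second display does contain the bar typo you spotted), then evaluation at the base point $y_{0}$, where $\rho(x,y_{0})$ is bounded by the normalizations and $\tilde{\rho}(x,y_{0})$ is bounded because the opposite $\pm\frac{1}{2\sqrt{x}}\bigl(\varsigma_{0}-\sqrt{x}/\sqrt{\tau(x)}\bigr)$ parts of $R_{\pm}$ pair with $\ln|y_{0}\mp\sqrt{x}|^{2}$, whose difference is $O(\sqrt{|x|})$ since $y_{0}\neq 0$. The only step you leave implicit (``more care shows'') is precisely that rate estimate, which is the content of the paper's $\hat{\rho}$ computation, so there is no genuine gap.
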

\begin{proof}
It is obvious that $\rho - \tilde{\rho}$ is constant in each line $x=x_{0}$.
Since $\rho(x,y_{0})$ is bounded by construction it suffices to show that
$\tilde{\rho}(x,y_{0})$ is bounded in the neighborhood of $0$.
We have
\[ \tilde{\rho}(x,y_{0}) = \frac{1}{2 \sqrt{x}}
\left( \varsigma_{0} - \frac{\sqrt{x}}{\sqrt{\tau(x)}} \right)
\ln {\left| \frac{y_{0} - \sqrt{x}}{y_{0} + \sqrt{x}} \right|}^{2}
+(\varsigma_{0} a -b) \ln |y_{0}^{2}-x| .\]
It suffices to prove that
\[ \hat{\rho}(x,y_{0}) \stackrel{\mathrm{def}}{=} \frac{1}{2 \sqrt{x}}
\left( \varsigma_{0} - \frac{\sqrt{x}}{\sqrt{\tau(x)}} \right)
\ln {\left| \frac{y_{0} - \sqrt{x}}{y_{0} + \sqrt{x}} \right|}^{2} \]
is bounded. We have
\[ \hat{\rho}(x,y_{0})= \frac{1}{2 \sqrt{x}}
\left( \varsigma_{0} - \frac{\sqrt{x}}{\sqrt{\tau(x)}} \right)
\ln \left(
1 - 2 \frac{\sqrt{x} \overline{y_{0}} + \overline{\sqrt{x}} y_{0} }{
|y_{0}|^{2} +\sqrt{x} \overline{y_{0}} + \overline{\sqrt{x}} y_{0} + |x|}
\right) \]
and then
\[ \hat{\rho}(x,y_{0}) \sim -
\left( \varsigma_{0} - \frac{\sqrt{x}}{\sqrt{\tau(x)}} \right)
\left(  \frac{1}{y_{0}} +\frac{1}{\overline{y_{0}}} \frac{\overline{\sqrt{x}}}{\sqrt{x}} \right)  \]
is bounded.
\end{proof}
\begin{lem}
\label{lem:limrho}
We have $\lim_{(x,y) \to (x_{0},y_{0})} \rho(x,y) (y^{2}-x)=0$
for any $(x_{0},y_{0})$ in the curve $y^{2}-x=0$.
\end{lem}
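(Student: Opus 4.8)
The plan is to show that near any point $(x_0,y_0)$ with $y_0^2 = x_0$ the product $\rho(x,y)(y^2-x)$ tends to $0$, and the natural route is to replace $\rho$ by $\tilde{\rho}$, which differs from it only by a function of $x$ alone (Lemma \ref{lem:tilrho}), and then to analyze $\tilde{\rho}(x,y)(y^2-x)$ directly. So first I would write
\[
\rho(x,y)(y^2-x) = \tilde{\rho}(x,y)(y^2-x) + (\rho-\tilde{\rho})(x)(y^2-x),
\]
and observe that since $\rho-\tilde{\rho}$ is a bounded function of $x$ by Lemma \ref{lem:tilrho}, the second summand is $O(|y^2-x|)$ and hence tends to $0$ as $(x,y)\to(x_0,y_0)$. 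It therefore suffices to prove $\lim_{(x,y)\to(x_0,y_0)} \tilde{\rho}(x,y)(y^2-x) = 0$.

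Next I would split into the two cases $x_0 \neq 0$ and $x_0 = 0$. For $x_0 \neq 0$ the point $(x_0,y_0)$ lies on one of the two branches, say $y_0 = \sqrt{x_0}$ (the other branch is symmetric). Then near $(x_0,y_0)$ the coefficients $R_{+}(x)$ and $R_{-}(x)$ are bounded (note $\sqrt{x}/\sqrt{\tau(x)}$ is bounded away from $0$ and $\infty$ by Eq.~(\ref{equ:dtq})), the term $R_{-}(x)\ln|y+\sqrt{x}|^2$ stays bounded since $y+\sqrt{x}$ is bounded away from $0$, and the only singular contribution is $R_{+}(x)\ln|y-\sqrt{x}|^2$. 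Writing $u = y-\sqrt{x}$ and $y^2-x = u(y+\sqrt{x})$, we get
\[
\tilde{\rho}(x,y)(y^2-x) = R_{+}(x)\,(y+\sqrt{x})\, u\ln|u|^2 + O(|y^2-x|),
\]
and since $t\ln|t|^2 \to 0$ as $t\to 0$ while $R_{+}(x)(y+\sqrt{x})$ is bounded, the product tends to $0$. For $x_0 = 0$ we have $y_0 = 0$; here one must be more careful because $R_{\pm}(x) \sim \pm\tfrac{1}{2\sqrt{x}}(\varsigma_0 - \sqrt{x}/\sqrt{\tau(x)})$ blows up like $1/\sqrt{x}$. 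The point is that $(y^2-x) = (y-\sqrt{x})(y+\sqrt{x})$, so $(y^2-x)\ln|y\mp\sqrt{x}|^2 = (y-\sqrt{x})(y+\sqrt{x})\ln|y\mp\sqrt{x}|^2$, and combining with the $1/\sqrt{x}$ growth of $R_\pm$ one is left controlling expressions of the type $\tfrac{y\mp\sqrt{x}}{\sqrt{x}}(y\pm\sqrt{x})\ln|y\mp\sqrt{x}|^2$; since $|y\mp\sqrt{x}| \leq |y| + |\sqrt{x}|$ and $|y\pm\sqrt{x}|\leq |y|+|\sqrt{x}|$ both tend to $0$, while $\tfrac{y\mp\sqrt{x}}{\sqrt{x}}$ need not stay bounded, I would instead bound the whole product by $C(|y|+|\sqrt{x}|)^2\,|\log(|y|+|\sqrt{x}|)|/|\sqrt{x}| $ type terms and check that $(|y|+\sqrt{|x|})^2/\sqrt{|x|} \to 0$ together with the logarithmic factor being dominated, using that near $(0,0)$ the curve $y^2=x$ forces comparable magnitudes; more simply, one can parametrize $x = t^2$, note $\sqrt{x}=\pm t$, and check the limit along this chart where everything becomes elementary.

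The main obstacle is the $x_0 = 0$ case, where the naive bound does not immediately close because $R_\pm(x)$ has a genuine pole at $x=0$ and one has to use the vanishing of $y^2-x$ to order $2$ (as opposed to order $1$ in the logarithmic argument) in a way that beats both the pole and the logarithm. I expect the cleanest way around this is to work in the branched chart $x = t^2$, where $\sqrt{x} = t$, $R_{+} = \tfrac{1}{2t}(\varsigma_0 - t/\sqrt{\tau(t^2)}) + O(1)$, and $\varsigma_0 - t/\sqrt{\tau(t^2)} = \varsigma_1 t/\bar t + o(1)$ by Eq.~(\ref{equ:dtq}), so that $R_{+} = O(1/|t|)\cdot O(|t|) + O(1) = O(1)$ is in fact bounded after all; this is the key simplification, and once it is in hand the $x_0=0$ case reduces to the same $t\ln|t|^2\to 0$ argument as before. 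I would present the proof by reducing to $\tilde{\rho}$ via Lemma \ref{lem:tilrho}, making the observation that $\varsigma_0 - \sqrt{x}/\sqrt{\tau(x)} = \varsigma_1 \sqrt{x}/\overline{\sqrt{x}}$ keeps $R_\pm$ bounded (hence there is no real pole after the cancellation dictated by the definition of $\tau$), and then invoking $\lim_{t\to 0} t\ln|t|^2 = 0$ uniformly.
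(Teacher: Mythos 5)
Your reduction to $\tilde{\rho}$ via Lemma \ref{lem:tilrho} and your treatment of the points with $x_{0}\neq 0$ are fine, but the case $(x_{0},y_{0})=(0,0)$ --- which is the whole difficulty --- rests on a false ``key simplification''. From Eq. (\ref{equ:dtq}) one gets $\varsigma_{0}-\sqrt{x}/\sqrt{\tau(x)} = \varsigma_{1}\,\sqrt{x}/\overline{\sqrt{x}}$, and $\sqrt{x}/\overline{\sqrt{x}}$ has modulus $1$, not modulus $O(|\sqrt{x}|)$. Hence this difference has constant modulus $|\varsigma_{1}|$, and $R_{\pm}(x)=\pm\varsigma_{1}/(2\overline{\sqrt{x}})+\frac{1}{2}(\varsigma_{0}a-b)$ genuinely blows up like $1/\sqrt{|x|}$ whenever $\varsigma_{1}\neq 0$, which is exactly the case of interest (if $\varsigma_{1}=0$, then ${\mathfrak h}(1)=1$ forces ${\mathfrak h}(z)\equiv z$ and there is nothing to construct). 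So $R_{+}$ is not bounded, and the case $x_{0}=0$ does not reduce to applying $t\ln|t|^{2}\to 0$ to each logarithmic term separately. Your fallback estimate of the type $C(|y|+\sqrt{|x|})^{2}|\log(\cdot)|/\sqrt{|x|}$ also does not close: along $|x|=|y|^{8}$, for instance, it diverges. In fact each product $R_{\pm}(x)\,\ln|y\mp\sqrt{x}|^{2}\,(y^{2}-x)$ taken individually diverges in the regime $|\sqrt{x}|\ll |y|$, so no argument that separates the two logarithms can succeed.

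The missing idea is the cancellation between the two logarithms: the polar coefficient $\frac{1}{2\sqrt{x}}\bigl(\varsigma_{0}-\sqrt{x}/\sqrt{\tau(x)}\bigr)$ multiplies the difference $\ln|y-\sqrt{x}|^{2}-\ln|y+\sqrt{x}|^{2}=\ln\bigl|(y-\sqrt{x})/(y+\sqrt{x})\bigr|^{2}$; this is the function $\hat{\rho}$ from the proof of Lemma \ref{lem:tilrho}, and the remaining piece $(\varsigma_{0}a-b)\ln|y^{2}-x|\cdot(y^{2}-x)$ is handled by $t\ln|t|\to 0$. One then splits into two regimes. If $|y|\leq 8|\sqrt{x}|$, then $|y\pm\sqrt{x}|\leq 9|\sqrt{x}|$, so $|y^{2}-x|/(2\sqrt{|x|})\leq\frac{9}{2}|y\mp\sqrt{x}|$ absorbs the pole and one is left with terms $|y\mp\sqrt{x}|\,\bigl|\ln|y\mp\sqrt{x}|^{2}\bigr|\to 0$. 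If $|y|\geq 8|\sqrt{x}|$, then $\ln\bigl|(y-\sqrt{x})/(y+\sqrt{x})\bigr|^{2}=O(\sqrt{|x|}/|y|)$, which cancels the $1/\sqrt{|x|}$, leaving $O\bigl(|y|\,|1-x/y^{2}|\bigr)=O(|y|)\to 0$. This two-regime argument is the paper's proof and is what must replace your boundedness claim for $R_{\pm}$.
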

\begin{proof}
It suffices to show the result for $\hat{\rho}$, see the proof of Lemma
\ref{lem:tilrho}.
Suppose $(x_{0},y_{0}) = (0,0)$, otherwise the proof is straightforward.

Suppose that $|y/\sqrt{x}| \leq 8$. We have
\[ |\hat{\rho}(x,y) (y^{2}-x)| \leq
\frac{9}{2} \left| \varsigma_{0} - \frac{\sqrt{x}}{\sqrt{\tau(x)}} \right|
\left(
|y - \sqrt{x}| |\ln |y - \sqrt{x}|^{2}| +
|y + \sqrt{x}| |\ln |y + \sqrt{x}|^{2}| \right) .\]
We deduce $\lim_{|y/\sqrt{x}| \leq 8, \ (x,y) \to (0,0)} \rho(x,y) (y^{2}-x)=0$.

Suppose that $|y/\sqrt{x}| \geq 8$. We have
\[ |\hat{\rho}(x,y) (y^{2}-x)|  \leq \frac{1}{2 \sqrt{|x|}}
\left| \varsigma_{0} - \frac{\sqrt{x}}{\sqrt{\tau(x)}} \right|
|y^{2}-x|
\left| \ln {\left| \frac{y - \sqrt{x}}{y + \sqrt{x}} \right|}^{2} \right| \]
and then
\[ |\hat{\rho}(x,y) (y^{2}-x)| \leq \frac{1}{2 \sqrt{|x|}}
\left| \varsigma_{0} - \frac{\sqrt{x}}{\sqrt{\tau(x)}} \right|
|y^{2}-x| C \frac{\sqrt{|x|}}{|y|} \]
for some $C \in {\mathbb R}^{+}$. We obtain
\[ |\hat{\rho}(x,y) (y^{2}-x)| \leq \frac{C}{2}
\left| \varsigma_{0} - \frac{\sqrt{x}}{\sqrt{\tau(x)}} \right|
|y| \left| 1 - \frac{x}{y^{2}} \right|   \leq
\frac{65 C}{128} |y| \left| \varsigma_{0} - \frac{\sqrt{x}}{\sqrt{\tau(x)}} \right|.  \]
We deduce $\lim_{|y/\sqrt{x}| \geq 8, \ (x,y) \to (0,0)} \rho(x,y) (y^{2}-x)=0$.
\end{proof}
\begin{lem}
The functions $c_{1}$ and $c_{2}$
are continuous. They are defined in a neighborhood of
$\{ (0,0) \} \times [0,1]$
in $(({\mathbb C}^{*} \times {\mathbb C}) \cup \{(0,0)\}) \times {\mathbb C}$ and
vanish at $y^{2}-x=0$.
\end{lem}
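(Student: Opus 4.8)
The plan is to establish the stated continuity and vanishing properties of $c_{1}$ and $c_{2}$ by combining the algebraic identities already derived for the numerators and the denominator of the expressions in Eq.~(\ref{equ:c1c2}). The key observation is that, by Cramer's rule, each $c_{j}$ is a quotient of determinants whose common denominator is $D = |\partial \Psi / \partial y|^{2} - |\partial \Psi / \partial \overline{y}|^{2}$, and whose numerators are $2 \times 2$ determinants with one column equal to $(Re(\rho), Im(\rho))$ and the other column formed by partial derivatives of $\Psi_{1}, \Psi_{2}$. Since all of $\partial \Psi/\partial y$, $\partial \Psi / \partial \overline{y}$ and $\rho$ carry a pole of order one along $y^{2} - x = 0$ (as exhibited in the displayed formulas for $\partial \Psi/\partial y$, $\partial \Psi/\partial \overline{y}$ and in the definition of $\tilde{\rho}$), the natural normalization is to multiply numerator and denominator by $|y^{2}-x|^{2}$: the denominator becomes $D |y^{2}-x|^{2}$, which is bounded away from $0$ and $\infty$ by Lemma~\ref{lem:den}, while each numerator determinant becomes an expression of the form $\pm (\text{product of two factors, each of which is } (y^{2}-x) \text{ times a bounded meromorphic term, or } (y^{2}-x) \rho)$.

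First I would write out explicitly the products $c_{1} D = Re(\rho)\, \partial \Psi_{2}/\partial y_{2} - Im(\rho)\, \partial \Psi_{1}/\partial y_{2}$ and $c_{2} D = Im(\rho)\, \partial \Psi_{1}/\partial y_{1} - Re(\rho)\, \partial \Psi_{2}/\partial y_{1}$, and multiply through by $|y^{2}-x|^{2}$. The factors $\partial \Psi_{k}/\partial y_{l}$, multiplied by $(y^{2}-x)$, extend continuously across $y^{2}-x=0$ by the explicit formulas for $\partial \Psi/\partial y$ and $\partial \Psi/\partial \overline{y}$ (the $1+ay$, $1+by\sqrt{\tau(x)}/\sqrt{x}$, and $\kappa$ terms are all bounded, using Eq.~(\ref{equ:dtq}) and the boundedness of $\sqrt{x/\tau^{-1}(x)}$). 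The factor $\rho$, multiplied by $(y^{2}-x)$, extends continuously to $0$ along $y^{2}-x=0$ by Lemma~\ref{lem:limrho}. Away from the curve $y^{2}-x=0$ but with $x \neq 0$, all the functions involved are manifestly continuous (indeed smooth), and $D \neq 0$ there since $|\partial \Psi/\partial y| > |\partial \Psi/\partial \overline{y}|$ pointwise (this strict inequality follows from Lemma~\ref{lem:den}: $D|y^{2}-x|^{2}$ bounded below forces $D > 0$). Hence $c_{1}$ and $c_{2}$ are continuous at every point of $(({\mathbb C}^{*} \times {\mathbb C}) \cup \{(0,0)\}) \times {\mathbb C}$ in a neighborhood of $\{(0,0)\} \times [0,1]$, and since their numerators (after the $|y^{2}-x|^{2}$ normalization) vanish on $y^{2}-x=0$ while the normalized denominator stays bounded below, $c_{1}$ and $c_{2}$ vanish on $y^{2}-x=0$.

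The one genuinely delicate point is continuity at a point $(0,0,s_{0})$, where both the behavior as $x \to 0$ and the behavior as $y^{2}-x \to 0$ interact. Here I would invoke the estimates already packaged in Lemmas~\ref{lem:den}, \ref{lem:tilrho} and \ref{lem:limrho}: Lemma~\ref{lem:limrho} was proved precisely by splitting into the regimes $|y/\sqrt{x}| \leq 8$ and $|y/\sqrt{x}| \geq 8$ and gives $\lim_{(x,y)\to(0,0)} \rho(x,y)(y^{2}-x) = 0$ uniformly in $s$, so $(y^{2}-x)\,Re(\rho)$ and $(y^{2}-x)\,Im(\rho)$ tend to $0$; combined with the boundedness of the normalized derivative factors and the uniform lower bound on $D|y^{2}-x|^{2}$ (which is uniform in $s$ near $[0,1]$ by the proof of Lemma~\ref{lem:den}), this forces $c_{1}(x,y,s) \to 0$ and $c_{2}(x,y,s) \to 0$ as $(x,y,s) \to (0,0,s_{0})$, matching the values $c_{1}=c_{2}=0$ on the (closure of the) singular curve through $(0,0)$. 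I expect the main obstacle to be bookkeeping the uniformity in $s$ throughout — making sure every bound obtained in the cited lemmas holds for $s$ in a neighborhood of $[0,1]$ and not merely for fixed $s$ — but since the $s$-dependence enters only through the affine combinations $\kappa = (1-s)\varsigma_{0} + s\sqrt{x}/\sqrt{\tau(x)}$ and $(1-s)\varsigma_{1}$, which are jointly continuous and for which Lemma~\ref{lem:den} already gives the required uniform estimate, this is a routine though careful verification rather than a new difficulty.
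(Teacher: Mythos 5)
Your proposal is correct and follows essentially the same route as the paper: normalize by $|y^{2}-x|^{2}$, use Lemma \ref{lem:den} to bound the denominator $D|y^{2}-x|^{2}$ away from $0$, note that $\partial \Psi_{j}/\partial y_{k} = O(1/(y^{2}-x))$, and conclude via Lemma \ref{lem:limrho} that the normalized numerator tends to $0$ along $y^{2}-x=0$. The extra remarks on smoothness off the singular curve and uniformity in $s$ are fine elaborations of what the paper leaves implicit, not a different argument.
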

\begin{proof}
Let us prove the result for $c_{1}$ without lack of generality.
Equation (\ref{equ:c1c2}) and Lemma \ref{lem:den}
imply that it suffices to show that
\[ |y^{2}-x|^{2}
\left|
\begin{array}{cc}
Re(\rho) & \frac{\partial \Psi_{1}}{\partial y_{2}} \\
Im(\rho) & \frac{\partial \Psi_{2}}{\partial y_{2}}
\end{array}
\right| \]
tends to $0$ when we approach a point of
$y^{2}-x=0$. Since $\partial \Psi_{j} / \partial y_{k} = O(1/(y^{2}-x))$
for all $j, k \in \{1,2\}$ the property is a consequence of Lemma \ref{lem:limrho}.
\end{proof}
\subsection{Consequences of the construction}
We have all the ingredients required to provide examples of exceptional
conjugating mappings that are not holomorphic or anti-holomorphic.
Hence the condition requiring the unperturbed diffeomorphisms to be
non-analytically trivial in the Main Theorem can not be removed.
\begin{pro}
\label{pro:sigma}
The mapping $\sigma_{\flat}(x,y) = {\rm exp}(Z)(x,y,0)$ conjugates
$\Re (X)$ and $\tilde{\sigma}^{*} (\Re (Y))$
in a neighborhood of $(0,0)$
in $({\mathbb C}^{*} \times {\mathbb C}) \cup \{(0,0)\}$.
Moreover the mapping $\sigma = \tilde{\sigma} \circ \sigma_{\flat}$ is
a homeomorphism conjugating $\Re (X)$ and $\Re (Y)$ defined in
a neighborhood of $(0,0)$. Moreover $\sigma$ and $\sigma^{-1}$
are real analytic outside $y^{2}-x=0$.
\end{pro}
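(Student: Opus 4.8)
The plan is to assemble the pieces established in the preceding lemmas into a verification that the vector field $Z$ has a well-defined time-$1$ flow whose restriction to $s=0$ does what we want. First I would invoke the construction of $c_{1},c_{2}$ (Eq.~(\ref{equ:c1c2})) together with the lemmas: Lemma~\ref{lem:den} shows the denominator $D$ of $c_{1},c_{2}$ satisfies $D|y^{2}-x|^{2}$ bounded away from $0$ and $\infty$, and the last lemma of the previous subsection shows $c_{1},c_{2}$ are continuous in a neighborhood of $\{(0,0)\}\times[0,1]$ in $((\mathbb{C}^{*}\times\mathbb{C})\cup\{(0,0)\})\times\mathbb{C}$ and vanish on $\{y^{2}-x=0\}$. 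Since $Z=\partial/\partial s + c_{1}\partial/\partial y_{1}+c_{2}\partial/\partial y_{2}$ has a continuous right-hand side and its $s$-component is identically $1$, its flow is defined for $s\in[0,1]$; the vanishing of $c_{1},c_{2}$ on $\{y^{2}-x=0\}$ means that this set is invariant, so the flow stays inside the domain for $x$ small. Hence $\sigma_{\flat}(x,y)=\mathrm{exp}(Z)(x,y,0)$ is a well-defined homeomorphism of a neighborhood of $(0,0)$ in $(\mathbb{C}^{*}\times\mathbb{C})\cup\{(0,0)\}$ onto its image, fixing the parameter $x$ (the $x$-coordinate is not moved by $Z$).

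Next I would show $\sigma_{\flat}$ conjugates $\Re(X)$ and $\tilde{\sigma}^{*}(\Re(Y))$. The key identity is $Z(\Psi)\equiv 0$, which is exactly how $c_{1},c_{2}$ were chosen; this says $\Psi(s,x,y)$ is constant along orbits of $Z$. At $s=0$, $\Psi$ equals ${\mathfrak h}\circ\psi_{X}$, while at $s=1$ it equals $\psi_{Y}(\tau(x),\sqrt{\tau(x)/x}\,y)$, i.e. the pullback of $\psi_{Y}$ under $\tilde{\sigma}$ read in the relocated coordinate. Therefore $\psi_{Y}\circ\tilde{\sigma}\circ\sigma_{\flat}={\mathfrak h}\circ\psi_{X}$ on the relevant domain. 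Since ${\mathfrak h}$ is an $\mathbb{R}$-linear isomorphism with ${\mathfrak h}(1)=1$, it conjugates the unit translation on Fatou coordinates, hence $\sigma_{\flat}$ conjugates the flow $\mathrm{exp}(zX)$ (which acts by $\psi_{X}\mapsto\psi_{X}+z$) to the flow whose Fatou coordinate is advanced by ${\mathfrak h}(z)$; in particular for $z\in\mathbb{R}$ it conjugates $\Re(X)$ to $\tilde{\sigma}^{*}(\Re(Y))$. Conjugating by the biholomorphic relocation $\tilde{\sigma}$ then gives that $\sigma=\tilde{\sigma}\circ\sigma_{\flat}$ conjugates $\Re(X)$ and $\Re(Y)$; one must remark here that although $\tilde{\sigma}$ is not defined at $x=0$, the factor $\sqrt{x/\tau^{-1}(x)}$ is bounded away from $0$ and $\infty$ (Eq.~(\ref{equ:dtq})), so $\sigma$ extends continuously across $x=0$ and is a genuine homeomorphism near $(0,0)$.

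Finally I would address the regularity claim: away from $\{y^{2}-x=0\}$ the functions $c_{1},c_{2}$ are real-analytic (they are rational expressions in $y,\overline{y},x,\overline{x},\sqrt{x},\overline{\sqrt{x}},s$ with non-vanishing denominator there, once one checks $\tau$ and $\sqrt{\tau(x)}$ are real-analytic in $x$ off $0$ — which follows from Eq.~(\ref{equ:dtq})), so by analytic dependence of solutions of ODEs on initial conditions $\sigma_{\flat}$ and hence $\sigma$ is real-analytic off that curve; the same applies to $\sigma^{-1}=\sigma_{\flat}^{-1}\circ\tilde{\sigma}^{-1}$, obtained by running the flow of $Z$ backwards. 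The one point demanding care — the main obstacle — is checking that the flow of $Z$ does not escape the domain, i.e. that orbits starting near $(0,0)$ with $x\neq 0$ remain in the neighborhood where $c_{1},c_{2}$ are defined and where $\tilde{\sigma}$ behaves well; this is handled by the invariance of $\{y^{2}-x=0\}$ plus the boundedness estimates in Lemmas~\ref{lem:den}, \ref{lem:tilrho} and \ref{lem:limrho}, which force the $c_{j}$ to be small near the singular curve and bounded elsewhere, so a standard Gronwall-type argument confines the orbits for $x$ in a small enough ball.
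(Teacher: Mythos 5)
Your first two paragraphs (existence of the flow of $Z$, invariance of $\{y^{2}-x=0\}$, and the derivation of the conjugacy from $Z(\Psi)\equiv 0$, so that $\psi_{Y}\circ\sigma={\mathfrak h}\circ\psi_{X}$ and ${\mathfrak h}(w+1)={\mathfrak h}(w)+1$ gives the conjugation of the real flows) correspond to what the paper dismisses as ``clear by construction''. The genuine gap is exactly where the paper's written proof concentrates: the behaviour of $\sigma$ near the line $x=0$. Your justification --- that $\sigma$ ``extends continuously across $x=0$'' because $\sqrt{x/\tau^{-1}(x)}$ is bounded away from $0$ and $\infty$ --- is a non sequitur: by Eq.~(\ref{equ:dtq}) this ratio equals, up to inversion, $\varsigma_{0}-\varsigma_{1}\sqrt{x}/\overline{\sqrt{x}}$, whose limit as $x\to 0$ depends on $\arg x$ whenever $\varsigma_{1}\neq 0$; boundedness gives no limit, and indeed the eventual restriction $\sigma_{|x=0}$ is only ${\mathbb R}$-affine in Fatou coordinates, not holomorphic, so no soft continuity argument can produce the extension. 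More fundamentally, you never verify the normalization $\sigma(x,y_{0})=(\tau(x),y_{0})$: constancy of $\Psi$ along $Z$-orbits only places the endpoint at $s=1$ in the level set $\{\psi_{Y}(\tau(x),\cdot)=0\}$ of a multivalued Fatou coordinate (its monodromy is $2\pi i$ times a residue), so a priori it could be a different preimage. Without pinning this down, the identity ${\mathfrak h}\circ\psi_{X}\equiv\psi_{Y}\circ\sigma$ with coherently chosen branches --- which is what the paper uses to obtain both the homeomorphic behaviour near $x=0$ and the real analyticity of $\sigma$ and $\sigma^{-1}$ off $y^{2}-x=0$ --- is not available. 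The paper settles this by a method-of-the-path homotopy in an auxiliary parameter $u$: it interpolates the data from $(\mathrm{Id},a)$ to $({\mathfrak h},b)$ through $({\mathfrak h}_{u},a_{u})$, gets a continuous family $\sigma_{u}$ with $\sigma_{0}=\mathrm{Id}$, and continuity in $u$ forces $\sigma_{u}(x,y_{0})=(\tau_{u}(x),y_{0})$ for all $u$, whence the functional equation and the regularity. This step has no counterpart in your proposal.

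A related, smaller defect: your real-analyticity argument via analytic dependence of the $Z$-flow on initial conditions can at best treat points with $x\neq 0$, since $\sigma_{\flat}$ is simply not defined at points $(0,y)$ with $y\neq 0$; yet such points lie outside $y^{2}-x=0$ and are covered by the statement (and are the ones used afterwards, since the whole purpose is the restriction $\sigma_{|x=0}$). The functional-equation route, $\sigma=\psi_{Y}^{-1}\circ{\mathfrak h}\circ\psi_{X}$ locally with matched branches, is what covers them. Finally, your confinement of orbits (``Gronwall-type'') and the implicit claim that orbits starting off $y^{2}-x=0$ never reach that curve in time at most $1$ (needed both for injectivity of $\sigma_{\flat}$ and for the ODE regularity you invoke, since the coefficients are only continuous at the curve) are asserted rather than proved; the paper also treats this as immediate, so it is a lesser issue, but as written these points do not follow from Lemmas \ref{lem:den}, \ref{lem:tilrho} and \ref{lem:limrho} alone.
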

\begin{proof}
It is clear that $\sigma_{\flat}$ and its inverse
${\rm exp}(-Z)(x,y,1)$ satisfy the properties by construction.
The mapping $\sigma$ conjugates $\Re (X)$ and $\Re (Y)$.
The remaining issue is the study of the  properties of $\sigma$ in the
neighborhood of the line $x=0$.

Suppose $a \in i {\mathbb R}$. Then we have $a=b$ and we define
${\mathfrak h}_{u}(z)=(1-u) z + u {\mathfrak h}(z)$ and $a_{u}=a$ for $u \in [0,1]$.
If $a \not \in i {\mathbb R}$ the real parts of $a$ and $b$ have
the same sign since ${\mathfrak h}$ is orientation preserving. We
define $a_{u}=(1-u)a +ub$ and ${\mathfrak h}_{u}:{\mathbb C} \to {\mathbb C}$ as the ${\mathbb R}$-linear
mapping such that ${\mathfrak h}_{u}(1)=1$ and
${\mathfrak h}_{u}(2 \pi i a)=2 \pi i a_{u}$ for $u \in [0,1]$.
We define $X_{u}=[(y^{2}-x)/(1+a_{u} y)] \partial / \partial y$
and a Fatou coordinate $\psi_{u}$ of $X_{u}$ such that
$\psi_{u}(x,y_{0}) \equiv 0$ for $u \in [0,1]$.
We denote by $\sigma_{u}$ the diffeomorphism obtained by applying
the previous method to
$X$, $X_{u}$ and ${\mathfrak h}_{u}$.
We obtain $\sigma_{0}=Id$ and $\sigma_{1} = \sigma$.
Since $\sigma_{u}$ depends continuously on $u$ and $\sigma_{u}$
conjugates the functions ${\mathfrak h}_{u} \circ \psi_{X}$ and $\psi_{u}$
we obtain
$\sigma_{u}(x,y_{0}) = (\tau_{u}(x), y_{0})$ for all
$u \in [0,1]$ and $x$ in a neighborhood of $0$.
In particular we deduce $\sigma(x,y_{0}) = (\tau(x), y_{0})$
for any $x$ in a neighborhood of $0$.
The equation ${\mathfrak h} \circ\psi_{X} \equiv \psi_{Y} \circ \sigma$
implies that $\sigma$ and $\sigma^{-1}$
are real analytic outside $y^{2}-x=0$.
\end{proof}
\begin{exa}
Consider $b= a \in i {\mathbb R}$ and a ${\mathbb R}$-linear
orientation-preserving isomorphism
such that ${\mathfrak h}(1)=1$. Then the mapping $\sigma$ provided by
Proposition \ref{pro:sigma} satisfies $\sigma^{*}(\Re (X)) = \Re(X)$
and $\sigma \circ \mathrm{exp}(X) = \mathrm{exp}(X) \circ \sigma$.
Moreover $\sigma$ is holomorphic if and only if
${\mathfrak h} \equiv z$.

If ${\mathfrak h}$ is orientation-reversing consider the conjugation
$\sigma$ associated to $(X,X,\overline{z} \circ {\mathfrak h})$.
Now $\zeta \circ \sigma$ conjugates  $\Re (X)$
and $\Re (Y)$ where $Y= [(y^{2}-x)/(1+\overline{a} y)] \partial / \partial y$
and $\zeta(x,y) = (\overline{x},\overline{y})$.
The mapping ${\mathfrak h}_{{\rm exp}(X), {\rm exp}(Y), \zeta \circ \sigma}$
is equal to ${\mathfrak h}$.
We described both situations in Proposition \ref{pro:atui} where the diffeomorphisms
by restriction to $x=0$ are holomorphically (resp. anti-holomorphically)
conjugated but the conjugation
is not holomorphic (resp. anti-holomorphic) in general.
\end{exa}
\begin{exa}
Consider $a,b \not \in i {\mathbb R}$ such that $Re (a) Re (b) >0$.
Let ${\mathfrak h}$ be the ${\mathbb R}$-linear mapping
such that ${\mathfrak h}(1)=1$ and ${\mathfrak h}(2 \pi i a) = 2 \pi i b$.
Then the mapping $\sigma$ provided by
Proposition \ref{pro:sigma} satisfies $\sigma^{*}(\Re (Y)) = \Re(X)$
and $\sigma \circ \mathrm{exp}(X) = \mathrm{exp}(Y) \circ \sigma$.
Moreover $\sigma$ is holomorphic if and only if
$a=b$.

Let ${\mathfrak h}$ be the ${\mathbb R}$-linear mapping
such that ${\mathfrak h}(1)=1$ and ${\mathfrak h}(2 \pi i a) = -2 \pi i b$.
We obtain $(\overline{z} \circ {\mathfrak h})(2 \pi i a) = 2 \pi i \overline{b}$.
Then the mapping $\sigma$ provided by
Proposition \ref{pro:sigma} and associated to
$X$, $\tilde{Y}=[(y^{2}-x)/(1+\overline{b} y)] \partial / \partial y$
and $\overline{z} \circ {\mathfrak h}$
satisfies $\sigma^{*}(\Re (\tilde{Y})) = \Re(X)$
and $\sigma \circ \mathrm{exp}(X) = \mathrm{exp}(\tilde{Y}) \circ \sigma$.
The homeomorphism $\zeta \circ \sigma$ conjugates $\Re (X)$ and $\Re (Y)$.
Moreover $\zeta \circ \sigma$ is anti-holomorphic if and only if
$a=\overline{b}$.
We described these situations in Proposition \ref{pro:atu}.
\end{exa}

\bibliography{rendu}

\def\cprime{$'$} \def\cprime{$'$} \def\cprime{$'$} \def\cprime{$'$}
\begin{thebibliography}{10}

\bibitem{CM:bsmf}
D.~Cerveau and R.~Moussu.
\newblock Groupes d'automorphismes de {$({\bf C},0)$} et \'equations
  diff\'erentielles {$ydy+\cdots=0$}.
\newblock {\em Bull. Soc. Math. France}, 116(4):459--488 (1989), 1988.

\bibitem{Cerveau-Sad:modules}
Dominique Cerveau and Paulo Sad.
\newblock Probl\`emes de modules pour les formes diff\'erentielles
  singuli\`eres dans le plan complexe.
\newblock {\em Comment. Math. Helv.}, 61(2):222--253, 1986.

\bibitem{DES}
A.~Douady, F.~Estrada, and P.~Sentenac.
\newblock Champs de vecteurs polyn\^{o}miaux sur $\plano$.
\newblock {\em To appear in the Proceedings of Boldifest}.

\bibitem{Ecalle}
J.~{\'E}calle.
\newblock Th\'eorie it\'erative: introduction \`a la th\'eorie des invariants
  holomorphes.
\newblock {\em J. Math. Pures Appl. (9)}, 54:183--258, 1975.

\bibitem{Fulton-MacPherson}
William Fulton and Robert MacPherson.
\newblock A compactification of configuration spaces.
\newblock {\em Ann. of Math. (2)}, 139(1):183--225, 1994.

\bibitem{Gluglu}
A.~A. Glutsyuk.
\newblock Confluence of singular points and the nonlinear {Stokes} phenomenon.
\newblock {\em Trans. Moscow Math. Soc.}, pages 49--95, 2001.

\bibitem{Hirsch-Smale}
Morris~W. Hirsch and Stephen Smale.
\newblock {\em Differential equations, dynamical systems, and linear algebra}.
\newblock Academic Press [A subsidiary of Harcourt Brace Jovanovich,
  Publishers], New York-London, 1974.
\newblock Pure and Applied Mathematics, Vol. 60.

\bibitem{Ilya-toppor}
Yu.~S. Il{\cprime}yashenko.
\newblock Topology of phase portraits of analytic differential equations on a
  complex projective plane.
\newblock {\em Trudy Sem. Petrovsk.}, (4):83--136, 1978.

\bibitem{Ilya-Yako}
Yulij Ilyashenko and Sergei Yakovenko.
\newblock {\em Lectures on analytic differential equations}, volume~86 of {\em
  Graduate Studies in Mathematics}.
\newblock American Mathematical Society, Providence, RI, 2008.

\bibitem{Lavaurs}
P.~Lavaurs.
\newblock {\em Syst\ei mes dynamiques holomorphes: explosion de points p\ex
  riodiques paraboliques}.
\newblock PhD thesis, Université de Paris-Sud, 1989.

\bibitem{Loray5}
F.~Loray.
\newblock Cinq le\c{c}ons sur la structure transverse d'une singularit\'{e} de
  feuilletage holomorphe en dimension 2 complexe.
\newblock {\em Monographies Red TMR Europea Sing. Ec. Dif. Fol.}, (1):1--92,
  1999.

\bibitem{mal:ast}
B.~Malgrange.
\newblock Travaux d'\'{E}calle et de {M}artinet-{R}amis sur les syst\`emes
  dynamiques.
\newblock In {\em Bourbaki Seminar, Vol. 1981/1982}, volume~92 of {\em
  Ast\'erisque}, pages 59--73. Soc. Math. France, Paris, 1982.

\bibitem{MRR}
P.~Marde{\v{s}}i{\'c}, R.~Roussarie, and C.~Rousseau.
\newblock Modulus of analytic classification for unfoldings of generic
  parabolic diffeomorphisms.
\newblock {\em Mosc. Math. J.}, 4(2):455--502, 535, 2004.

\bibitem{Marin-rigidity}
David Mar{\'{\i}}n.
\newblock Rigidity of certain holomorphic foliations.
\newblock {\em Proc. Amer. Math. Soc.}, 134(12):3595--3603, 2006.

\bibitem{Mar:ast}
Jean Martinet.
\newblock Remarques sur la bifurcation n\oe ud-col dans le domaine complexe.
\newblock {\em Ast\'erisque}, (150-151):131--149, 186, 1987.
\newblock Singularit{\'e}s d'{\'e}quations diff{\'e}rentielles (Dijon, 1985).

\bibitem{Naishul}
V.~A. Na{\u\i}shul{\cprime}.
\newblock Topological invariants of analytic and area-preserving mappings and
  their application to analytic differential equations in {${\bf C}\sp{2}$} and
  {${\bf C}P\sp{2}$}.
\newblock {\em Trudy Moskov. Mat. Obshch.}, 44:235--245, 1982.

\bibitem{Nakai-nonsolvable}
Isao Nakai.
\newblock Separatrices for nonsolvable dynamics on {${\bf C},0$}.
\newblock {\em Ann. Inst. Fourier (Grenoble)}, 44(2):569--599, 1994.

\bibitem{NSS:rigidity}
A.~Lins Neto, P.~Sad, and B.~Sc{\'a}rdua.
\newblock On topological rigidity of projective foliations.
\newblock {\em Bull. Soc. Math. France}, 126(3):381--406, 1998.

\bibitem{Oudkerk}
R.~Oudkerk.
\newblock {\em The parabolic implosion for $f_{0}(z)=z+z^{\nu+1} +
  O(z^{\nu+2})$}.
\newblock PhD thesis, University of Warwick, 1999.

\bibitem{Rebelo-rigidity}
Julio~C. Rebelo.
\newblock On transverse rigidity for singular foliations in {$(\Bbb C\sp
  2,0)$}.
\newblock {\em Ergodic Theory Dynam. Systems}, 31(3):935--950, 2011.

\bibitem{UPD}
Javier Rib{\'o}n.
\newblock Formal classification of unfoldings of parabolic diffeomorphisms.
\newblock {\em Ergodic Theory Dynam. Systems}, 28(4):1323--1365, 2008.

\bibitem{JR:mod}
Javier Rib{\'o}n.
\newblock Modulus of analytic classification for unfoldings of resonant
  diffeomorphisms.
\newblock {\em Mosc. Math. J.}, 8(2):319--395, 400, 2008.

\bibitem{rib-ast}
Javier Rib{\'o}n.
\newblock Unfoldings of tangent to the identity diffeomorphisms.
\newblock {\em Ast\'erisque}, (323):325--370, 2009.

\bibitem{rib-mams}
Javier Rib{\'o}n.
\newblock Topological classification of families of diffeomorphisms without
  small divisors.
\newblock {\em Mem. Amer. Math. Soc.}, 207(975):x+166, 2010.

\bibitem{Rou:ast}
Robert Roussarie.
\newblock Mod\`{e}les locaux de champs et de formes.
\newblock {\em Asterisque}, (30):181 pp., 1975.

\bibitem{Rou-Chris:mod}
Christiane Rousseau and Colin Christopher.
\newblock Modulus of analytic classification for the generic unfolding of a
  codimension 1 resonant diffeomorphism or resonant saddle.
\newblock {\em Ann. Inst. Fourier (Grenoble)}, 57(1):301--360, 2007.

\bibitem{Shcherbakov-topan}
A.~A. Shcherbakov.
\newblock Topological and analytic conjugation of noncommutative groups of
  germs of conformal mappings.
\newblock {\em Trudy Sem. Petrovsk.}, (10):170--196, 238--239, 1984.

\bibitem{Shishi}
Mitsuhiro Shishikura.
\newblock Bifurcation of parabolic fixed points.
\newblock In {\em The {M}andelbrot set, theme and variations}, volume 274 of
  {\em London Math. Soc. Lecture Note Ser.}, pages 325--363. Cambridge Univ.
  Press, Cambridge, 2000.

\bibitem{V}
S.~M. Voronin.
\newblock Analytic classification of germs of conformal mappings {$({\bf
  C},\,0)\rightarrow ({\bf C},\,0)$}.
\newblock {\em Funktsional. Anal. i Prilozhen.}, 15(1):1--17, 96, 1981.

\end{thebibliography}
\end{document}